\newcommand{\function}[2]{#1\left(#2\right)}
\newcommand{\p}[1]{\mathbf{P}\left(#1\right)}
\newcommand{\e}[1]{\mathbf{E}\left(#1\right)}
\newenvironment{proof}{\textbf{Proof: }}{$\hfill \square \\$}
\newcommand{\emp}[1]{\mathbb{P}_n\left(#1\right)}
\newcommand{\cp}[1]{\mathbf{P_\mathfrak{X}}\left(#1\right)}
\newcommand{\ce}[1]{\mathbf{E_\mathfrak{X}}\left(#1\right)}
\newtheorem{lemma}{Lemma}[section]
\newtheorem{prop}{Proposition}[section]
\newtheorem{cor}{Corollary}[section]
\newcommand{\ind}[1]{\mathbf{1}_{#1}}
\newcommand{\cas}{\stackrel{a.s.}{\longrightarrow}}
\newcommand{\cip}{\stackrel{\mathbf{P}}{\longrightarrow}}
\newcommand{\ccipas}{\stackrel{\mathbf{P}_\mathfrak{X}}{\substack{\longrightarrow \\ a.s.}}}
\newcommand{\ccipip}{\stackrel{\mathbf{P}_\mathfrak{X}}{\substack{\longrightarrow \\ \mathbf{P}}}}
\newcommand{\argmax}{\operatornamewithlimits{\textrm{argmax}}}
\newcommand{\lsup}{\operatornamewithlimits{\overline{\lim}}}
\newcommand{\linf}{\operatornamewithlimits{\underline{\lim}}}
\newcommand{\sargmax}{\operatornamewithlimits{\textrm{sargmax}}}
\newcommand{\largmax}{\operatornamewithlimits{\textrm{largmax}}}
\newcommand{\sargmin}{\operatornamewithlimits{\textrm{sargmin}}}
\begin{document}


\title{\textsc{Change-point in stochastic design regression and the bootstrap}}
\author{\begin{tabular}{c}
Emilio Seijo and Bodhisattva Sen\\
Columbia University
\end{tabular}}
\date{}
\maketitle
\fontsize{12}{18pt}
\selectfont

\begin{abstract}
In this paper we study the consistency of different bootstrap procedures for constructing confidence intervals (CIs) for the unique jump discontinuity (change-point) in an otherwise smooth regression function in a stochastic design setting. This problem exhibits nonstandard asymptotics and we argue that the standard bootstrap procedures in regression fail to provide valid confidence intervals for the change-point. We propose a version of smoothed bootstrap, illustrate its remarkable finite sample performance in our simulation study, and prove the consistency of the procedure. The $m$ out of $n$ bootstrap procedure is also considered and shown to be consistent. We also provide sufficient conditions for any bootstrap procedure to be consistent in this scenario.
\end{abstract}

\section{Introduction}\label{intro}
Change-point models may arise when a stochastic system is subject to sudden external influences and are encountered in almost every field of science. In the simplest form the model considers a random vector $X=(Y,Z)$ satisfying the following relation:
\begin{equation}\label{ec1}
Y = \alpha_0 \mathbf{1}_{Z\leq \zeta_0} + \beta_0 \mathbf{1}_{Z> \zeta_0} + \epsilon,
\end{equation}
where $Z$ is a continuous random variable, $\alpha_0 \neq \beta_0 \in \mathbb{R}$, $\zeta_0 \in [a,b] \subset \mathbb{R}$ and $\epsilon$ is a continuous random variable, independent of $Z$ with zero expectation and finite variance $\sigma^2 > 0$. The parameter of interest is $\zeta_0$, the change-point.

Despite its simplicity, model (\ref{ec1}) captures the inherent ``non-standard'' nature of the problem: The least squares estimator of the change-point $\zeta_0$ converges at a rate of $n^{-1}$ to a minimizer of a two-sided, compound Poisson process that depends crucially on the entire error distribution, the marginal density of $Z$, among other nuisance parameters; see \cite{pons}, \cite{koss} (Section 14.5.1, pages 271--277) or \cite{koul}. Therefore, it is not practical to use this limiting distribution to build CIs for $\zeta_0$. Bootstrap methods bypass the estimation of nuisance parameters and are generally reliable in $\sqrt{n}$-convergence problems. In this paper we investigate the performance (both theoretically and through simulation) of different bootstrap schemes in building CIs for $\zeta_0$. We hope that the analysis of the bootstrap procedures employed in this paper will help illustrate the issues that arise when the bootstrap is applied in such non-standard problems.

The problem of estimating a jump-discontinuity (change-point) in an otherwise smooth curve has been under study for at least the last forty years. More recently, it has been extensively studied in the nonparametric regression and survival analysis literature; see for instance \cite{ghkbs}, \cite{dw}, \cite{pons}, \cite{koso}, \cite{lmm} and the references therein. Bootstrap techniques have also been applied in many instances in change point models. \cite{dum} proposed asymptotically valid confidence regions for the change-point by inverting bootstrap tests in a one-sample problem. \cite{huki} considered bootstrap CIs for the change-point of the mean in a time series context. \cite{koso} use a form of parametric bootstrap to estimate the distribution of the estimated change-point in a stochastic design regression model that arises in survival analysis. \cite{ghk}, in a slightly different setting, suggested a bootstrap procedure for model (\ref{ec1}), but did not give a complete proof of its validity.

Our work goes beyond those cited above as follows: We present strong theoretical and empirical evidence to suggest the {\it inconsistency} of the two most natural bootstrap procedures in a regression setup -- the usual nonparametric bootstrap (i.e., sampling from the empirical cumulative distribution function (ECDF) of $(Y,Z)$, often also called as bootstrapping ``pairs'') and the ``residual'' bootstrap. The bootstrap estimators built by both of these methods are the smallest maximizers of certain stochastic processes. We show that these processes do not have any weak limit in probability. This fact strongly suggests not only inconsistency but also the absence of {\it any} weak limit for the bootstrap estimators. In addition, we prove that independent sampling from a smooth approximation to the marginal of $Z$ and the centered ECDF of the residuals, and the $m$ out of $n$ bootstrap from the ECDF of $(Y,Z)$ yield asymptotically valid CIs for $\zeta_0$. The finite sample performance of the different bootstrap methods shows the superiority of the proposed smoothed bootstrap procedure. We also develop a series of convergence results which generalize those obtained in \cite{koss} to triangular arrays of random vectors and can be used to validate the consistency of {\it any} bootstrap scheme in this setup. Moreover, in the process of achieving this we develop convergence results for stochastic processes with a three-dimensional parameter which are continuous on the first two arguments and c\'adl\'ag on the third. In particular, we prove a version of the argmax continuous mapping theorem for these processes which may be of independent interest (see Section \ref{app2}).

Although we develop our results in the setting of (\ref{ec1}), our conclusions have broader implications (as discussed in Section \ref{discussion}). They extend immediately to regression functions with parametrically specified models on either side of the change-point. The \textit{smoothed bootstrap} procedure can also be modified to work in more general nonparametric settings. \cite{ghkbs} consider jump-point estimation in the more general setup of non-parametric regression and develop two-stage procedures to build CI for the change-point. In the second stage of their procedure, they localize to a neighborhood of the change-point and reduce the problem to exactly that of (\ref{ec1}). \cite{lmm} consider a two-stage adaptive sampling procedure to estimate the jump discontinuity. The second stage of their method relies on an approximate CI for the change-point, and the bootstrap methods developed in this paper can be immediately used in their context.

The paper is organized in the following manner: In Section \ref{BootsSchemes} we describe the problem in greater detail, introduce the bootstrap schemes and describe the appropriate notion of consistency. In Section \ref{pgs}, we prove a series of convergence results that generalize those obtained in \cite{koss}. These results will constitute the general framework under which the bootstrap schemes will be analyzed. In Section \ref{incons} we study the inconsistency of the standard bootstrap methods, including the ECDF and residual bootstraps. In Section \ref{cons} we propose two bootstrap procedures and show their consistency. We compare the finite sample performance of the different bootstrap methods through a simulation study in Section \ref{simula}. Finally, in Section \ref{discussion} we discuss the consequences of our analysis in more general change-point regression models. Additionally, we include an Appendix with the proofs and some necessary lemmas and results.

\section{The problem and the bootstrap schemes}\label{BootsSchemes}
Assume that we are given an i.i.d. sequence of random vectors $\left\{X_n=(Y_n,Z_n)\right\}_{n=1}^\infty$ defined on a probability space $\left(\Omega,\mathcal{A},\mathbf{P}\right)$ having a common distribution $\mathbb{P}$ satisfying (\ref{ec1}) for some parameter $\theta_0 :=(\alpha_0, \beta_0, \zeta_0) \in \Theta := \mathbb{R}^2\cup [a,b]$. This is a semi-parametric model with an Euclidean parameter $\theta_0$ and two infinite-dimensional parameters -- the distributions of $Z$ and $\epsilon$. We are interested in estimating $\zeta_0$, the change-point. For technical reasons, we will also assume that $\mathbb{P}(|\epsilon|^3)<\infty$.  Here, and in the remaining of the paper, we take the convention that for any probability distribution $\mu$, we will denote the expectation operator by $\mu(\cdot)$. In addition, we suppose that $Z$ has a uniformly bounded, strictly positive density $f$ (with respect to the Lebesgue measure) on $[a,b]$ such that $\inf_{|z - \zeta_0| \le \eta} f(z) > \kappa > 0$ for some $\eta >0$ and that $\mathbb{P}(Z<a)\land \mathbb{P}(Z>b) > 0$. For $\theta = (\alpha,\beta,\zeta) \in \Theta$, $x=(y,z) \in \mathbb{R}^2$ write
\begin{equation}\label{eq:m_theta}
\function{m_\theta}{x} := -\left(y - \alpha \mathbf{1}_{z\leq \zeta} - \beta \mathbf{1}_{z> \zeta}\right)^2,
\end{equation}
$\mathbb{P}_n$ for the empirical measure defined by $X_1,\ldots,X_n$,
\begin{equation}\label{eq:m_thetabis}\function{M_n}{\theta} := \emp{m_\theta} = - \frac{1}{n} \sum_{i=1}^n \left( Y_i - \alpha \mathbf{1}_{Z_i \leq \zeta} + \beta \mathbf{1}_{Z_i > \zeta} \right)^2,
\end{equation}
and $\function{M}{\theta} := \function{\mathbb{P}}{m_\theta}$. The function $M_n$ is strictly concave in its first two coordinates but c\`{a}dl\`{a}g (right continuous with left limits) in the third; in fact, piecewise constant and with $n$ jumps (w.p. 1). Thus, $M_n$ has unique maximizing values of $\alpha$ and $\beta$, but an entire interval of maximizers for $\zeta$. For this reason, we define the {\it least squares estimator} of $\theta_0$ to be the maximizer of $M_n$ over $\Theta$ with the smallest $\zeta$, and denote it by
\begin{eqnarray}
\hat{\theta}_n := (\hat{\alpha}_n,\hat{\beta}_n,\hat{\zeta}_n) = \sargmax_{\theta \in \Theta} \left\{ M_n(\theta) \right\}, \nonumber
\end{eqnarray}
where $\sargmax$ stands for the {\it smallest argmax}. At this point we would like to clarify what we mean by a maximizer: if $W$ is a c\`adl\`ag process on an interval $I$, a point $x\in I$ is said to be a maximizer if $\function{W}{x}\lor \function{W}{x^-}=\sup\left\{W(s) : s\in I\right\}$. In the context of our problem, $(\alpha,\beta,\zeta)$ is a maximizer of $M_n$ if $\function{M_n}{\alpha,\beta,\zeta} \lor \function{M_n}{\alpha,\beta,\zeta^-}=\sup\left\{M_n(\theta) : (\theta)\in \Theta\right\}$.

The asymptotic properties of this least squares estimator are well known. It is shown in \cite{koss}, pages 271--277, that $\sqrt{n}(\hat{\alpha}_n-\alpha_0)=\function{O_{\mathbf{P}}}{1}$, $\sqrt{n}(\hat{\beta}_n-\beta_0)=\function{O_{\mathbf{P}}}{1}$ and $n(\hat{\zeta}_n-\zeta_0)=\function{O_{\mathbf{P}}}{1}$. It is also shown that the asymptotic distribution of $n(\hat{\zeta}_n-\zeta_0)$ is that of the smallest argmax of a two-sided compound Poisson process. However, the limiting process depends on the distribution of $\epsilon$ and the value of the density of $Z$ at $\zeta_0$. Thus, there is no straightforward way to build CIs for $\zeta_0$ using this limiting distribution. In this connection we investigate the performance of bootstrap procedures for constructing CIs for $\zeta_0$.

\subsection{Bootstrap}\label{Boots}
We start with a brief review of the bootstrap. Given a sample ${\mathbf W}_n=\{W_1, W_2, \ldots,$ $ W_n\}\stackrel{\rm iid}{\sim} L$ from an unknown distribution $L$, suppose that the distribution function $H_n$ of some random variable $R_n \equiv R_n(\mathbf{W}_n, L)$ is of interest; $R_n$ is usually called a {\it root} and it can in general be any measurable function of the data and the distribution $L$. The bootstrap method can be broken into three simple steps:
\begin{itemize}
	\item[(i)] Construct an estimator $\hat{L}_n$ of $L$ from ${\mathbf W}_n$.
	
	\item[(ii)] Generate  ${\mathbf W}_n^{*} =\{W_1^{*},\ldots, W_{m_n}^{*}\} \stackrel{\rm iid} {\sim} \hat{L}_n$ given ${\mathbf W}_n$.
	
	\item[(iii)] Estimate $H_n$ by $\hat H_{n}$, the conditional CDF of $R_n({\mathbf W}_n^{*},\hat{L}_n)$
    given ${\mathbf W}_n$.
\end{itemize}
Let $d$ denote the Prokhorov metric or any other metric metrizing weak convergence of probability measures. We say that $\hat H_{n}$ is {\it weakly consistent} if $d(H_n, \hat H_n)\stackrel{P}{\rightarrow} 0$; if $H_{n}$ has a weak limit $H$, this is equivalent to $\hat H_{n}$ converging weakly to $H$ in probability. Similarly, $\hat H_{n}$ is {\it strongly consistent} if $d(H_n, \hat H_n)\stackrel{a.s.}{\rightarrow} 0$.

The choice of $\hat{L}_n$ mostly considered in the literature is the ECDF. Intuitively, an $\hat L_n$ that mimics the essential properties (e.g., smoothness) of the underlying distribution $L$ can be expected to perform well. Despite being a good estimator in most situations, the ECDF can fail to capture some properties of L that may be crucial for the problem under consideration. This is especially true for nonstandard problems. In Section \ref{incons} we illustrate this phenomenon (the inconsistency of the ECDF bootstrap) when $n(\hat{\zeta}_n - \zeta_0)$ is the random variable (root) of interest.

We denote by $\mathfrak{X}=\function{\sigma}{\left(X_n\right)_{n=1}^\infty}$ the $\sigma$-algebra generated by the sequence $\left(X_n\right)_{n=1}^\infty$ and write $\cp{\cdot}=\p{\cdot\left|\mathfrak{X}\right.}$ and $\ce{\cdot}=\e{\cdot\left|\mathfrak{X}\right.}$. We approximate the CDF of $\Delta_n = n(\hat{\zeta}_n - \zeta_0)$ by $\cp{\Delta_n^* \le x}$, the conditional distribution function of $\Delta_n^* = m_n(\zeta_n^* - \hat{\zeta}_n)$ and use this to build a CI for $\zeta_0$, where $\zeta_n^*$ is the least squares estimator of $\zeta_0$ obtained from the bootstrap sample. In the following we introduce four bootstrap schemes that arise naturally in this problem and investigate their consistency properties in Sections \ref{incons} and \ref{cons}. \medskip

\noindent {\bf Scheme 1 (ECDF bootstrap)}: Draw a bootstrap sample $(Y_{n,1}^*,Z_{n,1}^*), \ldots,$ $(Y_{n,n}^*,Z_{n,n}^*)$ from the ECDF of $(Y_1,Z_1),\ldots,(Y_n,Z_n)$; probably the most widely used bootstrap scheme. \medskip

\noindent {\bf Scheme 2 (Bootstrapping residuals)}: This is another widely used bootstrap procedure in regression models. We first obtain the residuals $$\hat{\epsilon}_{n,j} := Y_j - \hat{\alpha}_n \mathbf{1}_{Z_j\leq \hat{\zeta}_n} - \hat{\beta}_n \mathbf{1}_{Z_j> \hat{\zeta}_n} \;\; \mbox{ for } j=1,\ldots,n,$$ from the fitted model. Note that these residuals are not guaranteed to have mean 0, so we work with the centered residuals, $\hat{\epsilon}_{n,1}-\bar{\epsilon}_n,\ldots,\hat{\epsilon}_{n,n}-\bar{\epsilon}_n$, where $ \bar{\epsilon}_n = \sum_{j=1}^n \hat{\epsilon}_{n,j}/n$. Letting $\mathbb{P}_{n}^{\epsilon}$ denote the empirical measure of the centered residuals, we obtain the bootstrap sample $(Y_{n,1}^*,Z_1),\ldots,(Y_{n,n}^*,Z_n)$ as:
\begin{enumerate}
\item Sample $\epsilon_{n,1}^{*},\ldots,\epsilon_{n,n}^{*}$ independently from $\mathbb{P}_{n}^{\epsilon}$.

\item Fix the predictors $Z_j$, $j=1,\ldots,n$, and define the bootstrapped responses at $Z_j$ as $Y_{n,j}^{*} = \hat{\alpha}_n \mathbf{1}_{Z_j\leq \hat{\zeta}_n} + \hat{\beta}_n \mathbf{1}_{Z_j> \hat{\zeta}_n} + \epsilon_{n,j}^{*}$.
\end{enumerate}
\medskip
%

\noindent {\bf Scheme 3 (Smoothed bootstrap)}: Notice that in (\ref{ec1}), $Z$ is assumed to have a density and it also arises in the limiting distribution of $\Delta_n$. A successful bootstrap scheme must mimic this underlying assumption, and we accomplish this in the following:
\begin{enumerate}
\item Choose an appropriate nonparametric smoothing procedure (e.g., kernel density estimation) to build a distribution $\hat{F}_n$ with a density $\hat{f}_n$ such that $\|\hat{F}_n - F\|_\infty \stackrel{a.s.}{\rightarrow} 0$ and $\hat{f}_n\rightarrow f$ uniformly on some open interval around $\zeta_0$ w.p. 1, where $f$ is the density of $Z$.

\item Get i.i.d. replicates $Z_{n,1}^*,\ldots,Z_{n,n}^*$ from $\hat{F}_n$ and sample, independently, $\epsilon_{n,1}^{*},\ldots,\epsilon_{n,n}^{*}$ from $\mathbb{P}_{n}^{\epsilon}$.

\item Define $Y_{n,j}^{*} = \hat{\alpha}_n \mathbf{1}_{Z_{n,j}^{*}\leq \hat{\zeta}_n} + \hat{\beta}_n \mathbf{1}_{Z_{n,j}^{*}> \hat{\zeta}_n} + \epsilon_{n,j}^{*}$ for all $j=1,\ldots,n$.
\end{enumerate}
\medskip
\noindent {\bf Scheme 4 ($m$ out of $n$ bootstrap)}: A natural alternative to the usual nonparametric bootstrap (i.e., generating bootstrap samples from the ECDF) considered widely in non-regular problems is to use the $m$ out of $n$ bootstrap. We choose a nondecreasing sequence of natural numbers $\{m_n\}_{n = 1}^\infty$ such that $m_n=o(n)$ and $m_n \rightarrow \infty$ and generate the bootstrap sample $(Y_{n,1}^*,Z_{n,1}^*),\ldots,(Y_{n,m_n}^*,Z_{n,m_n}^*)$ from the ECDF of $(Y_1,Z_1),\ldots,(Y_n,Z_n)$. Although there are a number of methods available for choosing the $m_n$ in applications, there is no satisfactory solution to this problem and the obtained CIs usually vary with changing $m_n$.
\medskip

We will use the framework established by our convergence theorems in Section \ref{pgs} to prove that schemes 3 and 4 above yield {\it consistent} bootstrap procedures for building CIs for $\zeta_0$. We will also give strong empirical and theoretical evidence for the {\it inconsistency} of schemes 1 and 2. Note that schemes 1 and 2 are the two most widely used resampling techniques in regression models (see pages 35-36 of \cite{E82}; also see \cite{F81} and \cite{W86}). Thus in this change--point scenario, a typical nonstandard problem, we see that the two standard bootstrap approaches fail. The failure of the usual bootstrap methods in nonstandard situations is not new and has been investigated in the context of M-estimation problems by \cite{BC01} and in situations giving rise to $n^{1/3}$ asymptotics by \cite{AH05} and \cite{sebawo}. But the change-point problem considered in this paper is indeed quite different from the nonstandard problems considered by the above authors --  one key distinction being that compound Poisson processes, as opposed to Gaussian processes, form the backbone of the asymptotic distributions of the estimators -- and thus demands an independent investigation. We will also see later that the performance of scheme 3 clearly dominates that of the $m$ out of $n$ bootstrap procedure (scheme 4), the general recipe proposed in situations where the usual bootstrap does not work (see \cite{LP06} for applications of the $m$ out of $n$ bootstrap procedure in some nonstandard problems). Also note that the performance of the $m$ out of $n$ bootstrap scheme crucially depends on $m$ (see e.g., \cite{BGZ97}) and the choice of this tuning parameter is tricky in applications.

\section{A uniform convergence result}\label{pgs}
In this section we generalize the results obtained in \cite{koss}, pages 271--277, to a triangular array of random variables. Consider the triangular array\newline
$\left\{ X_{n,k} =(Y_{n,k},Z_{n,k}) \right\}_{1\leq k \leq m_n}^{n\in\mathbb{N}}$ defined on a probability space $(\Omega,\mathcal{A},\mathbf{P})$, where $\left(m_n\right)_{n=1}^\infty$ is a nondecreasing sequence of natural numbers such that $m_n \rightarrow \infty$. Throughout the entire paper we will always denote by $\mathbf{E}$ the expectation operator with respect to $\mathbf{P}$. Furthermore, assume that for each $n \in \mathbb{N}$, $(X_{n,1},\ldots, X_{n,m_n})$ constitutes a random sample from an arbitrary bivariate distribution $\mathbb{Q}_n$ with $\mathbb{Q}_n (Y_{n,1}^2) < \infty$ and let $M_n(\theta) := \mathbb{Q}_n(m_\theta)$ for all $\theta\in\Theta$, where $m_\theta$ is defined in (\ref{eq:m_theta}). Let $\mathbb{P}$ be a bivariate distribution satisfying (\ref{ec1}). Recall that $M(\theta) := \mathbb{P}(m_\theta)$ and $\theta_0 := \sargmax M(\theta)$.

Let $\theta_n := (\alpha_n,\beta_n,\zeta_n)$ be given by
\[ \theta_n = \sargmax_{\theta\in\Theta}\{\mathbb{Q}_n(m_\theta)\}.\] Note that $\mathbb{Q}_n$ need not satisfy model (\ref{ec1}) with $(\alpha_n,\beta_n,\zeta_n)$. The existence of $\theta_n$ is guaranteed as $\mathbb{Q}_n(m_\theta)$ is a quadratic function in $\alpha$ and $\beta$ (for a fixed $\zeta$) and bounded and c\'adl\'ag as a function in $\zeta$. For each $n$, let $\mathbb{P}_n^*$ be the empirical measure produced by the random sample $(X_{n,1},\ldots, X_{n,m_n})$, and define the least squares estimator $\theta_n^* = (\alpha_n^*, \beta_n^*, \zeta_n^*)\in\Theta$ to be the smallest argmax of $M_n^*(\theta) := \mathbb{P}_n^* (m_{\theta})$. If $Q$ is a signed Borel measure on $\mathbb{R}^2$ and $\mathscr{F}$ is a class of (possibly) complex-valued functions defined on $\mathbb{R}^2$, write $\left\| Q \right\|_{\mathscr{F}} := \sup \left\{|Q(f)|: f\in\mathscr{F}\right\}$. If $g: K \subset \mathbb{R}^3 \rightarrow \mathbb{R}$ is a bounded function, write $\|g \|_K := \sup_{x \in K} |g(x)|$. Also, for $\left(z,y\right)\in\mathbb{R}^2$ and $n\in\mathbb{N}$ we write
\begin{equation}\label{eq:epsilon-tilde}
\tilde{\epsilon}_n := \function{\tilde{\epsilon}_n}{z,y} = y - \alpha_n \mathbf{1}_{z\leq \zeta_n} - \beta_n \mathbf{1}_{z> \zeta_n}.
\end{equation}
Let $M > 0$ be such that $|\alpha_n | \le M$ for all $n$. We define the following three classes of functions from $\mathbb{R}^2$ into $\mathbb{R}$:
\begin{eqnarray}
\mathcal{F} & := & \left\{ \function{\ind{I}}{z}: I \subset \mathbb{R} \mbox{ is an interval} \right\} , \nonumber \\
\mathcal{G} & := & \left\{y f(z): f \in \mathcal{F} \right\} \cup \left\{|y+\alpha|f(z): f \in \mathcal{F}, |\alpha| \le M \right\}, \nonumber \\
\mathcal{H} & := & \{y^2 f(z): f \in \mathcal{F}\}. \nonumber
\end{eqnarray}
In what follows, we will derive conditions on the distributions $\mathbb{Q}_n$ that will guarantee consistency and weak convergence of $\theta_n^*$.

\subsection{Consistency and the rate of convergence}
We provide first a consistency result for the least squares estimator, whose proof we include in the Appendix (see Section \ref{prueba1}). To this end, we consider the following set of assumptions:
\begin{enumerate}
\item[(I)] $\left\|\mathbb{Q}_n - \mathbb{P}\right\|_\mathcal{F} \rightarrow 0$,
\item[(II)] $\left\|\mathbb{Q}_n - \mathbb{P}\right\|_\mathcal{G} \rightarrow 0$,
\item[(III)] $\left\|\mathbb{Q}_n - \mathbb{P}\right\|_\mathcal{H} \rightarrow 0$,
\item[(IV)] $\theta_n\rightarrow\theta_0$.
\end{enumerate}
\begin{prop}\label{pg1}
Assume that (I)-(IV) hold. Then, $\theta_n^* \cip \theta_0$.
\end{prop}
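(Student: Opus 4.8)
The plan is to follow the standard two-step argument for consistency of $M$-estimators (à la van der Vaart--Wellner / Kosorok), adapted to the triangular-array setting, using the three metric-entropy-free conditions (I)--(III) to pass from $\mathbb{P}_n^*$ to $\mathbb{Q}_n$ and then (IV) to pass from $\mathbb{Q}_n$ to $\mathbb{P}$. First I would show that $\|M_n^* - M_n\|_K \cip 0$ uniformly over compact $K \subset \Theta$ (with the $\alpha,\beta$ coordinates restricted to a bounded set, which is legitimate since one shows $\alpha_n^*, \beta_n^*$ are eventually bounded in probability by a separate truncation argument). The point is that for $\theta = (\alpha,\beta,\zeta)$, the function $m_\theta(y,z) = -(y - \alpha\ind{z \le \zeta} - \beta \ind{z > \zeta})^2$ expands into a linear combination of terms of the form $y^2$, $y\ind{z \le \zeta}$, $\ind{z \le \zeta}$, etc., with coefficients that are polynomials in $\alpha,\beta$; hence $M_n^*(\theta) - M_n(\theta) = (\mathbb{P}_n^* - \mathbb{Q}_n)(m_\theta)$ is controlled by $\|\mathbb{P}_n^* - \mathbb{Q}_n\|_{\mathcal{F}}$, $\|\mathbb{P}_n^* - \mathbb{Q}_n\|_{\mathcal{G}}$ and $\|\mathbb{P}_n^* - \mathbb{Q}_n\|_{\mathcal{H}}$, uniformly in $\theta$ over the bounded region. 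Each of these empirical-process suprema goes to $0$ in probability: $\mathcal{F}$ is a VC class, and $\mathcal{G}, \mathcal{H}$ are bounded (after truncating $y$, which requires a preliminary argument using the third-moment assumption $\mathbb{P}(|\epsilon|^3) < \infty$ inherited through $\mathbb{Q}_n(Y_{n,1}^2) < \infty$) Glivenko--Cantelli classes; the triangular-array Glivenko--Cantelli statement needed here is exactly of the kind the authors say they are generalizing from \cite{koss}.

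Next I would combine this with conditions (I)--(III) themselves, which give $\|M_n - M\|_K \to 0$ on the same compact set (again by the linear-combination expansion of $m_\theta$), to conclude $\|M_n^* - M\|_K \cip 0$. The second ingredient is a well-separation (identifiability) statement for the population criterion $M$: for every $\varepsilon > 0$,
\[
\sup_{\theta \in \Theta,\ \|\theta - \theta_0\| \ge \varepsilon} M(\theta) < M(\theta_0),
\]
which follows because $M(\theta) = -\mathbb{P}(\epsilon^2) - \mathbb{P}((\alpha_0\ind{Z\le\zeta_0} + \beta_0\ind{Z>\zeta_0} - \alpha\ind{Z\le\zeta} - \beta\ind{Z>\zeta})^2)$, and the second term is a strictly positive, continuous, coercive function of $(\alpha,\beta,\zeta)$ that vanishes only at $\theta_0$ — here the hypotheses that $f$ is strictly positive near $\zeta_0$ and that $\mathbb{P}(Z<a)\wedge\mathbb{P}(Z>b)>0$ are what prevent the minimum from being attained away from $\theta_0$ (they force $\alpha$ and $\beta$ to be pinned down, and force $\zeta$ to be pinned down since mislocating the jump costs a positive amount proportional to $(\alpha_0-\beta_0)^2$ times the mass of $Z$ between $\zeta$ and $\zeta_0$). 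Finally, the usual argmax argument closes the proof: since $\theta_n^*$ is a (smallest) maximizer of $M_n^*$ and $\theta_n$ maximizes $M_n$ with $M_n(\theta_n) \to M(\theta_0)$ by (I)--(IV), one gets $M(\theta_n^*) \ge M_n^*(\theta_n^*) - o_{\mathbf P}(1) \ge M_n^*(\theta_n) - o_{\mathbf P}(1) \to M(\theta_0)$ in probability, and well-separation then forces $\theta_n^* \cip \theta_0$.

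The main obstacle, and the step I would spend the most care on, is the uniform convergence $\|M_n^* - M_n\|_K \cip 0$ in the triangular array — specifically handling the unboundedness of $y$ and the fact that the underlying sampling distribution $\mathbb{Q}_n$ changes with $n$. One cannot directly invoke a fixed Glivenko--Cantelli theorem; instead I would truncate at a level $T_n \to \infty$ slowly enough that $\mathbb{Q}_n(Y_{n,1}^2 \ind{|Y_{n,1}| > T_n}) \to 0$ (uniform integrability of $Y_{n,1}^2$ under $\mathbb{Q}_n$, which is where conditions (III) and (IV) combined with $\mathbb{P}(|\epsilon|^3)<\infty$ really get used), then apply a bounded-class, triangular-array empirical-process maximal inequality — e.g. via symmetrization and a VC bound for the truncated classes $y\mapsto (y\wedge T_n)\vee(-T_n)$ times indicators of intervals — whose covering numbers are bounded uniformly in $n$. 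A secondary technical point is establishing a priori that $(\alpha_n^*,\beta_n^*)$ are $O_{\mathbf P}(1)$ so that the compact restriction on $\Theta$ is harmless; this is done by noting that $M_n^*(\theta_n^*) \ge M_n^*(\theta_0) = O_{\mathbf P}(1)$ while $M_n^*(\theta) \to -\infty$ as $|\alpha| + |\beta| \to \infty$ uniformly, using $\|\mathbb{P}_n^*\|_{\mathcal F} \to$ finite limits guaranteed by (I) and the Glivenko--Cantelli property of $\mathcal F$.
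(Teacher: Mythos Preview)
Your overall architecture --- first show $(\alpha_n^*,\beta_n^*)=O_{\mathbf P}(1)$, then establish uniform convergence of the (recentered) criterion over compacts, then invoke the argmax continuity result (Corollary~3.2.3(ii) of \cite{vw}) --- is exactly the paper's strategy. The difference is in how the uniform convergence is obtained, and there your truncation route has a real gap.

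The problematic step is your claim that uniform integrability of $Y_{n,1}^2$ under $\mathbb{Q}_n$ follows from (III), (IV) and $\mathbb{P}(|\epsilon|^3)<\infty$. It does not: the class $\mathcal H$ is indexed by intervals in $z$, so (III) only tells you $\mathbb{Q}_n(Y^2\ind{Z\in I})\to\mathbb{P}(Y^2\ind{Z\in I})$; it says nothing about the tail of $Y$ under $\mathbb{Q}_n$. The third-moment assumption is on $\mathbb{P}$, not on $\mathbb{Q}_n$. One can build $\mathbb{Q}_n$ satisfying (I)--(IV) in which $\{Y^2\}$ is \emph{not} uniformly integrable (keep the marginal of $Z$ and the conditional first two moments of $Y$ given $Z$ equal to those under $\mathbb{P}$, but put a mass $\sim 1/n$ at $|Y|\sim\sqrt n$). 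With such $\mathbb{Q}_n$, your truncation argument for $\|\mathbb{P}_n^*-\mathbb{Q}_n\|_{\mathcal H}$ breaks down.

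The paper sidesteps this entirely, and the fix is worth internalizing. Expanding $m_\theta$ around $\theta_n$ as in (\ref{ec3}) gives
\[
m_\theta(X)+\tilde\epsilon_n^2=\text{(finite sum of terms of the form }c(\theta)\,\tilde\epsilon_n\ind{\cdots}\text{ or }c(\theta)\,\ind{\cdots}\text{)},
\]
so the $y^2$ part is absorbed into the $\theta$-independent shift $\mathbb{P}_n^*(\tilde\epsilon_n^2)$. Since the argmax is unchanged by this shift, it suffices to show $\|M_n^*+\mathbb{P}_n^*(\tilde\epsilon_n^2)-M-\sigma^2\|_K\cip 0$. The remaining empirical-process terms have envelopes $|\tilde\epsilon_n|$ or $1$, whose second moments under $\mathbb{Q}_n$ are bounded by (I)--(III); a single application of the Kim--Pollard maximal inequality (Lemma~\ref{lKP}, which is a per-$n$ bound depending only on the VC index and the envelope's $L^2(\mathbb{Q}_n)$ norm) gives $\e{\|(\mathbb{P}_n^*-\mathbb{Q}_n)(\tilde\epsilon_n\ind{\cdots})\|}\lesssim m_n^{-1/2}$ directly --- no truncation, no triangular-array Glivenko--Cantelli machinery needed. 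Equivalently, in your expansion the coefficient of $y^2$ is the constant $-1$, so you never actually need $\|\mathbb{P}_n^*-\mathbb{Q}_n\|_{\mathcal H}$, only the single random variable $(\mathbb{P}_n^*-\mathbb{Q}_n)(Y^2)$, and even that is avoidable by recentering.
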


To guarantee the right rate of convergence, we need to assume stronger regularity conditions. In addition to those of Proposition \ref{pg1}, we require the following:

\begin{enumerate}
\item[(V)] There are $\eta,\rho, L > 0$ with the property that for any $\delta\in(0,\eta)$, there is $N >0$ such that the following inequalities hold for any $n\geq N$:
\begin{eqnarray}
 \inf_{\frac{1}{\sqrt{m_n}}\leq|\zeta-\zeta_n|<\delta^2}\left\{\frac{1}{|\zeta-\zeta_n|}\mathbb{Q}_n (\ind{\zeta\land\zeta_n < Z \leq \zeta\lor\zeta_n})\right\} > \rho, \label{rccs1}\\
 \sup_{|\zeta-\zeta_n|<\delta^2}\left\{|\mathbb{Q}_n(\tilde{\epsilon}_n\ind{\zeta\land\zeta_n < Z \leq \zeta\lor\zeta_n})|\right\} \le \frac{L \delta}{\sqrt{m_n}}, \label{rccs2}\\
 \sup_{|\zeta-\zeta_n|<\delta^2}\left\{|\mathbb{Q}_n(\tilde{\epsilon}_n\ind{Z \leq \zeta\land\zeta_n})|+|\mathbb{Q}_n(\tilde{\epsilon}_n\ind{Z > \zeta\lor\zeta_n})|\right\} \le \frac{L}{\sqrt{m_n}}. \label{rccs3}
\end{eqnarray}
\end{enumerate}
We would like to point out some facts about (V). It must be noted that (\ref{rccs2}) and (\ref{rccs3}) automatically hold in the case where $Z$ and $\tilde\epsilon_n$ are independent under $\mathbb{Q}_n$ with $\mathbb{Q}_n(\tilde\epsilon_n)=0$. Also, (\ref{rccs1}) is easily seen to hold when the $Z$'s, under $\mathbb{Q}_n$, have densities $f_n$ converging uniformly to $f$ in some neighborhood of $\zeta_0$, where $f$ is the density of $Z$ under $\mathbb{P}$; by a consequence of the classical mean value theorem of calculus.

With the aid of these conditions, Proposition \ref{pg1} and Theorem 3.4.1, page 322, of \cite{vw} we can now state and prove (see Section \ref{prueba2}) the rate of convergence result.

\begin{prop}\label{pg2}
Assume that (I)-(V) hold. Then $\sqrt{m_n}(\alpha_n^* - \alpha_n) = \function{O_{\mathbf{P}}}{1}$, $\sqrt{m_n}(\beta_n^* - \beta_n) = \function{O_{\mathbf{P}}}{1}$ and $m_n (\zeta_n^* - \zeta_n) = \function{O_{\mathbf{P}}}{1}$.
\end{prop}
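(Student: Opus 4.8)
The plan is to follow the standard rate-of-convergence machinery of Theorem 3.4.1 in \cite{vw}, applied to the criterion functions $M_n^*(\theta) = \mathbb{P}_n^*(m_\theta)$ centered at the (deterministic) maximizer $\theta_n$ of $M_n(\theta) = \mathbb{Q}_n(m_\theta)$, using Proposition \ref{pg1} to guarantee that $\theta_n^*$ is already consistent for $\theta_0$, hence eventually lies in a small neighborhood of $\theta_n$ where the curvature estimates (V) are in force. Because the parametrization is anisotropic --- curvature of order $(\alpha-\alpha_n)^2+(\beta-\beta_n)^2$ in the Euclidean directions but only of order $|\zeta-\zeta_n|$ in the change-point direction --- one cannot directly apply the theorem with a single rate; instead I would run the argument with the rescaled parameter, or equivalently verify the two ingredients by hand: (a) a lower bound $M_n(\theta_n) - M_n(\theta) \gtrsim d(\theta,\theta_n)^2$ where $d(\theta,\theta_n)^2 := (\alpha-\alpha_n)^2+(\beta-\beta_n)^2+|\zeta-\zeta_n|$, valid for $\theta$ in a fixed neighborhood of $\theta_n$ and $n$ large; and (b) a modulus-of-continuity bound on the centered empirical process $\mathbb{G}_n^* := \sqrt{m_n}(\mathbb{P}_n^* - \mathbb{Q}_n)$, namely $\mathbf{E}\sup_{d(\theta,\theta_n)<\delta}|\mathbb{G}_n^*(m_\theta - m_{\theta_n})| \lesssim \phi_n(\delta)$ for a suitable $\phi_n$ with $\phi_n(\delta)/\delta^{\gamma}$ decreasing for some $\gamma<2$. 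Balancing $\sqrt{m_n}\,\delta^2 \asymp \phi_n(\delta)$ then yields $d(\theta_n^*,\theta_n) = O_{\mathbf{P}}(m_n^{-1/2})$, which unpacks exactly to the three claimed rates.

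For step (a), I would expand $m_\theta(x) - m_{\theta_n}(x)$ explicitly using \eqref{eq:m_theta} and \eqref{eq:epsilon-tilde}. Writing $x=(z,y)$ and $\tilde\epsilon_n = \tilde\epsilon_n(z,y)$, the difference $m_\theta - m_{\theta_n}$ decomposes into a quadratic form in $(\alpha-\alpha_n,\beta-\beta_n)$ plus cross terms, plus terms supported on the set $\{\zeta\wedge\zeta_n < z \le \zeta\vee\zeta_n\}$ that involve the level gap; integrating against $\mathbb{Q}_n$ and using (\ref{rccs1}) to bound the $\zeta$-direction curvature from below by $\rho|\zeta-\zeta_n|$, using (\ref{rccs2})--(\ref{rccs3}) to control the linear-in-$\tilde\epsilon_n$ cross terms by $L\delta/\sqrt{m_n}$ and $L/\sqrt{m_n}$ respectively, and using assumptions (I)--(IV) together with $\theta_n\to\theta_0$ and the separation $\alpha_0\ne\beta_0$ to bound the Euclidean curvature below, one obtains the desired quadratic lower bound on $d(\theta,\theta_n)^2$; the $\delta^2$ scale in (V) is precisely what makes the error terms of lower order on that scale. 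This is essentially the triangular-array analogue of the computation on pp.~271--277 of \cite{koss}, and I would cross-reference the consistency proof in Section \ref{prueba1} for the pieces that are identical.

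For step (b), I would bound the empirical process increments using maximal inequalities over the three base classes $\mathcal{F},\mathcal{G},\mathcal{H}$ introduced above, all of which are (uniformly in $n$) VC-type / of polynomial uniform covering number --- $\mathcal{F}$ is the class of indicators of intervals, $\mathcal{G}$ and $\mathcal{H}$ are bounded multiples of such indicators times $y$ or $y^2$, and the third moment assumption $\mathbb{P}(|\epsilon|^3)<\infty$ (plus the uniform second-moment control $\mathbb{Q}_n(Y_{n,1}^2)<\infty$) supplies the needed envelopes. The key point is that the relevant subclass $\{m_\theta - m_{\theta_n} : d(\theta,\theta_n)<\delta\}$ has $\mathbb{Q}_n$-$L_2$-diameter of order $\delta$ (again the curvature-direction contribution is only $|\zeta-\zeta_n|\le\delta^2\ll\delta$), so a Dudley-type bound gives $\phi_n(\delta)\asymp\delta$ up to logarithmic factors; this is the regime in which Theorem 3.4.1 of \cite{vw} delivers the $m_n$-rate. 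I expect the main obstacle to be precisely the bookkeeping around the anisotropy: one must be careful to split the increment into its ``$\alpha,\beta$-part'' and its ``$\zeta$-part'', apply the right scaling to each, and check that the localization radius $\delta^2$ appearing in (V) is compatible with the $\delta$-radius used in the peeling/modulus argument --- i.e., that the $O_{\mathbf{P}}(1)$ localization from Proposition \ref{pg1} can be bootstrapped through the usual $2^j$-shell decomposition down to the $m_n^{-1/2}$ scale without the error terms in (a) overwhelming the curvature. Once that compatibility is verified, the conclusion follows by the argmax/rate theorem essentially verbatim.
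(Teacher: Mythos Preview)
Your proposal is correct and follows essentially the same route as the paper's own proof: the paper too invokes Theorem 3.4.1 of \cite{vw} with the anisotropic metric $d(\theta,\vartheta)=|(\theta_1,\theta_2)-(\vartheta_1,\vartheta_2)|+\sqrt{|\theta_3-\vartheta_3|}$, derives the curvature lower bound $\mathcal{M}_n(\theta)-\mathcal{M}_n(\theta_n)\le -c\delta^2$ from the expansion of $m_\theta-m_{\theta_n}$ together with (\ref{rccs1})--(\ref{rccs3}), and obtains the modulus bound $\phi_n(\delta)\propto\delta$ via a Kim--Pollard maximal inequality over a VC class (their Lemma \ref{lKP}). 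Two minor remarks: the VC structure gives $\phi_n(\delta)\asymp\delta$ without logarithmic factors, and only second-moment control (via Lemma \ref{l12}) is used here --- the third-moment assumption (VIII) is not needed for this proposition.
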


Propositions \ref{pg1} and \ref{pg2} provide sufficient conditions on the measures $\mathbb{Q}_n$, the distribution of each element in the $n$th row of the triangular array, to achieve the same rate of convergence as the original least squares estimators. We would like to highlight that we are not assuming that each $\mathbb{Q}_n$ satisfy the model (\ref{ec1}) with $(\alpha_n,\beta_n,\zeta_n)$; all we need is that $\mathbb{Q}_n$ and $\theta_n$ approach $\mathbb{P}$ and $\theta_0$ respectively, in a suitable manner.

\subsection{Weak Convergence and asymptotic distribution}
We start with some additional set of assumptions:
\begin{enumerate}
\item[(VI)] For any function $\psi:\mathbb{R}\rightarrow \mathbb{C}$ which is either of the form $\psi(x) = e^{i \xi x}$ for some $\xi\in\mathbb{R}$ or defined by $\psi(x)=|x|^p$ for $p=1,2$, we have:
\[m_n\mathbb{Q}_n \left( \psi(\tilde{\epsilon}_n)\ind{\zeta_n - \frac{\delta}{m_n} < Z \leq \zeta_n + \frac{\eta}{m_n}} \right) \rightarrow f(\zeta_0)(\delta+\eta)\mathbb{P}\left(\psi(\epsilon)\right) \ \ \ \forall\  \eta,\delta > 0.\]

\item[(VII)] $\sqrt{m_n} \mathbb{Q}_n(\tilde{\epsilon}_n \ind {Z\leq\zeta_n})\rightarrow 0$ and $\sqrt{m_n} \mathbb{Q}_n (\tilde{\epsilon}_n \ind {Z>\zeta_n})\rightarrow 0$.

\item[(VIII)] $\lsup_{n \rightarrow \infty} \mathbb{Q}_n(|\tilde {\epsilon}_n|^3)<\infty.$
\end{enumerate}

Observe that condition (VI) implies, for all $\eta,\delta>0$, and $p=1,2$,
\begin{eqnarray}\label{ec49}
\sqrt{m_n} \mathbb{Q}_n \left(|\tilde{\epsilon}_n|^p\ind{\zeta_n - \frac{\delta}{m_n}<Z\leq \zeta_n + \frac{\eta}{m_n}}\right) &\rightarrow& 0, \label{ec48} \\
\sqrt{m_n} \mathbb{Q}_n \left(\ind{\zeta_n - \frac{\delta}{m_n}<Z\leq \zeta_n + \frac{\eta}{m_n}}\right) &\rightarrow& 0. \label{ec49}
\end{eqnarray}
For $h = (h_1,h_2,h_3) \in \mathbb{R}^3$, let $\vartheta_{n,h} := \theta_n + \left(\frac{h_1}{\sqrt{m_n}},\frac{h_2}{\sqrt{m_n}},\frac{h_3}{m_n}\right)$ and
\begin{eqnarray}
\hat{E}_n(h) := m_n\mathbb{P}_n^* \left[m_{\vartheta_{n,h}} - m_{\theta_n} \right] \nonumber.
\end{eqnarray}
We will argue that $$h_n^* := \sargmin_{h \in \mathbb{R}^3} \hat E_n(h) = \left(\sqrt{m_n}(\alpha_n^* - \alpha_n), \sqrt{m_n}(\beta_n^* - \beta_n), m_n(\zeta_n^* - \zeta_n)\right)$$ converges in distribution to the smallest argmax of some process involving two independent normal random variables and a two-sided, compound Poisson process (independent of the normal variables).

We derive the asymptotic distribution of the process $\hat E_n$ and then apply continuous mapping techniques to obtain the limiting distribution of $h_n^*$. We consider these stochastic processes as random elements in the space $\mathcal{D}_K$, for a given compact rectangle $K \subset \mathbb{R}^3$, of all functions $W:K \rightarrow \mathbb{R}$ having ``quadrant limits'' (as defined in \cite{neu}), being continuous from above (again, in the terminology of \cite{neu}) and such that $\function{W}{\cdot,\cdot,\zeta}$ is continuous for all $\zeta$ and $\function{W}{\alpha,\beta,\cdot}$ is c\`adl\`ag (right continuous having left limits) for all $(\alpha,\beta)$. Write $\mathcal{D}=\mathcal{D}_{\mathbb{R}^3}$. For any compact interval $I\subset\mathbb{R}$ let \[\Lambda_I=\left\{\lambda:I\rightarrow I\left| \lambda \textrm{ is strictly increasing, surjective and continuous}\right.\right\}\]
and write \[ \left\|\lambda\right\| := \sup_{s\neq t\in I}\left|\log \frac{\lambda(s) - \lambda(t)}{s-t}\right|. \]

Then, for any set of the form $K=A\times I$ with $A\subset\mathbb{R}^2$ define the Skorohod topology as the topology given by the metric
\[ \function{d_K}{\Psi,\Gamma} := \inf_{\lambda\in\Lambda_I}\left\{\sup_{(\alpha,\beta,\zeta)\in K}\left\{\left|\function{\Psi}{\alpha,\beta,\zeta}-\function{\Gamma}{\alpha,\beta,\lambda(\zeta)}\right|\right\} + \left\|\lambda\right\| \right\}\ \ \] for $\Gamma,\Psi\in\mathcal{D}_K$. Endowed with this metric, $\mathcal{D}_K$ becomes a Polish space (it is a closed subspace of the Polish spaces $D_k$ defined in \cite{neu}) and thus the existence of conditional probability distributions for its random elements is ensured (see \cite{du}, Theorem 10.2.2 page 345). Also, let $\tilde{\mathcal{D}_I}$, $I \subset \mathbb{R}$, denote the space of real valued c\`adl\`ag functions on $I$. We refer the reader to Section \ref{D_K} for some results about the Skorohod space.

We express the process $\hat E_n$ as the sum of the four terms $\hat{A}_n$, $\hat{B}_n$, $\hat{C}_n$ and $\hat{D}_n$ where
\begin{eqnarray}
    \hat{A}_n (h_1,h_3) & := & 2h_1 \sqrt{m_n} \mathbb{P}_n^*\left(\tilde{\epsilon}_n\ind{Z\leq \zeta_n \land \left(\zeta_n+\frac{h_3}{m_n}\right)}\right) - h_1^2\mathbb{P}_n^*\left(\ind{Z\leq \zeta_n \land \left(\zeta_n+\frac{h_3}{m_n}\right)}\right), \nonumber \\
    \hat{B}_n (h_2,h_3) & := & 2h_2 \sqrt{m_n} \mathbb{P}_n^*\left(\tilde{\epsilon}_n\ind{Z > \zeta_n \lor \left(\zeta_n+\frac{h_3}{m_n}\right)}\right) - h_2^2\mathbb{P}_n^*\left(\ind{Z > \zeta_n \lor \left(\zeta_n+\frac{h_3}{m_n}\right)}\right), \nonumber \\
    \hat{C}_n (h_2,h_3) & := & -2m_n\left(\alpha_n - \beta_n + \frac{h_2}{\sqrt{m_n}}\right)\mathbb{P}_n^*\left(\tilde{\epsilon}_n \ind {\zeta_n+\frac{h_3}{m_n}<Z\leq \zeta_n }\right) \nonumber \\
    & & \qquad \qquad - \ m_n\left(\alpha_n - \beta_n + \frac{h_2}{\sqrt{m_n}}\right)^2\mathbb{P}_n^*\left( \ind{\zeta_n+\frac{h_3}{m_n}<Z\leq \zeta_n }\right), \nonumber \\
    \hat{D}_n (h_1,h_3) & := & -2m_n\left(\beta_n - \alpha_n + \frac{h_1}{\sqrt{m_n}}\right)\mathbb{P}_n^*\left(\tilde{\epsilon}_n \ind{\zeta_n<Z\leq \zeta_n+\frac{h_3}{m_n}}\right) \nonumber \\
    & & \qquad \qquad - \ m_n\left(\beta_n - \alpha_n + \frac{h_1}{\sqrt{m_n}}\right)^2\mathbb{P}_n^*\left(\ind{\zeta_n<Z\leq \zeta_n+\frac{h_3}{m_n} }\right). \nonumber
\end{eqnarray}
We define another process $E_n^* := A_n^* + B_n^* + C_n^* + D_n^*$ where
\begin{eqnarray}
    A_n^* (h_1) & := & 2h_1 \sqrt{m_n} \mathbb{P}_n^*\left(\tilde{\epsilon}_n\ind{Z\leq \zeta_n }\right) - h_1^2\mathbb{P}_n^*\left(\ind{Z\leq \zeta_n}\right), \nonumber \\
    B_n^* (h_2) & := & 2h_2 \sqrt{m_n} \mathbb{P}_n^*\left(\tilde{\epsilon}_n\ind{Z > \zeta_n }\right) - h_2^2\mathbb{P}_n^*\left(\ind{Z> \zeta_n}\right), \nonumber \\
    C_n^* (h_3) & := & -2m_n(\alpha_n - \beta_n)\mathbb{P}_n^*\left(\tilde {\epsilon}_n\ind{\zeta_n+\frac{h_3}{m_n}<Z\leq \zeta_n }\right) \nonumber \\
    & & \qquad \qquad - \; m_n(\alpha_n - \beta_n)^2\mathbb{P}_n^*\left( \ind{\zeta_n+\frac{h_3}{m_n}<Z\leq \zeta_n }\right), \nonumber \\
    D_n^* (h_3) & := & -2m_n(\beta_n - \alpha_n)\mathbb{P}_n^*\left(\tilde {\epsilon}_n\ind{\zeta_n
    <Z\leq \zeta_n+\frac{h_3}{m_n} }\right) \nonumber \\
    & & \qquad \qquad - \; m_n(\beta_n - \alpha_n)^2\mathbb{P}_n^*\left(\ind{\zeta_n<Z\leq \zeta_n+\frac{h_3}{m_n} } \right). \nonumber
\end{eqnarray}
We work with $E_n^*$ instead of $\hat{E}_n$ as their difference approaches uniformly to 0 in probability, as shown in the next lemma (proved in Section \ref{prueba3}), and the asymptotic distribution of $E_n^*$ is easier to derive.

\begin{lemma}\label{l14}
Let $K\subset\mathbb{R}^3$ be a compact rectangle. If conditions (I)-(IV) and (\ref{ec48}) and (\ref{ec49}) hold, then \[\left\|E_n^* - \hat{E}_n\right\|_K\cip 0.\] Therefore, $E_n^* - \hat{E}_n \cip 0$ as random elements of $\mathcal{D}_K$. In particular, this result is true under conditions (I)-(IV) and (VI).
\end{lemma}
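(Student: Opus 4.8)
The plan is to control $\|E_n^*-\hat E_n\|_K$ by a bounded multiple of the $\mathbb{P}_n^*$--content of a single fixed shrinking interval around $\zeta_n$, plus the $\mathbb{P}_n^*$--average of $|\tilde\epsilon_n|$ over that same interval, and then to kill both quantities with a crude first--moment (Markov) bound using (\ref{ec48}) and (\ref{ec49}).

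Concretely, one compares $E_n^*$ and $\hat E_n$ term by term. In the pairs $(C_n^*,\hat C_n)$ and $(D_n^*,\hat D_n)$ the indicator sets coincide and only the polynomial coefficients in $h$ differ; the leading $(\alpha_n-\beta_n)$ (resp.\ $(\beta_n-\alpha_n)$) contributions cancel, leaving for instance
\[ C_n^*-\hat C_n \;=\; 2h_2\sqrt{m_n}\,\mathbb{P}_n^*\!\left(\tilde\epsilon_n\ind{\zeta_n+\frac{h_3}{m_n}<Z\le\zeta_n}\right)+\left(2(\alpha_n-\beta_n)h_2\sqrt{m_n}+h_2^2\right)\mathbb{P}_n^*\!\left(\ind{\zeta_n+\frac{h_3}{m_n}<Z\le\zeta_n}\right). \]
In the pairs $(A_n^*,\hat A_n)$ and $(B_n^*,\hat B_n)$ the coefficients coincide while the indicator sets differ by $\ind{\zeta_n\wedge(\zeta_n+h_3/m_n)<Z\le\zeta_n}$, respectively $\ind{\zeta_n<Z\le\zeta_n\vee(\zeta_n+h_3/m_n)}$, so $A_n^*-\hat A_n$ and $B_n^*-\hat B_n$ have the same shape, with $h_1$ in place of $h_2$ and with no $\sqrt{m_n}$ in the second summand.

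Next, put $c:=\sup_{h\in K}\big(|h_1|\vee|h_2|\vee|h_3|\big)$ and $B_n:=(\zeta_n-\tfrac{c}{m_n},\,\zeta_n+\tfrac{c}{m_n}]$. Every indicator occurring above is the indicator of a subinterval of the fixed (random) interval $B_n$, so, using $|\mathbb{P}_n^*(\tilde\epsilon_n\ind A)|\le\mathbb{P}_n^*(|\tilde\epsilon_n|\ind A)\le\mathbb{P}_n^*(|\tilde\epsilon_n|\ind{B_n})$ and $\mathbb{P}_n^*(\ind A)\le\mathbb{P}_n^*(\ind{B_n})$ whenever $A\subset B_n$, together with $|\alpha_n|\le M$ and $\sup_n|\beta_n|<\infty$ (the latter from (IV)), one obtains a deterministic constant $C$, depending only on $K$, $M$ and $\sup_n|\beta_n|$, with
\[ \left\|E_n^*-\hat E_n\right\|_K\;\le\;C\left(\sqrt{m_n}\,\mathbb{P}_n^*\!\left(|\tilde\epsilon_n|\ind{B_n}\right)+\sqrt{m_n}\,\mathbb{P}_n^*\!\left(\ind{B_n}\right)+\mathbb{P}_n^*\!\left(\ind{B_n}\right)\right). \]

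To finish, note that since $X_{n,1},\dots,X_{n,m_n}$ is a sample from $\mathbb{Q}_n$ we have $\mathbf{E}\big[\mathbb{P}_n^*(g)\big]=\mathbb{Q}_n(g)$ for any $g\ge0$; hence $\mathbf{E}\big[\sqrt{m_n}\mathbb{P}_n^*(|\tilde\epsilon_n|\ind{B_n})\big]=\sqrt{m_n}\mathbb{Q}_n(|\tilde\epsilon_n|\ind{B_n})\to0$ by (\ref{ec48}) (with $\delta=\eta=c$, $p=1$) and $\mathbf{E}\big[\sqrt{m_n}\mathbb{P}_n^*(\ind{B_n})\big]=\sqrt{m_n}\mathbb{Q}_n(\ind{B_n})\to0$ by (\ref{ec49}). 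As the three terms on the right above are nonnegative, Markov's inequality yields each $\cip 0$, hence $\|E_n^*-\hat E_n\|_K\cip 0$. Taking $\lambda$ to be the identity in the definition of $d_K$ gives $d_K(E_n^*,\hat E_n)\le\|E_n^*-\hat E_n\|_K$, so $E_n^*-\hat E_n\cip 0$ in $\mathcal{D}_K$; and the last assertion follows because (VI) implies (\ref{ec48}) and (\ref{ec49}). The only genuine point to watch is obtaining the estimate \emph{uniformly} over the cube $K$ rather than pointwise in $h$ — this is exactly why one dominates all the thin indicators by the single interval $B_n$ and then uses a first--moment bound rather than a variance/CLT estimate; everything else is bookkeeping.
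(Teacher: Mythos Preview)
Your proof is correct and, in fact, more elementary than the paper's. The paper proceeds by decomposing $\sqrt{m_n}\,\mathbb{P}_n^*(\tilde\epsilon_n\ind{\cdot})$ into an empirical--process piece $\sqrt{m_n}\,(\mathbb{P}_n^*-\mathbb{Q}_n)(\tilde\epsilon_n\ind{\cdot})$ and a deterministic piece $\sqrt{m_n}\,\mathbb{Q}_n(\tilde\epsilon_n\ind{\cdot})$; to control the supremum of the first over the varying indicators it invokes the Kim--Pollard maximal inequality (Lemma \ref{lKP}) together with Lemma \ref{l12}, and then uses (\ref{ec48})--(\ref{ec49}) for the second. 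Your argument bypasses the VC/maximal-inequality machinery entirely: since every indicator arising in $E_n^*-\hat E_n$ is that of a subinterval of the single interval $B_n$, the monotone domination $\mathbb{P}_n^*(|\tilde\epsilon_n|\ind{A})\le\mathbb{P}_n^*(|\tilde\epsilon_n|\ind{B_n})$ already delivers the required uniformity in $h$, and a raw first-moment bound (using $\mathbf{E}\,\mathbb{P}_n^*=\mathbb{Q}_n$) plus (\ref{ec48})--(\ref{ec49}) finishes. Both routes yield $\mathbb{L}_1$-convergence; yours is shorter and self-contained, while the paper's approach reuses empirical-process tools that are needed elsewhere (e.g., in the rate proof). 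Your handling of the passage to $\mathcal{D}_K$ via the identity time change is also fine.
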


As a first step to finding the asymptotic distribution of $(E_n^*)_{n=1}^\infty$, we show that the random sequence is tight in the Skorohod space $\mathcal{D}_K$ for any compact rectangle $K \subset \mathbb{R}^3$. The proof of the next result is given in Section \ref{prueba4}.
\begin{lemma}\label{l15}
Let $I\subset\mathbb{R}$ be a compact interval and assume that conditions (I)-(VIII) hold. Then, the sequence of $\mathbb{R}^6$-valued processes
\begin{equation}\label{ec44}
\Xi_n (t) := \left(\begin{array}{c}
\sqrt{m_n}\mathbb{P}_n^*(\tilde{\epsilon}_n\ind{Z\leq \zeta_n})\\
\sqrt{m_n}\mathbb{P}_n^*(\tilde{\epsilon}_n\ind{Z> \zeta_n})\\
m_n\mathbb{P}_n^*(\ind{\zeta_n + \frac{t}{m_n}<Z\leq \zeta_n})\\
m_n\mathbb{P}_n^*(\tilde{\epsilon}_n\ind{\zeta_n + \frac{t}{m_n}<Z\leq \zeta_n})\\
m_n\mathbb{P}_n^*(\ind{\zeta_n<Z\leq\zeta_n + \frac{t}{m_n}})\\
m_n\mathbb{P}_n^*(\tilde{\epsilon}_n\ind{\zeta_n<Z\leq\zeta_n + \frac{t}{m_n}})
\end{array}\right)
\end{equation}
is uniformly tight in $\mathbb{R}^2 \times \tilde{\mathcal{D}_I^4}$. Also, if $K\subset\mathbb{R}^3$ is a compact rectangle, the sequence $(E_n^*)_{n=1}^\infty$ is uniformly tight in $\mathcal{D}_K$.
\end{lemma}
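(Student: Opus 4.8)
The plan is to reduce the tightness of $(E_n^*)$ in $\mathcal{D}_K$ to the tightness of the six-dimensional process $\Xi_n$, and then establish the latter by decomposing each coordinate into its conditional mean under $\mathbb{Q}_n$ plus a centered empirical fluctuation. First I would observe that $E_n^*$ is, for each $n$, a fixed continuous function of the coordinates of $\Xi_n$: indeed $A_n^*$ and $B_n^*$ are affine in the first, third (through $\mathbb{P}_n^*(\ind{Z\le\zeta_n})=1-\mathbb{P}_n^*(\ind{Z>\zeta_n})$, obtained as a limit $t\to\infty$, but more cleanly one simply carries $\mathbb{P}_n^*(\ind{Z\le\zeta_n})$ as an auxiliary scalar) coordinates, and $C_n^*,D_n^*$ are affine combinations of the third through sixth coordinates with coefficients $m_n(\alpha_n-\beta_n)$ and $m_n(\alpha_n-\beta_n)^2$ — which are bounded sequences since $\alpha_n\to\alpha_0$, $\beta_n\to\beta_0$ by (IV), wait, these blow up. The correct statement is that $m_n(\alpha_n-\beta_n)^2 \mathbb{P}_n^*(\ind{\cdots})$ stays $O_{\mathbf P}(1)$ because $\mathbb{P}_n^*(\ind{\zeta_n<Z\le\zeta_n+t/m_n})=O_{\mathbf P}(1/m_n)$ on compacta; so one packages the product $m_n\mathbb{P}_n^*(\ind{\zeta_n<Z\le\zeta_n+t/m_n})$ as a single coordinate (the third and fifth coordinates of $\Xi_n$ do exactly this) and the map $(\Xi_n$-coordinates$)\mapsto E_n^*$ is then genuinely continuous from $\mathbb{R}^2\times\tilde{\mathcal D}_I^4$ to $\mathcal D_K$, uniformly in $n$ after absorbing the bounded quantity $(\alpha_n-\beta_n)$ and the $O(1/\sqrt{m_n})$ corrections. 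Hence by the continuous mapping theorem for tightness, the second assertion follows from the first.

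For the tightness of $\Xi_n$, I would treat the two scalar coordinates and the four process-valued coordinates separately. The scalar coordinates $\sqrt{m_n}\mathbb{P}_n^*(\tilde\epsilon_n\ind{Z\le\zeta_n})$ and its complement: write $\mathbb{P}_n^* = (\mathbb{P}_n^*-\mathbb{Q}_n)+\mathbb{Q}_n$; the deterministic part $\sqrt{m_n}\,\mathbb{Q}_n(\tilde\epsilon_n\ind{Z\le\zeta_n})\to 0$ by (VII), and the centered part $\sqrt{m_n}(\mathbb{P}_n^*-\mathbb{Q}_n)(\tilde\epsilon_n\ind{Z\le\zeta_n})$ has variance bounded by $\mathbb{Q}_n(\tilde\epsilon_n^2\ind{Z\le\zeta_n})\le \mathbb{Q}_n(\tilde\epsilon_n^2)$, which is bounded by (VIII) (via $|x|^2\le 1+|x|^3$), so this coordinate is tight in $\mathbb{R}$. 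For the four $\tilde{\mathcal D}_I$-valued coordinates, the strategy is the standard one for cadlag processes built from sums of independent, rare, triangular-array summands: show convergence of finite-dimensional distributions is not needed for tightness, but one needs a fluctuation/modulus-of-continuity bound. Again split $\mathbb{P}_n^* = (\mathbb{P}_n^*-\mathbb{Q}_n)+\mathbb{Q}_n$. The deterministic parts, e.g. $m_n\mathbb{Q}_n(\ind{\zeta_n<Z\le\zeta_n+t/m_n})$, converge by (VI) (with $\psi\equiv 1$) to the deterministic cadlag function $t\mapsto f(\zeta_0)t$ (for $t\ge0$), hence are tight; similarly $m_n\mathbb{Q}_n(\tilde\epsilon_n\ind{\zeta_n<Z\le\zeta_n+t/m_n})\to f(\zeta_0)t\,\mathbb{P}(\epsilon)=0$. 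For the centered parts, I would use a maximal inequality for the increments: for $s<t$ in $I$, the increment $m_n(\mathbb{P}_n^*-\mathbb{Q}_n)(g_{n,s,t})$ where $g_{n,s,t}=\ind{\zeta_n+s/m_n<Z\le\zeta_n+t/m_n}$ (or with a $\tilde\epsilon_n$ factor) is a normalized sum of $m_n$ i.i.d. mean-zero terms each bounded (or with controlled third moment via (VIII)) and supported on an event of $\mathbb{Q}_n$-probability $O((t-s)/m_n)$; a Bernstein/Doob-type estimate then gives $\mathbf{E}|m_n(\mathbb{P}_n^*-\mathbb{Q}_n)(g_{n,s,t})|^2 \lesssim |t-s|$ plus lower-order terms, together with the analogue on the $\ind{Z\le\zeta_n}$ and $\ind{Z>\zeta_n}$ pieces being handled by the scalar argument. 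This $L^2$ increment bound, combined with the fact that the jumps of the processes are of size $O_{\mathbf P}(1/m_n)\to 0$ (at most one data point falls in a window of width $1/m_n$ with high probability by (VI)/(VII)), verifies the Aldous or the $D[0,\infty)$ tightness criterion for each coordinate.

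The main obstacle I anticipate is the modulus-of-continuity estimate for the cadlag coordinates that involve the factor $\tilde\epsilon_n$, namely $m_n\mathbb{P}_n^*(\tilde\epsilon_n\ind{\zeta_n<Z\le\zeta_n+t/m_n})$: here the summands are unbounded (they carry the heavy factor $\tilde\epsilon_n$), so a crude bounded-difference argument does not apply and one must exploit the third-moment control (VIII) together with the rarity of the indicator event to get a uniform bound on, say, $\mathbf{E}\sup_{|t-s|<\delta}|\cdots|^2$ small as $\delta\to0$ uniformly in $n$ — this is exactly where conditions (VI) and (VIII) do their real work, and where (\ref{ec48})–(\ref{ec49}) guarantee that the contribution of the $|\tilde\epsilon_n|^p$-weighted windows is negligible. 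Once the increment bound and the asymptotic negligibility of jumps are in hand, tightness in $\tilde{\mathcal D}_I$ for each coordinate follows from a standard criterion (e.g. Theorem 13.5 or the Aldous criterion in Billingsley), tightness of the vector $\Xi_n$ in $\mathbb{R}^2\times\tilde{\mathcal D}_I^4$ follows since each marginal is tight, and the claim about $E_n^*$ in $\mathcal{D}_K$ follows by the continuous-mapping reduction of the first paragraph, noting that the metric $d_K$ on $\mathcal D_K$ only reparametrizes the $\zeta$-coordinate and is continuous in the $\tilde{\mathcal D}_I$-inputs. I would also invoke Lemma \ref{l14} at the end to note that the same conclusion transfers to $\hat E_n$ if desired, though the statement as given only concerns $E_n^*$.
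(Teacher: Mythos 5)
Your reduction of the tightness of $(E_n^*)$ to that of $\Xi_n$ via continuous mapping is essentially what the paper does, and your treatment of the two scalar coordinates (split into $\mathbb{Q}_n$-mean, controlled by (VII), plus a centered part with variance controlled by (VIII)) is a valid alternative to the paper's characteristic-function computation. The problem is in the core step -- tightness of the four c\`adl\`ag coordinates -- where your argument has a genuine error.

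You assert that ``the jumps of the processes are of size $O_{\mathbf{P}}(1/m_n)\to 0$.'' This is false. The process $\Gamma_n(t)=m_n\mathbb{P}_n^*(\ind{\zeta_n<Z\le\zeta_n+t/m_n})=\sum_{j}\ind{Z_{n,j}^*\in(\zeta_n,\zeta_n+t/m_n]}$ jumps by exactly $1$ each time a sample point enters the window, and $\Psi_n(t)=m_n\mathbb{P}_n^*(\tilde\epsilon_n\ind{\zeta_n<Z\le\zeta_n+t/m_n})$ jumps by $\tilde\epsilon_{n,j}$ -- both $O(1)$. That is precisely why the limits are (compound) Poisson processes, not continuous processes. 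Consequently, the $L^2$ increment bound you derive, $\mathbf{E}|m_n(\mathbb{P}_n^*-\mathbb{Q}_n)(g_{n,s,t})|^2\lesssim|t-s|$, cannot be fed into the Kolmogorov--Chentsov/Billingsley moment criterion: that criterion requires an exponent strictly greater than $1$, i.e.\ $|t-s|^{1+\epsilon}$, and a first-power bound is exactly the borderline case at which the criterion fails (as it must -- any moment criterion implying tightness with limits supported on $C$ cannot hold here, since the limit lives genuinely in $D\setminus C$). What is actually needed is a bound on the probability that \emph{two or more} jumps land in a short interval, which is $O(\delta^2)$ by (VI); this, combined with Markov-type sup-norm bounds and boundary control, is how the paper verifies the $w''$-based tightness criterion (Theorem 15.3 of Billingsley). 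If you prefer to invoke Aldous instead, you can, but then the relevant estimate is that the increment over a short random window has $o(1)$ probability of being nonzero -- not an $L^2$ bound, and certainly not vanishing jump sizes -- and you would need to set up the point-process filtration to make the stopping-time statement meaningful; the paper's fixed-partition, two-jump argument sidesteps this cleanly and is the right tool for a Poisson-type limit.
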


It now suffices to show convergence of the finite-dimensional distributions of the processes $E_n^*$ to the finite dimensional distributions of some process $E^*\in\mathcal{D}_K$ to conclude that $E_n^*$ converges weakly to $E^*$ (and thus $\hat{E}_n$ too). With this objective in mind, we make the following definitions: Let $\mathbf{Z}_1 \sim \function{\mathbf{N}}{0,\sigma^2 \mathbb{P}(Z\leq\zeta_0)}$ and $\mathbf{Z}_2 \sim \function{\mathbf{N}}{0, \sigma^2\mathbb{P} (Z > \zeta_0)}$ be two independent normal random variables; $\nu_1$ and $\nu_2$ be, respectively, left-continuous and right-continuous, homogeneous Poisson processes with rate $f(\zeta_0)>0$; $\mathbf{u}=(u_n)_{n =1}^\infty$ and $\mathbf{v}=(v_n)_{n=1}^\infty$ two sequences of i.i.d. random variables having the same distribution as $\epsilon$ under $\mathbb{P}$. Assume, in addition, that $\mathbf{Z}_1$, $\mathbf{Z}_2$, $\nu_1$, $\nu_2$, $\mathbf{v}$ and $\mathbf{u}$ are all mutually independent. Then, define the process $\Xi = (\Xi^{(1)},\ldots,\Xi^{(6)})'$ as
\begin{equation}
\Xi (t) := \left(\begin{array}{c}
\mathbf{Z}_1\\
\mathbf{Z}_2\\
\nu_1 (-t)\ind{t < 0}\\
\sum_{0<j\leq \nu_1(-t)} v_j \ind{t < 0}\\
\nu_2 (t)\ind{t \geq 0}\\
\sum_{0<j\leq \nu_2(t)} u_j\ind{t \geq 0}
\end{array}\right) \label{ec45}
\end{equation}
and let $E^*$ be given by
\begin{eqnarray}\label{eq46}
E^*(h)  & := & 2h_1 \Xi^{(1)}(h_3) - h_1^2 \mathbb{P}(Z\leq \zeta_0) + 2h_2 \Xi^{(2)}(h_3) - h_2^2 \mathbb{P}(Z>\zeta_0)\nonumber\\
& & \qquad \qquad + \ 2(\beta_0 - \alpha_0)\Xi^{(4)}(h_3) - (\alpha_0 - \beta_0)^2\Xi^{(3)}(h_3)\nonumber\\
& & \qquad \qquad + \ 2(\alpha_0 - \beta_0)\Xi^{(6)}(h_3) - (\alpha_0 - \beta_0)^2\Xi^{(5)}(h_3)\label{ec46}
\end{eqnarray}
for $h = (h_1,h_2,h_3) \in \mathbb{R}^3$.  \newline

We will now prove weak convergence of the sequence of processes $(\hat{E}_n)_{n=1}^\infty$ to $E^*$, and then use a continuous mapping theorem for the smallest argmax functional (see Lemma \ref{l11}) to obtain weak convergence of $h_n^* := \sargmax \hat{E}_n(h)$. The application of Lemma \ref{l11} requires the weak convergence of processes $(\hat{E}_n)_{n=1}^\infty$ to $E^*$ and also the weak convergence of their associated jump processes. Let $\mathcal{S}$ be the class of all piecewise constant, c\'adl\'ag functions $\tilde{\psi}:\mathbb{R}\rightarrow\mathbb{R}$ that are continuous on the integers with $\tilde{\psi}(0)=0$; $\tilde{\psi}$ has jumps of size 1, and $\tilde{\psi}(-t)$ and $\tilde{\psi}(t)$ are nondecreasing on $(0,\infty)$. For an interval $I$ containing 0 in its interior, we write $\mathcal{S}_I = \left\{f\left|_I\right. : f\in\mathcal{S}\right\}$. Define the $\mathcal{S}$--valued (pure jump) processes $\hat{J}_n$, $J_n^*$ and $J^*$ as
\begin{eqnarray}
J_n^*(t) = \hat{J}_n (t) & := & m_n\mathbb{P}_n^* (\ind{\zeta_n + \frac{t}{m_n}< Z \leq \zeta_n})
+ m_n\mathbb{P}_n^* (\ind{\zeta_n < Z \leq \zeta_n + \frac{t}{m_n}}), \nonumber\\
J^* (t) & := & \nu_1(-t)\ind{t<0} + \nu_2(t) \ind{t\geq 0}.\nonumber
\end{eqnarray}
\begin{lemma}\label{l16}
Let $I\subset \mathbb{R}$ be a compact interval and $K=A\times B \times I\subset\mathbb{R}^3$ a compact rectangle. If (I)-(VIII) hold, we have
\begin{enumerate}[(i)]
\item $\Xi_n \rightsquigarrow \Xi$ in $\mathbb{R}^2 \times \tilde{\mathcal{D}_I^4}$,
\item $(E_n^*,J_n^*)\rightsquigarrow (E^*,J^*)$ in $\mathcal{D}_K\times \mathcal{S}_I$,
\item $(\hat{E}_n,\hat{J}_n)\rightsquigarrow (E^*,J^*)$ in $\mathcal{D}_K\times \mathcal{S}_I$,
\end{enumerate}
where $\rightsquigarrow$ denotes weak convergence.
\end{lemma}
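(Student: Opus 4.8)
## Proof plan for Lemma~\ref{l16}

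\textbf{Overall strategy.} The three parts build on one another. Part (i) is the core probabilistic content: convergence of finite-dimensional distributions of $\Xi_n$ combined with the tightness already established in Lemma~\ref{l15}. Part (ii) follows from (i) by the continuous mapping theorem, since $E_n^*$ and $J_n^*$ are both continuous (indeed essentially linear) images of $\Xi_n$ evaluated at finitely many coordinate slices, modulo the subtlety that the maps land in the Skorohod spaces $\mathcal{D}_K$ and $\mathcal{S}_I$. Part (iii) is then immediate from part (ii) and Lemma~\ref{l14}, which gives $\|E_n^* - \hat E_n\|_K \cip 0$ (and, trivially, $\hat J_n = J_n^*$ so the jump-process coordinate needs no argument at all), via Slutsky's theorem in the product space $\mathcal{D}_K \times \mathcal{S}_I$.

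\textbf{Part (i): convergence of finite-dimensional distributions.} Fix $t_1 < \cdots < t_r$ in $I$ and consider the joint law of $\bigl(\Xi_n(t_1),\ldots,\Xi_n(t_r)\bigr)$. The first two coordinates, $\sqrt{m_n}\mathbb{P}_n^*(\tilde\epsilon_n\ind{Z\le\zeta_n})$ and $\sqrt{m_n}\mathbb{P}_n^*(\tilde\epsilon_n\ind{Z>\zeta_n})$, do not depend on $t$; writing them as normalized sums of the i.i.d.\ triangular-array summands $\tilde\epsilon_{n,k}\ind{Z_{n,k}\le\zeta_n}$ (resp.\ $>$), I would apply the Lindeberg--Feller CLT. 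The centering is handled by condition (VII), which forces $\sqrt{m_n}\,\mathbb{Q}_n(\tilde\epsilon_n\ind{Z\le\zeta_n})\to 0$; the variance converges to $\sigma^2\mathbb{P}(Z\le\zeta_0)$ using (I)--(III) together with $\theta_n\to\theta_0$ (so $\tilde\epsilon_n$ behaves like $\epsilon$ asymptotically), giving the limit law $\mathbf{N}(0,\sigma^2\mathbb{P}(Z\le\zeta_0))$; and the Lindeberg condition follows from the uniform third-moment bound (VIII). For the remaining four coordinates, observe that for $t<0$ the count $m_n\mathbb{P}_n^*(\ind{\zeta_n + t/m_n < Z \le \zeta_n})$ is a sum of $m_n$ i.i.d.\ Bernoulli$(p_n)$ variables with $m_n p_n = m_n\mathbb{Q}_n(\ind{\zeta_n + t/m_n < Z\le\zeta_n}) \to f(\zeta_0)(-t)$ by the density-convergence consequence of (VI) (equation~(\ref{ec49}) used in the localized form of (VI)), so this coordinate converges to a Poisson$(f(\zeta_0)(-t))$ variable, i.e.\ $\nu_1(-t)$; the increments over disjoint $t$-intervals are built from disjoint events and hence asymptotically independent, yielding the Poisson-process structure. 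The companion coordinate $m_n\mathbb{P}_n^*(\tilde\epsilon_n\ind{\zeta_n + t/m_n < Z\le\zeta_n})$ is a sum over the same (random number of) indices of the $\tilde\epsilon$-marks; conditionally on the count it is a sum of that many i.i.d.\ copies of (asymptotically) $\epsilon$, which via the characteristic-function form of (VI) (the $\psi(x)=e^{i\xi x}$ case) converges jointly with the count to $\bigl(\nu_1(-t),\sum_{0<j\le\nu_1(-t)} v_j\bigr)$ — a compound Poisson pair with the marks independent of everything else. The $t\ge 0$ coordinates are symmetric, and the left-/right-continuity conventions on $\nu_1,\nu_2$ match the half-open interval conventions in $\Xi_n$. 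Finally, the block $(\mathbf{Z}_1,\mathbf{Z}_2)$ is asymptotically independent of the Poisson block because the former is a $\sqrt{m_n}$-scaled average over \emph{all} observations while the latter lives on a shrinking $O(1/m_n)$-neighborhood of $\zeta_n$ whose contribution to the former is $o_{\mathbf{P}}(1)$ by~(\ref{ec48}); the two half-line Poisson blocks are independent because they are supported on disjoint intervals. Combining fidi-convergence with the uniform tightness from Lemma~\ref{l15} gives $\Xi_n\rightsquigarrow\Xi$ in $\mathbb{R}^2\times\tilde{\mathcal{D}_I^4}$.

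\textbf{Parts (ii) and (iii).} For part (ii), note that $E_n^*$ is obtained from $\Xi_n$ by the map sending a path $\xi=(\xi^{(1)},\ldots,\xi^{(6)})$ to the function $h\mapsto 2h_1\xi^{(1)}(h_3) - h_1^2\mathbb{P}_n^*(\ind{Z\le\zeta_n}) + 2h_2\xi^{(2)}(h_3) - h_2^2\mathbb{P}_n^*(\ind{Z>\zeta_n}) + 2(\beta_n-\alpha_n)\xi^{(4)}(h_3) - (\alpha_n-\beta_n)^2\xi^{(3)}(h_3) + 2(\alpha_n-\beta_n)\xi^{(6)}(h_3) - (\alpha_n-\beta_n)^2\xi^{(5)}(h_3)$, and $J_n^*$ by $\xi\mapsto(\xi^{(3)}+\xi^{(5)})$; since $\mathbb{P}_n^*(\ind{Z\le\zeta_n})\cip\mathbb{P}(Z\le\zeta_0)$ etc.\ (by (I) and Proposition~\ref{pg1}-type arguments) and $\alpha_n\to\alpha_0$, $\beta_n\to\beta_0$, the coefficients converge to their limiting constants, so after a Slutsky step this is (asymptotically) a fixed continuous map $\mathbb{R}^2\times\tilde{\mathcal{D}_I^4}\to\mathcal{D}_K\times\mathcal{S}_I$ — continuity in the Skorohod metric holds because the map acts coordinatewise and only through time-change-compatible operations (composition with the linear reparametrization $h_3\mapsto\zeta_n + h_3/m_n$, multiplication by continuous functions of $(h_1,h_2)$, and addition). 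Applying the continuous mapping theorem to part (i) yields $(E_n^*,J_n^*)\rightsquigarrow(E^*,J^*)$. For part (iii), $\hat J_n = J_n^*$ identically, and Lemma~\ref{l14} gives $\|E_n^*-\hat E_n\|_K\cip 0$, hence $d_K(E_n^*,\hat E_n)\cip 0$; so $(\hat E_n,\hat J_n)$ and $(E_n^*,J_n^*)$ are asymptotically equivalent in $\mathcal{D}_K\times\mathcal{S}_I$ and share the weak limit $(E^*,J^*)$.

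\textbf{Main obstacle.} The delicate point is the joint fidi-convergence in part (i) — specifically, verifying the asymptotic \emph{independence} structure (normals vs.\ compound Poisson, and the two half-line pieces against each other) and correctly pairing each count coordinate with its mark-sum coordinate as a genuine compound Poisson process rather than merely matching one-dimensional marginals. This is exactly where the full strength of condition (VI), in both its exponential (characteristic-function) and power ($p=1,2$) forms, must be used, and where care with the left-/right-continuous conventions on $\nu_1,\nu_2$ and the half-open indicator intervals is essential; the truncation argument separating the ``global'' $\sqrt{m_n}$-scale averages from the ``local'' $1/m_n$-scale window (via~(\ref{ec48})--(\ref{ec49})) is the technical heart of establishing the independence.
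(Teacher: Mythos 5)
Your plan is correct, and parts (ii) and (iii) are essentially identical to the paper's proof (continuous mapping applied to (i), then Lemma~\ref{l14} with $\hat J_n = J_n^*$ and Slutsky). Where you genuinely diverge is part (i). The paper establishes joint finite-dimensional convergence in a single stroke: it forms an arbitrary linear combination $W_n$ of the $\Xi_n$-coordinates evaluated at finitely many time points $t_{-N_-}<\cdots<0<\cdots<t_{N_+}$, regroups the sum into contributions over the disjoint half-open intervals $(\zeta_n + t_{j-1}/m_n,\, \zeta_n + t_j/m_n]$ and the two complementary half-lines, computes the characteristic function $\e{e^{isW_n}}$ exactly using the i.i.d.\ structure of the bootstrap row, and passes to the limit term by term via (VI)--(VIII) and Lemma~\ref{l12}; Cram\'er--Wold then closes the argument. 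Your approach instead builds the limit coordinate-by-coordinate (Lindeberg--Feller for the two normals, Poissonization for the count coordinates, conditional compound-Poisson structure for the mark sums) and then argues asymptotic independence between the global and the local blocks via~(\ref{ec48})--(\ref{ec49}), and between the left and right local blocks via disjointness of the windows.

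The trade-off is real: your decomposition is conceptually transparent about \emph{why} the limit has the compound-Poisson-plus-Gaussian form, but establishing genuine joint convergence (not merely marginal convergence plus informal independence) requires exactly the multinomial-to-independent-Poisson argument that the single characteristic-function computation makes automatic, because all the disjoint-interval contributions appear additively inside the $m_n$-th power and factor into a product in the limit. In particular, your sentence that the two half-line Poisson blocks ``are independent because they are supported on disjoint intervals'' should read ``asymptotically independent''; for finite $n$ they are negatively dependent (multinomial), and closing that gap is precisely what the Cram\'er--Wold computation, or an explicit Poisson-thinning argument, supplies. Likewise, pairing a count coordinate with its mark-sum coordinate as a genuine bivariate limit, rather than matching one-dimensional marginals, needs the joint characteristic function of the pair, which again falls out of the paper's single expansion via (VI) with $\psi(x) = e^{i\xi x}$. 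None of this is a flaw in the route you chose --- it is a standard and perfectly valid alternative --- but if you carry it out in detail you will find yourself doing essentially the same characteristic-function bookkeeping the paper does, just in smaller pieces.
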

For a proof of the convergence result, see Section \ref{prueba5}.

To apply the argmax continuous mapping theorem we first show that the the smallest argmax of $E^*$ is well defined. The proof of the next lemma is provided in Section \ref{prueba6}.
\begin{lemma}\label{l17}
Consider the process $E^*$ defined in (\ref{ec46}). Then, for almost every sample path of $E^*$, $\displaystyle \phi^* = (\phi_1^*, \phi_2^*, \phi_3^*):=\sargmax_{h\in\mathbb{R}^3}\{E^*(h)\}$ is well-defined. Moreover, $\phi_1^*$, $\phi_2^*$ and $\phi_3^*$ are independent; and $\phi_1^*$ and $\phi_2^*$ are distributed as normal random variables with mean 0 and variances $\sigma^2/\mathbb{P}(Z\leq \zeta_0)$ and $\sigma^2/\mathbb{P}(Z> \zeta_0)$, respectively.
\end{lemma}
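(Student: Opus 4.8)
The plan is to analyze the structure of $E^*$ by separating the roles of its three arguments. Observe first that $E^*(h)$ decomposes as a sum of a function of $(h_1, h_3)$ and a function of $(h_2, h_3)$: writing
\[
E^*(h) = P_1(h_1, h_3) + P_2(h_2, h_3),
\]
where
\[
P_1(h_1, h_3) := 2h_1 \mathbf{Z}_1 - h_1^2 \mathbb{P}(Z \le \zeta_0) + 2(\alpha_0-\beta_0)\Xi^{(6)}(h_3) - (\alpha_0-\beta_0)^2 \Xi^{(5)}(h_3),
\]
and $P_2$ is the analogous expression involving $h_2$, $\mathbf{Z}_2$, $\Xi^{(4)}$ and $\Xi^{(3)}$. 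For each fixed $h_3$, the map $h_1 \mapsto P_1(h_1, h_3)$ is a strictly concave parabola (since $\mathbb{P}(Z \le \zeta_0) > 0$ because $\mathbb{P}(Z < a) > 0$), maximized uniquely at $h_1 = \mathbf{Z}_1 / \mathbb{P}(Z \le \zeta_0)$, and this maximizer does not depend on $h_3$; likewise $h_2 \mapsto P_2(h_2, h_3)$ is uniquely maximized at $h_2 = \mathbf{Z}_2 / \mathbb{P}(Z > \zeta_0)$, independent of $h_3$. Therefore one can maximize out $h_1$ and $h_2$ first: $\sargmax_h E^*(h)$ has first two coordinates $\mathbf{Z}_1/\mathbb{P}(Z \le \zeta_0)$ and $\mathbf{Z}_2/\mathbb{P}(Z > \zeta_0)$, and its third coordinate is the smallest argmax over $h_3$ of the ``profiled'' process
\[
g^*(h_3) := \frac{\mathbf{Z}_1^2}{\mathbb{P}(Z \le \zeta_0)} + \frac{\mathbf{Z}_2^2}{\mathbb{P}(Z > \zeta_0)} + 2(\beta_0-\alpha_0)\Xi^{(4)}(h_3) - (\alpha_0-\beta_0)^2\Xi^{(3)}(h_3) + 2(\alpha_0-\beta_0)\Xi^{(6)}(h_3) - (\alpha_0-\beta_0)^2\Xi^{(5)}(h_3).
\]
The constant terms are irrelevant, so $\phi_3^*$ is the smallest argmax of a two-sided compound Poisson process: on $h_3 < 0$ it equals $-(\alpha_0-\beta_0)^2 \nu_1(-h_3) + 2(\beta_0-\alpha_0)\sum_{0 < j \le \nu_1(-h_3)} v_j$, and on $h_3 \ge 0$ it equals $-(\alpha_0-\beta_0)^2 \nu_2(h_3) + 2(\alpha_0-\beta_0)\sum_{0<j\le \nu_2(h_3)} u_j$.

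Next I would establish that $\phi_3^*$ is well-defined, i.e.\ that $g^*$ (equivalently the two-sided compound Poisson process) attains its supremum at a unique smallest point almost surely. The drift of each one-sided piece is negative: since $\alpha_0 \ne \beta_0$, each increment of the process on $h_3 > 0$ has expectation $-(\alpha_0-\beta_0)^2 + 2(\alpha_0-\beta_0)\mathbb{P}(\epsilon) = -(\alpha_0-\beta_0)^2 < 0$ (using $\mathbb{P}(\epsilon) = 0$), and similarly on the negative side; hence by the strong law of large numbers for compound Poisson processes each piece drifts to $-\infty$ as $|h_3| \to \infty$, so a global maximum exists on all of $\mathbb{R}$. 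Uniqueness of the argmax with probability one follows from the fact that the jump heights $\{u_j\}$, $\{v_j\}$ are continuous random variables (they are distributed as $\epsilon$, which is continuous), so no two partial-sum levels coincide a.s.; the smallest argmax is then a well-defined measurable functional of the path (this is standard for c\`adl\`ag step functions drifting to $-\infty$, and one can invoke the same machinery that handles $\hat\zeta_n$). Combining with the first paragraph, $\sargmax_h E^*(h) = (\phi_1^*, \phi_2^*, \phi_3^*)$ is well-defined a.s., with $\phi_1^* = \mathbf{Z}_1/\mathbb{P}(Z\le\zeta_0) \sim \mathbf{N}(0, \sigma^2/\mathbb{P}(Z\le\zeta_0))$ and $\phi_2^* = \mathbf{Z}_2/\mathbb{P}(Z>\zeta_0) \sim \mathbf{N}(0, \sigma^2/\mathbb{P}(Z>\zeta_0))$, using that $\mathbf{Z}_1 \sim \mathbf{N}(0, \sigma^2\mathbb{P}(Z\le\zeta_0))$.

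Finally, independence: $\phi_1^*$ is a deterministic function of $\mathbf{Z}_1$ alone, $\phi_2^*$ of $\mathbf{Z}_2$ alone, and $\phi_3^*$ is a deterministic function of the four processes $\nu_1, \nu_2, \mathbf{u}, \mathbf{v}$ alone — none of these involves $\mathbf{Z}_1$ or $\mathbf{Z}_2$. Since $\mathbf{Z}_1$, $\mathbf{Z}_2$, $\nu_1$, $\nu_2$, $\mathbf{u}$, $\mathbf{v}$ were assumed mutually independent, the three coordinates $\phi_1^*, \phi_2^*, \phi_3^*$ are (Borel functions of) independent objects and hence mutually independent. I expect the main obstacle to be the rigorous verification that the profiling step is legitimate — that is, that the smallest argmax of the full three-dimensional process $E^*$ really does coincide with the triple obtained by first maximizing in $(h_1,h_2)$ for each $h_3$ and then taking the smallest argmax in $h_3$. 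This requires care because ``smallest argmax'' is defined via the lexicographic-type convention for c\`adl\`ag processes stated in Section~\ref{BootsSchemes}: one must check that for any $h_3$ the maximization in $(h_1,h_2)$ is attained (trivially, strict concavity) and that the value function $h_3 \mapsto g^*(h_3)$ inherits the c\`adl\`ag/continuity structure so that its smallest argmax is the third coordinate of the smallest argmax of $E^*$. Once that reduction is justified, everything else is a matter of reading off distributions and invoking the assumed mutual independence.
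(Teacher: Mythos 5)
Your proof is correct and follows essentially the same route as the paper's: maximize out $h_1$ and $h_2$ for each fixed $h_3$ (the maximizer $(\mathbf{Z}_1/\mathbb{P}(Z\leq\zeta_0), \mathbf{Z}_2/\mathbb{P}(Z>\zeta_0))$ is free of $h_3$), reduce to a two-sided compound Poisson profile, and argue it drifts to $-\infty$ so a smallest argmax exists; independence is read off from the independence of $\mathbf{Z}_1$, $\mathbf{Z}_2$, and $(\nu_1,\nu_2,\mathbf{u},\mathbf{v})$. The only (minor) divergence is that you obtain the drift-to-$-\infty$ via the negative-mean-increment SLLN argument, whereas the paper bounds the fluctuation $\sum_j v_j = O(\sqrt{\nu_1\log\log\nu_1})$ by the law of the iterated logarithm and compares it to the linear drift $-(\alpha_0-\beta_0)^2\nu_1$; both are valid, and your uniqueness remark, while not needed for the smallest argmax to be well-defined, is harmless.
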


We now state the distributional convergence result for the sequence of least squares estimator $\theta_n^*$. For a proof, we refer the reader to Section \ref{prueba7}.

\begin{prop}\label{pg3}
With the notation of Lemma \ref{l17}, if conditions (I)-(VIII) hold, then
\[ h_n^* = \left(\begin{array}{c}
\sqrt{m_n}(\alpha_n^*-\alpha_n)\\
\sqrt{m_n}(\beta_n^* -\beta_n)\\
m_n (\zeta_n^* - \zeta_n)
\end{array}\right) \rightsquigarrow \sargmax_{h\in\mathbb{R}^3}\{E^*(h)\}. \]
\end{prop}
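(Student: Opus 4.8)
The plan is to combine the tightness and finite-dimensional convergence results already established (Lemmas \ref{l14}, \ref{l15}, \ref{l16}) with the argmax continuous mapping theorem (Lemma \ref{l11}) and the rate-of-convergence bound (Proposition \ref{pg2}). First I would recall that by Lemma \ref{l16}(iii) we have the joint weak convergence $(\hat E_n, \hat J_n) \rightsquigarrow (E^*, J^*)$ in $\mathcal{D}_K \times \mathcal{S}_I$ for every compact rectangle $K = A \times B \times I \subset \mathbb{R}^3$. Then I would observe that by definition $h_n^* = \sargmin_{h \in \mathbb{R}^3} \hat E_n(h) = \sargmax_{h \in \mathbb{R}^3}\{-\hat E_n(h)\}$ (or, matching the statement as written, the smallest argmax of $\hat E_n$ after the sign conventions in the definitions of $\hat A_n, \ldots, \hat D_n$ are reconciled with those in $E^*$), and that Lemma \ref{l17} guarantees $\phi^* = \sargmax_h E^*(h)$ is a.s.\ well-defined. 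The remaining task is to pass from weak convergence of the processes to weak convergence of their smallest argmaxes.

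The argmax continuous mapping theorem of Section \ref{app2} (Lemma \ref{l11}) is tailored precisely to processes in $\mathcal{D}_K$ that are continuous in the first two coordinates and c\`adl\`ag in the third, and it delivers the conclusion provided two ingredients hold: (a) the joint weak convergence of $(\hat E_n, \hat J_n)$ to $(E^*, J^*)$ — supplied by Lemma \ref{l16}(iii) — and (b) a tightness/non-escape condition ensuring that the argmaxes $h_n^*$ do not drift to the boundary of $K$ or off to infinity. For ingredient (b) I would invoke Proposition \ref{pg2}, which gives $\sqrt{m_n}(\alpha_n^* - \alpha_n) = O_{\mathbf{P}}(1)$, $\sqrt{m_n}(\beta_n^* - \beta_n) = O_{\mathbf{P}}(1)$ and $m_n(\zeta_n^* - \zeta_n) = O_{\mathbf{P}}(1)$ — that is, $h_n^* = O_{\mathbf{P}}(1)$ — so that for any $\varepsilon > 0$ there is a compact rectangle $K_\varepsilon$ with $\mathbf{P}(h_n^* \in K_\varepsilon) > 1 - \varepsilon$ for all $n$. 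This lets one localize the argmax problem to $K_\varepsilon$ and apply the continuous mapping theorem there, then let $\varepsilon \downarrow 0$.

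Concretely the steps are: (1) fix $\varepsilon > 0$ and choose, via Proposition \ref{pg2}, a compact rectangle $K = A \times B \times I$ containing $h_n^*$ with probability at least $1 - \varepsilon$ uniformly in $n$, and also containing $\phi^*$ with probability at least $1 - \varepsilon$; (2) apply Lemma \ref{l16}(iii) on this $K$ to get $(\hat E_n, \hat J_n) \rightsquigarrow (E^*, J^*)$ in $\mathcal{D}_K \times \mathcal{S}_I$; (3) check the hypotheses of Lemma \ref{l11} — in particular that the limit $E^*$ has a.s.\ a unique smallest argmax in the interior of $K$ (from Lemma \ref{l17}), and that the jump process controls the discontinuities so that the smallest-argmax functional is continuous at $E^*$ along the relevant sequences; (4) conclude $h_n^* \wedge (\text{truncation to } K) \rightsquigarrow \phi^*$, and finally (5) remove the truncation by letting the probability of escape from $K$ tend to $0$ as $\varepsilon \downarrow 0$, using a standard Portmanteau/subsequence argument to upgrade convergence on each $K_\varepsilon$ to unconditional weak convergence on $\mathbb{R}^3$.

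The main obstacle I anticipate is step (3): verifying that the smallest-argmax functional is continuous (in the appropriate sense) at sample paths of $E^*$ when acting on $\mathcal{D}_K$ with its Skorohod-type topology on the third coordinate. Because $E^*$ is only c\`adl\`ag in $h_3$ (driven by the compound Poisson pieces $\Xi^{(3)}, \ldots, \Xi^{(6)}$), one cannot use the classical continuous-mapping argmax theorem for uniform topologies; this is exactly why the joint convergence with the jump processes $\hat J_n \rightsquigarrow J^*$ is needed — the jumps of $\hat E_n$ in the $\zeta$-direction must converge so that no mass of the maximizing location is lost across a discontinuity in the limit. I would lean entirely on Lemma \ref{l11} for this, having already arranged its hypotheses; the only real work is checking that $E^*$ a.s.\ attains its supremum over $K$ at an interior point with the localization from Proposition \ref{pg2} making this legitimate, and that almost every path of $E^*$ has the "well-separated maximum" property that Lemma \ref{l17} asserts. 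Everything else — tightness, fidis, the sign bookkeeping between $\hat E_n$ and $E_n^*$ — has been discharged by the preceding lemmas.
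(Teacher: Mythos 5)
Your proposal takes essentially the same route as the paper's proof: localize to a compact rectangle using the tightness from Proposition \ref{pg2}, invoke the joint weak convergence $(\hat E_n, \hat J_n)\rightsquigarrow(E^*,J^*)$ from Lemma \ref{l16}(iii), apply the argmax continuity result Lemma \ref{l11} (whose hypotheses, including (\ref{ec24}), follow from Lemma \ref{l17} together with the continuity of the distribution of the $u_i$'s and $v_i$'s), and then de-truncate. The paper makes the last two steps explicit by coupling via Skorohod's representation theorem to pass from weak convergence of processes to weak convergence of restricted argmaxes and then by invoking Lemma 3.3 of \cite{lmm} to lift the restricted convergence to $\mathbb{R}^3$ — exactly the "Portmanteau/subsequence" step you leave implicit.
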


If we take $\mathbb{Q}_n = \mathbb{P}$ and $m_n = n$ $\forall n\in\mathbb{N}$, it is easily seen that $\theta_n = \theta_0$ and conditions (I)-(VIII) hold. Hence, we immediately get the following corollary.

\begin{cor}[Asymptotic distribution of the least squares estimators]\label{corolario}
For the least squares estimators $(\hat{\alpha}_n,\hat{\beta}_n,\hat{\zeta}_n)$ based on an i.i.d. sequence $(X_n)_{n=1}^\infty$ satisfying (\ref{ec1}), we have
\[ (
\sqrt{n}(\hat{\alpha}_n-\alpha_0),
\sqrt{n}(\hat{\beta}_n -\beta_0),
n (\hat{\zeta}_n - \zeta_0))' \rightsquigarrow \sargmax_{h\in\mathbb{R}^3}\{E^*(h)\}. \]
\end{cor}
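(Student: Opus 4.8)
The plan is to derive the corollary as an immediate specialization of Proposition \ref{pg3}, so the entire argument reduces to verifying that the choice $\mathbb{Q}_n=\mathbb{P}$ and $m_n=n$ for all $n$ satisfies the hypotheses (I)--(VIII) and, additionally, identifies the various row-dependent quantities with their fixed counterparts from the i.i.d.\ model. First I would observe that with $\mathbb{Q}_n=\mathbb{P}$ we trivially have $\|\mathbb{Q}_n-\mathbb{P}\|_{\mathcal{F}}=\|\mathbb{Q}_n-\mathbb{P}\|_{\mathcal{G}}=\|\mathbb{Q}_n-\mathbb{P}\|_{\mathcal{H}}=0$ for every $n$, so (I)--(III) hold. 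Since $M_n(\theta)=\mathbb{Q}_n(m_\theta)=\mathbb{P}(m_\theta)=M(\theta)$, the smallest argmax $\theta_n=\sargmax M(\theta)=\theta_0$ is constant in $n$, which gives (IV) (indeed with equality), and simultaneously means $\tilde{\epsilon}_n=\epsilon$, $\alpha_n=\alpha_0$, $\beta_n=\beta_0$, $\zeta_n=\zeta_0$. The triangular array $\{X_{n,k}\}$ is then just the original i.i.d.\ sequence arranged in rows of increasing length $n$, so $\mathbb{P}_n^*$ is the ordinary empirical measure $\mathbb{P}_n$ and $\theta_n^*=\hat{\theta}_n$.

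Next I would check conditions (V)--(VIII) in this specialization. For (V): since under $\mathbb{P}$ the error $\epsilon$ is independent of $Z$ with $\mathbb{P}(\epsilon)=0$, the remark following the statement of (V) in the excerpt tells us that (\ref{rccs2}) and (\ref{rccs3}) hold automatically; and (\ref{rccs1}) follows from the assumption that $Z$ has a density $f$ that is uniformly bounded, with $\inf_{|z-\zeta_0|\le\eta}f(z)>\kappa>0$, via the mean value theorem exactly as indicated there (take $\rho=\kappa/2$, say, shrinking $\eta$ if needed). For (VI): $m_n\mathbb{Q}_n\big(\psi(\tilde{\epsilon}_n)\ind{\zeta_0-\frac{\delta}{n}<Z\le\zeta_0+\frac{\eta}{n}}\big)=n\,\mathbb{P}\big(\psi(\epsilon)\big)\,\mathbb{P}\big(\zeta_0-\frac{\delta}{n}<Z\le\zeta_0+\frac{\eta}{n}\big)$ by independence, and $n\,\mathbb{P}(\zeta_0-\frac{\delta}{n}<Z\le\zeta_0+\frac{\eta}{n})\to f(\zeta_0)(\delta+\eta)$ by continuity of $f$ at $\zeta_0$ (guaranteed since $f$ is continuous and positive on a neighborhood of $\zeta_0$), and $\mathbb{P}(|\psi(\epsilon)|)<\infty$ since $\psi$ is bounded for the oscillatory case and $\mathbb{P}(|\epsilon|^p)<\infty$ for $p=1,2$ by the assumed $\mathbb{P}(|\epsilon|^3)<\infty$. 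For (VII): $\sqrt{n}\,\mathbb{Q}_n(\tilde{\epsilon}_n\ind{Z\le\zeta_0})=\sqrt{n}\,\mathbb{P}(\epsilon)\mathbb{P}(Z\le\zeta_0)=0$, and likewise the other term vanishes. For (VIII): $\mathbb{Q}_n(|\tilde{\epsilon}_n|^3)=\mathbb{P}(|\epsilon|^3)<\infty$, a fixed finite constant, so its limsup is finite.

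Having verified (I)--(VIII), Proposition \ref{pg3} applies and yields
\[ h_n^*=\left(\sqrt{n}(\alpha_n^*-\alpha_n),\ \sqrt{n}(\beta_n^*-\beta_n),\ n(\zeta_n^*-\zeta_n)\right)' \rightsquigarrow \sargmax_{h\in\mathbb{R}^3}\{E^*(h)\}, \]
and since $\alpha_n=\alpha_0$, $\beta_n=\beta_0$, $\zeta_n=\zeta_0$, $\alpha_n^*=\hat{\alpha}_n$, $\beta_n^*=\hat{\beta}_n$, $\zeta_n^*=\hat{\zeta}_n$, the left-hand side is exactly $(\sqrt{n}(\hat{\alpha}_n-\alpha_0),\sqrt{n}(\hat{\beta}_n-\beta_0),n(\hat{\zeta}_n-\zeta_0))'$, which is the claim. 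I do not anticipate a genuine obstacle here: the corollary is a bookkeeping exercise once Proposition \ref{pg3} is in hand. The only points requiring a modicum of care are (a) confirming that the definition of the least squares estimator via $\sargmax$ in the triangular-array setting coincides with $\hat{\theta}_n$ when the array degenerates to the i.i.d.\ sequence (immediate from the definitions of $\mathbb{P}_n^*$ and $\sargmax$), and (b) making sure the regularity assumptions imposed on $\mathbb{P}$ at the start of Section \ref{BootsSchemes}---positivity and local uniform positivity of $f$ near $\zeta_0$, continuity of $f$ there, and $\mathbb{P}(|\epsilon|^3)<\infty$---are precisely what is needed to discharge (V), (VI) and (VIII); these were, of course, imposed with this corollary (and the bootstrap applications) in mind.
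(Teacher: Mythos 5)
Your proposal is correct and matches the paper's approach exactly: the paper simply remarks, just before the corollary, that taking $\mathbb{Q}_n=\mathbb{P}$ and $m_n=n$ makes $\theta_n=\theta_0$ and renders conditions (I)--(VIII) trivially true, so Proposition \ref{pg3} applies. You have merely spelled out the verification of each condition, which the paper leaves as ``easily seen.''
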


\section{Inconsistency of the bootstrap}\label{incons} In this section we argue the inconsistency of the two most common bootstrap procedures in regression: the ECDF bootstrap (scheme 1) and the residual bootstrap (scheme 2). Recall the notation and definitions in the beginning of Section \ref{BootsSchemes}. In particular, note that we have i.i.d. random vectors $\{X_n = (Y_n,Z_n)\}_{n=1}^\infty$ from (\ref{ec1}) with parameter $\theta_0$ defined on a probability space $(\Omega,\mathcal{A},\mathbf{P})$ and let $\mathbb{P}_n$ be the empirical distribution of the first $n$ data points. We start by stating two results that will be used in the sequel. We first show that the least squares estimator $\hat \theta_n$ of $\theta_0$ is strongly consistent. This is an improvement of the result obtained in \cite{koss} and we refer the reader to Section \ref{app3} for a complete proof. The proof of the second lemma can be found in Section \ref{pl5}.
\begin{lemma}\label{l3}
Let $K\subset\Theta$ be any compact rectangle. Then,
\begin{enumerate}[(i)]
\item $\left\|M_n - M\right\|_K \stackrel{a.s.}{\longrightarrow} 0$,
\item $M_n \stackrel{a.s.}{\longrightarrow} M$ in $\mathcal{D}_K$,
\item $\hat{\theta}_n\stackrel{a.s.}{\longrightarrow}\theta_0$.
\end{enumerate}
\end{lemma}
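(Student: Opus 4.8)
The plan is to prove the three parts in order: (i) is the substantive one, (ii) follows formally from it, and (iii) is a standard argmax--consistency argument once an a priori compactness bound is in place. For (i), expand the square: since $\mathbf{1}_{z\le\zeta}$ and $\mathbf{1}_{z>\zeta}$ are orthogonal indicators partitioning $\mathbb{R}$,
\[ m_\theta(y,z) = -y^2 + 2\alpha\, y\mathbf{1}_{z\le\zeta} + 2\beta\, y\mathbf{1}_{z>\zeta} - \alpha^2\mathbf{1}_{z\le\zeta} - \beta^2\mathbf{1}_{z>\zeta}. \]
Hence $M_n(\theta)-M(\theta)=(\mathbb{P}_n-\mathbb{P})(m_\theta)$ is, apart from the $\theta$-free term $(\mathbb{P}_n-\mathbb{P})(y^2)$, a linear combination --- with coefficients bounded over any fixed compact rectangle $K$ --- of $(\mathbb{P}_n-\mathbb{P})$ evaluated at $y\mathbf{1}_{z\le\zeta},\,y\mathbf{1}_{z>\zeta}\in\mathcal{G}$ and $\mathbf{1}_{z\le\zeta},\,\mathbf{1}_{z>\zeta}\in\mathcal{F}$, so that
\[ \|M_n-M\|_K \le |(\mathbb{P}_n-\mathbb{P})(y^2)| + C_K\big(\|\mathbb{P}_n-\mathbb{P}\|_{\mathcal{G}}+\|\mathbb{P}_n-\mathbb{P}\|_{\mathcal{F}}\big), \]
with $C_K$ depending only on $\sup_K(|\alpha|\lor|\beta|\lor\alpha^2\lor\beta^2)$. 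The first term tends to $0$ a.s. by the SLLN, since $\mathbb{P}(Y^2)<\infty$ (as $\mathbb{P}(|\epsilon|^3)<\infty$ and the regression means are bounded); the other two tend to $0$ a.s. because $\mathcal{F}$ is a VC class and $\mathcal{G}$ a VC--type class with the integrable envelope $|y|+M$, hence both are $\mathbb{P}$--Glivenko--Cantelli.

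Part (ii) is then immediate: taking $\lambda$ equal to the identity in the definition of $d_K$ gives $d_K(\Psi,\Gamma)\le\|\Psi-\Gamma\|_K$ for $\Psi,\Gamma\in\mathcal{D}_K$, while $M_n\in\mathcal{D}_K$ (continuous in $(\alpha,\beta)$, piecewise constant and c\`adl\`ag in $\zeta$) and $M\in\mathcal{D}_K$ (indeed continuous, as $Z$ has a density), so $d_K(M_n,M)\le\|M_n-M\|_K\cas0$. For (iii) I first bound the Euclidean coordinates: for each fixed $\zeta$ the minimizing $\alpha,\beta$ are the within-group averages of the $Y_i$, so $\hat\alpha_n$ and $\hat\beta_n$ are averages of the $Y_i$ over index sets that contain $\{i:Z_i<a\}$, resp. $\{i:Z_i>b\}$ (because $\hat\zeta_n\in[a,b]$). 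Since $\mathbb{P}(Z<a)\land\mathbb{P}(Z>b)>0$ and $\mathbb{P}(|Y|)<\infty$, the SLLN gives $\limsup_n(|\hat\alpha_n|\lor|\hat\beta_n|)<\infty$ a.s., so off a null set there is a (random) $j\in\mathbb{N}$ with $\theta_0\in K_j$ and $\hat\theta_n\in K_j$ for all $n$, where $K_j:=[-j,j]^2\times[a,b]$.

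Next, $\theta_0$ is a well-separated maximizer of $M$: writing $Y=\alpha_0\mathbf{1}_{Z\le\zeta_0}+\beta_0\mathbf{1}_{Z>\zeta_0}+\epsilon$ with $\epsilon$ independent of $Z$ and $\mathbb{P}(\epsilon)=0$,
\[ M(\theta) = -\sigma^2 - \mathbb{P}\big[(\alpha_0\mathbf{1}_{Z\le\zeta_0}+\beta_0\mathbf{1}_{Z>\zeta_0}-\alpha\mathbf{1}_{Z\le\zeta}-\beta\mathbf{1}_{Z>\zeta})^2\big], \]
which attains its maximum $-\sigma^2$ only at $\theta_0$ (using $\alpha_0\ne\beta_0$, the strict positivity of $f$ on $[a,b]$, and $\mathbb{P}(Z<a)\land\mathbb{P}(Z>b)>0$); since $M$ is continuous and $K_j$ compact, $\sup\{M(\theta):\theta\in K_j,\ |\theta-\theta_0|\ge\delta\}<M(\theta_0)$ for every $\delta>0$. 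On the full-measure event above, $\|M_n-M\|_{K_j}\to0$ by (i), and since $\hat\theta_n$ maximizes $M_n$ (handling the left-limit in the definition of maximizer via the continuity of $M$),
\[ M(\hat\theta_n)\ge \sup_{\Theta}M_n - \|M_n-M\|_{K_j}\ge M_n(\theta_0)-\|M_n-M\|_{K_j}\ge M(\theta_0)-2\|M_n-M\|_{K_j}\longrightarrow M(\theta_0). \]
Together with well-separation this forces $\hat\theta_n\cas\theta_0$, since along any subsequence with $|\hat\theta_n-\theta_0|\ge\delta$ the quantity $M(\hat\theta_n)$ would stay bounded away from $M(\theta_0)$. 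The main obstacle is not any single step but the two places where the structure of the problem enters: the a priori boundedness of $\hat\alpha_n,\hat\beta_n$, which is where the hypothesis $\mathbb{P}(Z<a)\land\mathbb{P}(Z>b)>0$ is used and which makes it necessary to pass to a random but finite enclosing rectangle $K_j$ before invoking (i); and the careful statement of the Glivenko--Cantelli property for $\mathcal{F}$ and $\mathcal{G}$ (honest integrable envelopes, and the VC--subgraph property of the class $\{y\mathbf{1}_I(z)\}$). Everything else is the standard argmax--consistency machinery, modulo minor bookkeeping for the c\`adl\`ag/left-limit convention.
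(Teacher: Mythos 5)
Your proof is correct, but it takes a genuinely different route from the paper's at three points, so a comparison is worthwhile. For (i), you expand the square directly, getting $m_\theta = -y^2 + 2\alpha y\mathbf{1}_{z\le\zeta} + 2\beta y\mathbf{1}_{z>\zeta} - \alpha^2\mathbf{1}_{z\le\zeta} - \beta^2\mathbf{1}_{z>\zeta}$, and reduce $\|M_n-M\|_K$ to a $\theta$-free SLLN term plus $\|\mathbb{P}_n-\mathbb{P}\|_\mathcal{F}$ and $\|\mathbb{P}_n-\mathbb{P}\|_\mathcal{G}$; the paper instead expands $m_\theta$ around $\theta_0$ via the four disjoint regions $\{Z\le\zeta_0\land\zeta\}$, $\{\zeta_0<Z\le\zeta\}$, $\{\zeta<Z\le\zeta_0\}$, $\{Z>\zeta_0\lor\zeta\}$, and introduces four fresh VC-subgraph classes $\mathcal{K}_1,\dots,\mathcal{K}_4$ with envelope $(|\epsilon|+A^*+|\gamma_0|)^2$. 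Your decomposition is the leaner one: it reuses the classes $\mathcal{F},\mathcal{G}$ already defined for conditions (I)--(IV) and never needs a squared envelope beyond the single SLLN for $Y^2$. For the a priori boundedness of $(\hat\alpha_n,\hat\beta_n)$, you exploit the closed form of the profile maximizers (within-group means) and bound them by $\mathbb{P}_n(|Y|)/\mathbb{P}_n(Z<a)$ resp.\ $\mathbb{P}_n(|Y|)/\mathbb{P}_n(Z>b)$; the paper instead uses the inequality $M_n(\theta_0)\le M_n(\hat\theta_n)$ restricted to $\{Z<a\}$ and $\{Z>b\}$ to get a quadratic inequality in $|\hat\gamma_n-\gamma_0|$. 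Your argument is more elementary and transparent, though the paper's template has the advantage of carrying over verbatim to the bootstrap settings (Propositions \ref{pg1} and \ref{pfd1}) where no closed form is available for the resampled objective. Finally, for the consistency step you run a direct well-separation argument on the (random) enclosing rectangle $K_j$, whereas the paper invokes its Lemma \ref{l2} (continuity of the smallest-argmax functional at a continuous function with a unique maximizer) applied along the nested family $\Theta_k$; the two are essentially the same argument, and your inline version is fine as long as one notes, as you do, that $M_n(\hat\theta_n)$ does attain $\sup_\Theta M_n$ because $M_n$ is right-continuous and piecewise constant in $\zeta$, so the left-limit in the definition of maximizer is harmless.
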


\begin{lemma}\label{l5} Let $K\subset\mathbb{R}$ be a compact interval and $(m_n)_{n=1}^\infty$ be an increasing sequence of natural numbers such that $m_n \rightarrow \infty$ and $m_n=O(n)$. Then,
\begin{enumerate}[(i)]
\item $m_n^\gamma\left\|\mathbb{P}_n(\hat{\zeta}_n + \frac{(\cdot)}{m_n} < Z \leq
    \hat{\zeta}_n)\right\|_K\stackrel{\mathbf{P}}{\longrightarrow} 0$ for any $\gamma<1$, and
\item $m_n^\gamma\left\|\function{\mathbb{P}_n}{|\tilde{\epsilon}_n|^p\ind{\hat{\zeta}_n + \frac{(\cdot)}{m_n} < Z \leq \hat{\zeta}_n}}\right\|_K\stackrel{\mathbf{P}}{\longrightarrow} 0$ for any $\gamma<1$, \mbox{ and } p = 1,2.
\end{enumerate}
These statements are still true if $\ind{\hat{\zeta}_n + \frac{(\cdot)}{m_n} < Z \leq \hat{\zeta}_n}$ is replaced by $\ind{\hat{\zeta}_n < Z \leq \hat{\zeta}_n + \frac{(\cdot)}{m_n}}$.
\end{lemma}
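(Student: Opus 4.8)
The plan is to combine two observations. First, for a fixed value of $\hat\zeta_n$ the map $t\mapsto\mathbb{P}_n(\hat\zeta_n+t/m_n<Z\le\hat\zeta_n)$ is nonnegative and nonincreasing, so the supremum over the compact set $K$ is attained at $t_*:=\min K$; if $t_*\ge 0$ the whole quantity is $0$, and otherwise, writing $c:=-t_*>0$, the norm in (i) equals $m_n^\gamma\mathbb{P}_n(\hat\zeta_n-c/m_n<Z\le\hat\zeta_n)$, i.e. the empirical measure of a \emph{single} random interval. The same monotonicity applies to the nonnegative integrand in (ii) and to the reversed intervals (now with $t_*:=\max K$). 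Second, since $n(\hat\zeta_n-\zeta_0)=O_{\mathbf{P}}(1)$ (see \cite{koss}, or Corollary \ref{corolario}) and $m_n=O(n)$, we have $m_n(\hat\zeta_n-\zeta_0)=O_{\mathbf{P}}(1)$; this is exactly what lets us trap the random interval $(\hat\zeta_n-c/m_n,\hat\zeta_n]$ inside a deterministic interval of length of order $1/m_n$. After these two reductions, only a first-moment estimate remains.

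For (i), fix $\epsilon>0$ and $L>0$ and set $A_{n,L}:=\{m_n|\hat\zeta_n-\zeta_0|\le L\}$. On $A_{n,L}$ one has $(\hat\zeta_n-c/m_n,\hat\zeta_n]\subset I_{n,L}:=(\zeta_0-(c+L)/m_n,\zeta_0+L/m_n]$, a fixed interval with $\mathbb{P}(Z\in I_{n,L})\le (c+2L)m_n^{-1}\sup_{|z-\zeta_0|\le\eta}f(z)$ once $n$ is large. Hence $\p{m_n^\gamma\mathbb{P}_n(\hat\zeta_n-c/m_n<Z\le\hat\zeta_n)>\epsilon}\le\p{A_{n,L}^c}+\p{m_n^\gamma\mathbb{P}_n(Z\in I_{n,L})>\epsilon}$, and the second term is at most $\epsilon^{-1}m_n^\gamma\,\mathbb{P}(Z\in I_{n,L})=O(m_n^{\gamma-1})\to 0$, using $\mathbf{E}\mathbb{P}_n(Z\in I_{n,L})=\mathbb{P}(Z\in I_{n,L})$ and Markov's inequality. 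Letting $n\to\infty$ and then $L\to\infty$, and invoking $m_n(\hat\zeta_n-\zeta_0)=O_{\mathbf{P}}(1)$ to make $\limsup_n\p{A_{n,L}^c}$ arbitrarily small, yields (i); the reversed statement is identical.

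For (ii), the same reduction leaves $m_n^\gamma\mathbb{P}_n(|\tilde\epsilon_n|^p\ind{\hat\zeta_n-c/m_n<Z\le\hat\zeta_n})\cip 0$ to be shown. Substituting $Y=\alpha_0\ind{Z\le\zeta_0}+\beta_0\ind{Z>\zeta_0}+\epsilon$ into the definition of $\tilde\epsilon_n$ and using $\hat\alpha_n\cas\alpha_0$, $\hat\beta_n\cas\beta_0$ (Lemma \ref{l3}(iii)), there is an event $B_n$ with $\p{B_n}\to 1$ on which $|\tilde\epsilon_n|\le|\epsilon|+C$ pointwise, with $C:=2(|\alpha_0|+|\beta_0|+1)$; consequently $|\tilde\epsilon_n|^p\le C_p(|\epsilon|^p+1)$ for $p=1,2$. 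Thus on $A_{n,L}\cap B_n$ the quantity of interest is dominated by $m_n^\gamma C_p W_{n,L}$, where $W_{n,L}:=n^{-1}\sum_{i=1}^n(|\epsilon_i|^p+1)\ind{Z_i\in I_{n,L}}$, and by the independence of $\epsilon$ and $Z$ under $\mathbb{P}$ together with $\mathbb{P}(|\epsilon|^2)<\infty$ we get $\mathbf{E}W_{n,L}=(\mathbb{P}(|\epsilon|^p)+1)\,\mathbb{P}(Z\in I_{n,L})=O(m_n^{-1})$. Markov's inequality, together with the fact that $\limsup_n(\p{A_{n,L}^c}+\p{B_n^c})\to 0$ as $L\to\infty$, finishes (ii) exactly as in (i).

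The one genuinely essential ingredient is the hypothesis $m_n=O(n)$: it is what upgrades the $O_{\mathbf{P}}(1/n)$ fluctuation of $\hat\zeta_n$ about $\zeta_0$ to an $O_{\mathbf{P}}(1/m_n)$ fluctuation, which is precisely the scale at which the random interval can be absorbed into a deterministic interval of $\mathbb{P}$-measure $O(1/m_n)$; were $m_n$ allowed to grow faster than $n$, this absorption — and in fact the conclusion itself — would fail. Everything else is a routine first-moment argument: no empirical-process oscillation/modulus bound is needed, because the supremum over $t\in K$ collapses to a single point by monotonicity.
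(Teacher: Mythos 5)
Your proof is correct and follows essentially the same route as the paper: fix $L$, condition on the event $\{m_n|\hat{\zeta}_n-\zeta_0|\le L\}$ (stochastically controllable since $n(\hat{\zeta}_n-\zeta_0)=O_{\mathbf{P}}(1)$ and $m_n=O(n)$), trap the random interval in a deterministic interval of $\mathbb{P}$-measure $O(1/m_n)$, and finish with Markov's inequality. Your monotonicity remark collapsing the supremum over $K$ to a single endpoint, and your uniform bound $|\tilde{\epsilon}_n|\le|\epsilon|+C$ on a high-probability event, are minor streamlinings of the paper's version (which instead expands $\tilde{\epsilon}_n$ around $\epsilon$ and $\hat{\alpha}_n-\alpha_0$ inside the shifted interval), but the substance of the argument is identical.
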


We introduce some notation. Let $(\texttt{X},d)$ be a metric space and consider the $\texttt{X}$-valued random elements $V$ and $(V_n)_{n=1}^\infty$ defined on $(\Omega,\mathcal{A},\mathbf{P})$. We say that $V_n$ converges conditionally in probability to $V$, almost surely, and write $V_n\ccipas V$, if
\begin{equation}\label{ccipas}
\mathbf{P}_\mathfrak{X}(d(V_n,V)>\epsilon)\cas 0\ \ \ \forall\ \epsilon>0.
\end{equation}
Similarly, we write $V_n\ccipip V$ and say that $V_n$ converges conditionally in probability to $V$, in probability, if the left--hand side of (\ref{ccipas}) converges in probability to 0.

\subsection{Scheme 1 (Bootstrapping from ECDF)} Consider the notation and definitions of Section \ref{Boots}. To translate this scheme into the framework of Propositions \ref{pg1}, \ref{pg2} and \ref{pg3}, we set $m_n = n$, $\mathbb{Q}_n = \mathbb{P}_n$ and consider the triangular array $\left\{X_{n,k}^*=(Y_{n,k}^*,Z_{n,k}^*)\right\}_{1\leq k \leq n}^{n\in\mathbb{N}}$. Moreover, from Lemma \ref{l3} we know that $\hat{\theta}_n\cas\theta_0$, so we can also take $\theta_n = \hat{\theta}_n$. We first prove that the bootstrapped estimators converge conditionally in probability to the true value of the parameters, almost surely.

\begin{prop}\label{convs1}
For the ECDF bootstrap, we have $\theta^*_n\ccipas\theta_0$.
\end{prop}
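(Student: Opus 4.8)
The plan is to verify that the ECDF bootstrap fits the triangular-array framework of Proposition \ref{pg1} in an almost-sure sense, i.e., to show that conditions (I)--(IV) hold with $\mathbb{Q}_n$ replaced by $\mathbb{P}_n$ for $\mathbf{P}$-almost every $\omega$, and then invoke Proposition \ref{pg1} conditionally. More precisely, since Proposition \ref{pg1} is a deterministic implication about a triangular array sampled from a deterministic sequence of measures $\mathbb{Q}_n$, I would fix a sample point $\omega$ in the almost-sure event on which (I)--(IV) hold for $\mathbb{Q}_n(\omega) = \mathbb{P}_n$ and $\theta_n(\omega) = \hat\theta_n$, and apply Proposition \ref{pg1} along that realization. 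The conclusion $\theta_n^* \cip \theta_0$ of Proposition \ref{pg1} is a statement about the sampling randomness in the array, which for fixed $\omega$ is exactly the conditional law $\mathbf{P}_\mathfrak{X}$; hence $\mathbf{P}_\mathfrak{X}(d(\theta_n^*,\theta_0) > \epsilon) \to 0$ for each such $\omega$, which is precisely $\theta_n^* \ccipas \theta_0$.

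The first main step is therefore to establish the almost-sure versions of (I)--(IV). Condition (IV), $\hat\theta_n \cas \theta_0$, is exactly Lemma \ref{l3}(iii), so nothing is needed there. For (I), (II) and (III) I must show $\|\mathbb{P}_n - \mathbb{P}\|_{\mathcal F} \cas 0$, $\|\mathbb{P}_n - \mathbb{P}\|_{\mathcal G} \cas 0$ and $\|\mathbb{P}_n - \mathbb{P}\|_{\mathcal H} \cas 0$, where $\mathcal F$, $\mathcal G$, $\mathcal H$ are the classes defined in Section \ref{pgs}. The class $\mathcal F$ of indicators of intervals is a VC class, so the Glivenko--Cantelli theorem gives $\|\mathbb{P}_n - \mathbb{P}\|_{\mathcal F} \cas 0$ directly. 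For $\mathcal G$ and $\mathcal H$ the functions are of the form $y\,\ind{I}(z)$, $|y+\alpha|\,\ind{I}(z)$ and $y^2\,\ind{I}(z)$; these are products of a fixed (or, for $\mathcal G$, a bounded-parameter-indexed) envelope depending only on $y$ with the VC class $\mathcal F$. Using the moment assumption $\mathbb{P}(|\epsilon|^3) < \infty$ (which, together with boundedness of the regression function, gives $\mathbb{P}(Y^2) < \infty$ and $\mathbb{P}(|Y|^3)<\infty$), these classes have an integrable envelope and are $\mathbb{P}$-Glivenko--Cantelli by a standard preservation argument (e.g. Theorem 2.4.1 / 2.10.14-type results in \cite{vw}); the parameter $\alpha$ ranging over the compact set $\{|\alpha|\le M\}$ is handled by the usual bracketing/continuity argument since $\alpha \mapsto |y+\alpha|$ is Lipschitz uniformly in $y$ on any bounded $y$-range after truncation. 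This gives the three uniform strong laws simultaneously on one almost-sure event.

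The second step is purely bookkeeping: intersect the almost-sure events from Lemma \ref{l3}(iii) and from the three Glivenko--Cantelli statements to get a single event $\Omega_0$ with $\mathbf P(\Omega_0) = 1$, on which, for every $\omega \in \Omega_0$, the deterministic hypotheses (I)--(IV) of Proposition \ref{pg1} hold for the sequence $\mathbb{Q}_n = \mathbb{P}_n(\omega)$, $\theta_n = \hat\theta_n(\omega)$. Apply Proposition \ref{pg1} for each such $\omega$ to conclude $\mathbf P_\mathfrak X\big(d(\theta_n^*,\theta_0)>\epsilon\big) \to 0$ for all $\epsilon > 0$ and all $\omega \in \Omega_0$, which is the definition of $\theta_n^* \ccipas \theta_0$. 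The only mild subtlety — and the step I expect to require the most care — is the measurability/conditioning argument that lets one read Proposition \ref{pg1} ``fiberwise'': one needs that for fixed $\omega$ the bootstrap array $(X_{n,k}^*)$ is genuinely an i.i.d. sample from $\mathbb{P}_n(\omega)$ and that the convergence-in-probability conclusion of Proposition \ref{pg1} transfers to the conditional probability $\mathbf P_\mathfrak X$; this is standard but should be stated explicitly, invoking the existence of regular conditional distributions (already noted in the excerpt) to make $\mathbf P_\mathfrak X(\cdot)$ well-defined. No genuinely new estimates beyond the Glivenko--Cantelli facts are needed.
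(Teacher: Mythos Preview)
Your proposal is correct and follows essentially the same route as the paper: verify that $\mathcal F,\mathcal G,\mathcal H$ are Glivenko--Cantelli (the paper simply notes they are VC-subgraph with integrable envelopes $1$, $|Y|+M$, $Y^2$), quote Lemma \ref{l3}(iii) for (IV), and then apply Proposition \ref{pg1} conditionally for almost every $\omega$. The paper is terser about the fiberwise application of Proposition \ref{pg1}, but the argument is the same.
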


\begin{proof} Since $Y$ has a second moment under $\mathbb{P}$, it is straightforward to see that $\mathcal{F}$, $\mathcal{G}$ and $\mathcal{H}$ are VC-subgraph classes with integrable envelopes $1$, $|Y| + M$ and $Y^2$, respectively. It follows that all these classes are Glivenko--Cantelli and therefore conditions (I)-(III) hold w.p. 1. Also, note that, from Lemma \ref{l3} $(iii)$ condition (IV) holds a.s. The result then follows from Proposition \ref{pg1}.
\end{proof}
Let $\mathbb{P}_n^*$ be the ECDF of $X_{n,1}^*,\ldots,X_{n,n}^*$ and recall the definition of the processes $\hat{A}_n$, $\hat{B}_n$, $\hat{C}_n$, $\hat{D}_n$, $\hat{E}_n$, $A_n^*$, $B_n^*$, $C_n^*$, $D_n^*$ and $E_n^*$. We then have the following result.
\begin{lemma}\label{l20} Let $K\subset\mathbb{R}^3$ be any compact rectangle. Then
\[\hat{E}_n - E_n^* \ccipip 0 \textrm{ in } \mathcal{D}_K .\]
\end{lemma}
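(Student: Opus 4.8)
\textbf{Proof plan for Lemma \ref{l20}.}

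The plan is to reduce the statement to a conditional-in-probability (a.s.) version of Lemma \ref{l14}, by verifying that, for the ECDF bootstrap, the hypotheses of that lemma hold along almost every sample path $\omega$, and that the convergence in probability appearing there can be upgraded to the conditional mode $\ccipip$. Concretely, the difference $\hat E_n - E_n^*$ is a sum of finitely many terms, each of which is, up to the bounded multipliers $h_1,h_2,h_3$ ranging over the compact rectangle $K$, a linear combination of quantities of the form $m_n \mathbb{P}_n^*(\ind{\zeta_n + \frac{(\cdot)}{m_n} < Z \le \zeta_n})$, $m_n \mathbb{P}_n^*(\tilde\epsilon_n \ind{\zeta_n + \frac{(\cdot)}{m_n} < Z \le \zeta_n})$ and their counterparts with the roles of $\zeta_n$ and $\zeta_n + \frac{(\cdot)}{m_n}$ interchanged, together with lower-order pieces controlled by conditions (I)--(IV). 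Since here $m_n = n$, $\mathbb{Q}_n = \mathbb{P}_n$, $\theta_n = \hat\theta_n$ and $\tilde\epsilon_n$ becomes $\hat\epsilon_{n,j}$, I would first record that conditions (I)--(IV) hold w.p.\ 1 (this was already established in the proof of Proposition \ref{convs1}) and that the analogues of \eqref{ec48}--\eqref{ec49} hold w.p.\ 1 --- but with $\mathbb{Q}_n$ in place of $\mathbb{P}_n^*$, these are exactly the statements of Lemma \ref{l5} (with $\gamma = 1/2 < 1$, and recalling $m_n = n = O(n)$), evaluated along a.e.\ $\omega$.

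Next I would carry out the analysis of Lemma \ref{l14}'s proof conditionally. The essential point is a maximal inequality: for a VC class of indicators $\ind{I}$ and the envelope considerations already used (the classes $\mathcal{F}$, $\mathcal{G}$, $\mathcal{H}$ are VC with integrable envelopes), one bounds the conditional expectation $\ce{\|\mathbb{P}_n^* - \mathbb{P}_n\|_{\mathscr C}}$ over the relevant (shrinking) subclass $\mathscr C$ of indicators of intervals near $\hat\zeta_n$ with lengths $O(1/n)$, using symmetrization and the fact that $\mathbb{P}_n$ assigns mass $O(1/n)$ — i.e.\ $O(|K|)$ after multiplying by $n$ — to such intervals. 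Combined with Lemma \ref{l5}, which controls the ``centering'' $n\mathbb{P}_n(\ind{\hat\zeta_n + \frac{(\cdot)}{n}<Z\le\hat\zeta_n})$ and the weighted version, this gives $\ce{\|E_n^* - \hat E_n\|_K} \to 0$ in probability (indeed a.s.\ along a subsequence, but in-probability suffices), and then Markov's inequality conditionally yields $\cp{\|E_n^* - \hat E_n\|_K > \epsilon} \cip 0$, which is the definition of $\ccipip$. The passage from the in-$K$-norm statement to the statement in $\mathcal{D}_K$ is immediate since uniform convergence dominates the Skorohod metric $d_K$ (take $\lambda = \mathrm{id}$).

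The main obstacle is the maximal-inequality step: one must control the conditional fluctuations of $\mathbb{P}_n^*$ around $\mathbb{P}_n$ uniformly over a class of intervals whose \emph{lengths shrink at rate $1/n$ as $n$ grows and whose location $\hat\zeta_n$ is itself random}, while simultaneously keeping the weight $\tilde\epsilon_n = \hat\epsilon_{n,j}$ (which is unbounded, with only a third moment) under control. The third-moment assumption $\mathbb{P}(|\epsilon|^3) < \infty$ is what one exploits here — truncating $\hat\epsilon_{n,j}$ at level $n^{1/6}$ (say), bounding the truncated part by a VC-class maximal inequality, and bounding the tail contribution by a crude first-moment estimate times the conditional probability that $Z^*$ lands in the $O(1/n)$-interval, which is $O_{\cp{}}(1/n)$ on average. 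Everything else — the lower-order terms $\hat A_n - A_n^*$ etc.\ involving $\sqrt n\,\mathbb{P}_n^*(\tilde\epsilon_n \ind{Z \le \zeta_n \wedge (\zeta_n + h_3/n)}) - \sqrt n\,\mathbb{P}_n^*(\tilde\epsilon_n\ind{Z\le\zeta_n})$ — telescopes into the same shrinking-interval quantities and is handled identically. Once these ingredients are in place the conclusion follows by collecting terms exactly as in the proof of Lemma \ref{l14}, now read conditionally along a.e.\ $\omega$.
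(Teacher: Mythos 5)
Your route is fundamentally the same as the paper's — reduce to Lemma \ref{l14} by checking (I)--(IV) almost surely and (\ref{ec48})--(\ref{ec49}) via Lemma \ref{l5} — but you introduce two unnecessary complications and one imprecision. First, Lemma \ref{l5} gives convergence \emph{in probability}, not along a.e.\ sample path: the statements $\sqrt{n}\,\mathbb{P}_n(\ldots)\to 0$ do not hold w.p.\ 1, so one cannot literally evaluate them "along a.e.\ $\omega$." The correct mechanism, which the paper states explicitly, is the subsequence trick: fix any subsequence, pass to a further subsequence along which (\ref{ec48}) and (\ref{ec49}) hold almost surely, and on that sub-subsequence apply Lemma \ref{l14} conditionally on $\mathfrak{X}$ (the hypotheses of Lemma \ref{l14} are deterministic conditions on the $\mathbb{Q}_n$-sequence, so conditioning on the data turns $\mathbb{Q}_n = \mathbb{P}_n$ into a deterministic sequence and the conclusion $\|\hat E_n - E_n^*\|_K\cip 0$ into $\cp{\|\hat E_n - E_n^*\|_K>\epsilon}\to 0$). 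Since this works for every subsequence, the full-sequence statement $\ccipip 0$ follows. You half-acknowledge this with "indeed a.s.\ along a subsequence," but the initial claim that the conditions hold w.p.\ 1 is a genuine slip. Second, the truncation of $\hat\epsilon_{n,j}$ at $n^{1/6}$ is a detour: Lemma \ref{lKP} (Kim--Pollard) already delivers a maximal inequality whose bound is $J_{\mathscr{A}}(1)m_n^{-1/2}\sqrt{\mathbb{Q}_n(B^2)}$ with $B$ the unbounded envelope, and the square-integrability of $\tilde\epsilon_n$ under $\mathbb{Q}_n=\mathbb{P}_n$ — not third moments or truncation — is all that is used in the proof of Lemma \ref{l14}. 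Third, and relatedly, there is no need to re-run the proof of Lemma \ref{l14} conditionally: since its hypotheses are conditions on deterministic sequences $\mathbb{Q}_n,\theta_n$, the lemma can be invoked directly, pathwise. In short: your proposal reaches the right conclusion by the right key lemmas, but the three-line argument — verify (I)--(IV) a.s.\ (done in Proposition \ref{convs1}), invoke Lemma \ref{l5} for (\ref{ec48})--(\ref{ec49}) in probability, then argue along subsequences and apply Lemma \ref{l14} — already suffices.
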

\begin{proof}
We already know that conditions (I)-(IV) hold w.p. 1 under this bootstrap scheme. But Lemma \ref{l5} implies that (\ref{ec48}) and (\ref{ec49}) hold in probability. Hence, this result follows by arguing through subsequences and applying Lemma \ref{l14}.
\end{proof}
It is evident that condition (VI) doesn't hold in this situation as we know that
\begin{equation}\label{ecultya}
n\mathbb{P}_n (\zeta_0-\frac{\eta}{n} < Z \leq \zeta_0 + \frac{\delta}{n}) \rightsquigarrow \textrm{Poisson} \big(f(\zeta_0)(\delta+\eta)\big).
\end{equation}
Hence, we cannot use Proposition \ref{pg3} to derive the limit behavior of $h_n^*$.

We will now argue that $E_n^*$, and therefore $\hat{E}_n$, {\it does not have any weak limit} in probability. This statement should be thought in terms of the Prokhorov metric (or any other metric metrizing weak convergence on $\mathcal{D}_K$). If we denote by $\rho_K$ the Prokhorov metric on the space of probability measures on $\mathcal{D}_K$ and by $\mu_n$ the conditional distribution of $E_n^*$ given $\mathfrak{X}$, to say that $(E_n^*)_{n=1}^\infty$ has no weak limit in probability means that there is no probability measure $\mu$ defined on $\mathcal{D}_K$ such that $\function{\rho_K}{\mu_n,\mu} \cip 0$.

The following lemma (proved in Section \ref{prueba21}) will help us show that the (conditional) characteristic functions corresponding to the finite dimensional distributions of $E_n^*$ fail to have a limit in probability, which would, in particular, imply that $E_n^*$ does not have a weak limit in probability.

\begin{lemma}\label{l19}
The following statements hold:
\begin{enumerate}[(i)]
\item For any two real numbers $s<t$, $\left\{n \mathbb{P}_n(\zeta_0+\frac{s}{n}<Z\leq\zeta_0 + \frac{t}{n})\right\}_{n=1}^\infty$ does not converge in probability.

\item There is $h_*>0$ such that for any $h\geq h_*$, the sequences \\ $\left\{n \mathbb{P}_n( \hat{\zeta}_n< Z \leq \hat{\zeta}_n + \frac{h}{n})\right\}_{n=1}^\infty$ and $\left\{ n \mathbb{P}_n(\hat{\zeta}_n - \frac{h}{n}< Z \leq\hat{\zeta}_n)\right\}_{n=1}^\infty$ do not converge in probability.

\item For any two real numbers $s<t$ and any measurable function $\phi:\mathbb{R}\rightarrow\mathbb{R}$, $\left\{n \mathbb{P}_n(\phi(Y)\ind{\zeta_0+\frac{s}{n}<Z\leq\zeta_0 + \frac{t}{n}})\right\}_{n=1}^\infty$ does not converge in probability.

\item Let $\phi$ be a measurable function which is either nonnegative or nonpositive and such that $\phi(\epsilon+\alpha_0)$ and $\phi(\epsilon+\beta_0)$ are nonconstant random variables with finite second moment. Then, there is $h_*>0$ such that for any $h\geq h_*$ \\ $\left\{n \mathbb{P}_n(\phi(Y)\ind{ \hat{\zeta}_n< Z \leq \hat{\zeta}_n + \frac{h}{n}})\right\}_{n=1}^\infty$ and $\left\{ n \mathbb{P}_n(\phi(Y)\ind{\hat{\zeta}_n - \frac{h}{n}< Z \leq\hat{\zeta}_n})\right\}_{n=1}^\infty$ do not converge in probability.
\end{enumerate}
\end{lemma}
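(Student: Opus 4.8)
The plan is to prove, for each of the four sequences, that it \emph{fails to be Cauchy in probability} — which is enough, since any sequence of random variables that converges in probability is Cauchy in probability. Concretely, for a sequence $W_n$ I would exhibit $\epsilon>0$ and subsequences $a_k<b_k$ with $b_k/a_k\to\infty$ along which $(W_{a_k},W_{b_k})\rightsquigarrow(V,V')$ in $\mathbb{R}^2$, with $V$ and $V'$ \emph{independent} copies of a non-degenerate random variable; then $V-V'$ is symmetric and is not a point mass at $0$, so for a suitable $\epsilon>0$ (a continuity point of the law of $|V-V'|$) the portmanteau theorem gives $\liminf_k\mathbf{P}(|W_{a_k}-W_{b_k}|\ge\epsilon)\ge\mathbf{P}(|V-V'|>\epsilon)>0$, which would contradict $W_n$ converging in probability. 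Since (i) and (ii) are the special cases $\phi\equiv1$ of (iii) and (iv), it suffices to treat (iii) and (iv); write $\lambda:=f(\zeta_0)>0$.

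\emph{Parts (i) and (iii) (anchor at the deterministic point $\zeta_0$).} Here $W_n=n\mathbb{P}_n\bigl(\phi(Y)\mathbf{1}_{\zeta_0+s/n<Z\le\zeta_0+t/n}\bigr)=\sum_{i=1}^n\phi(Y_i)\mathbf{1}_{Z_i\in I_n}$ with $I_n:=(\zeta_0+s/n,\zeta_0+t/n]$. Expanding $\phi(Y_i)$ through (\ref{ec1}) according to the sign of $Z_i-\zeta_0$ and applying the marked version of the Poisson limit theorem behind (\ref{ecultya}), one obtains $W_n\rightsquigarrow W_\infty$, a compound-Poisson-type variable (for part (i), $W_\infty\sim\mathrm{Poisson}(\lambda(t-s))$); under the hypotheses on $\phi$ — which hold in particular for the functions appearing in condition (VI), using $\mathbb{P}(|\epsilon|^3)<\infty$ — $W_\infty$ is non-degenerate. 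Now pick $a_k<b_k$ with $b_k/a_k\to\infty$ and split the sum defining $W_{b_k}$ at index $a_k$: since $|I_{b_k}|=(t-s)/b_k$, we have $\mathbf{E}\bigl[\sum_{i\le a_k}|\phi(Y_i)|\mathbf{1}_{Z_i\in I_{b_k}}\bigr]=a_k\,\mathbb{P}\bigl(|\phi(Y)|\mathbf{1}_{Z\in I_{b_k}}\bigr)=O(a_k/b_k)\to0$, so $W_{b_k}=\sum_{a_k<i\le b_k}\phi(Y_i)\mathbf{1}_{Z_i\in I_{b_k}}+o_{\mathbf{P}}(1)$. The leading term depends only on $X_{a_k+1},\dots,X_{b_k}$, hence is independent of $W_{a_k}$ (a function of $X_1,\dots,X_{a_k}$), and it has the same weak limit $W_\infty$. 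Therefore $(W_{a_k},W_{b_k})\rightsquigarrow(V,V')$ with $V,V'$ independent copies of $W_\infty$, and the first paragraph finishes the argument.

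\emph{Parts (ii) and (iv) (anchor at the random point $\hat\zeta_n$).} The skeleton is unchanged, but now $W_n=n\mathbb{P}_n\bigl(\phi(Y)\mathbf{1}_{\hat\zeta_n<Z\le\hat\zeta_n+h/n}\bigr)$ (and the analogous left-window quantity) is built around $\hat\zeta_n$, which depends on all of $X_1,\dots,X_n$; breaking this coupling is the main obstacle. I would handle it by localization. By Corollary \ref{corolario}, $n(\hat\zeta_n-\zeta_0)=O_{\mathbf{P}}(1)$; moreover — this is exactly the mechanism that makes $h_n^*=\sargmax\hat E_n$ converge through Lemma \ref{l16} and Proposition \ref{pg3} — up to an $o_{\mathbf{P}}(1)$ error and outside an event of probability $<\varepsilon(C)$ with $\varepsilon(C)\downarrow0$, both $n(\hat\zeta_n-\zeta_0)$ and $W_n$ are measurable functions of the \emph{local} configuration $\{(n(Z_i-\zeta_0),\,Y_i):i\le n,\ |n(Z_i-\zeta_0)|\le C\}$, which converges weakly to the marked two-sided Poisson process underlying $\Xi$ in (\ref{ec45}). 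Hence $W_n\rightsquigarrow W_\infty$, where $W_\infty$ is the ($\phi$-weighted) count of limiting Poisson points in a window of length $h$ with endpoint at $\phi_3^*$ (in the notation of Lemma \ref{l17}); the condition $h\ge h_*$ in (ii) (respectively the sign, moment and nondegeneracy conditions on $\phi$ in (iv)) is what guarantees that $W_\infty$ is non-degenerate. Finally, with $a_k<b_k$ and $b_k/a_k\to\infty$, the expected number of $X_1,\dots,X_{a_k}$ lying within $C/b_k$ of $\zeta_0$ is $O(a_k/b_k)\to0$, so with probability tending to $1$ the local configuration at scale $1/b_k$ governing $(n(\hat\zeta_{b_k}-\zeta_0),W_{b_k})$ is carried by indices in $(a_k,b_k]$, hence asymptotically independent of $(n(\hat\zeta_{a_k}-\zeta_0),W_{a_k})$, which is carried by indices $\le a_k$. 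This yields $(W_{a_k},W_{b_k})\rightsquigarrow(V,V')$ with $V,V'$ independent copies of $W_\infty$, and we conclude as before.

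\emph{Where the difficulty lies.} Outside parts (ii)--(iv) everything is routine: a Poisson limit theorem plus a block-splitting independence argument. The substantive step is the localization invoked above — that the $O(1/n)$-scale object $n(\hat\zeta_n-\zeta_0)$, and the local count $W_n$, depend (up to negligible error) only on the data in an $O(1/n)$-neighbourhood of $\zeta_0$. This is in essence already established in Section \ref{pgs}, since it is precisely what drives the weak convergence of $\hat E_n$ and of its smallest argmax, so I would quote it from there rather than redo it; the only genuinely new verification is the non-degeneracy of $W_\infty$, which is why the hypotheses ``$h\ge h_*$'' and the conditions on $\phi$ are imposed in the statement.
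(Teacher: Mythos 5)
For parts (i) and (iii) your block-splitting / failure-of-Cauchy-in-probability argument is correct and genuinely different from the paper's. The paper instead invokes the Hewitt--Savage $0$--$1$ law: $n\mathbb{P}_n(\zeta_0+\frac{s}{n}<Z\le\zeta_0+\frac{t}{n})$ (and the $\phi$-marked version) is a permutation-invariant function of the i.i.d.\ sample, so any a.s.-convergent subsequence must have a constant limit, which contradicts the non-degenerate Poisson (resp.\ compound-Poisson) weak limit supplied by Lemma~\ref{l16}. Your argument is more elementary and self-contained (no exchangeability theory), and would in fact yield a slightly stronger conclusion --- the lag-splitting makes the ``not Cauchy in probability'' statement explicit; the paper's route is shorter given that Lemma~\ref{l16} is already in hand. (Minor point: you implicitly assume $\phi$ is non-trivial; the paper's statement as worded omits this too.)

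For parts (ii) and (iv) there is a genuine gap. The localization claim you lean on --- that, up to $o_{\mathbf{P}}(1)$ error and outside an event of small probability, $n(\hat\zeta_n-\zeta_0)$ and $W_n$ are measurable functions of the local configuration $\{(n(Z_i-\zeta_0),Y_i):|n(Z_i-\zeta_0)|\le C\}$ --- is not ``already established in Section~\ref{pgs}''. Section~\ref{pgs} proves weak convergence of $\hat E_n$ and distributional convergence of its smallest argmax; it makes no measurability or conditional claim about which $\sigma$-algebra the argmax approximately lives in. In particular $\hat\zeta_n$ maximizes the profile with $\hat\alpha_n,\hat\beta_n$ plugged in, and those depend on the whole sample; one would have to separately show that replacing $(\hat\alpha_n,\hat\beta_n)$ by $(\alpha_0,\beta_0)$ perturbs the $\zeta$-argmax only on an event of vanishing probability and that the reduced argmax is a function of the local data. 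That is plausible (it is the mechanism behind the independence of $\phi_3^*$ from $\phi_1^*,\phi_2^*$ in Lemma~\ref{l17}) but it is a nontrivial new step, not a citation. The paper avoids it entirely: on the high-probability event $n|\hat\zeta_n-\zeta_0|\le B_\delta$ it sandwiches the random-anchor count $\hat T_k$ between two $\zeta_0$-anchored counts $T_k\le\hat T_k\le S_k$, uses Lemma~\ref{l18} (resp.\ Lemma~\ref{l18bis} for (iv)) to separate quantiles of the Poisson (resp.\ compound-Poisson) limits of $T_k$ and $S_k$ --- this quantile-separation requirement is what actually forces $h\ge h_*$, rather than any non-degeneracy of your $W_\infty$, which would hold for all $h>0$ --- and then applies Hewitt--Savage to the permutation-invariant events $\{\hat T_k>N_{2,h}\}$ and $\{\hat T_k\le N_{1,h}\}$ to rule out a.s.\ convergence along any subsequence.
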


With the aid of Lemma \ref{l19} we are now able to state our main result.

\begin{lemma}\label{l21}
There is a compact rectangle $K\subset\mathbb{R}^3$ such that neither $\hat{E}_n$ nor $E_n^*$ has a weak limit in probability in $\mathcal{D}_K$.
\end{lemma}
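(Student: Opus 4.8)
The plan is to reduce the statement about weak convergence of the $\mathcal{D}_K$-valued random elements $E_n^*$ (and $\hat E_n$) to a statement about finite-dimensional distributions, and then use Lemma~\ref{l19} to obstruct convergence of even a single one-dimensional marginal. The key principle is the standard fact that if $\mu_n \to \mu$ in the Prokhorov metric $\rho_K$ on $\mathcal{D}_K$, then for any fixed $h \in K$ the evaluation map $\Psi \mapsto \Psi(h)$ (which is measurable, and continuous at $\Psi$ whenever $h$ is a continuity point of $\Psi$; since the limit process $E^*$ has, a.s., only countably many discontinuities in the third coordinate, one can choose $h$ so that evaluation is a.s.\ continuous) pushes $\mu_n$ to the law of $E_n^*(h)$, which would then have to converge weakly in $\mathbb{R}$. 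Equivalently, the conditional characteristic function $\ce{e^{i\xi E_n^*(h)}}$ would have to converge in probability for each $\xi$. So it suffices to exhibit one $h \in \mathbb{R}^3$ and one $\xi \in \mathbb{R}$ for which $\ce{e^{i\xi E_n^*(h)}}$ does \emph{not} converge in probability.

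First I would pick $h = (0,0,h_3)$ with $h_3 < 0$ and $|h_3| \geq h_*$, where $h_*$ is as in Lemma~\ref{l19}(iv); with $h_1 = h_2 = 0$, the processes $\hat A_n$, $\hat B_n$, $\hat D_n$ and $A_n^*, B_n^*, D_n^*$ all vanish, so $E_n^*(0,0,h_3) = C_n^*(h_3)$, which is an explicit affine function of the two quantities
\[
n\mathbb{P}_n^*\!\left(\tilde\epsilon_n \ind{\hat\zeta_n + \frac{h_3}{n} < Z \leq \hat\zeta_n}\right) \quad\text{and}\quad n\mathbb{P}_n^*\!\left(\ind{\hat\zeta_n + \frac{h_3}{n} < Z \leq \hat\zeta_n}\right),
\]
with coefficients $-2(\alpha_n - \beta_n) = -2(\hat\alpha_n - \hat\beta_n)$ and $-(\hat\alpha_n - \hat\beta_n)^2$. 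Since $\hat\alpha_n - \hat\beta_n \to \alpha_0 - \beta_0 \neq 0$ a.s., and since conditionally on $\mathfrak X$ the bootstrap empirical measure $\mathbb{P}_n^*$ concentrates (for these $O(1/n)$-width windows) around $\mathbb{P}_n$ in a way that can be controlled, the distribution of $C_n^*(h_3)$ given $\mathfrak X$ is essentially determined by the (non-random given $\mathfrak X$) quantities $n\mathbb{P}_n(\phi(Y)\ind{\hat\zeta_n + \frac{h_3}{n} < Z \leq \hat\zeta_n})$ for $\phi(y) = 1$ and $\phi(y) = y - \hat\alpha_n \ind{\cdot} - \hat\beta_n\ind{\cdot}$, plus a conditionally-Poisson-type randomization. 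The point is that Lemma~\ref{l19}(ii) and (iv) say precisely that these driving quantities do not converge in probability, and one then checks that this non-convergence propagates to $\ce{e^{i\xi C_n^*(h_3)}}$: along one subsequence the conditional law of $C_n^*(h_3)$ looks like one compound-Poisson-type law, along another it looks like a genuinely different one (different intensity), and the corresponding characteristic functions stay bounded away from each other with non-vanishing probability.

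To make the last step precise I would argue by contradiction: suppose $\ce{e^{i\xi C_n^*(h_3)}} \cip g(\xi)$ for all $\xi$ in a neighborhood of $0$. Passing to a subsequence along which the convergence is a.s., $g$ would be (a.s.) the characteristic function of the conditional weak limit; but computing $\ce{e^{i\xi C_n^*(h_3)}}$ explicitly — conditionally on $\mathfrak X$, $n\mathbb{P}_n^*$ of a fixed finite collection of disjoint $O(1/n)$-cells is a multinomial/Poissonized sum, so the conditional characteristic function is an explicit function of the cell-probabilities $n\mathbb{P}_n(\cdots)$ — shows $g(\xi)$ must be a fixed continuous function of the limit of $n\mathbb{P}_n(\hat\zeta_n + \frac{h_3}{n} < Z \le \hat\zeta_n)$ (and of the associated $\tilde\epsilon_n$-weighted count), and this limit does not exist in probability by Lemma~\ref{l19}. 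Choosing $\xi$ so that the map is locally injective gives the contradiction. Finally, since $\hat E_n - E_n^* \ccipip 0$ in $\mathcal{D}_K$ by Lemma~\ref{l20}, the two sequences have the same weak-limit-in-probability behavior, so $\hat E_n$ has no weak limit in probability either.

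The main obstacle I expect is the second-to-last step: carefully disentangling the bootstrap randomization from the randomness in $\mathfrak X$, i.e.\ showing that the conditional characteristic function of $C_n^*(h_3)$ is, up to a term going to $0$ in probability, a deterministic and (locally) injective function of the non-convergent empirical quantities from Lemma~\ref{l19}. This requires a small-window local analysis of $\mathbb{P}_n^*$ (a conditional CLT/Poisson approximation for the bootstrap counts in $O(1/n)$-neighborhoods of $\hat\zeta_n$), together with the observation that the variance of the Poissonized count is itself governed by the same non-convergent quantity — so there is no way for the conditional law to stabilize.
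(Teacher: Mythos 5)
Your strategy matches the paper's: Lemma~\ref{l20} plus Slutsky reduces the problem to $E_n^*$; you then pick a single marginal $E_n^*(0,0,h_3)$ (you take $h_3<0$ so only $C_n^*$ survives, the paper takes $h_3>0$ so only $D_n^*$ survives --- an inconsequential sign choice) and examine its conditional characteristic function. Your remark that evaluation at a continuity point of the third coordinate is a.s.\ continuous under any candidate limit law is correct and is the justification, left implicit in the paper, for passing from non-convergence of a one-dimensional marginal to non-convergence in $\mathcal{D}_K$.

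The gap is in the step you yourself flag as the ``main obstacle.'' No Poissonization or small-window local CLT is needed: because $\mathbb{P}_n^*$ is the empirical measure of an i.i.d.\ sample from $\mathbb{P}_n$ given $\mathfrak X$, the conditional characteristic function is \emph{exactly}
$\ce{e^{i\xi E_n^*(0,0,h_3)}}=\bigl(1+\tfrac{1}{n}\,x_n\bigr)^n$ with
$x_n = n\mathbb{P}_n\bigl((e^{i\xi(\hat\alpha_n-\hat\beta_n)(2\tilde\epsilon_n + \hat\beta_n - \hat\alpha_n)}-1)\ind{\cdots}\bigr)$,
so everything reduces to showing $x_n$ does not converge in probability. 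Your ``locally injective function of non-convergent quantities'' reasoning leaves open the possibility of cancellation: both $n\mathbb{P}_n(\ind{\cdots})$ and $n\mathbb{P}_n(\tilde\epsilon_n\ind{\cdots})$ fail to converge, but this does not rule out some particular combination stabilizing. Moreover, Lemma~\ref{l19}(iv) requires a \emph{fixed} measurable $\phi:\mathbb{R}\to\mathbb{R}$ that is sign-definite; the weight $\tilde\epsilon_n = y-\hat\alpha_n\ind{\cdot}-\hat\beta_n\ind{\cdot}$ is neither fixed nor sign-definite. The paper's missing ingredient is: first replace $(\hat\alpha_n,\hat\beta_n)$ by $(\alpha_0,\beta_0)$ inside the exponential (legal because $n\mathbb{P}_n(\ind{\cdots})=O_\mathbf{P}(1)$, so the error is $o_\mathbf{P}(1)$), and then pass to the real part $\phi(y)=\mathrm{Re}\bigl(e^{i\xi(2y-\beta_0-\alpha_0)}-1\bigr)=\cos\bigl(\xi(2y-\beta_0-\alpha_0)\bigr)-1\le 0$, which \emph{is} a fixed, bounded, nonpositive function for which $\phi(\epsilon+\alpha_0)$ and $\phi(\epsilon+\beta_0)$ are nonconstant when $\xi\ne 0$. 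Lemma~\ref{l19}(iv) then applies directly to $n\mathbb{P}_n(\phi(Y)\ind{\cdots})$, and non-convergence of $\mathrm{Re}(x_n)$ gives non-convergence of $x_n$. Without identifying this specific nonpositive $\phi$, the argument does not close.
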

\begin{proof} Since Lemma \ref{l20} and Slutsky's lemma show that $\hat{E}_n$ has a weak limit in probability if and only if  $E_n^*$ has a weak limit in probability, it suffices to argue that the statement is true for $E_n^*$. To prove this, it is enough to show that there is some $h_3$ such that $E_n^* (0,0,h_3)$ does not converge in distribution. Pick $h_3 >0$ and observe that
\[ E_n^* (0,0,h_3) = (\hat{\alpha}_n - \hat{\beta}_n)\left(n\mathbb{P}_n^* \left[(2\tilde{\epsilon}_n - \hat{\alpha}_n + \hat{\beta}_n) \ind{\hat{\zeta}_n<Z\leq \hat{\zeta}_n + \frac{h_3}{n}}\right]\right).\]
Since $\hat{\alpha}_n - \hat{\beta}_n\cas \alpha_0 - \beta_0 \neq 0$ we see that $E_n^* (0,0,h_3)$ will converge weakly in probability if and only if $\Lambda_n := n \mathbb{P}_n^* \left[(2\tilde{\epsilon}_n - \hat{\alpha}_n + \hat{\beta}_n)\ind{\hat{\zeta}_n<Z\leq \hat{\zeta}_n + \frac{h_3}{n}}\right]$ converges weakly in probability.

The conditional characteristic function of $\Lambda_n$ is given by
\begin{equation}\label{ecrev1}
 \ce{e^{i\xi\Lambda_n}} = \left(1+\frac{1}{n}n\mathbb{P}_n\left((\textrm{e}^{i\xi(2\tilde\epsilon_n +\hat{\beta}_n - \hat{\alpha}_n)} -1)\ind{\hat{\zeta}_n<Z\leq \hat{\zeta}_n + \frac{h_3}{n}}\right)\right)^n,
\end{equation}
which converges in probability if and only if so does
$$n\mathbb{P}_n\left((\textrm{e}^{i\xi(2\tilde\epsilon_n +\hat{\beta}_n - \hat{\alpha}_n)} -1)\ind{\hat{\zeta}_n<Z\leq \hat{\zeta}_n + \frac{h_3}{n}}\right).$$.
But note that
\[n\mathbb{P}_n\left((\textrm{e}^{i\xi(2\tilde\epsilon_n +\hat{\beta}_n - \hat{\alpha}_n)} -1)\ind{\hat{\zeta}_n<Z\leq \hat{\zeta}_n + \frac{h_3}{n}}\right) = n\mathbb{P}_n\left((\textrm{e}^{i\xi(2Y - \hat{\beta}_n - \hat{\alpha}_n)} -1)\ind{\hat{\zeta}_n<Z\leq \hat{\zeta}_n + \frac{h_3}{n}}\right).\]
It is easily seen that (\ref{ecultya}) and the fact that $n(\hat\zeta_n - \zeta_0) = O_\mathbf{P}(1)$ imply that \[n\mathbb{P}_n\left(\ind{\hat{\zeta}_n<Z\leq \hat{\zeta}_n + \frac{h_3}{n}}\right) = O_\mathbf{P}(1).\]
Hence,
$$\left|n\mathbb{P}_n\left((\textrm{e}^{i\xi(2Y - \hat{\beta}_n - \hat{\alpha}_n)} -1)\ind{\hat{\zeta}_n<Z\leq \hat{\zeta}_n + \frac{h_3}{n}}\right) - n\mathbb{P}_n\left((\textrm{e}^{i\xi(2Y - \beta_0 - \alpha_0)} -1)\ind{\hat{\zeta}_n<Z\leq \hat{\zeta}_n + \frac{h_3}{n}}\right)\right|$$
\[ \leq n\mathbb{P}_n\left(\ind{\hat{\zeta}_n<Z\leq \hat{\zeta}_n + \frac{h_3}{n}}\right)(|\hat{\alpha}_n - \alpha_0| + |\hat{\beta}_n - \beta_0|)|\xi|\cip 0.\]
It follows that $\ce{e^{i\xi\Lambda_n}}$ has a limit in probability if and only if $$n\mathbb{P}_n\left((\textrm{e}^{i\xi(2Y - \beta_0 - \alpha_0)} -1)\ind{\hat{\zeta}_n<Z\leq \hat{\zeta}_n + \frac{h_3}{n}}\right)$$
has a limit in probability. But a necessary condition for the latter to happen is that its real part,
\[n\mathbb{P}_n\left(\textrm{Re}(\textrm{e}^{i\xi(2Y - \beta_0 - \alpha_0)} -1)\ind{\hat{\zeta}_n<Z\leq \hat{\zeta}_n + \frac{h_3}{n}}\right)\]
converges in probability. Since $\textrm{Re}(\textrm{e}^{i\xi(2Y - \beta_0 - \alpha_0)} -1)\leq 0$ we can conclude from (iv) of Lemma \ref{l19} that $n\mathbb{P}_n\left(\textrm{Re}(\textrm{e}^{i\xi(2Y - \beta_0 - \alpha_0)} -1)\ind{\hat{\zeta}_n<Z\leq \hat{\zeta}_n + \frac{h_3}{n}}\right)$ does not converge in probability for all $h_3\geq h_*$ for some $h_*>0$ large enough. This in turn implies that, for all $h_3\geq h_*$, the conditional characteristic function in (\ref{ecrev1}) does not converge in probability and hence $E_n^* (0,0,h_3)$ has no weak limit in probability.

Hence, if $K$ is any compact rectangle containing $(0,0,h_*)$ the finite dimensional dimensional distributions of $E_n^*$ on $K$ do not have a weak limit in probability. Therefore, $E_n^*$ does not have a weak limit in probability on $\mathcal{D}_K$. \end{proof}

Note that
\[ \left(\sqrt{n}(\alpha_n^* - \hat{\alpha}_n),\sqrt{n}(\beta_n^* - \hat{\beta}_n),n(\zeta_n^* - \hat{\zeta}_n)\right) = \sargmax_{h\in\mathbb{R}^3} \left\{ \hat{E}_n(h)\right\}. \] Thus, the fact that the sequence $(\hat{E}_n)_{n=1}^\infty$ doesn't have a weak limit in probability makes the existence of a weak limit in probability for $n(\zeta_{n}^* - \hat{\zeta}_n)$ very unlikely. However, we do not have the a rigorous mathematical proof this statement. The main difficulty in such a proof is that the argmax functional is non-linear and that $\hat E_n$ depends on $h_3$ through indicator functions that do not converge in the limit.  \newline

{\bf Remark:} It must be noted in this connection that the bootstrap scheme estimates the distribution of $(\sqrt{n}(\alpha_n^* - \hat{\alpha}_n),\sqrt{n}(\beta_n^* - \hat{\beta}_n))$ correctly, and in fact, valid bootstrap based inference can be conducted to obtain CIs for $\alpha_0$ and $\beta_0$. This follows from the fact that, asymptotically, the maximizers of $\hat E_n (\cdot,\cdot,h_3)$ do not depend on $h_3$ (see the expressions for $\hat A_n^*$, $\hat B_n$, $A_n^*$, $B_n^*$).\newline

We next provide an alternative additional argument that illustrates the inconsistency of the ECDF bootstrap. Our approach is similar to that of \cite{kost} and relies on the asymptotic {\it unconditional} behavior of $$\tilde \Delta_n^* := (\sqrt{n}(\alpha_n^* - \alpha_0),\sqrt{n}(\beta_n^* - \beta_0), n(\zeta_n^* - \zeta_0)).$$ For $h \in \mathbb{R}^3$, we write $\tilde \vartheta_{n,h} := \theta_0 + \left(\frac{h_1}{\sqrt{n}},\frac{h_2}{\sqrt{n}},\frac{h_3}{n}\right)$ and
\begin{eqnarray}
\tilde E_n(h) := n\mathbb{P}_n^* \left[m_{\tilde \vartheta_{n,h}} - m_{\theta_0} \right].\label{ecdereff4}
\end{eqnarray}
This corresponds to centering the objective function around $\theta_0$. As in (\ref{ec44}), we can define the processes
\begin{equation}\label{ecsins}
\tilde{\Xi}_n (t) = \left(\begin{array}{c}
\tilde \Xi_n^{(1)} (t)\\
\tilde \Xi_n^{(2)} (t)\\
\tilde \Xi_n^{(3)} (t)\\
\tilde \Xi_n^{(4)} (t)\\
\tilde \Xi_n^{(5)} (t)\\
\tilde \Xi_n^{(6)} (t)
\end{array}\right) := \left(\begin{array}{c}
\sqrt{n}\mathbb{P}_n^*(\epsilon\ind{Z\leq \zeta_0})\\
\sqrt{n}\mathbb{P}_n^*(\epsilon_n\ind{Z> \zeta_0})\\
n\mathbb{P}_n^*(\ind{\zeta_0 + \frac{t}{n}<Z\leq \zeta_0})\\
n\mathbb{P}_n^*(\epsilon\ind{\zeta_0 + \frac{t}{n}<Z\leq \zeta_0})\\
n\mathbb{P}_n^*(\ind{\zeta_0<Z\leq\zeta_0 + \frac{t}{n}})\\
n\mathbb{P}_n^*(\epsilon\ind{\zeta_0<Z\leq\zeta_0 + \frac{t}{n}})
\end{array}\right)
\end{equation}
and just as in that case, we can also define the process $\tilde E_n^*$ by
\begin{eqnarray}
\tilde E_n^*(h)  & := & 2h_1 \tilde\Xi_n^{(1)}(h_3) - h_1^2 \mathbb{P}_n^*(Z\leq \zeta_0) + 2h_2 \tilde\Xi_n^{(2)}(h_3) - h_2^2 \mathbb{P}_n^*(Z>\zeta_0)\nonumber\\
& & \qquad + \ 2(\beta_0 - \alpha_0)\tilde\Xi_n^{(4)}(h_3) - (\alpha_0 - \beta_0)^2\tilde\Xi_n^{(3)}(h_3)\nonumber\\
& & \qquad +\ 2(\alpha_0 - \beta_0)\tilde\Xi_n^{(6)}(h_3) - (\alpha_0 - \beta_0)^2\Xi_n^{(5)}(h_3).\nonumber
\end{eqnarray}
Then, it can be shown that $\tilde{E}_n - \tilde E_n^* \cip 0 $ in $\mathcal{D}_K$ for any compact rectangle $K\subset\mathbb{R}^3$ and that the sequence $(\tilde E_n^*)_{n=1}^\infty$ is tight in $\mathcal{D}_K$.

In what follows we will describe the limiting distribution of $\tilde E_n^*$, namely $\tilde E^*$, and show that the (unconditional) asymptotic distribution of $\tilde \Delta_n^*$ is that of the smallest argmax of $\tilde E^*$. This result will help us show that the ECDF bootstrap is inconsistent.

We start by introducing some notation. Recall the definitions of the random elements $\mathbf{Z}_1$, $\mathbf{Z}_2$, $\nu_1$, $\nu_2$, $\mathbf{u}$ and $\mathbf{v}$ as in the discussion preceding (\ref{ec45}). Also let $\mathbf{\tau}=(\tau_n)_{n =1}^\infty$ and $\mathbf{\kappa}=(\kappa_n)_{n=1}^\infty$ two sequences of i.i.d. Poisson(1) random variables. Assume, in addition, that $\mathbf{Z}_1$, $\mathbf{Z}_2$, $\nu_1$, $\nu_2$, $\mathbf{v}$, $\mathbf{u}$, $\mathbf{\tau}$ and $\mathbf{\kappa}$ are all mutually independent. Then, define the process $\tilde \Xi = (\tilde \Xi^{(1)},\ldots,\tilde\Xi^{(6)})'$ as
\begin{equation}
\tilde \Xi (t) := \left(\begin{array}{c}
\mathbf{Z}_1\\
\mathbf{Z}_2\\
\sum_{0<j\leq \nu_1(-t)} \kappa_j \ind{t < 0}\\
\sum_{0<j\leq \nu_1(-t)} v_j\kappa_j \ind{t < 0}\\
\sum_{0<j\leq \nu_2(t)} \tau_j\ind{t \geq 0}\\
\sum_{0<j\leq \nu_2(t)} u_j\tau_j\ind{t \geq 0}
\end{array}\right) \label{ecdreff2}
\end{equation}
for $t \in \mathbb{R}$ and let $\tilde E^*$ be given by
\begin{eqnarray}
\tilde E^*(h)  &=& 2h_1 \tilde \Xi^{(1)}(h_3) - h_1^2 \mathbb{P}(Z\leq \zeta_0) + 2h_2 \tilde \Xi^{(2)}(h_3) - h_2^2 \mathbb{P}(Z>\zeta_0)\nonumber\\
 & &\qquad +\ 2(\beta_0 - \alpha_0)\tilde\Xi^{(4)}(h_3) - (\alpha_0 - \beta_0)^2\tilde\Xi^{(3)}(h_3)\nonumber\\
 & &\qquad +\ 2(\alpha_0 - \beta_0)\tilde\Xi^{(6)}(h_3) - (\alpha_0 - \beta_0)^2\tilde\Xi^{(5)}(h_3)\label{ecdreff3}
\end{eqnarray}
for $h = (h_1,h_2,h_3) \in \mathbb{R}^3$. Additionally define the $\mathcal{S}$--valued (pure jump) processes $\tilde{J}_n$, $\tilde J_n^*$ and $\tilde J^*$ as
\begin{eqnarray}
\tilde J_n^*(t) = \tilde{J}_n (t) & := & n\mathbb{P}_n^* (\ind{\zeta_0 + \frac{t}{n}< Z \leq \zeta_0})
+ n\mathbb{P}_n^* (\ind{\zeta_0 < Z \leq \zeta_0 + \frac{t}{n}}), \label{ecdereff5}\\
\tilde J^* (t) & := & \nu_1(-t)\ind{t<0} + \nu_2(t) \ind{t\geq 0}.\label{ecdereff6}
\end{eqnarray}

Lemma \ref{lsho} (proved in Section \ref{plsho}) now states the asymptotic distribution of $\tilde{E}_n$ and of $n(\zeta^*_n - \zeta_0)$.

\begin{lemma}\label{lsho}
Consider the processes $\tilde \Xi_n$, $\tilde{E}_n$, $\tilde{J}_n$, $\tilde \Xi$, $\tilde E^*$ and $\tilde J^*$ as defined in (\ref{ecsins}), (\ref{ecdereff4}), (\ref{ecdereff5}), (\ref{ecdreff2}), (\ref{ecdreff3}) and (\ref{ecdereff6}), respectively. Then, unconditionally,
\begin{enumerate}[(i)]
\item $\tilde\Xi_n\rightsquigarrow\tilde\Xi$ in $\mathbb{R}^2\times\mathcal{D}_I^4$ for any compact interval $I\subset\mathbb{R}$;
\item $(\tilde{E}_n,\tilde{J}_n)\rightsquigarrow (\tilde E^*,\tilde J^*)$ in $\mathcal{D}_K\times\mathcal{S}_I$ for any compact interval $I\subset\mathbb{R}$ and any compact rectangle $K=A\times B\times I\subset\mathbb{R}^3$;
\item $\tilde \Delta_n^* = \sargmax_{h\in\mathbb{R}^3}\{\tilde{E}_n(h)\}\rightsquigarrow \sargmax_{h\in\mathbb{R}^3}\{\tilde E^*(h)\}$.
\end{enumerate}
As a consequence, if the ECDF bootstrap is consistent, the variance of $\sargmax_{h\in\mathbb{R}^3}\{\tilde E^*(h)\}$ must be twice that of $\sargmax_{h\in\mathbb{R}^3}\{E^*(h)\}$.
\end{lemma}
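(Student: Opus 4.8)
The plan is to prove parts (i)–(iii) in sequence, mirroring the architecture already developed for $E_n^*$ in Lemmas \ref{l15}--\ref{l17} and Proposition \ref{pg3}, but now tracking the fact that each bootstrap index is drawn with a Poisson$(1)$ multiplicity in the limit. For part (i), I would first handle the two finite-dimensional coordinates $\tilde\Xi_n^{(1)},\tilde\Xi_n^{(2)}$: since $\mathbb{P}_n^*$ is the ECDF of an i.i.d. resample, $\sqrt{n}\mathbb{P}_n^*(\epsilon\ind{Z\le\zeta_0})$ and $\sqrt{n}\mathbb{P}_n^*(\epsilon\ind{Z>\zeta_0})$ converge unconditionally to independent normals $\mathbf{Z}_1,\mathbf{Z}_2$ with the stated variances by a CLT for the (unconditional) bootstrap, using $\mathbb{P}(|\epsilon|^3)<\infty$. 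For the four càdlàg coordinates, the key computation is that for fixed $t>0$, $n\mathbb{P}_n(\zeta_0<Z\le\zeta_0+\tfrac{t}{n})$ counts the (random) number of original $Z_i$ falling in a window of length $t/n$, which converges to $\nu_2(t)\sim$ Poisson$(f(\zeta_0)t)$; and conditionally on there being $k$ such points, the resampled count $n\mathbb{P}_n^*(\zeta_0<Z\le\zeta_0+\tfrac{t}{n})$ is a sum of $n$ i.i.d. Bernoulli$(k/n)$ variables, hence asymptotically Poisson with mean equal to that count — which is precisely the thinned/compounded structure $\sum_{0<j\le\nu_2(t)}\tau_j$ in \eqref{ecdreff2}. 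The $Y$-weighted coordinates follow by attaching to the $j$-th original point its response $Y\sim\epsilon+\alpha_0$ or $\epsilon+\beta_0$ and noting $2Y-\alpha_0-\beta_0$ has the same law as $2\epsilon\mp(\alpha_0-\beta_0)$; one then identifies the limit with $\sum u_j\tau_j$ after absorbing constants into the $E^*$ expression. Convergence of all finite-dimensional distributions jointly, plus the tightness already asserted for $(\tilde E_n^*)$, upgrades this to process convergence in $\mathbb{R}^2\times\mathcal{D}_I^4$.

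For part (ii), I would first invoke the (stated) facts that $\tilde E_n-\tilde E_n^*\cip 0$ in $\mathcal{D}_K$ and that $(\tilde E_n^*)$ is tight, so it suffices to show $(\tilde E_n^*,\tilde J_n)\rightsquigarrow(\tilde E^*,\tilde J^*)$. But $\tilde E_n^*$ is an explicit continuous functional of the vector process $\tilde\Xi_n$ (and of $\mathbb{P}_n^*(Z\le\zeta_0)\to\mathbb{P}(Z\le\zeta_0)$ a.s.), namely exactly the formula defining $\tilde E_n^*$ paralleling \eqref{ecdreff3}; likewise $\tilde J_n$ is the sum of the third and fifth coordinates of $\tilde\Xi_n$. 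So part (ii) is a continuous-mapping consequence of part (i), provided the relevant addition/evaluation maps are continuous at the limit — which holds on the Skorohod space for the jump-type limits here because $\nu_1,\nu_2$ a.s. have no jump at any fixed point and the coordinates share jump locations. I would then cite Lemma \ref{l16}-style reasoning (or Section \ref{app2}) for the joint convergence with the jump process $\tilde J_n$.

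For part (iii), the identity $\tilde\Delta_n^*=\sargmax_{h}\tilde E_n(h)$ holds by definition of $\theta_n^*$ and the centering in \eqref{ecdereff4}, so I would apply the argmax continuous mapping theorem (Lemma \ref{l11}) to the pair $(\tilde E_n,\tilde J_n)\rightsquigarrow(\tilde E^*,\tilde J^*)$. This requires: (a) that $\sargmax \tilde E^*$ is a.s. well-defined and tight — which follows exactly as in Lemma \ref{l17}, since $\tilde E^*$ has the same qualitative structure (concave quadratic in $h_1,h_2$; a two-sided compound-Poisson-driven drift in $h_3$ that $\to-\infty$), the only change being that the jumps now carry Poisson$(1)$-weighted batches, still leaving a well-separated argmax; and (b) a rate/argmax-tightness input for $\tilde\Delta_n^*$, which one gets from a rate-of-convergence argument analogous to Proposition \ref{pg2} applied unconditionally to the bootstrap empirical measure. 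Finally, the closing assertion: if the ECDF bootstrap were consistent, then $\sargmax\tilde E^*$ would be the convolution of the law of $\sargmax E^*$ with an independent copy of itself (consistency means the conditional law of $n(\zeta_n^*-\hat\zeta_n)$ matches that of $n(\hat\zeta_n-\zeta_0)$ in probability, and $n(\zeta_n^*-\zeta_0)=n(\zeta_n^*-\hat\zeta_n)+n(\hat\zeta_n-\zeta_0)$ with the two pieces asymptotically independent by the usual bootstrap argument); hence its variance must be twice that of $\sargmax E^*$. One then observes — this is the punchline feeding the inconsistency claim — that the compound structure in \eqref{ecdreff2} does \emph{not} produce a doubled variance, contradicting consistency.

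The main obstacle I anticipate is part (i), specifically making rigorous the double-limit interchange that produces the compound-Poisson-of-Poisson object $\sum_{0<j\le\nu_1(-t)}\kappa_j$: one must show that the random, slowly-growing multiplicities $k_n := n\mathbb{P}_n(\zeta_0<Z\le\zeta_0+\tfrac{t}{n})$ and the resampling noise decouple in the limit, i.e. that $\mathrm{Bin}(n,k_n/n)$ is well-approximated by $\mathrm{Poisson}(k_n)$ uniformly enough to pass to the limit jointly across finitely many values of $t$ and jointly with the $Y$-marks; a careful argument through conditioning on the original sample, a Poisson approximation bound (e.g. Le Cam / Stein–Chen), and a subsequence argument should close this, but it is the delicate step. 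A secondary technical point is verifying the Skorohod-continuity hypotheses of Lemma \ref{l11} for $\tilde E^*$, which is handled as in Lemma \ref{l17}.
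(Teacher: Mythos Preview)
Your proposal is correct and follows the same overall architecture as the paper: part (i) is the workhorse, parts (ii) and (iii) follow by continuous mapping and the argmax machinery of Section~\ref{app2}, and the final assertion uses the decomposition $n(\zeta_n^*-\zeta_0)=n(\zeta_n^*-\hat\zeta_n)+n(\hat\zeta_n-\zeta_0)$ together with asymptotic independence (the paper cites Lemma~3.1 of \cite{sebawo} and Theorem~2.2 of \cite{kost} for this step, which you label ``the usual bootstrap argument'').

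The genuine difference is in how you attack part (i). You propose to condition on the original data, use a Poisson approximation bound (Le Cam/Stein--Chen) to control $\mathrm{Bin}(n,k/n)$ by $\mathrm{Poisson}(k)$ uniformly in the random $k$, and then pass to a joint limit via a subsequence argument. The paper instead computes the \emph{unconditional} characteristic function directly: writing $\Lambda_n=\ce{e^{i\xi n\mathbb{P}_n^*(\epsilon\ind{\zeta_0<Z\le\zeta_0+t/n})}}$ (a bounded random variable depending only on the data), one has $\Lambda_n=(1+\Psi_n/n)^n$ with $\Psi_n=n\mathbb{P}_n\big((e^{i\xi\epsilon}-1)\ind{\zeta_0<Z\le\zeta_0+t/n}\big)$; the weak limit of the complex-valued $\Psi_n$ is identified by a standard compound-Poisson computation on the \emph{original} sample, whence $\Lambda_n\rightsquigarrow e^{\Psi_\xi^*}$ by continuous mapping, and since $|\Lambda_n|\le 1$ dominated convergence gives $\e{\Lambda_n}\to\e{e^{\Psi_\xi^*}}$, which is then recognized as the characteristic function of $\sum_{j\le\nu_2(t)}u_j\tau_j$. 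This route sidesteps precisely the uniformity issue you flagged as the main obstacle: there is no double-limit interchange to justify, only a single weak-convergence statement about $\Psi_n$ (a functional of the original data alone) followed by DCT on a bounded sequence. Your approach would also work if executed carefully, but is more laborious for exactly the reason you anticipated; the paper's characteristic-function trick is the cleaner way to handle the nested randomness of the ECDF bootstrap.
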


As analytic expressions for the asymptotic variances of $n(\zeta^*_n - \zeta_0)$ and $n(\hat{\zeta}_n - \zeta_0)$ are not known, we use simulations to compute them. As an illustration, we take $\epsilon\sim N(0,1)$, $Z\sim N(0,1)$, $\alpha_0 = -1$, $\beta_0 = 1$ and $\zeta_0=0$ in (\ref{ec1}). We approximate the limiting variances with the sample variances computed from 20,000 observations from each of the two asymptotic distributions. Our results are summarized in the following table, which immediately shows that the asymptotic variance of $n(\zeta^*_n - \zeta_0)$ is not twice that of $n(\hat{\zeta}_n - \zeta_0)$. Thus the ECDF bootstrap cannot be consistent.
\begin{center}
\begin{tabular}{|c|c|}
\hline
Random variable & Asymptotic Variance \\
\hline
$n(\hat{\zeta}_n - \zeta_0)$ & 7.620948 \\
\hline
$n(\zeta^*_n - \zeta_0)$ & 63.98377\\
\hline
\end{tabular}
\end{center}

\subsection{Scheme 2 (Bootstrapping ``residuals'')}
Another resampling procedure that arises naturally in a regression setup is bootstrapping ``residuals''. As with scheme 1, bootstrapping the ``residuals'' fixing the covariates is also {\it inconsistent}. Heuristically speaking, the resampling distribution fails to approximate the density of the predictor at the change-point $\zeta_0$ at rate-$n$, and this leads to the inconsistency.

We recall the notation of Section 2. There we described the basic elements of the traditional fixed-design bootstrap of residuals and how to compute the bootstrap estimates $\theta_n^*$. We first show that these bootstrap estimators converge conditionally in probability (almost surely) to the true value of the parameter. Then, we will provide a strong argument against the consistency of this bootstrap scheme. For notational convenience, we introduce the process $R_n$ given by \[ R_n(\theta) := -\frac{1}{n}\sum_{j=1}^n \left( Y_{n,j}^{*} - \alpha \mathbf{1}_{Z_j\leq \zeta} - \beta \mathbf{1}_{Z_j> \zeta}\right)^2\ \ \forall\ \theta\in\Theta.\]

We start by showing that the ``centered'' empirical distribution for the least squares residuals, $\mathbb{P}_n^\epsilon$, converges to the distribution of $\epsilon$ in total variation distance with probability one and its second moment is an almost surely consistent estimator of $\sigma^2$. This lemma will also be useful for the analysis of the smoothed bootstrap procedure. The proof can be found in Section \ref{prueba11}.
\begin{lemma}\label{l7}
Let $G$ and $\varphi$ be, respectively, the distribution and characteristic functions of $\epsilon$. Then,
\begin{enumerate}[(i)]
\item for any $\eta>0$ we have that
$\displaystyle \sup_{|\xi|\leq \eta}\left\{\left|\int e^{i\xi x}d\mathbb{P}_n^\epsilon (x) - \function{\varphi}{\xi}\right|\right\}\cas 0;$
\item $\left\|\mathbb{P}_n^\epsilon - G\right\|_\mathbb{R} \cas 0$;
\item $\displaystyle \int x^2 d\mathbb{P}_n^\epsilon(x) \cas \sigma^2$;
\item $\displaystyle \int |x|d\mathbb{P}_n^\epsilon (x) \cas \mathbb{P}(|\epsilon|)$;
\item if $\epsilon$ has a finite third moment under $\mathbb{P}$, then
\[ \lsup_{n\rightarrow\infty} \int |x|^3 d \mathbb{P}_n^{\epsilon} (x) <\infty\ \ \ \textrm{almost surely}.\]
\end{enumerate}
\end{lemma}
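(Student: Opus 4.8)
The plan is to decompose each least squares residual as $\hat\epsilon_{n,j} = \epsilon_j + r_{n,j}$, where $\epsilon_j := Y_j - \alpha_0\ind{Z_j\le\zeta_0} - \beta_0\ind{Z_j>\zeta_0}$ is the true error of (\ref{ec1}) and $r_{n,j}$ is a remainder controlled by the distance between $\hat\theta_n$ and $\theta_0$, and then to reduce every assertion to the classical strong law for the i.i.d.\ sequence $\{\epsilon_j\}$ after proving that this remainder is asymptotically negligible in an averaged sense. First I would note, using (\ref{ec1}), that $r_{n,j} = \alpha_0 - \hat\alpha_n$ on $\{Z_j\le\zeta_0\wedge\hat\zeta_n\}$, $r_{n,j} = \beta_0 - \hat\beta_n$ on $\{Z_j>\zeta_0\vee\hat\zeta_n\}$, and $|r_{n,j}|\le|\alpha_0|+|\beta_0|+|\hat\alpha_n|+|\hat\beta_n|$ on the (random) transition interval with endpoints $\zeta_0$ and $\hat\zeta_n$; hence, for $p = 1,2,3$,
\[\frac1n\sum_{j=1}^n|r_{n,j}|^p \le |\alpha_0-\hat\alpha_n|^p + |\beta_0-\hat\beta_n|^p + \big(|\alpha_0|+|\beta_0|+|\hat\alpha_n|+|\hat\beta_n|\big)^p\,\emp{\zeta_0\wedge\hat\zeta_n < Z \le \zeta_0\vee\hat\zeta_n}.\]
By Lemma \ref{l3}$(iii)$ we have $\hat\theta_n\cas\theta_0$, so the first two terms vanish; since $\mathcal F$ is Glivenko--Cantelli (VC--subgraph with envelope $1$, as in the proof of Proposition \ref{convs1}), $\|\mathbb P_n - \mathbb P\|_{\mathcal F}\cas 0$, and $\function{\mathbb P}{\zeta_0\wedge\hat\zeta_n < Z \le \zeta_0\vee\hat\zeta_n}\le\function{\mathbb P}{|Z-\zeta_0| < |\hat\zeta_n-\zeta_0|}\to 0$ because $F$ is continuous, so the last term vanishes a.s.\ as well. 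This would give $\frac1n\sum_j|r_{n,j}|^p\cas 0$ for $p = 1,2,3$, and, by the SLLN, $\bar\epsilon_n = \frac1n\sum_j\epsilon_j + \frac1n\sum_j r_{n,j}\cas 0$.

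With this in hand, parts $(iii)$, $(iv)$ and $(v)$ would follow by elementary expansions. Writing $\hat\epsilon_{n,j} - \bar\epsilon_n = \epsilon_j + r_{n,j} - \bar\epsilon_n$ and using the SLLN and the Cauchy--Schwarz inequality, $\int x^2\,d\mathbb P_n^\epsilon = \frac1n\sum_j\epsilon_j^2 + \frac2n\sum_j\epsilon_j r_{n,j} + \frac1n\sum_j r_{n,j}^2 - \bar\epsilon_n^2\cas\sigma^2$; likewise $\big|\int|x|\,d\mathbb P_n^\epsilon - \frac1n\sum_j|\epsilon_j|\big|\le\frac1n\sum_j|r_{n,j}| + |\bar\epsilon_n|\cas 0$ together with $\frac1n\sum_j|\epsilon_j|\cas\mathbb P(|\epsilon|)$ gives $(iv)$; and $\int|x|^3\,d\mathbb P_n^\epsilon\le C\big(\frac1n\sum_j|\epsilon_j|^3 + \frac1n\sum_j|r_{n,j}|^3 + |\bar\epsilon_n|^3\big)$ for a universal $C$, whose $\lsup$ is at most $C\,\mathbb P(|\epsilon|^3)<\infty$ a.s.\ when $\epsilon$ has a finite third moment, which is $(v)$.

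For $(i)$ I would write $\int e^{i\xi x}\,d\mathbb P_n^\epsilon(x) = e^{-i\xi\bar\epsilon_n}\,\frac1n\sum_j e^{i\xi\hat\epsilon_{n,j}}$ and control the three ingredients: $\sup_{|\xi|\le\eta}\big|\frac1n\sum_j e^{i\xi\hat\epsilon_{n,j}} - \frac1n\sum_j e^{i\xi\epsilon_j}\big|\le\eta\,\frac1n\sum_j|r_{n,j}|\cas 0$, since $|e^{i\xi r_{n,j}} - 1|\le|\xi|\,|r_{n,j}|$; the empirical characteristic function $\frac1n\sum_j e^{i\xi\epsilon_j}$ converges to $\varphi$ uniformly on $|\xi|\le\eta$ a.s.\ by a standard Glivenko--Cantelli argument (for fixed $\eta$ the classes $\{\cos(\xi\,\cdot):|\xi|\le\eta\}$ and $\{\sin(\xi\,\cdot):|\xi|\le\eta\}$ have finite $L^1$ bracketing numbers with respect to $G$ because $\mathbb P(|\epsilon|)<\infty$); and $|e^{-i\xi\bar\epsilon_n} - 1|\le\eta|\bar\epsilon_n|\cas 0$. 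Part $(ii)$ would then follow from $(i)$: taking $\eta = 1,2,\dots$, for a.e.\ $\omega$ the characteristic functions of $\mathbb P_n^\epsilon$ converge pointwise on $\mathbb R$ to $\varphi$, hence $\mathbb P_n^\epsilon\rightsquigarrow G$ a.s.\ by the Lévy continuity theorem, and since $G$ is continuous ($\epsilon$ being a continuous random variable) Pólya's theorem upgrades this to $\sup_t|\mathbb P_n^\epsilon((-\infty,t]) - G((-\infty,t])|\cas 0$; the supremum over all intervals is at most twice this, which gives $(ii)$.

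I expect the main obstacle to be the first step: on the transition interval between $\hat\zeta_n$ and $\zeta_0$ the remainder $r_{n,j}$ does \emph{not} go to $0$ (there it is of order $|\alpha_0 - \beta_0| + o(1)$), so the negligibility of $\frac1n\sum_j|r_{n,j}|^p$ has to be wrung entirely from the fact that this random interval has empirical measure tending to $0$ -- which is where the Glivenko--Cantelli property of $\mathcal F$ and the almost sure consistency of $\hat\zeta_n$ from Lemma \ref{l3} enter. Everything after that is bookkeeping layered on the classical strong law and standard properties of empirical characteristic functions.
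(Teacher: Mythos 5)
Your proposal is correct and follows essentially the same route as the paper: decompose $\hat\epsilon_{n,j}=\epsilon_j+r_{n,j}$ with $\frac1n\sum_j|r_{n,j}|^p\to 0$ a.s.\ (the paper arrives at the same $\mathbb P_n(|\tilde\epsilon_n-\epsilon|)\cas 0$ bound via Lemmas \ref{l3} and \ref{l12}), use the factorization $\int e^{i\xi x}\,d\mathbb P_n^\epsilon = e^{-i\xi\bar\epsilon_n}\mathbb P_n(e^{i\xi\tilde\epsilon_n})$ for $(i)$, deduce $(ii)$ from $(i)$ via continuity of $G$, and handle $(iii)$--$(v)$ by direct expansion and the SLLN. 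The only cosmetic difference is that the paper invokes the Glivenko--Cantelli convergence of the error ECDF $\mathbb G_n$ to $G$ to get uniform convergence of the empirical characteristic function, whereas you supply the bracketing argument explicitly; both are standard and equivalent here.
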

The next result (proved in Section \ref{prueba17}) shows that the bootstrapped least squares estimators converge conditionally in probability with probability one.
\begin{prop}\label{pfd1} Let $K\subset\Theta$ be a compact rectangle. Then,
\begin{enumerate}[(i)]
\item $\left\|R_n + \mathbb{P}_n^*(\tilde{\epsilon}_n^2) - M_n - \sigma^2\right\|_K \ccipas 0$;
\item $\left\|R_n + \mathbb{P}_n^*(\tilde{\epsilon}_n^2) - M - \sigma^2\right\|_K \ccipas 0$;
\item $\theta_n^*\ccipas\theta_0$ and $\theta_n^*-\hat{\theta}_n \ccipas 0$.
\end{enumerate}
where $M_n$ and $M$ are defined as in (\ref{eq:m_thetabis}) and the subsequent paragraph.
\end{prop}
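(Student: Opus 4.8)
The plan is to verify that the residual bootstrap fits into the triangular-array framework of Section~\ref{pgs} with $m_n = n$, $\mathbb{Q}_n$ the joint law (conditional on $\mathfrak{X}$) of a single bootstrapped observation $(Y_{n,1}^*, Z_1^{\mathrm{rand}})$ — more precisely, since the covariates are fixed, I would work directly with the empirical-process objects $R_n(\theta)$ and $\mathbb{P}_n^*(\tilde\epsilon_n^2)$ rather than a literal $\mathbb{Q}_n$, but the logic mirrors Proposition~\ref{pg1}. First I would establish (i): expanding the square in $R_n(\theta)$ and using $Y_{n,j}^* = \hat\alpha_n \ind{Z_j \le \hat\zeta_n} + \hat\beta_n \ind{Z_j > \hat\zeta_n} + \epsilon_{n,j}^*$, one writes $R_n(\theta) + \mathbb{P}_n^*(\tilde\epsilon_n^2)$ as $M_n(\theta)$ plus a cross term that is linear in the $\epsilon_{n,j}^*$ and a term $\mathbb{P}_n^\epsilon(x^2) - \sigma^2$ (after identifying $\mathbb{P}_n^*(\tilde\epsilon_n^2)$ with $\frac1n\sum (\epsilon_{n,j}^*)^2 = \mathbb{P}_n^\epsilon(x^2)$ up to the centering, noting $\tilde\epsilon_n$ evaluated at the bootstrap data equals $\epsilon_{n,j}^*$ because $\theta_n = \hat\theta_n$). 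The term $\mathbb{P}_n^\epsilon(x^2) - \sigma^2 \cas 0$ by Lemma~\ref{l7}(iii). The cross term, uniformly over $\theta$ in the compact $K$, is handled by a conditional Glivenko--Cantelli / maximal-inequality argument: conditionally on $\mathfrak{X}$, the $\epsilon_{n,j}^*$ are i.i.d.\ mean-zero (they are centered residuals) with uniformly bounded second moment (Lemma~\ref{l7}(iii)), and the relevant function class is the VC class $\mathcal{F}$ of interval indicators times the covariate values; so the supremum over $K$ of the cross term converges conditionally in probability to $0$, almost surely. This gives (i).

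For (ii), I would simply combine (i) with Lemma~\ref{l3}(i), which gives $\|M_n - M\|_K \cas 0$; the triangle inequality then yields $\|R_n + \mathbb{P}_n^*(\tilde\epsilon_n^2) - M - \sigma^2\|_K \ccipas 0$. For (iii), the point is that $\theta \mapsto R_n(\theta) + \mathbb{P}_n^*(\tilde\epsilon_n^2)$ is, up to the additive constant $\sigma^2$ (which does not affect the argmax) and up to the uniformly-small discrepancy from (ii), the population objective $M(\theta)$, whose smallest argmax is the well-separated maximizer $\theta_0$ (here I would invoke the well-separatedness of $\theta_0$ as a maximizer of $M$, which is part of the setup in \cite{koss} and underlies Proposition~\ref{pg1}). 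A standard argmax argument — the estimator $\theta_n^*$ maximizes $R_n$, hence (adding the constant) nearly maximizes something uniformly close to $M$, so it must land near $\theta_0$ — combined with the identifiability/well-separation, gives $\theta_n^* \ccipas \theta_0$. Since $\hat\theta_n \cas \theta_0$ by Lemma~\ref{l3}(iii), the second assertion $\theta_n^* - \hat\theta_n \ccipas 0$ follows immediately from the first by the triangle inequality. Throughout, the passage from ``$\cas 0$ of $\mathbf{P}_\mathfrak{X}$-probabilities'' statements is routine once the conditional convergence is in place.

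The main obstacle is the uniform control of the cross term in (i): one must show that $\sup_{\theta\in K}\bigl|\frac2n\sum_{j=1}^n \epsilon_{n,j}^*\,(\ind{Z_j\le\zeta} - \ind{Z_j\le\hat\zeta_n})\alpha + \cdots\bigr| \ccipas 0$, and the subtlety is that the summands involve the random residuals $\epsilon_{n,j}^*$ (whose conditional law changes with $n$) paired with a VC class indexed by $\theta$. The clean route is a symmetrization plus contraction argument conditional on $\mathfrak{X}$: bound the conditional expected supremum by a multiple of the conditional Rademacher complexity of the class $\{(\ind{z\le\zeta})_j : \zeta\}$ scaled by $(\frac1n\sum (\epsilon_{n,j}^*)^2)^{1/2}$, which is $O(n^{-1/2})$ times a term converging a.s.\ to $\sigma$; the VC dimension being finite makes the Rademacher complexity $O(\sqrt{(\log n)/n})$, which vanishes. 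An alternative is to reduce to the i.i.d.\ case by noting $\mathbb{P}_n^\epsilon$ converges in total variation to the law of $\epsilon$ (Lemma~\ref{l7}(ii)) and coupling, but the symmetrization argument is self-contained and I would prefer it. Once this uniform bound is secured, the rest of the proof is bookkeeping with the triangle inequality and the already-established Lemmas~\ref{l3} and~\ref{l7}.
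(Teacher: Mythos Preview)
Your overall plan matches the paper's structure, but the paper handles the key technical step --- uniform control of the cross terms $\sup_{\zeta}\bigl|\frac{1}{n}\sum_j \epsilon_{n,j}^*\ind{Z_j\le\zeta}\bigr|$ --- by a much more direct device than symmetrization. It sorts the $Z_j$'s and observes that, conditionally on $\mathfrak{X}$, the partial sums $S_k=\sum_{i\le k}\epsilon_{n,r_i}^*$ (with $r_i$ the index of the $i$-th order statistic of the $Z$'s) form a square-integrable martingale with mean zero; Doob's maximal inequality then gives $\mathbf{P}_\mathfrak{X}\bigl(\max_{k}|S_k|/n>\rho\bigr)\le \mathbb{P}_n(\tilde\epsilon_n^2)/(n\rho^2)\cas 0$ in one stroke. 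Your symmetrization-plus-contraction route is not quite right as stated: the multipliers $\epsilon_{n,j}^*$ are unbounded, so the Ledoux--Talagrand contraction principle does not apply and you would need a genuine multiplier inequality (cf.\ \cite{vw}, Section~2.9), which carries an extra moment hypothesis. The Doob argument uses only second moments and exploits the fixed-design structure cleanly. (A minor point: your decomposition of $R_n+\mathbb{P}_n^*(\tilde\epsilon_n^2)-M_n-\sigma^2$ omits a second, $\mathfrak{X}$-measurable cross term in the \emph{actual} residuals $\hat\epsilon_{n,j}$; it vanishes a.s.\ by Lemma~\ref{l12} applied with $\mathbb{Q}_n=\mathbb{P}_n$, but it is present.)

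There is a genuine gap in your argument for (iii). The uniform approximations (i)--(ii) are proved only on \emph{compact} rectangles $K$, while $\Theta=\mathbb{R}^2\times[a,b]$ is unbounded in $(\alpha,\beta)$; a well-separation/argmax argument alone does not force $\theta_n^*$ into a compact set. The paper first establishes $|\gamma_n^*-\hat\gamma_n|=O_{\mathbf{P}_\mathfrak{X}}(1)$ almost surely by the same quadratic-inequality device used in Proposition~\ref{pg1}: from $R_n(\hat\theta_n)\le R_n(\theta_n^*)$ one extracts a bound of the form
\[
|\gamma_n^*-\hat\gamma_n|^2\bigl(\mathbb{P}_n(Z<a)\wedge\mathbb{P}_n(Z>b)\bigr)\le O_{\mathbf{P}_\mathfrak{X}}(1)+|\gamma_n^*-\hat\gamma_n|\,o_{\mathbf{P}_\mathfrak{X}}(1).
\]
Only with this tightness in hand can one invoke Corollary~3.2.3(ii) of \cite{vw} (which is what the paper does) to conclude $\theta_n^*\ccipas\theta_0$.
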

Consider the following process \[ \hat{E}_n (h) = -\sum_{j=1}^n \left( Y_{n,j}^{*} - \left(\hat{\alpha_n}+\frac{h_1}{\sqrt{n}}\right)\mathbf{1}_{Z_j\leq \hat{\zeta}_n +\frac{h_3}{n}} - \left(\hat{\beta_n}+\frac{h_2}{\sqrt{n}}\right)\mathbf{1}_{Z_j> \hat{\zeta}_n +\frac{h_3}{n}}\right)^2 +\sum_{j=1}^n(\epsilon_{n,j}^*)^2. \]
Then for $n$ large enough we have that
\[ \left(\sqrt{n}(\alpha_n^* - \hat{\alpha}_n),\sqrt{n}(\beta_n^* - \hat{\beta}_n),n(\zeta_n^* - \hat{\zeta}_n)\right) = \sargmax_{h\in\mathbb{R}^3} \left\{ \hat{E}_n(h)\right\}. \]
Next we argue that the sequence $(\hat{E}_n)_{n=1}^\infty$ does not have a weak limit in probability and therefore distributional convergence of their corresponding smallest minimizers seems unreasonable. We refer the reader to Section \ref{prueba19} for a complete proof of the statement.

\begin{lemma}\label{pfd3}
There is a compact rectangle $K\subset\mathbb{R}^3$ such that the sequence of processes $(\hat{E}_n)_{n=1}^\infty$ does not have a weak limit in probability in $\mathcal{D}_K$.
\end{lemma}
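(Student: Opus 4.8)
The plan is to mimic the proof of Lemma~\ref{l21}. Let $h_*>0$ be the constant produced by Lemma~\ref{l19}(ii), fix any $\xi\neq 0$, and take $K=[-1,1]^2\times[h_*,h_*+1]$. It is enough to show that for \emph{every} $h_3\in[h_*,h_*+1]$ the conditional characteristic function $\ce{e^{i\xi \hat E_n(0,0,h_3)}}$ fails to converge, in $\mathbf P$-probability, to any limit. Indeed, if $\hat E_n$ had a weak limit in probability, say $\mu$, in $\mathcal{D}_K$, then for all but countably many $h_3\in[h_*,h_*+1]$ the evaluation map $W\mapsto W(0,0,h_3)$ would be $\mu$-a.s.\ continuous on $\mathcal{D}_K$ (a c\`adl\`ag path has at most countably many jump levels, so by Fubini only countably many levels receive positive mass), and for such an $h_3$ the continuous-mapping theorem forces the conditional law of $\hat E_n(0,0,h_3)$ to converge weakly in probability, hence its conditional characteristic function to converge in probability at every $\xi$ --- contradicting the non-convergence we establish for every $h_3\geq h_*$.

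The second step is a computation exploiting the fixed design. Fix $h_3>0$, set $S_n:=\{1\le j\le n:\ \hat\zeta_n<Z_j\le \hat\zeta_n+\tfrac{h_3}{n}\}$ and $N_n:=\# S_n = n\mathbb{P}_n(\hat\zeta_n<Z\le\hat\zeta_n+\tfrac{h_3}{n})$. For $j\notin S_n$ the fitted value does not change when the threshold moves from $\hat\zeta_n$ to $\hat\zeta_n+\tfrac{h_3}{n}$, so the corresponding term in the definition of $\hat E_n(0,0,h_3)$ cancels against $(\epsilon_{n,j}^*)^2$; for $j\in S_n$ one has $Y_{n,j}^*=\hat\beta_n+\epsilon_{n,j}^*$ while the fitted value at $\hat\zeta_n+\tfrac{h_3}{n}$ equals $\hat\alpha_n$, so the residual there is $\epsilon_{n,j}^*+(\hat\beta_n-\hat\alpha_n)$ and the term becomes $-2(\hat\beta_n-\hat\alpha_n)\epsilon_{n,j}^*-(\hat\beta_n-\hat\alpha_n)^2$. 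Hence
\[
\hat E_n(0,0,h_3)=-2(\hat\beta_n-\hat\alpha_n)\sum_{j\in S_n}\epsilon_{n,j}^*-N_n(\hat\beta_n-\hat\alpha_n)^2 .
\]
Since $S_n$, $N_n$, $\hat\alpha_n$, $\hat\beta_n$ are $\mathfrak{X}$-measurable and, conditionally on $\mathfrak{X}$, the $\epsilon_{n,j}^*$ are i.i.d.\ $\mathbb{P}_n^\epsilon$, this gives
\[
\ce{e^{i\xi\hat E_n(0,0,h_3)}}=e^{-i\xi N_n(\hat\beta_n-\hat\alpha_n)^2}\Big(\textstyle\int e^{-2i\xi(\hat\beta_n-\hat\alpha_n)x}\,d\mathbb{P}_n^\epsilon(x)\Big)^{\!N_n},
\]
whose modulus is $r_n^{N_n}$ with $r_n:=\big|\int e^{-2i\xi(\hat\beta_n-\hat\alpha_n)x}\,d\mathbb{P}_n^\epsilon(x)\big|$. (Intuitively the obstruction is already visible in the conditional mean, which is $-N_n(\hat\beta_n-\hat\alpha_n)^2$ because $\mathbb{P}_n^\epsilon$ is centered; the characteristic-function version avoids an artificial uniform-integrability hypothesis.)

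Finally I would assemble the pieces. By Lemma~\ref{l3}(iii), $\hat\beta_n-\hat\alpha_n\to\beta_0-\alpha_0\neq 0$ a.s., and together with Lemma~\ref{l7}(i) this yields $r_n\to r:=|\varphi(-2\xi(\beta_0-\alpha_0))|$ a.s., where $\varphi$ is the characteristic function of $\epsilon$; since $\epsilon$ is continuous (hence non-lattice) and $\xi\neq 0$, $r<1$. Moreover $N_n=O_{\mathbf P}(1)$: for $c$ large one has $\{\hat\zeta_n<Z\le\hat\zeta_n+\tfrac{h_3}{n}\}\subseteq\{\zeta_0-\tfrac{c}{n}<Z\le\zeta_0+\tfrac{c}{n}\}$ with probability tending to one (using $n(\hat\zeta_n-\zeta_0)=O_{\mathbf P}(1)$), and $n\mathbb{P}_n(\zeta_0-\tfrac{c}{n}<Z\le\zeta_0+\tfrac{c}{n})\rightsquigarrow\mathrm{Poisson}(2cf(\zeta_0))$ by (\ref{ecultya}). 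A routine splitting on $\{N_n\le C\}$ shows $r_n^{N_n}-r^{N_n}\cip 0$; and if $r^{N_n}$ converged in probability then, since $r\in(0,1)$ and $\{N_n\}$ is tight (so $r^{N_n}$ stays bounded away from $0$ in probability), applying the continuous map $t\mapsto\log t/\log r$ would force $N_n$ to converge in probability, contradicting Lemma~\ref{l19}(ii) for $h_3\geq h_*$. Hence $r_n^{N_n}$, and therefore $\ce{e^{i\xi\hat E_n(0,0,h_3)}}$, does not converge in probability for any $h_3\geq h_*$, which proves the claim with the above $K$. The main obstacle is precisely this last transfer: one must make sure that the genuinely non-convergent but bounded integer count $N_n$ cannot be \emph{masked} by the nuisance factor $r_n$ or the unit-modulus phase, and this is exactly where the tightness of $N_n$ together with the strict bound $r<1$ (i.e.\ the non-latticeness of $\epsilon$, which would fail for discrete error laws) are indispensable; the passage from the scalar marginal to the full process in $\mathcal{D}_K$ is the subsidiary point, handled above via the uncountability of $[h_*,h_*+1]$.
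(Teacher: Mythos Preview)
Your proof is correct and follows essentially the same route as the paper's: compute $\hat E_n(0,0,h_3)$ explicitly for the fixed-design residual bootstrap, express its conditional characteristic function as $b_n^{N_n}$ with $N_n=n\mathbb{P}_n(\hat\zeta_n<Z\le\hat\zeta_n+h_3/n)$ and $b_n\to e^{-i\xi(\alpha_0-\beta_0)^2}\varphi(2(\alpha_0-\beta_0)\xi)$ a.s.\ (via Lemma~\ref{l7} and Lemma~\ref{l3}), and then invoke Lemma~\ref{l19}(ii). The paper's write-up simply asserts that ``this characteristic function will converge if and only if $N_n$ converges''; you make this transfer rigorous by passing to the modulus $r_n^{N_n}$, using tightness of $N_n$ to replace $r_n$ by $r=|\varphi(2(\alpha_0-\beta_0)\xi)|\in(0,1)$, and then inverting via $t\mapsto\log t/\log r$. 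You are also more careful than the paper about the step from a one-dimensional marginal to the process in $\mathcal{D}_K$ (your countable-jump-levels argument), whereas the paper just says it ``suffices'' to show non-convergence of $\hat E_n(0,0,h_3)$.
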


\section{Consistent bootstrap procedures}\label{cons}
Here we will prove that the ``smoothed bootstrap'' (scheme 3) and the $m$ out of $n$ bootstrap (scheme 4) procedures yield consistent methods for constructing confidence intervals around the parameters.

\subsection{Scheme 3 (Smoothed Bootstrap)}
To show that scheme 3 (smoothed bootstrap + bootstrapping residuals) achieves consistency we appeal to Propositions \ref{pg1}, \ref{pg2} and \ref{pg3} by proving that the regularity conditions (I)-(VIII) of Section 3 hold for this scheme. Recall the description of this bootstrap procedure given in Section \ref{BootsSchemes}. Let $\hat{f}_n$ and $\hat{F}_n$ be the estimated smoothed density and distribution function of $Z$, respectively. For $I := [c,d] \subset\mathbb{R}$, a compact interval such that $\zeta_0 \in (c,d)$, we require the following two properties of $\hat{f}_n$ and $\hat{F}_n$:
\begin{eqnarray}
\|\hat{F}_n - F\|_{\mathbb{R}} & \cas & 0 ; \label{eq:consF_n} \\
\|\hat{f}_n - f\|_{I} & \cas & 0. \label{ascci}
\end{eqnarray}
We would want to highlight that these conditions are fulfilled by many density estimation procedures. In particular, they hold when the density $f$ is continuous and we let $\hat{f}_n$ be the kernel density estimator constructed from a suitable choice of kernel and bandwidth (e.g., see \cite{sil}).

Let $\theta_n = \hat{\theta}_n$, $m_n=n$ and $\mathbb{Q}_n$ be the distribution that generates the bootstrap sample. Observe that under $\mathbb{Q}_n$, $\tilde{\epsilon}_n$ and $Z$ are independent and that $Z$ is a continuous random variable with density $\hat{f}_n$. The next result (proved in Section \ref{prueba15}) shows that the bootstrapped least squares estimators achieve the right rate of convergence.

\begin{prop}\label{cs3} If (\ref{eq:consF_n}) and (\ref{ascci}) hold, then w.p.1, the sequence of conditional distributions of
$\left(\sqrt{n}(\alpha_n^* - \hat{\alpha}_n),\sqrt{n}(\beta_n^* - \hat{\beta}_n),n(\zeta_n^* - \hat{\zeta}_n)\right)$ is tight.
\end{prop}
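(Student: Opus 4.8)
The plan is to verify that conditions (I)--(VIII) of Section \ref{pgs} hold with probability one for the sequence $\mathbb{Q}_n$ generating the smoothed bootstrap sample, with $m_n = n$ and $\theta_n = \hat\theta_n$; once this is done, Proposition \ref{pg2} (or, more precisely, the tightness component established inside the proof of Proposition \ref{pg3}) immediately gives the claimed tightness of the conditional distributions of $\bigl(\sqrt{n}(\alpha_n^*-\hat\alpha_n),\sqrt{n}(\beta_n^*-\hat\beta_n),n(\zeta_n^*-\hat\zeta_n)\bigr)$. Since all the hypotheses of those propositions are stated as deterministic conditions on a triangular array, the strategy is to work on the almost sure event (of probability one) on which Lemma \ref{l3}, Lemma \ref{l7}, and the assumptions (\ref{eq:consF_n})--(\ref{ascci}) all hold, and argue conditionally on $\mathfrak{X}$ along that event. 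On that event, $\mathbb{Q}_n$ is the law of $(Y^*,Z^*)$ where $Z^*\sim\hat F_n$, $\tilde\epsilon_n = Y^* - \hat\alpha_n\ind{Z^*\le\hat\zeta_n} - \hat\beta_n\ind{Z^*>\hat\zeta_n}$ is drawn from $\mathbb{P}_n^\epsilon$ independently of $Z^*$, and $\theta_n = \hat\theta_n$.

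First I would check (I)--(IV). Condition (IV), $\hat\theta_n\to\theta_0$, is exactly Lemma \ref{l3}(iii). For (I)--(III), using the product structure of $\mathbb{Q}_n$ (independence of $Z^*$ and $\tilde\epsilon_n$), one writes $\mathbb{Q}_n(\ind{I}(z)) = \hat F_n(I)$ and compares to $F(I) = \mathbb{P}(\ind I)$, using $\|\hat F_n - F\|_\infty\to 0$; similarly $\mathbb{Q}_n(y\,\ind I(z)) = \hat F_n(I)\,\mathbb{P}_n^\epsilon(\text{id}) + (\text{mass of }\hat\alpha_n,\hat\beta_n \text{ terms}) $, which one controls using $\mathbb{P}_n^\epsilon\to G$ in total variation (Lemma \ref{l7}(ii)), $\int |x|\,d\mathbb{P}_n^\epsilon \to \mathbb{P}(|\epsilon|)$ (Lemma \ref{l7}(iv)), $\int x^2\,d\mathbb{P}_n^\epsilon\to\sigma^2$ (Lemma \ref{l7}(iii)), and $\hat\alpha_n\to\alpha_0$, $\hat\beta_n\to\beta_0$. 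A small technical point here is that the classes $\mathcal{F},\mathcal{G},\mathcal{H}$ involve a supremum over all intervals and over $|\alpha|\le M$, so one needs the convergences to be uniform in the interval — which for the $Z^*$-part follows from $\|\hat F_n - F\|_\infty\to 0$ and for the $\tilde\epsilon_n$-part is trivial since the $\tilde\epsilon_n$-integral factors out. Condition (V) is, as the authors note just after its statement, automatic: (\ref{rccs2})--(\ref{rccs3}) hold because $Z^*$ and $\tilde\epsilon_n$ are independent with $\mathbb{Q}_n(\tilde\epsilon_n)=\int x\,d\mathbb{P}_n^\epsilon(x) = 0$ by construction of the centered residuals, and (\ref{rccs1}) holds by the mean value theorem because $\hat f_n\to f$ uniformly on the neighborhood $I$ of $\zeta_0$ (so $\hat f_n$ is bounded below by $\kappa/2 > 0$ there for $n$ large), exactly the remark following (V).

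For the weak-convergence-level conditions (VI)--(VIII): (VI) asks that $n\,\mathbb{Q}_n\bigl(\psi(\tilde\epsilon_n)\ind{\zeta_n - \delta/n < Z\le\zeta_n+\eta/n}\bigr)\to f(\zeta_0)(\delta+\eta)\mathbb{P}(\psi(\epsilon))$; by independence this factors as $\bigl[\mathbb{P}_n^\epsilon(\psi)\bigr]\cdot\bigl[n\,\hat F_n\bigl((\hat\zeta_n-\delta/n,\hat\zeta_n+\eta/n]\bigr)\bigr]$, the first factor converging to $\mathbb{P}(\psi(\epsilon))$ by Lemma \ref{l7}(i) for $\psi(x)=e^{i\xi x}$ and by Lemma \ref{l7}(iii)--(iv) for $\psi(x)=|x|^p$, and the second factor equal to $n\int_{\hat\zeta_n-\delta/n}^{\hat\zeta_n+\eta/n}\hat f_n(u)\,du \to f(\zeta_0)(\delta+\eta)$ by the mean value theorem together with $\hat f_n\to f$ uniformly near $\zeta_0$ and continuity of $f$ at $\zeta_0$ (note $\hat\zeta_n\to\zeta_0$, so eventually $[\hat\zeta_n-\delta/n,\hat\zeta_n+\eta/n]\subset I$). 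Condition (VII) again factors through $\mathbb{Q}_n(\tilde\epsilon_n\ind{Z\le\zeta_n}) = 0\cdot\hat F_n(\hat\zeta_n) = 0$ since $\mathbb{Q}_n(\tilde\epsilon_n)=0$, so both limits in (VII) are identically zero. Condition (VIII), $\limsup_n\mathbb{Q}_n(|\tilde\epsilon_n|^3) = \limsup_n\int|x|^3\,d\mathbb{P}_n^\epsilon(x) < \infty$, is exactly Lemma \ref{l7}(v) (using $\mathbb{P}(|\epsilon|^3)<\infty$). Having verified (I)--(VIII) on a probability-one event, Proposition \ref{pg3} applies conditionally on $\mathfrak{X}$ (and in particular the tightness of $h_n^*$ established within its proof, or via Proposition \ref{pg2}), which is precisely the assertion that the conditional distributions of $\bigl(\sqrt{n}(\alpha_n^*-\hat\alpha_n),\sqrt{n}(\beta_n^*-\hat\beta_n),n(\zeta_n^*-\hat\zeta_n)\bigr)$ are tight, w.p.\ 1.

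The main obstacle I anticipate is not any single estimate but the bookkeeping of passing from the \emph{unconditional} almost-sure statements (Lemma \ref{l3}, Lemma \ref{l7}, (\ref{eq:consF_n})--(\ref{ascci})) to a single probability-one event on which the \emph{deterministic} array conditions (I)--(VIII) hold simultaneously, and then correctly invoking Proposition \ref{pg3} in its conditional form — i.e., recognizing that ``$\mathbb{Q}_n$'' in Section \ref{pgs} is, for the smoothed bootstrap, a random measure but that all of Section \ref{pgs}'s conclusions are purely analytic consequences of (I)--(VIII) and hence hold pathwise. A secondary subtlety is ensuring the mean-value-theorem arguments for (V) and (VI) are valid: they require $\hat\zeta_n\to\zeta_0$ so that the shrinking intervals around $\hat\zeta_n$ eventually lie inside the fixed interval $I$ where $\hat f_n\to f$ uniformly, plus continuity of $f$ at $\zeta_0$; both are available, the former from Lemma \ref{l3}(iii) and the latter implicitly from the assumption that $f$ is continuous (needed anyway for the kernel estimator consistency (\ref{ascci})).
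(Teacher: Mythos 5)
Your proof follows essentially the same route as the paper: verify, on a single probability-one event, that the triangular-array conditions of Section \ref{pgs} hold for the smoothed bootstrap measures $\mathbb{Q}_n$ with $m_n = n$ and $\theta_n = \hat\theta_n$ (using $\|\hat F_n - F\|_\infty \to 0$ for (I), the product structure and Lemma \ref{l7} for (II)--(III), Lemma \ref{l3}(iii) for (IV), and independence of $Z^*$ and $\tilde\epsilon_n$ plus the mean value theorem for (V)), and then apply the deterministic rate result of Section \ref{pgs} pathwise conditionally on $\mathfrak{X}$. The only deviation is that you expend effort verifying (VI)--(VIII) as well, which is unnecessary for this proposition: the paper's proof of Proposition \ref{cs3} establishes exactly (I)--(V) and invokes Proposition \ref{pg2} for the tightness, while (VI)--(VIII) and Proposition \ref{pg3} are the content of the subsequent Proposition \ref{cos3}.
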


Scheme 3 uses an approximation to the density of $Z$ and this turns out to be crucial. The bootstrap measures now satisfy property (VI) on Section \ref{pgs} and the bootstrap procedure is {\it strongly consistent}, as shown in the next result (proved in Section \ref{prueba16}).

\begin{prop}\label{cos3} For scheme 3, provided that (\ref{eq:consF_n}) and (\ref{ascci}) hold, conditions (I)--(VIII) are satisfied with probability one, and thus,
\[ \left(\begin{array}{c}
\sqrt{n}(\alpha_n^*-\hat{\alpha}_n)\\
\sqrt{n}(\beta_n^* -\hat{\beta}_n)\\
n (\zeta_n^* - \hat{\zeta}_n)
\end{array}\right) \rightsquigarrow \sargmax_{h\in\mathbb{R}^3}\left\{E^*(h)\right\} \textrm{ almost surely.} \]
\end{prop}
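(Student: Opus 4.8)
The plan is to verify that conditions (I)--(VIII) of Section~\ref{pgs} hold with probability one for the triangular array generated by scheme~3, after which Proposition~\ref{pg3} (together with Corollary~\ref{corolario}'s limit process $E^*$) immediately yields the stated weak convergence on the event of probability one where all the conditions hold. I would fix a realization of the data in the almost-sure event on which (\ref{eq:consF_n}), (\ref{ascci}), the Glivenko--Cantelli conclusions of Proposition~\ref{convs1}, the conclusions of Lemma~\ref{l7}, and $\hat\theta_n\to\theta_0$ (Lemma~\ref{l3}(iii)) all hold, and work conditionally on $\mathfrak{X}$ thereafter; the distribution $\mathbb{Q}_n$ is then the product of $\hat F_n$ (for $Z$) and the law obtained by adding an independent draw from $\mathbb{P}_n^\epsilon$ to the fitted step function with parameters $\hat\theta_n$.

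First I would dispose of (I)--(IV). Condition (IV) is just $\theta_n=\hat\theta_n\to\theta_0$. For (I)--(III), since under $\mathbb{Q}_n$ we have $Z\sim\hat F_n$ with $\|\hat F_n-F\|_\infty\to 0$ and $\tilde\epsilon_n$ distributed as $\mathbb{P}_n^\epsilon$ independent of $Z$, one checks that $\mathbb{Q}_n(f)\to\mathbb{P}(f)$ uniformly over $\mathcal F$, $\mathcal G$, $\mathcal H$ by factorizing $\mathbb{Q}_n(yf(z))$, $\mathbb{Q}_n(y^2 f(z))$, etc., into a $\hat F_n$-integral of an indicator of an interval (controlled by $\|\hat F_n-F\|_\infty$ since $\mathcal F$ is the class of interval indicators) times moments of $\mathbb{P}_n^\epsilon$ (which converge to the corresponding moments of $\epsilon$ by Lemma~\ref{l7}(ii)--(iv)), plus the drift terms $\hat\alpha_n,\hat\beta_n\to\alpha_0,\beta_0$. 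Conditions (V) and (VII) are, as the remark after~(V) and the discussion before Proposition~\ref{cs3} point out, automatic because $\tilde\epsilon_n\perp Z$ with $\mathbb{Q}_n(\tilde\epsilon_n)=\int x\,d\mathbb{P}_n^\epsilon(x)=0$ (the residuals are centered), so (\ref{rccs2}), (\ref{rccs3}) and (VII) hold trivially; (\ref{rccs1}) follows from $\hat f_n\to f>\kappa$ uniformly near $\zeta_0$ via the mean value theorem, exactly as noted in the text; and (VIII) follows from Lemma~\ref{l7}(v), which bounds $\limsup_n\int|x|^3\,d\mathbb{P}_n^\epsilon(x)$ almost surely, combined with $\hat\alpha_n,\hat\beta_n$ bounded. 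Note Proposition~\ref{cs3} already packages (I)--(V) into the tightness statement, so the genuinely new content here is (VI).

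The main obstacle is condition (VI): showing
\[
n\,\mathbb{Q}_n\!\left(\psi(\tilde\epsilon_n)\ind{\hat\zeta_n-\frac{\delta}{n}<Z\le\hat\zeta_n+\frac{\eta}{n}}\right)\longrightarrow f(\zeta_0)(\delta+\eta)\,\mathbb{P}(\psi(\epsilon))
\]
for $\psi(x)=e^{i\xi x}$ and $\psi(x)=|x|^p$, $p=1,2$. Using independence of $\tilde\epsilon_n$ and $Z$ under $\mathbb{Q}_n$, the left side factors as $\big(\int\psi(x)\,d\mathbb{P}_n^\epsilon(x)\big)\cdot n\big(\hat F_n(\hat\zeta_n+\tfrac{\eta}{n})-\hat F_n(\hat\zeta_n-\tfrac{\delta}{n})\big)$. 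The first factor converges to $\mathbb{P}(\psi(\epsilon))$ by Lemma~\ref{l7}(i),(iii),(iv). For the second factor I would write $n(\hat F_n(\hat\zeta_n+\tfrac{\eta}{n})-\hat F_n(\hat\zeta_n-\tfrac{\delta}{n}))=\int_{\hat\zeta_n-\delta/n}^{\hat\zeta_n+\eta/n} n\,\hat f_n(z)\,dz$ and argue this tends to $f(\zeta_0)(\delta+\eta)$: the length of the interval of integration is $(\delta+\eta)/n$, and $\hat f_n$ converges uniformly to the continuous $f$ on a neighborhood $I$ of $\zeta_0$ by~(\ref{ascci}), while $\hat\zeta_n\to\zeta_0$ puts the shrinking interval eventually inside $I$; hence $\sup_{z\in[\hat\zeta_n-\delta/n,\hat\zeta_n+\eta/n]}|\hat f_n(z)-f(\zeta_0)|\le\|\hat f_n-f\|_I+\sup_{|z-\zeta_0|\le (\delta\vee\eta)/n+|\hat\zeta_n-\zeta_0|}|f(z)-f(\zeta_0)|\to 0$, and multiplying by the interval length $(\delta+\eta)/n$ and by $n$ gives the claim. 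The consequences (\ref{ec48}), (\ref{ec49}) then follow as noted. Having established (I)--(VIII) on a set of full probability, Proposition~\ref{pg3} applied with $m_n=n$, $\mathbb{Q}_n$ as above, and $\theta_n=\hat\theta_n$ gives $h_n^*=(\sqrt n(\alpha_n^*-\hat\alpha_n),\sqrt n(\beta_n^*-\hat\beta_n),n(\zeta_n^*-\hat\zeta_n))'\rightsquigarrow\sargmax_{h\in\mathbb{R}^3}\{E^*(h)\}$ on that event, i.e.\ almost surely, which is exactly the assertion. $\hfill\square$
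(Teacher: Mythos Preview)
Your proposal is correct and follows essentially the same route as the paper's proof: invoke Proposition~\ref{cs3} (or its proof) for (I)--(V), observe that (VII) is automatic from the independence of $\tilde\epsilon_n$ and $Z$ under $\mathbb{Q}_n$ together with $\mathbb{Q}_n(\tilde\epsilon_n)=0$, obtain (VIII) from Lemma~\ref{l7}(v), and verify (VI) by factoring $n\,\mathbb{Q}_n(\psi(\tilde\epsilon_n)\ind{\cdot})$ into $\mathbb{P}_n^\epsilon(\psi)\cdot n\int_{\hat\zeta_n-\delta/n}^{\hat\zeta_n+\eta/n}\hat f_n$ and using Lemma~\ref{l7} plus the uniform convergence (\ref{ascci}). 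The paper's only packaging difference is that it organizes the (VI) argument through the events $A_N=[\hat\zeta_n\pm N/n\in I\ \text{a.a.}]\cap[\|\hat f_n-f\|_I\to 0]$ to make explicit that the conclusion holds for all $\delta,\eta>0$ on a single probability-one set; your fixing of a single good realization at the outset achieves the same thing. (A minor quibble: the reference to ``the Glivenko--Cantelli conclusions of Proposition~\ref{convs1}'' is extraneous here, since (I)--(III) for scheme~3 are argued directly from $\|\hat F_n-F\|_\infty\to 0$ and Lemma~\ref{l7}, not from $\|\mathbb{P}_n-\mathbb{P}\|$; this does not affect the validity of your argument.)
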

\subsection{Scheme 4 ($m$ out of $n$ bootstrap)}
For this scheme we will again use the framework established in Section \ref{pgs}. We take $(m_n)_{n=1}^\infty$ to be any sequence of natural numbers which increases to infinity, $\hat{\theta}_n =\theta_n$ and $\mathbb{Q}_n=\mathbb{P}_n$. The next result (proved in Section \ref{prueba20}) shows the {\it weak consistency} of this procedure.
\begin{prop}\label{cs5} If $m_n = o(n)$ and $m_n \rightarrow \infty$, then conditions (I)--(VIII) hold (in probability) and we have
\begin{equation}\label{eq:m_n_boots}
\left(\begin{array}{c}
\sqrt{m_n}(\alpha_n^*-\hat{\alpha}_n)\\
\sqrt{m_n}(\beta_n^* -\hat{\beta}_n)\\
m_n (\zeta_n^* - \hat{\zeta}_n)
\end{array}\right) \rightsquigarrow \sargmax_{h\in\mathbb{R}^3}\left\{E^*(h)\right\} \textrm{ in probability.}
\end{equation}
\end{prop}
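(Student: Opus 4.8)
The plan is to fit the $m$ out of $n$ bootstrap into the framework of Section \ref{pgs} by taking $\mathbb{Q}_n=\mathbb{P}_n$, $\theta_n=\hat\theta_n$, $m_n$ as prescribed, and then to verify that conditions (I)--(VIII) hold (the first six in probability, the last two even almost surely) for the triangular array $\{X_{n,k}^*\}_{1\le k\le m_n}$, so that Proposition \ref{pg3}, applied conditionally on $\mathfrak{X}$, delivers (\ref{eq:m_n_boots}). Conditions (I)--(IV) are disposed of exactly as in Proposition \ref{convs1}: $\mathcal{F},\mathcal{G},\mathcal{H}$ are VC--subgraph classes with integrable envelopes $1$, $|Y|+M$, $Y^2$, hence Glivenko--Cantelli, and $\hat\theta_n\cas\theta_0$ by Lemma \ref{l3}$(iii)$. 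Condition (VII) holds \emph{exactly}, not just asymptotically: the least--squares normal equations for $\hat\theta_n$ give $\mathbb{P}_n(\tilde\epsilon_n\ind{Z\le\hat\zeta_n})=\mathbb{P}_n(\tilde\epsilon_n\ind{Z>\hat\zeta_n})=0$. Condition (VIII) is immediate since $|\hat\alpha_n|\vee|\hat\beta_n|\le M$ eventually, so $\lsup_{n\rightarrow\infty}\mathbb{P}_n(|\tilde\epsilon_n|^3)\le\lsup_{n\rightarrow\infty}\mathbb{P}_n((|Y|+M)^3)=\mathbb{P}((|Y|+M)^3)<\infty$ a.s., using $\mathbb{P}(|\epsilon|^3)<\infty$.

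The first substantive point is (V), and it rests on the rate separation $m_n=o(n)$, i.e.\ $m_n^{-1/2}\gg n^{-1/2}$. The intervals entering (\ref{rccs1})--(\ref{rccs3}) have one endpoint at $\hat\zeta_n$ and length between $m_n^{-1/2}$ and $\delta^2$; since (eventually) they lie in $(\zeta_0-\eta,\zeta_0+\eta)$, where $\mathbb{P}(I)\ge\kappa|I|$, and since $\sup_I|\mathbb{P}_n(I)-\mathbb{P}(I)|=O_{\mathbf P}(n^{-1/2})=o_{\mathbf P}(|I|)$ uniformly over such $I$, we get $\mathbb{P}_n(I)\ge\tfrac\kappa2|I|$ uniformly for $n$ large, which is (\ref{rccs1}) with $\rho=\kappa/2$. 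For (\ref{rccs2})--(\ref{rccs3}) one writes $\tilde\epsilon_n=\epsilon+(\alpha_0-\hat\alpha_n)\ind{Z\le\zeta_0}+(\beta_0-\hat\beta_n)\ind{Z>\zeta_0}$ except on $\{Z\text{ strictly between }\zeta_0\text{ and }\hat\zeta_n\}$, a set of $\mathbb{P}_n$--measure $O_{\mathbf P}(n^{-1})$; since $\epsilon\perp Z$ makes $\mathbb{P}_n(\epsilon\,\ind{Z\in I})$ centered, a maximal inequality for the VC class of interval indicators bounds $\sup_{|I|<\delta^2}|\mathbb{P}_n(\epsilon\,\ind{Z\in I})|$ by $O_{\mathbf P}(\delta\,n^{-1/2})=o_{\mathbf P}(\delta\,m_n^{-1/2})$, while the drift terms contribute $O_{\mathbf P}(\delta^2 n^{-1/2})$ and the exceptional set $O_{\mathbf P}(n^{-1})$, all negligible against $m_n^{-1/2}$; for (\ref{rccs3}) one first invokes the exact normal--equation identities to reduce $\{Z\le\zeta\wedge\hat\zeta_n\}$ to $\{Z\le\hat\zeta_n\}$ minus a length--$\delta^2$ window at $\hat\zeta_n$.

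Condition (VI) is the crux --- it is precisely the property whose failure (see (\ref{ecultya}) and Lemma \ref{l21}) destroys scheme 1. The window $(\hat\zeta_n-\tfrac{\delta}{m_n},\hat\zeta_n+\tfrac{\eta}{m_n}]$ has length $(\delta+\eta)/m_n$ and, because $m_n=o(n)$, contains $\asymp n/m_n\to\infty$ sample points; moreover $|\hat\zeta_n-\zeta_0|=O_{\mathbf P}(n^{-1})=o_{\mathbf P}(m_n^{-1})$, so up to a sub--window of $m_n\mathbb{P}_n$--mass $o_{\mathbf P}(1)$ one has $\tilde\epsilon_n=\epsilon+(\alpha_0-\hat\alpha_n)$ on $(\hat\zeta_n-\tfrac{\delta}{m_n},\zeta_0]$ and $\tilde\epsilon_n=\epsilon+(\beta_0-\hat\beta_n)$ on $(\zeta_0,\hat\zeta_n+\tfrac{\eta}{m_n}]$. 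Since $\hat\alpha_n\to\alpha_0$, $\hat\beta_n\to\beta_0$ and $\psi$ is continuous (for $\psi(x)=e^{i\xi x}$ it factors), it suffices to show $m_n\mathbb{P}_n(\psi(\epsilon)\ind{\zeta_0-\frac{\delta}{m_n}<Z\le\zeta_0})\cip f(\zeta_0)\,\delta\,\mathbb{P}(\psi(\epsilon))$ and its right--hand analogue. The expectation is $m_n\,\mathbb{P}(\psi(\epsilon))\,\mathbb{P}(\zeta_0-\tfrac{\delta}{m_n}<Z\le\zeta_0)\to f(\zeta_0)\delta\,\mathbb{P}(\psi(\epsilon))$ by differentiability of $F$ at $\zeta_0$, and the variance is $O\!\big(\tfrac{m_n^2}{n}\,\mathbb{P}(|\psi(\epsilon)|^2)\,\tfrac{1}{m_n}\big)=O(m_n/n)\to0$ (for $p=2$, truncate $\epsilon^2$ at level $T$ and let $T\to\infty$, using only $\mathbb{P}(\epsilon^2)<\infty$), so the convergence holds in probability; adding the two sub--windows gives (VI).

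To conclude: (I)--(IV), (VII), (VIII) hold a.s.\ and (V), (VI) hold in probability (with fixed constants $\rho=\kappa/2$, $L$, and for a countable dense set of $\delta,\eta$, which suffices by monotonicity of the quantities involved). Hence, given any subsequence, one may pass to a further subsequence $(n_k)$ along which \emph{all} of (I)--(VIII) hold simultaneously for $\mathbf P$--a.e.\ $\omega$; for such $\omega$, $\big(\mathbb{P}_{n_k}(\omega),\hat\theta_{n_k}(\omega),m_{n_k}\big)$ is a deterministic array satisfying the hypotheses of Proposition \ref{pg3}, so --- applied conditionally on $\mathfrak{X}$ --- the conditional law of $h_{n_k}^*$ converges weakly to the law of $\sargmax_{h\in\mathbb{R}^3}\{E^*(h)\}$. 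The subsequence characterization of convergence in probability then yields $d\big(\mathcal{L}(h_n^*\mid\mathfrak{X}),\mathcal{L}(\sargmax E^*)\big)\cip0$, which is (\ref{eq:m_n_boots}). The main obstacle is the pair of rate--calibrated empirical--process estimates (V) and, above all, (VI): the entire argument hinges on $m_n=o(n)$, which is exactly what forces the empirical measure over a window of length $\Theta(1/m_n)$ to self--average to the deterministic limit $f(\zeta_0)(\delta+\eta)\mathbb{P}(\psi(\epsilon))$ --- the averaging that does not take place when $m_n=n$.
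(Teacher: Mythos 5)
Your proof is correct and follows the same overall architecture as the paper's: set $\mathbb{Q}_n=\mathbb{P}_n$, $\theta_n=\hat\theta_n$, verify (I)--(VIII), apply Proposition~\ref{pg3} along a.s.\ subsequences, and conclude via the subsequence characterization of convergence in probability. The differences from the paper's verification are in the details and are worth recording.

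Your handling of (VII) is cleaner than the paper's. You observe that the normal equations force $\mathbb{P}_n(\tilde\epsilon_n\ind{Z\le\hat\zeta_n})=\mathbb{P}_n(\tilde\epsilon_n\ind{Z>\hat\zeta_n})=0$ exactly (for all $n$ large enough that both sides of $\hat\zeta_n$ contain data), so (VII) holds identically with $\mathbb{Q}_n=\mathbb{P}_n$ and $\zeta_n=\hat\zeta_n$. The paper instead derives (VII) as a byproduct of the $\mathcal{C}$-class maximal inequality bound $\sqrt{m_n}\|\mathbb{P}_n\|_{\mathcal{C}}\cip 0$ inside the same estimate used for~(\ref{rccs3}); both are valid, but the exact identity avoids any rate argument here and also simplifies (\ref{rccs3}) by reducing the quantity to a length-$\delta^2$ window at $\hat\zeta_n$, exactly as you note. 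For (VI), the paper fixes a point $\eta$, computes the characteristic function of $m_n\mathbb{P}_n(\psi(\epsilon)\ind{\zeta_0<Z\le\zeta_0+\eta/m_n})$, shows it converges to a degenerate limit using the second-moment bound on the remainder, and then upgrades to uniform-over-$[0,\eta]$ convergence via tightness in $\mathcal{D}_{[0,\eta]}$; you instead compute mean and variance directly, bounding the variance by $O(m_n/n)\to 0$. These are functionally equivalent — both are the statement that a window of length $\Theta(1/m_n)\gg 1/n$ contains $\asymp n/m_n\to\infty$ points and therefore self-averages — and both hit the same technical snag for $\psi=|\cdot|^2$ (only $\mathbb{P}(\epsilon^2)<\infty$ is available for the second moment of $\psi(\epsilon)$), which you handle by truncation and the paper handles by the bound $|e^{ix}-1-ix|\le\min(x^2/2,2|x|)$ applied to the characteristic-function remainder. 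Your observation that (V)--(VI) need only be verified on a countable dense set of $(\delta,\eta)$ by monotonicity plays the same role as the paper's explicit extraction of a single a.s.\ subsequence via Borel--Cantelli over a growing sequence of compacts. No gaps.
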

{\bf Remark:} To prove Proposition \ref{cs5}, we will, in fact, show that for every subsequence $(n_{k})_{k=1}^\infty$, there is a further subsequence $(n_{k_s})_{s=1}^\infty$, such that (I)-(VIII) hold w.p. 1 for $(n_{k_s})_{s=1}^\infty$ and (\ref{eq:m_n_boots}) holds almost surely along the subsequence $(n_{k_s})_{s=1}^\infty$.

\section{Simulation experiments}\label{simula}
In this section we report the finite sample performance of the different bootstrap schemes on simulated data. We simulated random draws from four different models following (\ref{ec1}). Each of these corresponded to choosing different pairs $(F,G)$ of distributions for $Z$ and $\epsilon$ (having mean 0), respectively. The pairs considered were $(N(0,2), N(0,1))$, $(4B(4,6)-2,N(0,1))$, $(4B(4,6)-2, \textrm{Unif}(-1,1))$, and  $(4B(4,6)-2,\Gamma(4,2)-2)$, where $B(\cdot,\cdot)$ and $\Gamma(\cdot,\cdot)$ denote the beta and gamma distributions respectively.

For each of these models, we considered 1000 random samples of sizes $n=50,100,200,500$. For each sample, and for each of the bootstrap schemes, we took $4n$ bootstrap replicates to approximate the bootstrap distribution. The following table provides the estimated coverage proportions of nominal 95\% CIs and average lengths of the CIs obtained using the 4 different bootstrap schemes for each of the four models.

At this point, we want to make some remarks about the computation of the estimators. We used a kernel density estimator based on the Gaussian kernel and chose the bandwidth by the so-called ``normal reference rule'' (see \cite{sco}, page 131). In the case of the $m$ out of $n$ bootstrap, we did not use any data driven choice of $m_n$, but tried 3 different possibilities: $\lceil n^{\frac{4}{5}} \rceil, \lceil n^{\frac{9}{10}} \rceil$ and  $\lceil n^{\frac{14}{15}} \rceil$. We will refer to the fixed-design bootstrapping of residuals scheme by FDR. \newline

{\scriptsize
\begin{tabular}{|c|c|c|c|c|c|c|}
\hline
\multicolumn{7}{|c|}{$Z\sim N(0,2) , \epsilon\sim N(0,1)$}\\
\hline
\multirow{2}{*}{Scheme} & \multicolumn{2}{|c|}{$n=50$} & \multicolumn{2}{|c|}{$n=200$} & \multicolumn{2}{|c|}{$n=500$}\\ \cline{2-7}
 & Coverage & Avg Length & Coverage & Avg Length & Coverage & Avg Length \\
\hline
ECDF & 0.83 & 1.14 & 0.79 & 0.22 & 0.81 & 0.08 \\
\hline
Smoothed & 0.94 & 0.94 & 0.95 & 0.19 & 0.95 & 0.07 \\
\hline
FDR & 0.83 & 0.76 & 0.86 & 0.16 & 0.90 & 0.06 \\
\hline
$\lceil n^{4/5} \rceil$ & 0.87 & 0.87 & 0.91 & 0.23 & 0.91 & 0.08 \\
\hline
$\lceil n^{9/10} \rceil$ & 0.85 & 1.02 & 0.87 & 0.21 & 0.87 & 0.079 \\
\hline
$\lceil n^{14/15} \rceil$ & 0.85 & 1.05 & 0.84 & 0.21 & 0.86 & 0.08\\
\hline
\end{tabular}
}

{\scriptsize
\begin{tabular}{|c|c|c|c|c|c|c|}
\hline
\multicolumn{7}{|c|}{ $Z\sim 4B(4,6)-2, \epsilon\sim N(0,1)$ }\\
\hline
\multirow{2}{*}{Scheme} &  \multicolumn{2}{|c|}{$n=50$} & \multicolumn{2}{|c|}{$n=200$} & \multicolumn{2}{|c|}{$n=500$}\\ \cline{2-7}
 & Coverage & Avg Length & Coverage & Avg Length & Coverage & Avg Length \\
\hline
ECDF &  0.80 & 0.54 & 0.80 & 0.11 & 0.81 & 0.04 \\
\hline
Smoothed & 0.96 & 0.46 & 0.94 & 0.11 & 0.95 & 0.47 \\
\hline
FDR & 0.73 & 0.32 & 0.77 & 0.08 & 0.79 & 0.03\\
\hline
$\lceil n^{4/5} \rceil$ & 0.88 & 0.53 & 0.89 & 0.11 & 0.90 & 0.04\\
\hline
$\lceil n^{9/10} \rceil$ & 0.85 & 0.54 & 0.86 & 0.11 & 0.88 & 0.04\\
\hline
$\lceil n^{14/15} \rceil$ & 0.83 & 0.55 & 0.84 & 0.11 & 0.87 & 0.04\\
\hline
\end{tabular}
}

{\scriptsize
\begin{tabular}{|c|c|c|c|c|c|c|}
\hline
\multicolumn{7}{|c|}{$Z\sim 4B(4,6)-2, \epsilon\sim\textrm{Unif}(-1,1)$}\\
\hline
\multirow{2}{*}{Scheme} & \multicolumn{2}{|c|}{$n=50$}& \multicolumn{2}{|c|}{$n=200$} & \multicolumn{2}{|c|}{$n=500$}\\ \cline{2-7}
 & Coverage & Avg Length & Coverage & Avg Length & Coverage & Avg Length \\
\hline
ECDF & 0.80 & 0.40 & 0.80 & 0.08 & 0.81 & 0.03\\
\hline
Smoothed & 0.94 & 0.33 & 0.95 & 0.08 & 0.96 & 0.04\\
\hline
FDR & 0.75 & 0.26 & 0.77 & 0.06 & 0.81 & 0.02\\
\hline
$\lceil n^{4/5} \rceil$ & 0.88 & 0.36 & 0.88 & 0.09 & 0.91 & 0.04\\
\hline
$\lceil n^{9/10} \rceil$ & 0.85 & 0.39 & 0.85 & 0.08 & 0.87 & 0.03\\
\hline
$\lceil n^{14/15} \rceil$ & 0.83 & 0.39 & 0.84 & 0.08 & 0.85 & 0.03\\
\hline
\end{tabular}
}

{\scriptsize
\begin{tabular}{|c|c|c|c|c|c|c|}
\hline
\multicolumn{7}{|c|}{$Z\sim 4B(4,6)-2, \epsilon\sim \Gamma(4,2)-2$}\\
\hline
\multirow{2}{*}{Scheme} & \multicolumn{2}{|c|}{$n=50$} & \multicolumn{2}{|c|}{$n=200$} & \multicolumn{2}{|c|}{$n=500$}\\ \cline{2-7}
 &  Coverage & Avg Length & Coverage & Avg Length & Coverage & Avg Length \\
\hline
ECDF & 0.80 & 0.49 & 0.80 & 0.09 & 0.81 & 0.04 \\
\hline
Smoothed & 0.93 & 0.36 & 0.95 & 0.08 & 0.96 & 0.03 \\
\hline
FDR & 0.76 & 0.30 & 0.77 & 0.06 & 0.80 & 0.02\\
\hline
$\lceil n^{4/5} \rceil$ & 0.87 & 0.43 & 0.88 & 0.10 & 0.91 & 0.03\\
\hline
$\lceil n^{9/10}\rceil$ & 0.85 & 0.46 & 0.84 & 0.09 & 0.88 & 0.03\\
\hline
$\lceil n^{14/15}\rceil$ & 0.83 & 0.48 & 0.85 & 0.09 & 0.85 & 0.03\\
\hline
\end{tabular}
}

We can see from the table that the smoothed bootstrap scheme outperforms all the others in terms of coverage. It must also be noted that this is achieved without a relative increase in the lengths of the intervals. The $m$ out of $n$ bootstrap with $\lceil n^{4/5} \rceil$ also performs reasonably well. It clearly outperforms all other $m$ out of $n$ schemes as well as ECDF and FDR bootstrap procedures (which are inconsistent).


Figure \ref{figura1} shows the histograms of the distribution of $n(\hat{\zeta}_n - \zeta_0)$ (obtained from 1000 random samples) and its bootstrap estimates obtained from the 4 different bootstrap schemes (using 2000 bootstrap samples each) from a single data set of size $n=500$ from model (\ref{ec1}) with $Z\sim 4B(4,6)-2, \epsilon\sim \Gamma(4,2)-2, \alpha_0 = -1, \beta_0 = 1, \zeta_0 = 0$. The histograms clearly show that the smoothed bootstrap (top right panel) provides, by far, the best approximation to both, the actual (top middle panel) and the limiting distributions (top left panel). In fact, the histograms of the distribution of $n(\hat{\zeta}_n - \zeta_0)$  and the corresponding smoothed bootstrap estimate are almost indistinguishable. The $m$ out of $n$ approach, although guaranteed to converge, lacks the efficiency of the smoothed bootstrap. This may be due to the fact that we do not have an optimal way of choosing the tuning parameter $m_n$. The smoothed bootstrap also requires the choice of a tuning parameter, namely, the smoothing bandwidth, but the in our analysis the results were very insensitive to the choice of the bandwidth. This is certainly an advantage for the smoothed bootstrap procedure.
\begin{figure}[h!]
\centering
\includegraphics[height=9cm,width=13cm]{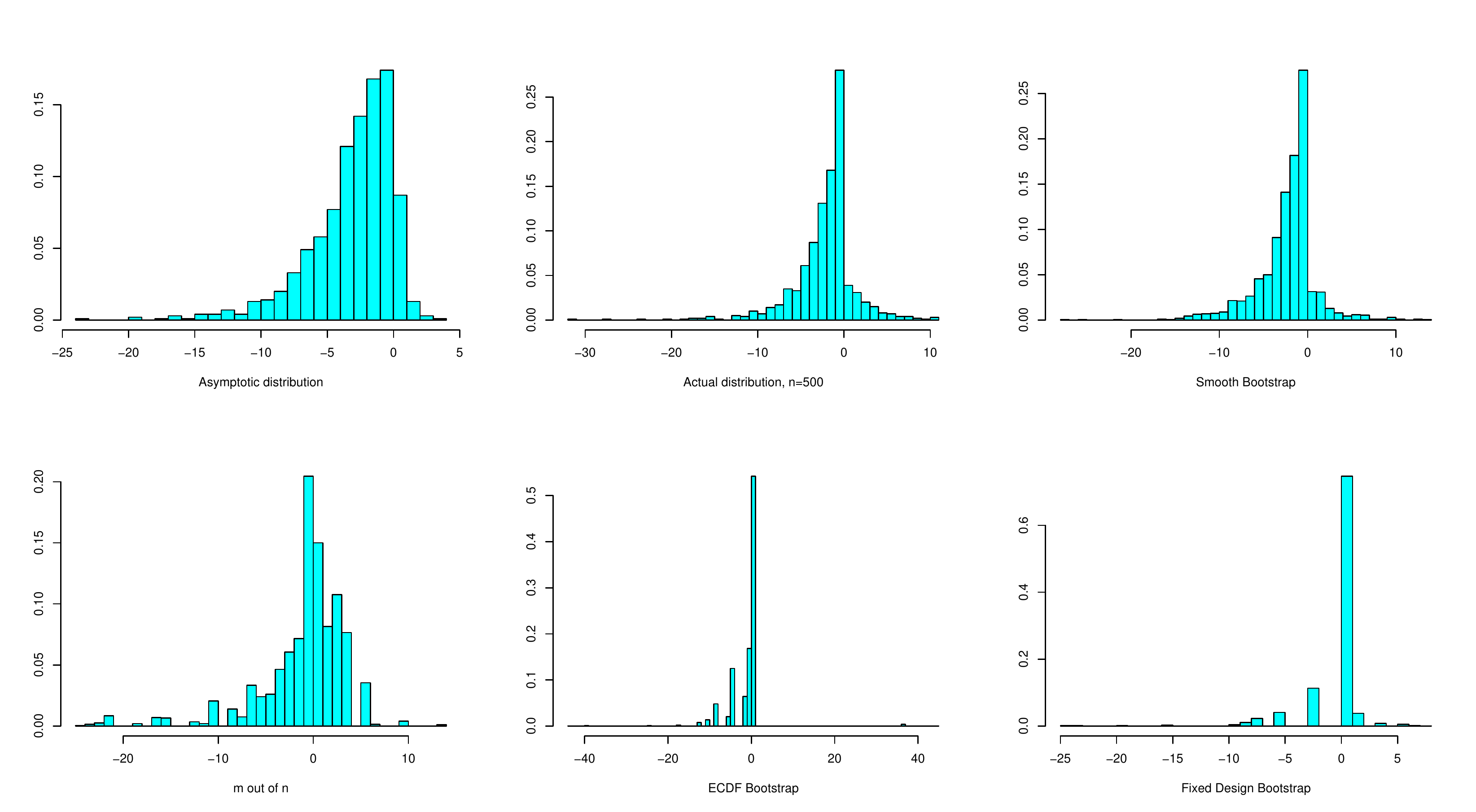}
\caption{Histograms of the distribution of $n(\hat{\zeta}_n - \zeta_0)$ and its bootstrap estimates: the asymptotic distribution of $n(\hat{\zeta}_n - \zeta_0)$ (top left); the actual distribution of $n(\hat{\zeta}_n - \zeta_0)$ (top middle); the distribution of $n(\zeta_n^* - \hat{\zeta}_n)$ for the smoothed (top right), ECDF (bottom middle) and FDR (bottom right) schemes; the distribution of $m_n(\zeta_n^* - \hat{\zeta}_n)$, $m_n=\lceil n^{\frac{4}{5}} \rceil$ (bottom left).}\label{figura1}
\end{figure}
\section{More general change-point regression models}\label{discussion}
In this section we mention some of the broader implications of our analysis of (\ref{ec1}) in the context of more general change-point models in regression. We can consider a model of the form
\begin{equation}\label{ecuaciongeneral}
Y = \psi_{\alpha_0} (W,Z) \mathbf{1}_{Z\leq \zeta_0} + \xi_{\beta_0} (W,Z) \mathbf{1}_{Z> \zeta_0} + \epsilon,
\end{equation}
where $Z$ is a continuous random variable; $W$ is a random vector of covariates; $\alpha_0\in\mathbb{R}^p$ and $\beta_0\in\mathbb{R}^q$ are two unknown Euclidian parameters; $\psi_\alpha (w,z)$ and $\xi_\beta (w,z)$ are known real-valued functions continuous in $(w,z)$ and twice continuously differentiable in $\alpha$ and $\beta$ respectively; $\zeta_0 \in [a,b] \subset \textrm{supp}(Z)\subset \mathbb{R}$ is the change-point; $\epsilon$ is a continuous random variable, independent of $(W,Z)$ with zero expectation and finite variance $\sigma^2 > 0$. We assume that $\psi_{\alpha_0} (W,Z)$ is identifiable from $\xi_{\beta_0} (W,Z)$ and that the least squares problems
\[ \min_{\alpha\in\mathbb{R}^p}\left\{\sum_{Z_j\leq \zeta} (Y_j - \psi_\alpha (W_j,Z_j) )^2\right\} \;\;\;\ \mbox{ and }\;\;\; \min_{\beta\in\mathbb{R}^q}\left\{\sum_{Z_j > \zeta} (Y_j - \xi_\beta (W_j,Z_j) )^2\right\}\] are well-posed for every possible data set $\{(Y_1,Z_1,W_1),\ldots,(Y_n,Z_n,W_n)\}$ and any $\zeta\in \textrm{supp}(Z)^\circ$. We also assume that $\psi_{\alpha_0} (w,\zeta_0) \ne \xi_{\beta_0} (w,\zeta_0)$ for every value of $w$.

Like in the simple case, the method of least squares can be used to compute estimators $\hat{\alpha}_n$, $\hat{\beta}_n$ and $\hat{\zeta}_n$. One simply takes the minimizer $(\hat{\alpha}_n,\hat{\beta}_n,\hat{\zeta}_n)$ of
\[\displaystyle \sum_{j=1}^n \left(Y_j - \psi_\alpha (W_j,Z_j) \mathbf{1}_{Z_j\leq \zeta} + \xi_\beta (W_j,Z_j) \mathbf{1}_{Z_j> \zeta}\right)^2\] with the smallest $\zeta$-component.

Since the simple model (\ref{ec1}) is a particular case of (\ref{ecuaciongeneral}), one can immediately conclude from our analysis that the usual ECDF and residual bootstrap procedures will not be consistent. However, the smoothed bootstrap can be adapted to produce consistent interval estimation. The modified scheme can be described as follows:
\begin{enumerate}
\item Choose some procedure (e.g., kernel density estimation) to build a distribution $\hat{F}_n$ with density $\hat{f}_n$ such that $\hat{f}_n\rightarrow f$ uniformly on some open interval containing $\zeta_0$ w.p. 1, where $f$ is the density of $Z$. Let $\mathbb{P}_n^\epsilon$  and $\mathbb{P}_n^W$ be the empirical measures of the centered residuals (as in the description of Scheme 2 in Section \ref{BootsSchemes}) and $W_1,\ldots,W_n$, respectively.
\item Get i.i.d. replicates $Z_{n,1}^*,\ldots,Z_{n,n}^*$ from $\hat{F}_n$ and sample, independently, \newline $\epsilon_{n,1}^{*},\ldots,\epsilon_{n,n}^{*}\stackrel{i.i.d.}{\sim}\mathbb{P}_{n}^{\epsilon}$ and $W_{n,1}^{*},\ldots,W_{n,n}^{*}\stackrel{i.i.d.}{\sim}\mathbb{P}_{n}^{W}$. We could have also kept the $W_i$'s fixed, i.e., $W_{n,i}^{*} = W_i$.
\item Define $Y_{n,j}^{*} = \psi_{\hat{\alpha}_n}(W_{n,j}^*,Z_{n,j}^*) \mathbf{1}_{Z_{n,j}^{*}\leq \hat{\zeta}_n} + \xi_{\hat{\beta}_n}(W_{n,j}^*,Z_{n,j}^*) \mathbf{1}_{Z_{n,j}^{*}> \hat{\zeta}_n} + \epsilon_{n,j}^{*}$ for all $j=1,\ldots,n$.
\item Compute the bootstrap least squares estimators $(\alpha_n^*,\beta_n^*,\zeta_n^*)$ by taking the minimizer of
\[ \sum_{j=1}^n \left(Y_{n,j}^* - \psi_\alpha (W_{n,j}^*,Z_{n,j}^*) \mathbf{1}_{Z_{n,j}^*\leq \zeta} - \xi_\beta (W_{n,j}^*,Z_{n,j}^*) \mathbf{1}_{Z_{n,j}^*> \zeta}\right)^2 \]
with the smallest $\zeta$-component.
\item Approximate the distribution of $n(\hat{\zeta}_n - \zeta_0)$ with the (conditional) distribution of $n(\zeta_n^* - \hat{\zeta}_n)$.
\end{enumerate}
Although our analysis indicates that this smoothed bootstrap procedure must be consistent, it is difficult to use our methods to prove consistency in such generality. However, the proof of consistency for the simple model (\ref{ec1}) can be adapted to cover the case of parametric additive models, i.e., when $\psi_\alpha(w,z)$ and $\xi_\beta (w,z)$ are of the form
\begin{eqnarray}
\psi_\alpha(w,z) = \sum_{j=1}^p \alpha_j g_j(w,z), \;\;\;\mbox{ and } \;\;\;
\xi_\beta(w,z) = \sum_{k=1}^q \beta_k h_k(w,z),\nonumber
\end{eqnarray}
where $g_j, h_k$, $j=1,\ldots,p$, $k=1,\ldots, q$ are known smooth functions.

\section{Acknowledgements}
We would like to thank Souvik Ghosh for his helpful comments on the proof of Lemma \ref{lsho}.

\appendix
\section{Appendix}\label{appendix}
In this appendix we provide the proofs of most of the results stated in the previous sections. We start with some results that characterize convergence in the space $\mathcal{D}_K$ with metric $d_K$.
\subsection{The space $\mathcal{D}_K$}\label{D_K} Recall that $\mathcal{D}_K$ is the space of all functions on $K \subset \mathbb{R}^3$ that are continuous in the first two co-ordinates and c\`adl\`ag in the third. We keep the notation introduced in Section \ref{pgs}.
\begin{lemma}\label{l1}
Let $K = A \times I \subset \mathbb{R}^3$ be a compact rectangle where $A \subset \mathbb{R}^2$ and $I \subset \mathbb{R}$. Let $W$ a continuous function on $K$ and $(W_n)_{n=1}^{\infty}$ a sequence of elements in $\mathcal{D}_K$ such that $\function{d_K}{W_n,W}\rightarrow 0$. Then, with the notation $\left\|  x \right\|_K = \sup\left\{|\function{x}{h}|: h \in K\right\}$, we have that $\left\|W_n - W\right\|_K\rightarrow 0$.
\end{lemma}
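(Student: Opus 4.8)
The plan is to use the fact that the Skorohod metric $d_K$ differs from the uniform metric only through a time change acting on the third coordinate, and that on a \emph{compact} time interval such a time change must be uniformly close to the identity once its $\|\cdot\|$-norm is small. First, since $\function{d_K}{W_n,W}\to 0$, for each $n$ choose $\lambda_n\in\Lambda_I$ nearly realizing the infimum, say so that
\[ \sup_{(\alpha,\beta,\zeta)\in K}\bigl|W_n(\alpha,\beta,\zeta)-W(\alpha,\beta,\lambda_n(\zeta))\bigr| + \|\lambda_n\| \;\le\; \function{d_K}{W_n,W} + \tfrac1n. \]
Since both summands on the left are nonnegative, this forces simultaneously $\sup_{(\alpha,\beta,\zeta)\in K}\bigl|W_n(\alpha,\beta,\zeta)-W(\alpha,\beta,\lambda_n(\zeta))\bigr|\to 0$ and $\|\lambda_n\|\to 0$.

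The next step is to deduce $\sup_{\zeta\in I}|\lambda_n(\zeta)-\zeta|\to 0$ from $\|\lambda_n\|\to 0$. Write $I=[c,d]$. Each $\lambda_n$ is a strictly increasing continuous surjection of $[c,d]$ onto itself, hence fixes the endpoints: $\lambda_n(c)=c$ and $\lambda_n(d)=d$. Applying the definition of $\|\lambda_n\|$ with $s=\zeta$ and $t=c$ gives $e^{-\|\lambda_n\|}\le \frac{\lambda_n(\zeta)-c}{\zeta-c}\le e^{\|\lambda_n\|}$ for $\zeta\in(c,d]$, so that $|\lambda_n(\zeta)-\zeta|=\bigl|(\lambda_n(\zeta)-c)-(\zeta-c)\bigr|\le (e^{\|\lambda_n\|}-1)(\zeta-c)\le (e^{\|\lambda_n\|}-1)(d-c)$, and this bound is trivially valid at $\zeta=c$ as well. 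Here the compactness of $I$, i.e.\ the finiteness of $d-c$, is exactly what converts the multiplicative control furnished by $\|\lambda_n\|$ into a uniform additive bound. Letting $n\to\infty$ yields $\sup_{\zeta\in I}|\lambda_n(\zeta)-\zeta|\to 0$.

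Finally, $W$ is continuous on the compact set $K$, hence uniformly continuous there, so $\sup_{\zeta\in I}|\lambda_n(\zeta)-\zeta|\to 0$ implies $\sup_{(\alpha,\beta,\zeta)\in K}\bigl|W(\alpha,\beta,\lambda_n(\zeta))-W(\alpha,\beta,\zeta)\bigr|\to 0$. Combining this with the first step through the triangle inequality,
\[ \left\|W_n-W\right\|_K \;\le\; \sup_{(\alpha,\beta,\zeta)\in K}\bigl|W_n(\alpha,\beta,\zeta)-W(\alpha,\beta,\lambda_n(\zeta))\bigr| \;+\; \sup_{(\alpha,\beta,\zeta)\in K}\bigl|W(\alpha,\beta,\lambda_n(\zeta))-W(\alpha,\beta,\zeta)\bigr| \;\to\; 0, \]
which is the claim. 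The only genuinely nonroutine point is the middle paragraph — translating smallness of the Skorohod time-change norm $\|\lambda_n\|$ into uniform proximity of $\lambda_n$ to the identity — and it is precisely there that the compactness hypothesis on $I$ (and hence on $K$) is used; the rest is the triangle inequality together with uniform continuity of $W$ on a compact set.
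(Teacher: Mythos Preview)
Your proof is correct and follows essentially the same route as the paper's: pick near-optimal time changes $\lambda_n$, convert $\|\lambda_n\|\to 0$ into $\sup_{\zeta\in I}|\lambda_n(\zeta)-\zeta|\to 0$ via the endpoint-fixing property and compactness of $I$, then invoke uniform continuity of $W$ and the triangle inequality. Your bound $|\lambda_n(\zeta)-\zeta|\le (e^{\|\lambda_n\|}-1)(d-c)$ is a slightly cleaner version of the paper's estimate $|\lambda(s)-s|<2L\rho$ (derived there through intermediate logarithmic inequalities under the extra restriction $\rho<1/4$), but the argument is structurally identical.
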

\begin{proof} Let $\epsilon>0$. Since $K$ is compact, $W$ is uniformly continuous on $K$ and therefore there is $\delta>0$ such that $|\function{W}{\theta}-W(\vartheta)|<\frac{\epsilon}{2}$ whenever $|\theta-\vartheta|<\delta$. Also, there is $\rho>0$ such that $\sup\{|s - \lambda(s)|: s\in I\}<\delta$ whenever $\left\|\lambda\right\|<\rho$. It suffices to choose $\rho < \frac{1}{4}\land\frac{\delta}{2L}$ where $L$ is the length of $I$. To see this, assume $I=[a,b]$ and observe that for any $\tau\in(0,\frac{1}{4})$, $\tau < 2\tau - 4\tau^2\leq \log(1+2\tau)$  and for any $\tau>-1$, $\log(1+\tau) \leq \tau$. It follows that for any $s\in I$, $\log(1-2\rho)< -\rho \leq \log\frac{\lambda s -a}{s - a} \leq \rho < 2\rho - 4\rho^2\leq \log(1+2\rho)$ and thus, $|\lambda s -s|<2(s-a)\rho\leq 2L\rho$.

Now, since $\function{d_K}{W_n,W}\rightarrow 0$, there is $N \in \mathbb{N}$ such that for any $n \geq N$ there exists $\lambda_n \in \Lambda_I$ with the property that  $\left\|\lambda_n \right\|<\rho\land\frac{\epsilon}{2}$ and
\begin{equation}
    \sup_{(\alpha,\beta,\zeta)\in K} \left\{ \left|\function{W_n}{\alpha,\beta,\zeta} - \function{W}{\alpha,\beta,\lambda_n\zeta} \right|\right\} <\rho\land\frac{\epsilon}{2}. \label{eq:Wn-W}
\end{equation}
Then, for any $\theta = (\alpha,\beta,\zeta) \in K$ and any $n\geq N$ we have that $|(\alpha, \beta, \lambda_n(\zeta)) - (\alpha,\beta,\zeta)| = |\lambda_n(\zeta) - \zeta|<\delta$ and thus, we can bound $|W_n(\theta) - W(\theta)|$, by
\begin{eqnarray}
& & \left|\function{W_n}{\alpha, \beta, \zeta} - \function{W}{\alpha, \beta, \lambda_n(\zeta)}\right| + \left|\function{W}{\alpha, \beta, \zeta} - \function{W}{\alpha, \beta, \lambda_n(\zeta)}\right|\nonumber\\
& & < \epsilon/2 + \epsilon/2 = \epsilon \nonumber
\end{eqnarray}
using (\ref{eq:Wn-W}) and the uniform continuity of $W$. From this it follows that $\left\|W_n - W\right\|\leq\epsilon$ for any $n\geq N$. \end{proof}

Lemma \ref{l1} shows that as long as the limit is continuous, convergence in the uniform and Skorohod topologies are equivalent. The next result concerns the continuity of the smallest argmax functional.

\begin{lemma}\label{l2}
Let $K\subset\mathbb{R}^3$ be a compact rectangle and $W\in\mathcal{D}_K$ be a continuous function which has a unique maximizer $\theta^* \in K$. Then, the smallest argmax functional is continuous at $W$ (with respect to both, the uniform and Skorohod topologies).
\end{lemma}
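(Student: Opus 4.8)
The plan is to reduce the statement to the case of uniform convergence and then run the classical argmax argument, taking care of the c\`adl\`ag convention in the third coordinate. First I would invoke Lemma \ref{l1}: since $W$ is continuous, $d_K(W_n,W)\to 0$ already forces $\|W_n-W\|_K\to 0$; and because $W$ has a unique maximizer, $\sargmax_{\theta\in K}\{W(\theta)\}=\theta^*$. Hence it suffices to show that $\|W_n-W\|_K\to 0$ implies $\sargmax_{\theta\in K}\{W_n(\theta)\}\to\theta^*$, which automatically handles both topologies. At this stage I would also record that $S_n:=\sup_K W_n\to S:=\sup_K W=W(\theta^*)$, using $|S_n-S|\le\|W_n-W\|_K$, and note that for a function c\`adl\`ag in the third coordinate the supremum over $K$ is unchanged if one also ranges over all left limits (each left limit is a limit of genuine values of the function).

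Next I would set $\theta_n=(\alpha_n,\beta_n,\zeta_n):=\sargmax_{\theta\in K}\{W_n(\theta)\}$ and, using compactness of $K$, pass to an arbitrary subsequence along which $\theta_n\to\theta_\infty\in K$; it is enough to prove $\theta_\infty=\theta^*$. Since $\theta_n$ is a maximizer of $W_n$ we have $W_n(\theta_n)\vee W_n(\theta_n^-)=S_n$, so for each $n$ I would select an auxiliary point $\theta_n'=(\alpha_n,\beta_n,\zeta_n')$ with $\zeta_n'\le\zeta_n$, $\zeta_n-\zeta_n'<1/n$, and $W_n(\theta_n')>S_n-1/n$: take $\theta_n'=\theta_n$ when $W_n(\theta_n)=S_n$, and otherwise take a point just to the left of $\zeta_n$ that nearly realizes the left limit $W_n(\theta_n^-)$. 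Along the chosen subsequence $\theta_n'\to\theta_\infty$ as well.

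To identify the limit I would bound $|W(\theta_\infty)-S|$ by $|W(\theta_\infty)-W(\theta_n')|+|W(\theta_n')-W_n(\theta_n')|+|W_n(\theta_n')-S_n|+|S_n-S|$, where along the subsequence the four terms tend to $0$ by continuity of $W$, by $\|W_n-W\|_K\to 0$, by the choice of $\theta_n'$, and by the first step, respectively. Thus $W(\theta_\infty)=S=W(\theta^*)$, and uniqueness of the maximizer of $W$ forces $\theta_\infty=\theta^*$. Since every convergent subsequence of $(\theta_n)$ has the same limit $\theta^*$ and $K$ is compact, $\theta_n\to\theta^*$, which is continuity of the smallest argmax at $W$ in the uniform topology; the reduction in the first step transfers it to the Skorohod topology.

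The one genuinely delicate point is the c\`adl\`ag convention: a maximizer of $W_n$ may attain the supremum only through its left limit, and that left limit can be a limit of strictly smaller values of $W_n$, so one cannot simply evaluate $W$ at $\theta_\infty=\lim\theta_n$ and conclude. The auxiliary points $\theta_n'$ sitting just to the left of $\zeta_n$ are designed precisely to get around this; I would also need the trivial verification that $\theta_n'\in K$, which is immediate because $K$ is a rectangle and $\zeta_n'$ lies between the left endpoint of the $\zeta$-interval and $\zeta_n$ (the left-limit case being vacuous when $\zeta_n$ is itself that left endpoint, in which case one simply takes $\theta_n'=\theta_n$). Everything else is the routine bookkeeping behind the argmax continuous mapping theorem.
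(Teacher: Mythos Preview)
Your proof is correct. Both you and the paper reduce to uniform convergence via Lemma \ref{l1}, but from there you diverge. The paper uses the direct separation argument: for a given $\epsilon>0$ it sets $2\delta=W(\theta^*)-\sup_{K\setminus B_\epsilon(\theta^*)}W>0$, observes that $\|W_n-W\|_K<\delta$ forces $\sup_{K\setminus B_\epsilon(\theta^*)}W_n<W(\theta^*)-\delta<\sup_K W_n$, and concludes that every maximizer of $W_n$ lies in $B_\epsilon(\theta^*)$. Your route instead goes through compactness and subsequential limits, showing that any accumulation point $\theta_\infty$ of $(\theta_n)$ satisfies $W(\theta_\infty)=W(\theta^*)$ and hence equals $\theta^*$ by uniqueness. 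The paper's version is shorter and avoids the subsequence bookkeeping; yours, on the other hand, is more explicit about the c\`adl\`ag convention---the auxiliary points $\theta_n'$ just to the left of $\zeta_n$ handle the case where the supremum is attained only as a left limit, a point the paper's proof leaves implicit (it works there because left limits of $W_n$ are also within $\|W_n-W\|_K$ of the continuous $W$). Both are standard arguments for continuity of the argmax.
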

\begin{proof}
Let $(W_n)_{n=1}^\infty$ be a sequence converging to $W$ in the Skorohod topology. Let  $\epsilon>0$ be given. Let $G$ be the open ball of radius $\epsilon$ around $\theta^*$ and let $\delta := \left(W(\theta^*) - \sup_{\theta\in K\setminus G}\left\{W(\theta)\right\}\right)/2 > 0$. By Lemma \ref{l1} we have $\left\|W_n - W\right\|_K<\delta$ for all large $n$. If this condition is satisfied, then
\[ W(\theta^*)=2\delta + \sup_{\theta\in K\setminus G}\left\{W(\theta)\right\}> \delta + \sup_{\theta\in K\setminus G}\left\{W_n(\theta)\right\}.\]
But $\left\|W_n - W\right\|_K<\delta$ also implies that $\displaystyle \sup_{\theta\in K}\{W_n(\theta)\} > W(\theta^*)-\delta$. The combination of these two facts shows that if $\left\|W_n - W\right\|_K<\delta$, then any maximizer of $W_n$ must belong to $G$. Thus, $\left|\sargmax_{\vartheta \in K} \{W_n(\vartheta)\} - \theta^*\right|<\epsilon$ for $n$ large enough.
\end{proof}

\subsubsection{A convergence theorem for the smallest argmax functional}\label{app2}
Recall the definitions of $\mathcal{S}$ and $\mathcal{S}_I$, where $I \subset \mathbb{R}$ is any interval containing $0$, which were provided in Section \ref{pgs}. For a compact rectangle $K = I_1 \times I_2 \times I_3 \subset \mathbb{R}^3$ containing the origin, consider the subspace $\mathcal{D}_K^0$ of $\mathcal{D}_K$ consisting of all functions $\psi\in\mathcal{D}_K$ which can be expressed as:
\begin{eqnarray}
    \function{\psi}{h_1,h_2,h_3} = V_0 (h_1,h_2) \ind{a_{-1} \leq h_3 < a_{1}} + \sum_{k=1}^\infty V_{k} (h_1,h_2)\ind{a_{k} \leq h_3 < a_{k+1}} \nonumber \\
    + \sum_{k=1}^\infty  V_{-k}(h_1,h_2)\ind{a_{-k-1} \leq h_3 < a_{-k}} \label{ec58}
\end{eqnarray}
where $\left(\ldots<a_{-k-1}<a_{-k}<\ldots< a_0 = 0<\ldots<a_{k}<a_{k+1}<\ldots\right)_{k\in\mathbb{N}}$ is a sequence of jumps and $\left(V_k\right)_{k\in\mathbb{Z}}$ is a collection of continuous functions.
We write $\mathcal{D}^0$ when $K = \mathbb{R}^3$. Observe that the representation in (\ref{ec58}) is not unique. However, knowledge of the function $\psi$ and of the jumps $(a_k)$ completely determines the continuous functions $(V_k)_{k\in\mathbb{Z}}$. Associate with every $\psi$, expressed as in (\ref{ec58}), a pure jump function $\tilde \psi \in \mathcal{S}$ whose sequence of jumps is exactly the $a_k$'s, i.e.,
\begin{eqnarray}\label{eq:PureJumpProc}
\function{\tilde{\psi}}{t} &=& \sum_{k=1}^\infty \ind{a_{k} \leq t }+
\sum_{k=1}^\infty  \ind{a_{-k} > t}.
\end{eqnarray}
Finally, we denote by sargmax and largmax the smallest and largest argmax functionals, respectively.

The next lemma, which mimics Lemma 3.1 of \cite{lmm}, makes a statement about the continuity of the smallest argmax functional on the space $\mathcal{D}_K^0 \times \mathcal{S}_{I_3}$.

\begin{lemma}\label{l11}
Let $C\in\mathbb{N}$, $K=[-C,C]^3$ and $\left(\psi_n,\tilde{\psi}_n\right)_{n=1}^\infty$, $(\psi_0,\tilde{\psi}_0)$ be functions in $\mathcal{D}_K^0 \times \mathcal{S}_{[-C,C]}$ such that $\psi_n$ satisfies (\ref{ec58}) for the sequence of jumps of $\tilde{\psi}_n$ for any $n\geq 0$. Assume that $(\psi_n,\tilde{\psi}_n)\rightarrow (\psi_0,\tilde{\psi}_0)$ in $\mathcal{D}_K^0 \times \mathcal{S}_{[-C,C]}$ (with the product topology). Suppose, in addition, that $\psi_0$ can be expressed as (\ref{ec58}) for the sequence of jumps $\left(\ldots<a_{-k-1}<a_{-k}< \ldots < a_0 = 0< \ldots< a_{k} \right.$ $\left. < a_{k+1}< \ldots\right)_{k\in\mathbb{N}}$ of $\tilde{\psi}_0$ and some strictly concave functions $(V_j)_{j\in\mathbb{Z}}$ with the property that for any finite subset $A\subset\mathbb{Z}$ there is only one $j\in A$ for which
\begin{equation}\label{ec24}
\max_{m\in A}\left\{\sup_{h\in K}\left\{V_m(h_1,h_2)\right\}\right\} = \sup_{h\in K}\left\{V_j(h_1,h_2)\right\}.
\end{equation}
Then, $\displaystyle \sargmax_{h\in K}\{\psi_n(h)\}$ and $\displaystyle \largmax_{h\in K}\{\psi_n(h)\}$ are well-defined for sufficiently large $n\in\mathbb{N}$ and
\begin{enumerate}[(i)]
\item $\displaystyle \sargmax_{h\in K}\{\psi_n(h)\} \rightarrow \sargmax_{h\in K}\{\psi_0(h)\}$ as $n\rightarrow\infty$
\item $\displaystyle \largmax_{h\in K}\{\psi_n(h)\} \rightarrow \largmax_{h\in K}\{\psi_0(h)\}$  as $n\rightarrow\infty$.
\end{enumerate}
\end{lemma}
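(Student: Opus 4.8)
The plan is to reduce the problem, via the assumed joint convergence, to a fixed finite collection of strictly concave ``slab functions'' and then to run an elementary argmax--continuity argument.

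\textbf{The limit.} First I would record that $\tilde\psi_0$, being a piecewise constant element of $\mathcal{S}$ with jumps of size $1$, has only finitely many jumps in $[-C,C]$, say $a_{k_0}<\dots<a_{k_1}$ with $k_0\le 0\le k_1$; correspondingly $\psi_0$ restricted to $K$ is governed by finitely many continuous functions $V_{k_0},\dots,V_{k_1}$ of $(h_1,h_2)$, with $\psi_0(h_1,h_2,h_3)=V_k(h_1,h_2)$ whenever $h_3$ lies in (the $K$-part of) the $k$-th slab. Writing $A$ for the $(h_1,h_2)$-projection of $K$, we get $\sup_K\psi_0=\max_{k_0\le k\le k_1}\sup_{A}V_k$, and by (\ref{ec24}) there is a unique index $j^*$ attaining this maximum; strict concavity of $V_{j^*}$ then furnishes a unique maximizer $(h_1^*,h_2^*)$ of $V_{j^*}$ over $A$. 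Using the convention that $x$ is a maximizer of a c\`adl\`ag function $W$ when $W(x)\lor W(x^-)=\sup W$, a short check shows that the maximizer set of $\psi_0$ is exactly $\{(h_1^*,h_2^*)\}\times\big([a_{j^*},a_{j^*+1}]\cap[-C,C]\big)$, so $\sargmax\psi_0=(h_1^*,h_2^*,a_{j^*})$ and $\largmax\psi_0=(h_1^*,h_2^*,a_{j^*+1})$, with the evident modifications if one of these $h_3$-values sits on the boundary $\pm C$. In particular $\psi_0$ does \emph{not} have a unique maximizer, which is why Lemma \ref{l2} cannot be invoked directly.

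\textbf{Transferring the structure to $\psi_n$.} Since $\tilde\psi_n\to\tilde\psi_0$ in $\mathcal{S}_{[-C,C]}$ and $\tilde\psi_0$ is a step function with finitely many jumps, all of size $1$, standard facts about Skorohod convergence to a step function give that for all large $n$ the function $\tilde\psi_n$ has exactly one jump $a_k^{(n)}$ near each $a_k$, with $a_k^{(n)}\to a_k$, and no other jumps in $[-C,C]$. Because $\psi_n$ is assumed to be of the form (\ref{ec58}) with jump sequence equal to that of $\tilde\psi_n$, this pins down the slab structure of $\psi_n$: its $k$-th piece $V_k^{(n)}$ is constant-in-$h_3$ on $[a_k^{(n)},a_{k+1}^{(n)})$. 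Next, writing the convergence $\psi_n\to\psi_0$ in $d_K$ via time changes $\lambda_n$ with $\|\lambda_n\|\to0$ and $\sup_K|\psi_n(h_1,h_2,h_3)-\psi_0(h_1,h_2,\lambda_n(h_3))|\to0$, and evaluating at points $h_3$ strictly interior to each slab of $\psi_0$ (so that, for large $n$, $h_3$ lies in the corresponding slab of $\psi_n$ while $\lambda_n(h_3)$ lies in that of $\psi_0$), I would deduce $\|V_k^{(n)}-V_k\|_A\to0$ for every $k\in\{k_0,\dots,k_1\}$.

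\textbf{Locating the argmax and concluding.} Put $\delta:=\sup_A V_{j^*}-\max_{k\ne j^*}\sup_A V_k>0$, the gap supplied by (\ref{ec24}). By the previous step, for large $n$ we have $\sup_A V_{j^*}^{(n)}\ge\sup_A V_{j^*}-\delta/3$ and $\sup_A V_k^{(n)}\le\sup_A V_k+\delta/3$ for $k\ne j^*$, so $\sup_K\psi_n$ is attained only at points whose $h_3$-coordinate lies in the $n$-th copy of slab $j^*$, i.e.\ in $[a_{j^*}^{(n)},a_{j^*+1}^{(n)}]\cap[-C,C]$ (the right endpoint being admissible via the left-limit convention). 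Moreover, since $V_{j^*}^{(n)}\to V_{j^*}$ uniformly and $V_{j^*}$ has the unique maximizer $(h_1^*,h_2^*)$, every maximizer of $V_{j^*}^{(n)}$ lies in any prescribed ball about $(h_1^*,h_2^*)$ for $n$ large. Hence the whole maximizer set of $\psi_n$ sits in an arbitrarily small neighborhood of $\{(h_1^*,h_2^*)\}\times([a_{j^*},a_{j^*+1}]\cap[-C,C])$; in particular $\sargmax\psi_n$ and $\largmax\psi_n$ are well-defined for large $n$, and picking the smallest, resp.\ largest, $h_3$-coordinate yields $\sargmax\psi_n\to(h_1^*,h_2^*,a_{j^*})=\sargmax\psi_0$ and $\largmax\psi_n\to(h_1^*,h_2^*,a_{j^*+1})=\largmax\psi_0$.

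\textbf{Main obstacle.} The crux is the middle step: extracting the uniform convergence $V_k^{(n)}\to V_k$ of the continuous slab-pieces from the joint Skorohod convergence, which forces one to control the time change $\lambda_n$ and correctly align the slab of $\psi_n$ containing a given $h_3$ with the corresponding slab of $\psi_0$; it is exactly here that the hypothesis linking the jumps of $\psi_n$ to those of $\tilde\psi_n$, together with the size-$1$ jump structure of $\mathcal{S}$, is indispensable, as it rules out spurious extra slabs of $\psi_n$. A lesser subtlety is that strict concavity is assumed only for the limiting pieces, so the location of the argmax of $\psi_n$ in its first two coordinates must be forced by uniform convergence plus uniqueness in the limit rather than by any structural property of $\psi_n$ itself, and the boundary cases (a jump of $\tilde\psi_0$ at or beyond $\pm C$) require a little extra care with the truncations.
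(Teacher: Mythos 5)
Your proposal is correct and follows essentially the same route as the paper's proof: identify the finite set of flat stretches of $\psi_0$ on $K$, use Skorohod convergence of the jump processes to pin down the slab structure of $\psi_n$ for large $n$, extract uniform convergence $V_k^{(n)}\to V_k$ of the slab pieces from the time-changed uniform bound, and then localize the maximizer of $\psi_n$ via the gap from (\ref{ec24}) and the unique maximizer of $V_{j^*}$. The only cosmetic difference is that the paper evaluates the time-changed uniform bound at the slab midpoints $b_j$ rather than at generic interior points, which is the same idea.
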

\begin{proof} We can write $\psi_n$ in the form (\ref{ec58}) with $\left(\ldots<a_{n,-k-1}<a_{n,-k}<\right.$ $\\$ $\left.\ldots < a_{n,0} = 0 < \ldots\right.$ $\left.< a_{n,k}< a_{n,k+1}< \ldots\right)_{k \in \mathbb{N}}$ being the sequence of jumps of $\psi_n$ and $V_{n,j}$ being the continuous functions. Consequently, $\tilde \psi_n$, the pure jump process associated with $\psi_n$, can be expressed as (\ref{eq:PureJumpProc}) with jumps at $(a_{n,k})_{k \in \mathbb{Z}}$.

Let $N_r$ and $N_l$ be the number of jumps of $\tilde{\psi}_0$ in $[0,C]$ and $[-C,0)$ respectively. Let $\rho > 0$ be sufficiently small such that all the points of the form $a_j \pm \rho$ are continuity points of $\psi_0$, for $-N_l \leq j \leq N_r$. Since convergence in the Skorohod topology of $\tilde{\psi}_n$ to $\tilde{\psi}_0$ implies point-wise convergence for continuity points of $\tilde{\psi}_0$ (see page 121 of \cite{bi}), and all of them are integer-valued functions, we see that $\tilde{\psi}_n (a_j - \rho) = j-1$ and $\tilde{\psi}_n (a_j + \rho) = j$ for any $1\leq j\leq N_r$, and $\tilde{\psi}_n (C) = N_r$ for all sufficiently large $n$. Thus, for all but finitely many $n$'s we have that $\tilde{\psi}_n$ has exactly $N_r$ jumps between 0 and $C$ and that the location of the $j$-th jump to the right of 0 satisfies $|a_{n,j}-a_{j}|<\rho$. The same happens to the left of zero: for all but finitely many $n$'s, $\tilde{\psi}_n$ has exactly $N_l$ jumps in $[-C,0)$ and the sequences of jumps $\left(a_{n,-j}\right)_{n=1}^\infty$, $1\leq j\leq N_l$, converge to the corresponding jumps $a_{-j}$.

Let $ V^* = \sup\left\{V_j (h_1,h_2): h\in K, -N_l\leq j \leq N_r\right\}$. Since all the $V_j$'s are continuous and $K$ is compact, this supremum is actually achieved at some value $(h_1^*,h_2^*)\in [-C,C]^2$. By (\ref{ec24}) and the strict concavity of the $V_j$'s, it is seen that $(h_1^*,h_2^*)$ is unique and that there is a unique ``flat stretch'' at which this supremum is attained. Suppose, without loss of generality, that the maximum value is achieved in an interval of the form $[a_k,a_{k+1}\land C)$ for a unique $k\in\left\{1,\ldots, N_r\right\}$. Now, write $b_0 = 0$; $b_j=\frac{ a_j + C\land a_{j+1}}{2}$ for $1 \leq j \leq N_r$; and $b_j = \frac{a_j + (-C)\lor a_{j-1}}{2}$ for $-N_l \leq j \leq -1$. Note that the $b_j$'s (for any value of the first two variables) are continuity points of both $\psi_0$ and $\tilde{\psi}_0$.

Let $\kappa = \min_{-N_l \leq j \leq N_r+1} (C \land a_j - (-C) \lor a_{j-1})$ be the length of the shortest stretch. Take $0 < \eta, \delta < \kappa/4$. Considering the convergence of the jumps of $\psi_n$ to those of $\psi_0$, there is  $N\in\mathbb{N}$ such that for any $n \geq N$, the following two statements hold:
\begin{enumerate}[(a)]
\item Consider $\rho > 0$ such that if $\left\|\lambda\right\|<\rho$, then $$\sup\left\{ |s - \lambda(s)| : s\in [-C,C]\right\}<\delta,$$ just as in the proof of Lemma \ref{l1}. By the convergence of $\psi_n$ to $\psi_0$ in the Skorohod topology, there exists $\lambda_n \in \Lambda_{[-C,C]}$ such that $\left\|\lambda_n\right\| < \rho$ and \[\displaystyle \sup_{h\in K} \left\{ |\psi_n(h_1,h_2,\lambda_n(h_3)) - \psi_0(h_1,h_2,h_3)|\right\} < \eta.\]
\item For any $1\leq j\leq N_r$ (respectively $j=0$, $-N_l\leq j \leq -1$), $b_j$ lies somewhere inside the interval $\left(a_{n,j} + \delta, C\land a_{n,j+1} - \delta\right)$ (respectively $\left(a_{n,-1} + \delta, \right.$ $\left. a_{n,1} - \delta\right)$, $\left((-C)\lor a_{n,j-1} + \delta, a_{n,j} - \delta \right)$). This follows from what was proven in the first two paragraphs of this proof.
\end{enumerate}
From (a) we see that $|\lambda_n (b_j) - b_j|< \delta$ for all $-N_l \leq j \leq N_r$. But (b) and the size of $\delta$ in turn imply that $b_j$ and $\lambda_n(b_j)$ belong to the same ``flat stretch'' of $\psi_n$ and thus $\psi_n(h_1,h_2,\lambda_n (b_j)) = \psi_n(h_1,h_2,b_j) = V_{n,j}(h_1,h_2)$ for all $(h_1,h_2)\in [-C,C]^2$ and all $-N_l \leq j \leq N_r$. Considering again (b) and the second inequality in (a), we conclude that $\left\|V_{n,j} - V_j\right\|_{[-C,C]^2} < \eta$ for all $-N_l \leq j \leq N_r$ and all $n\geq N$. Hence, all the sequences $(V_{n,j})_{n=1}^\infty$ converge uniformly in $[-C,C]^2$ to their corresponding $V_j$. Consequently:
\begin{eqnarray}
\max_{\substack{-N_l \leq j \leq N_r \\ j\neq k}}\left\{\sup_{h_1,h_2\in [-C,C]} V_{n,j}(h_1,h_2)\right\}
&\longrightarrow&  \max_{\substack{-N_l \leq j \leq N_r \\ j\neq k}}\left\{\sup_{h_1,h_2\in [-C,C]} V_{j}(h_1,h_2)\right\}, \nonumber\\
\max_{h_1,h_2\in [-C,C]}\left\{ V_{n,k} (h_1,h_2) \right\}&\longrightarrow& \max_{h_1,h_2\in [-C,C]}\left\{ V_{k}(h_1,h_2)\right\}  = V_k(h_1^*,h_2^*), \nonumber\\
\argmax_{h_1,h_2\in [-C,C]}\left\{ V_{n,k} (h_1,h_2) \right\}&\longrightarrow& \argmax_{h_1,h_2\in [-C,C]}\left\{ V_{k}(h_1,h_2)\right\}  = (h_1^*,h_2^*), \nonumber\\
 \lsup \max_{\substack{-N_l \leq j \leq N_r \\ j\neq k}}\left\{\sup_{h_1,h_2\in [-C,C]} V_{n,j}(h_1,h_2)\right\} & < & \linf \max_{h_1,h_2\in [-C,C]}\left\{ V_{n,k}(h_1,h_2)\right\}. \nonumber
\end{eqnarray}
The above, together with (\ref{ec24}) and the fact that $a_{n,k}\rightarrow a_k$ and $a_{n,k+1}\rightarrow a_{k+1}$, imply that
\begin{itemize}
\item[] $\displaystyle \sargmax_{h\in K}\{\psi_n(h)\}\rightarrow (h_1^*,h_2^*,a_k) = \sargmax_{h\in K}\{\psi_0(h)\}$
\item[] $\displaystyle \largmax_{h\in K}\{\psi_n(h)\}\rightarrow (h_1^*,h_2^*,a_{k+1}) = \largmax_{h\in K}\{\psi_0(h)\}$
\end{itemize}
as $n\rightarrow\infty$.
\end{proof}

\subsection{Some useful lemmas and proofs}
We first give an account of a series of technical lemmas which will aid us in the proof of Propositions \ref{pg1}, \ref{pg2} and \ref{pg3}.

\begin{lemma}\label{lKP}
Let $\alpha \neq \beta\in\mathbb{R}$. Consider the class of functions from $\mathbb{R}^2$ to $\mathbb{R}$ given by \begin{eqnarray*}
{\footnotesize \mathscr{A}=\left\{\phi(y,z) := (y-\alpha\ind{(-\infty,\zeta]}(z) - \beta\ind{(\zeta,\infty]}(z))\ind{I}(z) | \zeta \in \mathbb{R}, I\subset\mathbb{R} \textrm{ is an interval}\right\}.}  \
\end{eqnarray*}
Then, $\mathscr{A}$ is a VC-subgraph class with envelope $|y| + |\alpha|+|\beta|$. There is an upper bound for the VC-index of $\mathscr{A}$ that is independent of $\alpha$ and $\beta$. Moreover, there is a continuous, increasing function $J_\mathscr{A}$ with $J_\mathscr{A}(1)<\infty$, which is also independent of $\alpha$ and $\beta$, and satisfies the following property: If $\mathscr{D}\subset\mathscr{A}$ is a subclass with envelope $B$ and $W_1,\ldots,W_n$ is a random sample, defined on some probability space $(\Omega,\mathcal{A},\mathbf{P})$, from a distribution $\mu$ for which $\mu(B^2)<\infty$ and $\mu_n$ is the empirical measure defined by the sample, then
\begin{equation}\nonumber
\int \sup_{\varphi\in\mathcal{D}}\left\{\left|(\mu_n -\mu)(\varphi)\right|\right\}d\mathbf{P} \leq \frac{J_{\mathscr{A}}(1)}{\sqrt{n}}\sqrt{\mu(B^2)}.
\end{equation}
\end{lemma}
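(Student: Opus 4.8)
To prove Lemma~\ref{lKP} the plan is to exhibit the subgraphs of the functions in $\mathscr{A}$ as a subfamily of a VC class of sets whose index is an absolute constant (in particular independent of $\alpha$ and $\beta$), and then to run the standard covering-number and chaining machinery for VC-subgraph classes.

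\emph{Step 1: the VC-subgraph property with a uniform index.} Fix $\phi=\phi_{\zeta,I}\in\mathscr{A}$ and consider its subgraph $\{(y,z,t):t<\phi(y,z)\}$. On $\{z\in I,\ z\le\zeta\}$ this set is $\{t<y-\alpha\}$, on $\{z\in I,\ z>\zeta\}$ it is $\{t<y-\beta\}$, and on $\{z\notin I\}$ it is $\{t<0\}$; hence the subgraph equals the fixed Boolean expression
\[ \big(S\cap H\cap L_\alpha\big)\ \cup\ \big(S\cap H^{c}\cap L_\beta\big)\ \cup\ \big(S^{c}\cap L_0\big), \]
where $S=\{z\in I\}$ ranges over the VC class of ``vertical slabs'' (intervals in the $z$-coordinate), $H=\{z\le\zeta\}$ ranges over the VC class of $z$-halfspaces, $L_\alpha=\{t<y-\alpha\}$ and $L_\beta=\{t<y-\beta\}$ range over the VC class of translates of $\{t<y\}$ (preimages of halflines under $(y,t)\mapsto t-y$), and $L_0=\{t<0\}$ is a single fixed set. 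Each of these ``atomic'' classes has VC index bounded by an absolute constant. Decoupling the two occurrences of $S$ and of $H$ (which only enlarges the family) and invoking the stability of VC classes of sets under complements, finite intersections and finite unions --- with the resulting index controlled solely by the atomic indices and the number of operations (Lemma~2.6.17 of \cite{vw} and the remarks around it) --- we obtain a VC class of sets, of some index $V$ depending on none of $\alpha,\beta,\zeta,I$, that contains every subgraph of a member of $\mathscr{A}$. Thus $\mathscr{A}$ is VC-subgraph with index at most $V$, uniformly in $\alpha,\beta$. Finally, $|\phi(y,z)|\le |y-\alpha|\vee|y-\beta|\le |y|+|\alpha|+|\beta|$ (and $\phi$ vanishes off $I$), so $(y,z)\mapsto|y|+|\alpha|+|\beta|$ is an envelope for $\mathscr{A}$.

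\emph{Step 2: covering numbers and the maximal inequality.} Any subclass $\mathscr{D}\subseteq\mathscr{A}$ is again VC-subgraph with index at most $V$, so Theorem~2.6.7 of \cite{vw} yields a universal constant $K$ with
\[ N\big(\epsilon\|B\|_{Q,2},\,\mathscr{D},\,L_2(Q)\big)\ \le\ K\,V\,(16e)^{V}\,\epsilon^{-2(V-1)},\qquad 0<\epsilon<1, \]
for every probability measure $Q$ and every envelope $B$ of $\mathscr{D}$ with $Q(B^2)<\infty$; this bound is free of $\alpha,\beta,\mathscr{D},B$ and $Q$. Accordingly I would set
\[ J_{\mathscr{A}}(\delta):=C_0\int_0^{\delta}\sqrt{1+\log\!\big(K\,V\,(16e)^{V}\,\epsilon^{-2(V-1)}\big)}\;d\epsilon, \]
with $C_0$ the universal constant of the uniform-entropy maximal inequality of \cite{vw} (Section~2.14). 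Since the integrand is $O\big(\sqrt{\log(1/\epsilon)}\big)$ near $0$, $J_{\mathscr{A}}$ is finite, continuous and increasing with $J_{\mathscr{A}}(1)<\infty$, and it depends on nothing but the fixed structure --- in particular not on $\alpha,\beta$. Applying that maximal inequality (symmetrization followed by chaining against the displayed entropy bound) to the class $\mathscr{D}$ with envelope $B$ and the i.i.d.\ sample $W_1,\dots,W_n\sim\mu$ gives
\[ \int\sup_{\varphi\in\mathscr{D}}\big|(\mu_n-\mu)(\varphi)\big|\,d\mathbf{P}\ =\ \frac{1}{\sqrt n}\,\mathbf{E}\Big\|\sqrt n\,(\mu_n-\mu)\Big\|_{\mathscr{D}}\ \le\ \frac{J_{\mathscr{A}}(1)}{\sqrt n}\sqrt{\mu(B^2)}, \]
which is the assertion. (The class $\mathscr{A}$ is pointwise measurable --- restricting $\zeta$ and the endpoints of $I$ to the rationals gives a countable pointwise-dense subclass --- so the suprema above are measurable and outer expectations are genuine ones.)

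\emph{Main obstacle.} The only real work is Step~1: expressing the subgraph of $\phi$ as one \emph{fixed} Boolean combination of sets coming from a \emph{fixed} finite list of VC classes, so that the stability properties of VC classes deliver an index bound that is honestly uniform in $\alpha$ and $\beta$. Once that is in hand, Step~2 is bookkeeping: the VC covering bound of \cite{vw} plus the uniform-entropy chaining inequality, with $J_{\mathscr{A}}$ merely absorbing the universal constants.
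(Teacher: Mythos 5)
Your proof is correct and follows essentially the same route as the paper: express the subgraph (or in the paper's case the ``super-graph'' $[\varphi > t]$) as a fixed Boolean combination of VC set classes whose indices do not depend on $\alpha,\beta$, conclude a uniform VC index, and feed the resulting uniform-entropy bound into a chaining maximal inequality with $J_{\mathscr{A}}(1)$ absorbing the universal constants. The paper's decomposition is slightly coarser---it first shows $\mathscr{H}=(\beta-\alpha)\cdot\mathscr{K}+(y-\beta)$ is VC-subgraph via the function-class stability results of \cite{vw}, then splits only into the $\{z\in I\}$ and $\{z\notin I\}$ pieces, and cites the Kim--Pollard maximal inequality 3.1 rather than the chaining bound in Section~2.14 of \cite{vw}---but these are interchangeable implementations of the same argument.
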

\begin{proof}
We use the same notation as in Lemmas 2.6.17 and 2.6.18, page 147 of \cite{vw}. Consider the classes of functions $\mathscr{H} = \{y-\alpha\ind{(-\infty,\zeta]}(z) - \beta\ind{(\zeta,\infty]}(z): \zeta\in \mathbb{R}\}$ and $\mathscr{K} = \left\{\ind{(-\infty,\zeta]}(z): \zeta \in\mathbb{R} \right\}$. Then, $\mathscr{K}$ is a VC class with VC-index 2. It follows that $\mathscr{H} = (\beta-\alpha)\cdot\mathscr{K} + (y-\beta)$ is also VC. Recall that $\mathcal{F} = \{\mathbf{1}_I(z): I \subset \mathbb{R} \mbox{ is an interval} \}$. Letting $[\varphi > t] := \{(y,z,t): \varphi(y,z) > t\}$ for $\varphi \in \mathscr{A}$, we see that
\begin{eqnarray*}
    & & \left\{ [\varphi > t]: \varphi \in \mathscr{A} \right\} =
    \bigg( \mathbb{R} \times \{\mathcal{F} \leq 0\} \times (-\infty,0) \bigg)
    \bigsqcup \\
    & & \bigg(\left\{[\psi > t]: \psi \in \mathscr{H}\right\} \sqcap \left(\mathbb{R} \times \left\{\mathcal{F}>0 \right\} \times \mathbb{R} \right) \bigg)
\end{eqnarray*}
from which it follows that $\mathscr{A}$ is VC. Moreover, the VC-indexes of $\mathscr{K}$ and $\mathcal{F}$ are two and three for any choice of $\alpha$ and $\beta$. Hence, the corresponding VC-indexes of $\mathscr{H}$ and $\mathscr{A}$ both have upper bounds independent of $\alpha$ and $\beta$. The existence of the function $J_{\mathscr{A}}$ is a consequence of the maximal inequality 3.1 in \cite{kipo}. Note that $J_{\mathscr{A}}$ only depends on the VC-index of the class $\mathscr{A}$, which in turn has an upper bound independent of $\alpha$ and $\beta$.
\end{proof}

\begin{lemma}\label{l12}
Suppose that (I)-(IV) hold. Then,
\begin{enumerate}[(i)]
\item $\left\|\mathbb{Q}_n (\tilde{\epsilon}_n^2\ind{Z\leq (\cdot)\land\zeta_n})- \sigma^2\mathbb{P}(Z\leq(\cdot)\land\zeta_0)\right\|_{[a,b]}\rightarrow 0$,
\item $\left\|\mathbb{Q}_n (|\tilde{\epsilon}_n|\ind{Z\leq (\cdot)\land\zeta_n})- \mathbb{P}(|\epsilon|)\mathbb{P}(Z\leq(\cdot)\land\zeta_0)\right\|_{[a,b]}\rightarrow 0$,
\item $\left\|\mathbb{Q}_n (\tilde{\epsilon}_n\ind{Z\leq (\cdot)\land\zeta_n})\right\|_{[a,b]}\rightarrow 0$, and
\item $\left\|\mathbb{Q}_n (\ind{Z\leq (\cdot)\land\zeta_n})- \mathbb{P}(\ind{Z\leq(\cdot)\land\zeta_0})\right\|_{[a,b]}\rightarrow 0$.
\end{enumerate}
Also, these statements are true if $\ind{Z\leq (\cdot)\land\zeta_n}$ is replaced by any of $\ind{(\cdot) < Z \leq\zeta_n}$, $\ind{\zeta_n < Z\leq (\cdot)}$ or $\ind{Z> (\cdot)\lor\zeta_n}$.
\end{lemma}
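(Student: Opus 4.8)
The plan is to exploit the fact that, although $\tilde\epsilon_n$ is built from the estimated break $\zeta_n$, it is an affine function of $y$ on each of the two half-lines cut out by $\zeta_n$: one has $\tilde\epsilon_n(z,y)=y-\alpha_n$ on $\{z\le\zeta_n\}$ and $\tilde\epsilon_n(z,y)=y-\beta_n$ on $\{z>\zeta_n\}$. Every indicator occurring in the lemma — and each of the four admissible replacements — is supported on exactly one of these half-lines: $\ind{z\le\zeta\land\zeta_n}$ and $\ind{\zeta<z\le\zeta_n}$ on $\{z\le\zeta_n\}$, and $\ind{\zeta_n<z\le\zeta}$ and $\ind{z>\zeta\lor\zeta_n}$ on $\{z>\zeta_n\}$. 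Consequently, for $p\in\{1,2\}$ and any such $\ind{I}(z)$, the integrand $\tilde\epsilon_n^{\,p}\ind{I}(z)$ equals $(y-\alpha_n)^p\ind{I}(z)$ (or the same with $\beta_n$), and $|\tilde\epsilon_n|\ind{I}(z)$ equals $|y-\alpha_n|\ind{I}(z)$ (or the same with $\beta_n$). Expanding $(y-c)^2=y^2-2cy+c^2$, and using that $|\alpha_n|\le M$ and $\sup_n|\beta_n|<\infty$ by (IV) (so that, after enlarging $M$ if necessary, $\mathcal G$ also contains $|y-\beta_n|f$), each of these integrands is a fixed finite linear combination, with coefficients bounded uniformly in $n$ and $\zeta$, of functions lying in $\mathcal H$ ($y^2\ind{I}$), $\mathcal G$ ($y\ind{I}$, $|y-\alpha_n|\ind{I}$, $|y-\beta_n|\ind{I}$) and $\mathcal F$ ($\ind{I}$), uniformly over $\zeta\in[a,b]$.

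First I would write every $\mathbb Q_n$-integral in the statement as $\mathbb P(\cdot)+(\mathbb Q_n-\mathbb P)(\cdot)$. By the previous paragraph, the term $(\mathbb Q_n-\mathbb P)(\cdot)$ is bounded, uniformly in $\zeta\in[a,b]$, by a fixed linear combination of $\|\mathbb Q_n-\mathbb P\|_{\mathcal F}$, $\|\mathbb Q_n-\mathbb P\|_{\mathcal G}$ and $\|\mathbb Q_n-\mathbb P\|_{\mathcal H}$, hence it tends to $0$ by (I)--(III). It then remains to handle the $\mathbb P$-integrals. Here I substitute the model equation $Y=\alpha_0\ind{Z\le\zeta_0}+\beta_0\ind{Z>\zeta_0}+\epsilon$ valid under $\mathbb P$, so that on $\{Z\le\zeta_n\}$ one has $\tilde\epsilon_n=(\alpha_0-\alpha_n)\ind{Z\le\zeta_0}+(\beta_0-\alpha_n)\ind{Z>\zeta_0}+\epsilon$. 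Multiplying by the relevant indicator, splitting the region at $\zeta_0$, and using that $\epsilon$ is independent of $Z$ with $\mathbb P(\epsilon)=0$ (which kills every term linear in $\epsilon$), one obtains, writing $F$ for the (continuous) c.d.f.\ of $Z$, identities such as
\[
\mathbb P\!\left(\tilde\epsilon_n^2\ind{Z\le\zeta\land\zeta_n}\right)=\left((\alpha_0-\alpha_n)^2+\sigma^2\right)F\!\left(\zeta\land\zeta_n\land\zeta_0\right)+\mathbb P\!\left((\beta_0-\alpha_n+\epsilon)^2\right)\mathbb P\!\left(\zeta_0<Z\le\zeta\land\zeta_n\right),
\]
and the obvious analogues for (ii), (iii) and (iv), with $\mathbb P(|\alpha_0-\alpha_n+\epsilon|)$, $(\alpha_0-\alpha_n)$ and $1$ respectively replacing $(\alpha_0-\alpha_n)^2+\sigma^2$, and with the cross term multiplied by $\mathbb P(|\beta_0-\alpha_n+\epsilon|)$, $(\beta_0-\alpha_n)$ or $1$.

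The conclusion then follows by passing to the limit uniformly in $\zeta$. On the one hand, $\alpha_n\to\alpha_0$ gives $(\alpha_0-\alpha_n)^2+\sigma^2\to\sigma^2$, $\mathbb P(|\alpha_0-\alpha_n+\epsilon|)\to\mathbb P(|\epsilon|)$ and $(\alpha_0-\alpha_n)\to0$, while $\mathbb P((\beta_0-\alpha_n+\epsilon)^2)$ and $\mathbb P(|\beta_0-\alpha_n+\epsilon|)$ stay bounded (using $\mathbb P(\epsilon^2)=\sigma^2<\infty$). On the other hand, $F$ is continuous and $\zeta_n\to\zeta_0$ by (IV), and for every $\zeta$ both the symmetric difference of $\{Z\le\zeta\land\zeta_n\land\zeta_0\}$ and $\{Z\le\zeta\land\zeta_0\}$ and the set $\{\zeta_0<Z\le\zeta\land\zeta_n\}$ are contained in the fixed interval $(\zeta_n\land\zeta_0,\zeta_n\lor\zeta_0]$, whose $\mathbb P$-mass is $|F(\zeta_n)-F(\zeta_0)|\to0$; since this bound does not involve $\zeta$, the cross term vanishes uniformly and $F(\zeta\land\zeta_n\land\zeta_0)\to F(\zeta\land\zeta_0)=\mathbb P(Z\le\zeta\land\zeta_0)$ uniformly. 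Combining these with the vanishing of the $(\mathbb Q_n-\mathbb P)$-part yields the uniform convergence claimed in (i)--(iv).

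The four listed replacements for $\ind{Z\le(\cdot)\land\zeta_n}$ are treated identically: the only difference is whether $\tilde\epsilon_n$ collapses to $y-\alpha_n$ or to $y-\beta_n$ on the half-line carrying the support of the indicator, and hence whether the limiting constant is assembled from $\epsilon+(\alpha_0-\alpha_n)$ or from $\epsilon+(\beta_0-\beta_n)$; since $\beta_n\to\beta_0$ as well, all the estimates above carry over verbatim. I do not expect a deep difficulty here; the two points that need care are (a) observing the collapse of the $\zeta_n$-dependent $\tilde\epsilon_n$ to an affine function of $y$ on each half-line, which is precisely what places the integrands into the VC/Glivenko--Cantelli classes $\mathcal F,\mathcal G,\mathcal H$ where (I)--(III) can be applied, and (b) obtaining the uniformity over $\zeta\in[a,b]$, which rests on the fact that the "boundary" set near $\zeta_0$ is always trapped inside the interval with endpoints $\zeta_n$ and $\zeta_0$, whose mass goes to $0$ by (IV) and the continuity of $F$, with a bound free of $\zeta$.
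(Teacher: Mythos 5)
Your proof is correct and follows essentially the same strategy as the paper's: decompose $\mathbb{Q}_n = \mathbb{P} + (\mathbb{Q}_n - \mathbb{P})$, control the fluctuation term via the classes $\mathcal{F},\mathcal{G},\mathcal{H}$ and conditions (I)--(III), and handle the $\mathbb{P}$-part using (IV), continuity of $F$, and the collapse of $\tilde\epsilon_n$ to an affine function of $y$ on each half-line cut by $\zeta_n$. The only stylistic difference is that you compute the $\mathbb{P}$-integrals in closed form by substituting the model (\ref{ec1}), whereas the paper organizes the same algebra as a chain of triangle inequalities comparing $\mathbb{Q}_n$- and $\mathbb{P}$-integrals of $Y^2\ind{\cdot}$, $|Y-\alpha_0|\ind{\cdot}$, $Y\ind{\cdot}$, and $\ind{\cdot}$; your remark that $M$ must also dominate $\sup_n|\beta_n|$ for the replacement cases is a small but legitimate tightening that the paper leaves implicit.
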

\begin{proof} Since $\zeta_n\rightarrow \zeta_0$ and $Z$ is continuous, for any $\zeta \in [a,b]$, we obtain
\begin{eqnarray*}
    \left|\mathbb{P}(Y^2\ind{Z\leq \zeta\land\zeta_n}) - \mathbb{P}(Y^2\ind{Z\leq \zeta\land\zeta_0})\right| & \leq & \mathbb{P}(Y^2|\ind{Z\leq\zeta_n}-\ind{Z\leq\zeta_0}|) \rightarrow 0, \\
    \left|\mathbb{P}(|Y-\alpha_0|\ind{Z\leq \zeta\land\zeta_n})-\mathbb{P}(|Y-\alpha_0|\ind{Z\leq \zeta\land\zeta_0})\right| & \leq & \mathbb{P}(|Y||\ind{Z\leq\zeta_n}-\ind{Z\leq\zeta_0}|) \rightarrow 0, \\
    \left|\mathbb{P}(Y\ind{Z\leq \zeta\land\zeta_n})-\mathbb{P}(Y\ind{Z\leq \zeta\land\zeta_0})\right| & \leq & \mathbb{P}(|Y||\ind{Z\leq\zeta_n}-\ind{Z\leq\zeta_0}|) \rightarrow 0, \\
    \left|\mathbb{P}(\ind{Z\leq \zeta\land\zeta_n})-\mathbb{P}(\ind{Z\leq \zeta\land\zeta_0})\right| & \leq & \mathbb{P}(|\ind{Z\leq\zeta_n}-\ind{Z\leq\zeta_0}|) \rightarrow 0.
\end{eqnarray*}
Also note that the convergence is uniform in $\zeta \in [a,b]$. Thus,
\begin{eqnarray*}
    \left\|\mathbb{Q}_n(Y^2\ind{Z\leq (\cdot)\land\zeta_n})-\mathbb{P}(Y^2\ind{Z\leq (\cdot)\land\zeta_0})\right\|_{[a,b]}\leq \left\|\mathbb{Q}_n - \mathbb{P}\right\|_{\mathcal{H}} \\
    \qquad \qquad + \left\|\mathbb{P}(Y^2\ind{Z\leq (\cdot)\land\zeta_n})-\mathbb{P}(Y^2\ind{Z\leq (\cdot) \land \zeta_0})\right\|_{[a,b]} \rightarrow 0
\end{eqnarray*}
as $n \rightarrow \infty$ by (III). Similarly, we also obtain that $\|\mathbb{Q}_n(|Y-\alpha_0|\ind{Z\leq (\cdot)\land\zeta_n})-\mathbb{P}(|Y-\alpha_0|\ind{Z\leq (\cdot)\land\zeta_0})\|_{[a,b]}\rightarrow 0$, $\|\mathbb{Q}_n(Y\ind{Z\leq (\cdot)\land\zeta_n})-\mathbb{P}(Y\ind{Z\leq (\cdot)\land\zeta_0})\|_{[a,b]}\rightarrow 0$ and
$\|\mathbb{Q}_n(\ind{Z\leq (\cdot)\land\zeta_n})-\mathbb{P}(\ind{Z\leq (\cdot)\land\zeta_0})\|_{[a,b]}\rightarrow 0$. This proves $(iv)$.

Finally, $(i)$, $(ii)$ and $(iii)$ follow as consequence of the convergence $\alpha_n \rightarrow \alpha_0$ and of the following inequalities:
\begin{eqnarray*}
    & & \left\|\mathbb{Q}_n (\tilde{\epsilon}_n^2\ind{Z\leq (\cdot)\land\zeta_n})- \sigma^2\mathbb{P}(Z\leq(\cdot)\land\zeta_0)\right\|_{[a,b]} \\
    & \leq & \left\|\mathbb{Q}_n(Y^2\ind{Z\leq (\cdot)\land\zeta_n})-\mathbb{P}(Y^2\ind{Z\leq (\cdot)\land\zeta_0})\right\|_{[a,b]} + 2|\alpha_n - \alpha_0|\mathbb{Q}_n(|Y|) + |\alpha_n^2 - \alpha_0^2| \\
    & + & 2|\alpha_0|\left\|\mathbb{Q}_n(Y\ind{Z\leq (\cdot)\land\zeta_n})-\mathbb{P}(Y\ind{Z\leq (\cdot)\land\zeta_0})\right\|_{[a,b]}+ \alpha_0^2 \left\|\mathbb{Q}_n(\ind{Z\leq (\cdot)\land\hat{\zeta}_n})-\mathbb{P}(\ind{Z\leq (\cdot)\land\zeta_0})\right\|_{[a,b]}
\end{eqnarray*}
and
\begin{eqnarray*}
    \left\|\mathbb{Q}_n (|\tilde{\epsilon}_n|\ind{Z\leq (\cdot)\land\zeta_n}) -\mathbb{P} (|\epsilon|\ind{Z\leq (\cdot)\land\zeta_n})\right\|_{[a,b]} &\leq& \\
    \left\|\mathbb{Q}_n(|Y-\alpha_0|\ind{Z\leq (\cdot)\land\zeta_n})-\mathbb{P}(|Y-\alpha_0|\ind{Z\leq (\cdot)\land\zeta_0})\right\|_{[a,b]}  &+& |\alpha_n-\alpha_0|.
\end{eqnarray*}
and
\begin{eqnarray*}
    \left\|\mathbb{Q}_n (\tilde{\epsilon}_n\ind{Z\leq (\cdot)\land\zeta_n})\right\|_{[a,b]} \leq
    \left\|\mathbb{Q}_n(Y\ind{Z\leq (\cdot)\land\zeta_n})-\mathbb{P}(Y\ind{Z\leq (\cdot)\land\zeta_0})\right\|_{[a,b]} \\
    + |\alpha_n-\alpha_0| + |\alpha_0|\left\|\mathbb{Q}_n(\ind{Z\leq (\cdot)\land\zeta_n})-\mathbb{P}(\ind{Z\leq (\cdot)\land\zeta_0})\right\|_{[a,b]}.
\end{eqnarray*}
The other three cases follow from similar arguments.
\end{proof}

\begin{lemma}\label{l13}
Suppose that (I)-(IV) hold. Then,
\begin{enumerate}[(i)]
\item $\left\|(\mathbb{P}_n^* - \mathbb{Q}_n)(\tilde{\epsilon}_n \ind{Z\leq (\cdot)\land\zeta_n})\right\|_{[a,b]}\cip 0 $,
\item $\left\|(\mathbb{P}_n^* - \mathbb{Q}_n)(\ind{Z\leq (\cdot)\land\zeta_n})\right\|_{[a,b]} \cip 0 $.
\end{enumerate}
Also, these statements are true if $\ind{Z\leq (\cdot)\land\zeta_n}$ is replaced by any of $\ind{(\cdot) < Z \leq\zeta_n}$, $\ind{\zeta_n < Z\leq (\cdot)}$ or $\ind{Z> (\cdot)\lor\zeta_n}$.
\end{lemma}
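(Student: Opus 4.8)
The plan is to read both quantities as fluctuations of an empirical process over a VC-subgraph class whose complexity is controlled uniformly in $n$, and then to close the estimate with the maximal inequality of Lemma \ref{lKP} together with conditions (III) and (IV). No conditioning is needed here, since in the setting of this section $\mathbb{Q}_n$ and $\theta_n=(\alpha_n,\beta_n,\zeta_n)$ are deterministic and $(X_{n,1},\ldots,X_{n,m_n})$ is a genuine i.i.d.\ sample from $\mathbb{Q}_n$ on $(\Omega,\mathcal{A},\mathbf{P})$.

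First I would note that for every $\zeta\in[a,b]$ the set $\{z\le\zeta\wedge\zeta_n\}=(-\infty,\zeta\wedge\zeta_n]$ is an interval, so
\[
\mathscr{D}_n^{(2)} := \left\{\ind{Z\le\zeta\wedge\zeta_n} : \zeta\in[a,b]\right\}
\]
is a subclass of $\mathcal{F}$, which is VC-subgraph with envelope $1$. For (i), writing $\tilde{\epsilon}_n\ind{Z\le\zeta\wedge\zeta_n} = \big(y-\alpha_n\ind{(-\infty,\zeta_n]}(z)-\beta_n\ind{(\zeta_n,\infty)}(z)\big)\ind{(-\infty,\zeta\wedge\zeta_n]}(z)$, I see that
\[
\mathscr{D}_n^{(1)} := \left\{\tilde{\epsilon}_n\ind{Z\le\zeta\wedge\zeta_n} : \zeta\in[a,b]\right\}
\]
is precisely a subclass of the class $\mathscr{A}$ of Lemma \ref{lKP} associated with the parameters $(\alpha,\beta,\zeta)=(\alpha_n,\beta_n,\zeta_n)$, hence VC-subgraph with envelope $B_n := |y|+|\alpha_n|+|\beta_n|$ and --- this is the key point --- with VC-index and entropy integral $J_{\mathscr{A}}$ that do not depend on $n$.

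Then I would invoke Lemma \ref{lKP}, which (with sample size $m_n$, $\mu=\mathbb{Q}_n$, $\mu_n=\mathbb{P}_n^*$, $\mathscr{D}=\mathscr{D}_n^{(1)}$, $B=B_n$) yields
\[
\mathbf{E}\left\|(\mathbb{P}_n^*-\mathbb{Q}_n)\big(\tilde{\epsilon}_n\ind{Z\le(\cdot)\wedge\zeta_n}\big)\right\|_{[a,b]} \le \frac{J_{\mathscr{A}}(1)}{\sqrt{m_n}}\sqrt{\mathbb{Q}_n(B_n^2)},
\]
while the analogous maximal inequality for the VC class $\mathcal{F}$ with bounded envelope (cf.\ \cite{kipo}) gives $\mathbf{E}\|(\mathbb{P}_n^*-\mathbb{Q}_n)(\ind{Z\le(\cdot)\wedge\zeta_n})\|_{[a,b]}\lesssim m_n^{-1/2}$. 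It then suffices to show $\sup_n\mathbb{Q}_n(B_n^2)<\infty$: condition (IV) gives $\alpha_n\to\alpha_0$, $\beta_n\to\beta_0$, so $|\alpha_n|+|\beta_n|$ is bounded, and condition (III) applied to $f\equiv 1\in\mathcal{F}$ (so that $y^2\in\mathcal{H}$) gives $\mathbb{Q}_n(Y^2)\to\mathbb{P}(Y^2)<\infty$; expanding $(|y|+|\alpha_n|+|\beta_n|)^2$ then bounds $\mathbb{Q}_n(B_n^2)$ uniformly. Since $m_n\to\infty$, both right-hand sides vanish, and Markov's inequality delivers the claimed $\cip 0$. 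The three remaining cases are handled verbatim: for each $\zeta\in[a,b]$ the sets $\{\zeta<Z\le\zeta_n\}$, $\{\zeta_n<Z\le\zeta\}$ and $\{Z>\zeta\vee\zeta_n\}$ are again intervals, so the corresponding classes are again subclasses of $\mathcal{F}$ (respectively of $\mathscr{A}$ with parameters $(\alpha_n,\beta_n,\zeta_n)$) with the same envelopes, and the same bound applies.

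The only place that genuinely requires care --- and the reason Lemma \ref{lKP} is stated with the emphasis that the VC-index and $J_{\mathscr{A}}$ are independent of $\alpha$ and $\beta$ --- is that both the envelope $B_n$ and the defining parameters of the class $\mathscr{D}_n^{(1)}$ vary with $n$; without the uniform entropy control from Lemma \ref{lKP} and the uniform integrability of $B_n^2$ under $\mathbb{Q}_n$ supplied by (III)--(IV), the maximal inequality would not close. Everything else is routine, modulo the usual outer-measurability conventions for suprema over VC classes.
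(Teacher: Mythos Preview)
Your proof is correct and follows essentially the same route as the paper: both read the suprema as empirical process fluctuations over subclasses of the VC classes $\mathscr{A}$ and $\mathcal{F}$ from Lemma \ref{lKP}, apply the maximal inequality to obtain an $O(m_n^{-1/2})$ bound on the expectation, and conclude via Markov. The only cosmetic difference is the envelope you feed into Lemma \ref{lKP} for part (i): the paper uses the tighter envelope $|\tilde\epsilon_n|$ for the subclass $\mathscr{D}_n^{(1)}$, obtaining the bound $J_{\mathscr{A}}(1)m_n^{-1/2}\sqrt{\mathbb{Q}_n(\tilde\epsilon_n^2)}$ and then invoking Lemma \ref{l12} to get $\mathbb{Q}_n(\tilde\epsilon_n^2)\to\sigma^2$, whereas you use the coarser full envelope $B_n=|y|+|\alpha_n|+|\beta_n|$ and bound $\mathbb{Q}_n(B_n^2)$ directly from (III)--(IV), bypassing Lemma \ref{l12}. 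Both choices are valid under the stated hypotheses and lead to the same conclusion.
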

\begin{proof} By the maximal inequality 3.1 from \cite{kipo} and Lemma \ref{lKP} we see that:
\begin{eqnarray}
\e{\left\|(\mathbb{P}_n^* - \mathbb{Q}_n)(\tilde{\epsilon}_n \ind{Z\leq (\cdot)\land\zeta_n})\right\|_{[a,b]}} &\leq& \frac{J_\mathscr{A}(1)}{\sqrt{m_n}}\sqrt{\mathbb{Q}_n (\tilde{\epsilon}_n^2)} \nonumber\\
\e{\left\|(\mathbb{P}_n^* - \mathbb{Q}_n)(\ind{Z\leq (\cdot)\land\zeta_n})\right\|_{[a,b]}} &\leq& \frac{J_\mathcal{F}(1)}{\sqrt{m_n}}. \nonumber
\end{eqnarray}
The lemma now follow directly as $\mathbb{Q}_n (\tilde{\epsilon}_n^2) \rightarrow \sigma^2$ (a consequence of Lemma \ref{l12}). The other statements are proven similarly.
\end{proof}

\subsubsection{Proof of Proposition \ref{pg1}}\label{prueba1}
Noting that $\tilde \epsilon_n = Y - \alpha_n \ind{Z\le \zeta_n} - \beta_n \ind{Z > \zeta_n}$, we write
\begin{eqnarray}\label{eq:simp_m_theta}
    m_\theta(X) & = & -(\tilde \epsilon_n + \alpha_n - \alpha)^2 \ind{Z \le \zeta_n \land \zeta} - (\tilde \epsilon_n + \beta_n - \alpha)^2 \ind{\zeta_n < Z \le \zeta} \nonumber \\
    & & \qquad - (\tilde \epsilon_n + \alpha_n - \beta)^2 \ind{\zeta < Z \le \zeta_n} - (\tilde \epsilon_n + \beta_n - \beta)^2 \ind{Z > \zeta_n \lor \zeta},
\end{eqnarray}
and therefore
\begin{eqnarray}
  -\mathbb{P}_n^*(\tilde{\epsilon}_n^2) & = & M_n^* (\theta_n)  \qquad \le M_n^* (\theta_n^*) \nonumber\\
  & \leq & - \mathbb{P}_n^*[(\tilde{\epsilon}_n -\alpha_n^* + \alpha_n)^2\ind{Z<a}] - \mathbb{P}_n^*[(\tilde{\epsilon}_n -\beta_n^* + \beta_n)^2\ind{Z>b}]. \nonumber
\end{eqnarray}
Letting $\gamma^*_n = (\alpha^*_n, \beta^*_n)$, noticing that $ M_n^* (\hat{\theta}_n) = -\function{\mathbb{P}_n^*}{\tilde{\epsilon}_n^2}$, and by rearranging the terms in the above inequality, we get
\begin{eqnarray}
|\gamma_n^*-\gamma_n|^2 \mathbb{P}_n^* (Z<a)\land\mathbb{P}_n^* (Z>b) \leq \function{\mathbb{P}_n^*}{\tilde{\epsilon}_n^2 \ind{a\leq Z\leq b}} \nonumber \\
+ 2|\gamma_n^*-\gamma_n|\left(|\function{\mathbb{P}_n^*}{\tilde{\epsilon}_n \ind{Z<a}}|+ |\function{\mathbb{P}_n^*}{\tilde{\epsilon}_n\ind{Z>b}}|\right).\nonumber
\end{eqnarray}
Consider $\mathbb{P}_n^* (Z<a)$. By $(ii)$ of Lemma \ref{l13} we see that $|(\mathbb{P}_n^* - \mathbb{Q}_n)(Z<a)| \stackrel{\mathbf{P}}{\rightarrow} 0$ and by $(iv)$ of Lemma \ref{l12} we can show that $|(\mathbb{Q}_n - \mathbb{P})(Z<a)| \rightarrow 0$. Thus, combining the two, we have $\mathbb{P}_n^*(Z<a) \cip \mathbb{P} (Z<a)$. Similarly, we can show that $\mathbb{P}_n^* (Z<a)\land\mathbb{P}_n^*(Z>b)\cip\mathbb{P} (Z<a)\land\mathbb{P} (Z>b)>0$ and also that $|\function{\mathbb{P}_n^*}{\tilde{\epsilon}_n\ind{Z<a}}| + |\function{\mathbb{P}_n^*}{\tilde{\epsilon}_n\ind{Z>b}}|\cip 0 $. Also, observe that $\e{\mathbb{P}_n^*(\tilde{\epsilon}_n^2)}=\mathbb{Q}_n(\tilde{\epsilon}_n^2)\rightarrow\sigma^2$, by assumptions (I)-(III) and so $\mathbb{P}_n^*(\tilde{\epsilon}_n^2)$ is bounded in $\mathbb{L}^1$. Hence, we can write \[ |\gamma_n^*-\gamma_n|^2\leq O_{\mathbf{P}}(1) + |\gamma_n^*-\gamma_n|o_{\mathbf{P}}(1)\]
and therefore $|\gamma_n^*-\gamma_n| = O_{\mathbf{P}}(1)$ (and, consequently, $|\gamma_n^*-\gamma_0| = O_{\mathbf{P}}(1)$).

We first rewrite $m_\theta(X)$ as follows:
\begin{eqnarray}
m_\theta(X) = -\tilde{\epsilon}_n^2 -2(\alpha_n-\alpha)\tilde{\epsilon}_n\mathbf{1}_{Z\leq \zeta\land\zeta_n} - (\alpha_n-\alpha)^2\mathbf{1}_{Z\leq \zeta\land\zeta_n} \nonumber\\
- 2(\beta_n-\alpha)\tilde{\epsilon}_n\mathbf{1}_{\zeta_n < Z\leq \zeta} - (\beta_n-\alpha)^2\mathbf{1}_{\zeta_n < Z\leq \zeta} \nonumber\\
-2(\alpha_n-\beta)\tilde{\epsilon}_n\mathbf{1}_{\zeta < Z\leq \zeta_n} - (\alpha_n-\beta)^2\mathbf{1}_{\zeta < Z\leq \zeta_n} \nonumber\\
- 2(\beta_n-\beta)\tilde{\epsilon}_n\mathbf{1}_{Z> \zeta\lor\zeta_n} - (\beta_n-\beta)^2\mathbf{1}_{Z> \zeta\lor\zeta_n}. \label{ec3}
\end{eqnarray}
We can then decompose $M_n^*$ as in (\ref{ec3}), and use Lemmas \ref{l13} and \ref{l12} and the fact that $\theta_n \rightarrow \theta_0$, to obtain \begin{eqnarray*}
    \left\|M_n^* + \mathbb{P}_n^*(\tilde{\epsilon}_n^2) - M_n - \mathbb{Q}_n(\tilde{\epsilon}_n^2)\right\|_K & \cip & 0. \\
    \left\|M_n^* + \mathbb{P}_n^*(\tilde{\epsilon}_n^2) - M - \sigma^2\right\|_K & \cip & 0
\end{eqnarray*}
for every compact $K \subset \Theta$. But $\theta_0$ is also the unique maximizer of $M + \sigma^2$ and $|\gamma_n^*-\gamma_0| = O_{\mathbf{P}}(1)$. Therefore, the conditions of Corollary 3.2.3 (ii), page 287 of \cite{vw}, hold and we obtain that $\theta_n^* \cip \theta_0$ (and also that $\theta_n^*-\theta_n \cip 0$). $\hfill \square$

\subsubsection{Proof of Proposition \ref{pg2}}\label{prueba2}
We will apply Theorem 3.4.1 of \cite{vw} to prove the result. Let $d:\mathbb{R}^3\times\mathbb{R}^3\rightarrow\mathbb{R}$ be given by $d(\theta,\vartheta)=|(\theta_1,\theta_2)-(\vartheta_1,\vartheta_2)| + \sqrt{|\theta_3-\vartheta_3|}$. Consider $\eta,\rho,L>0$ as in (V) and a compact rectangle $K \subset \Theta$ such that $\left\{\theta\in\Theta :\right.$ $\left. d(\theta,\theta_n)<\eta \textrm{ for some } n \in \mathbb{N}\right\}\subset K$. We can take $L$ large enough so $L > 1\lor \sup\left\{ |\theta_1 - \vartheta_2| \lor |\theta_2 - \vartheta_1 | : \theta, \vartheta\in K\right\}$. Pick $n$ large enough so we can fix some $\delta\in(\frac{2\sqrt{2}}{m_n^{1/4}},\eta)$. Then, taking also (I)-(IV) into account and possibly making $\eta$ smaller, we can find positive constants $c_1,c_2>0$ and $N\in\mathbb{N}$ such that for any $n\geq N$, we have (\ref{rccs1}), (\ref{rccs2}), (\ref{rccs3}) and the inequalities:
\begin{eqnarray}\nonumber
\inf_{d(\theta,\theta_n)<\delta}\left\{|\alpha_n - \beta|^2 \land |\beta_n - \alpha|^2 \right\}&>& c_1,\nonumber\\
\mathbb{Q}_n(Z\leq a)\land \mathbb{Q}_n(Z>b) &>& c_2.\nonumber
\end{eqnarray}
Also, let $\mathbb{M}_n(\theta) := M_n^*(\theta) + \mathbb{P}_n^*(\tilde{\epsilon}_n^2)$ and $\mathcal{M}_n(\theta) := M_n(\theta) + \mathbb{Q}_n(\tilde{\epsilon}_n^2)$ for all $\theta \in \Theta$.

Choose $n\geq N$ and $\theta\in \Theta$ with $\frac{\delta}{2} < d(\theta,\theta_n) < \delta$. Then, considering the properties of the constants just defined and the expression
\begin{eqnarray}\label{eq:M_n-M_n_theta}
\mathcal{M}_n(\theta) - \mathcal{M}_n(\theta_n) =
-2(\alpha_n-\alpha)\mathbb{Q}_n(\tilde{\epsilon}_n\mathbf{1}_{Z\leq \zeta\land\zeta_n}) - (\alpha_n-\alpha)^2\mathbb{Q}_n(\mathbf{1}_{Z\leq \zeta\land\zeta_n}) \nonumber\\
- \ 2(\beta_n-\alpha)\mathbb{Q}_n(\tilde{\epsilon}_n\mathbf{1}_{\zeta_n < Z\leq \zeta}) - (\beta_n-\alpha)^2\mathbb{Q}_n(\mathbf{1}_{\zeta_n < Z\leq \zeta}) \nonumber\\
- \ 2(\alpha_n-\beta)\mathbb{Q}_n(\tilde{\epsilon}_n\mathbf{1}_{\zeta < Z\leq \zeta_n}) - (\alpha_n-\beta)^2\mathbb{Q}_n(\mathbf{1}_{\zeta < Z\leq \zeta_n}) \nonumber\\
- \ 2(\beta_n-\beta)\mathbb{Q}_n(\tilde{\epsilon}_n\mathbf{1}_{Z> \zeta\lor\zeta_n}) - (\beta_n-\beta)^2\mathbb{Q}_n(\mathbf{1}_{Z> \zeta\lor\zeta_n})
\end{eqnarray}
it is seen that the sum of the 1st, 3rd, 5th, and 7th terms in (\ref{eq:M_n-M_n_theta}) can be bounded from above by $\frac{8L^2\delta}{\sqrt{m_n}}$. While we also have,
\begin{eqnarray*}
    (\alpha_n-\alpha)^2 \mathbb{Q}_n(\mathbf{1}_{Z\leq \zeta\land\zeta_n}) & \ge & c_2 (\alpha_n - \alpha)^2, \\
    (\beta_n-\beta)^2 \mathbb{Q}_n(\mathbf{1}_{Z> \zeta \lor \zeta_n}) & \ge & c_2 (\beta_n-\beta)^2, \\
    (\beta_n - \alpha)^2 \mathbb{Q}_n(\mathbf{1}_{\zeta_n < Z\leq \zeta}) & \ge & c_1\rho|\zeta-\zeta_n|,\ \textrm{ if } |\zeta-\zeta_n|\geq \frac{\delta^2}{8} > \frac{1}{\sqrt{m_n}},\\
    (\alpha_n-\beta)^2\mathbb{Q}_n(\mathbf{1}_{\zeta < Z\leq \zeta_n}) & \ge & c_1\rho|\zeta-\zeta_n|, \ \textrm{ if } |\zeta-\zeta_n|\geq \frac{\delta^2}{8} > \frac{1}{\sqrt{m_n}},
\end{eqnarray*}
and therefore, noting that either $(\alpha-\alpha_n)^2 + (\beta-\beta_n)^2 \geq \frac{\delta^2}{8}$ or $|\zeta-\zeta_n|\geq \frac{\delta^2}{8}$, letting $c = \frac{1}{16} c_2\land (c_1\rho)$ and adding all the terms in the previous display, we get
\begin{equation}\nonumber
\sup_{\frac{\delta}{2}< d(\theta,\theta_n) < \delta} \left\{\mathcal{M}_n(\theta) - \mathcal{M}_n({\theta}_n)\right\} \leq \frac{8L^2}{\sqrt{m_n}}\delta - 2c\delta^2\ \ \forall n\geq N.
\end{equation}
Hence, setting $\delta_n = \frac{8L^2}{c\sqrt{m_n}}\land \frac{2\sqrt{2}}{m_n^{1/4}}$ we get that
\begin{equation}\label{ec26}
\sup_{\frac{\delta}{2}< d(\theta,{\theta}_n) < \delta} \left\{\mathcal{M}_n(\theta) - \mathcal{M}_n({\theta}_n)\right\} \leq -c\delta^2\ \ \forall\ \delta_n\leq \delta < \eta,\ \ \forall n\geq N.
\end{equation}
Next we will show
\begin{equation}\label{eq:RateConvCond}
\sqrt{n}\e{\sup_{d(\theta,\theta_n)<\delta}\left\{\left|(\mathbb{M}_n - \mathcal{M}_n)(\theta) - (\mathbb{M}_n - \mathcal{M}_n)({\theta}_n)\right|\right\}}\lesssim \frac{\sqrt{n}}{\sqrt{m_n}}\delta.
\end{equation}
Note that, using the expansion (\ref{ec3}), $\mathbb{M}_n(\theta_n) = \mathcal{M}_n (\theta_n) = 0$. To control the term $(\mathbb{M}_n - \mathcal{M}_n)(\theta)$ observe that it admits a very similar expansion as (\ref{eq:M_n-M_n_theta}) with the $\mathbb{Q}_n$ replaced by $(\mathbb{P}_n^* - \mathbb{Q}_n)$; in particular, we can write the difference $\mathbb{M}_n(\theta) - \mathcal{M}_n(\theta)$ (by re-arranging the terms) as
\begin{eqnarray}
-2(\alpha_n-\alpha)(\mathbb{P}_n^* - \mathbb{Q}_n)(\tilde{\epsilon}_n\mathbf{1}_{Z\leq \zeta\land\zeta_n})
-2(\beta_n-\beta)(\mathbb{P}_n^* - \mathbb{Q}_n)(\tilde{\epsilon}_n\mathbf{1}_{Z> \zeta\lor\zeta_n}) \nonumber \\
-2(\beta_n-\alpha)(\mathbb{P}_n^* - \mathbb{Q}_n)(\tilde{\epsilon}_n\mathbf{1}_{\zeta_n < Z\leq \zeta})
-2(\alpha_n-\beta)(\mathbb{P}_n^* - \mathbb{Q}_n)(\tilde{\epsilon}_n\mathbf{1}_{\zeta < Z\leq \zeta_n})  \nonumber \\
- (\alpha_n-\alpha)^2(\mathbb{P}_n^* - \mathbb{Q}_n)(\mathbf{1}_{Z\leq \zeta\land\zeta_n})
- (\beta_n-\beta)^2(\mathbb{P}_n^* - \mathbb{Q}_n)(\mathbf{1}_{Z> \zeta\lor\zeta_n}) \nonumber \\
- (\alpha_n-\beta)^2(\mathbb{P}_n^* - \mathbb{Q}_n)(\mathbf{1}_{\zeta < Z\leq \zeta_n})
- (\beta_n-\alpha)^2(\mathbb{P}_n^* - \mathbb{Q}_n)(\mathbf{1}_{\zeta_n < Z\leq \zeta}). \label{ec11}
\end{eqnarray}
Each of these terms can be controlled by using Lemma \ref{lKP} as
\begin{eqnarray*}
\e{\left\|(\mathbb{P}_n^* - \mathbb{Q}_n)(\tilde{\epsilon}_n \ind{Z\leq (\cdot)\land \zeta_n})\right\|_{[a,b]}} & \leq & \frac{J_\mathscr{A}(1)}{\sqrt{m_n}}\sqrt{\mathbb{Q}_n(\tilde{\epsilon}_n^2)} \\
\e{\left\|(\mathbb{P}_n^* - \mathbb{Q}_n)(\tilde{\epsilon}_n \ind{(\cdot)< Z \leq \zeta_n})\right\|_{|\zeta-\zeta_n|<\delta^2}} & \leq & \frac{J_\mathscr{A}(1)}{\sqrt{m_n}}\sqrt{\mathbb{Q}_n(\tilde{\epsilon}_n^2\ind{\zeta_n - \delta^2 < Z \leq \zeta_n + \delta^2})}.
\end{eqnarray*}
Lemma \ref{l12} implies that $\mathbb{Q}_n(\tilde{\epsilon}_n^2 \ind {\zeta_n - \delta^2 < Z \leq \zeta_n + \delta^2})\rightarrow \sigma^2 \mathbb{P}({\zeta}_0 - \delta^2 < Z \leq {\zeta}_0 + \delta^2) = \sigma^2 \{2f(\zeta_0)\delta^2 + o(\delta^2)\}$. Hence, there is a constant $R>0$ such that the right side of the above equations are bounded by $R/\sqrt{m_n}$ and $R \sqrt{\delta^2 + o(\delta^2)} /\sqrt{m_n}$. Using similar arguments, we can in fact make $R$ large enough so that the following inequalities hold too
\begin{eqnarray}
\e{\left\|(\mathbb{P}_n^* - \mathbb{Q}_n)(\tilde{\epsilon}_n \ind{Z > (\cdot)\lor\zeta_n})\right\|_{[a,b]}} &\leq& \frac{R}{\sqrt{m_n}} \label{ec29}\\
\e{\left\|(\mathbb{P}_n^* - \mathbb{Q}_n)(\tilde{\epsilon}_n \ind{\zeta_n< Z \leq (\cdot)})\right\|_{|\zeta-\zeta_n|<\delta^2}} &\leq& \frac{R}{\sqrt{m_n}}\sqrt{\delta^2 + o(\delta^2)}. \label{ec30}
\end{eqnarray}
We also assume that $R > J_\mathcal{F}(1)$. Using (\ref{ec29}), (\ref{ec30}), the discussion preceding the display, and grouping two consecutive terms at a time in the expansion (\ref{ec11}), it is easily seen that \[ \sqrt{n}\e{\sup_{d(\theta,\theta_n)<\delta}\left\{\left|(\mathbb{M}_n - \mathcal{M}_n)(\theta) - (\mathbb{M}_n - \mathcal{M}_n)(\theta_n)\right|\right\}}\lesssim \frac{4R\sqrt{n}}{\sqrt{m_n}}\delta\] \[+ \frac{4RL\sqrt{n}}{\sqrt{m_n}}\sqrt{\delta^2 + o(\delta^2)} + \frac{2 R \sqrt{n}}{\sqrt{m_n}}\delta^2 + \frac{2 R L^2 f(\zeta_0)\sqrt{n}}{\sqrt{m_n}}(\delta^2 + o(\delta^2)). \]
Thus by taking $\eta > 0$ small enough we can show that (\ref{eq:RateConvCond}) holds for every $n\geq N$ and any $\delta\in[\delta_n,\eta)$, with $\delta_n$ and $N$ defined as in (\ref{ec26}). Defining $\phi_n(\delta)=\frac{\sqrt{n}}{\sqrt{m_n}}\delta$ and $r_n=\sqrt{m_n}$, the hypotheses of Theorem 3.4.1 of \cite{vw} are satisfied (note that Proposition \ref{pg1} implies that $d(\theta_n,\theta_n^*)\cip 0$). Therefore, $r_n d(\theta_n,\theta_n^*) = \sqrt{m_n(\alpha_n^* - \alpha_n)^2 + m_n(\beta_n^* - \beta_n)^2} + \sqrt{m_n|\zeta_n^* - \zeta_n|} = O_{\mathbf{P}}(1)$. $\hfill \square$

\subsubsection{Proof of Lemma \ref{l14}}\label{prueba3}
Let $\eta>0$ be an upper bound for the norm of the elements in $K$. The maximal inequality from \cite{kipo} and Lemma \ref{lKP} imply
\begin{eqnarray*}
\sqrt{m_n}\e{\left\|(\mathbb{P}_n^* - \mathbb{Q}_n)(\tilde{\epsilon}_n\ind{\zeta_n + \frac{(\cdot)}{m_n} < Z \leq \zeta_n})\right\|_{K}} & \leq & J_\mathscr{A}(1) \sqrt{\mathbb{Q}_n (\tilde{\epsilon}_n^2 \ind{\zeta_n - \frac{\eta}{m_n} < Z \leq \zeta_n})} \\
\sqrt{m_n}\e{\left\|(\mathbb{P}_n^* - \mathbb{Q}_n)(\ind{\zeta_n + \frac{(\cdot)}{m_n} < Z \leq \zeta_n})\right\|_{K}} & \leq & J_\mathcal{F} (1) \sqrt{\mathbb{Q}_n (\ind{\zeta_n - \frac{\eta}{m_n} < Z \leq \zeta_n})}.
\end{eqnarray*}
By  $(i)$ and $(iv)$ of Lemma \ref{l12} applied with $\mathbf{1}_{Z \le (\cdot) \land \zeta_n}$ in place of $\mathbf{1}_{(\cdot) < Z \le \zeta_n}$, we see that the righthand side of both the above inequalities go to zero. On the other hand, using (\ref{ec48}) and (\ref{ec49}) it is easy to see that both $\sqrt{m_n}\|\mathbb{Q}_n({\tilde{\epsilon}_n^2 \ind{\zeta_n + \frac{(\cdot)}{m_n} < Z \leq \zeta_n}})\|_K$ and $\sqrt{m_n} \|\mathbb{Q}_n({\ind{\zeta_n + \frac{(\cdot)}{m_n} < Z \leq \zeta_n}})\|_K$ converge to zero. Now, note that \\ $\sqrt{m_n}\left\|\function{\mathbb{P}_n^*}{\tilde{\epsilon}_n\ind{\zeta_n + \frac{(\cdot)}{m_n} < Z \leq \zeta_n}}\right\|_K$ is bounded by
\begin{eqnarray}
 &  & \sqrt{m_n}\left\|(\mathbb{P}_n^* - \mathbb{Q}_n)(\tilde{\epsilon}_n \ind{\zeta_n + \; \frac{(\cdot)}{m_n} < Z \leq \zeta_n})\right\|_{K} + \sqrt{m_n}\left\|\function{\mathbb{Q}_n}{|\tilde{\epsilon}_n|\ind{\zeta_n + \frac{(\cdot)}{m_n} < Z
\leq \zeta_n}}\right\|_K \nonumber
\end{eqnarray}
and thus $\sqrt{m_n} \left\|\function{\mathbb{P}_n^*}{\tilde{\epsilon}_n\ind{\zeta_n + \frac{(\cdot)}{m_n} < Z \leq \zeta_n}}\right\|_K \stackrel{\mathbb{L}_1}{\longrightarrow} 0. $
Similarly we can bound $\\$ $\sqrt{m_n}\left\|\function{\mathbb{P}_n^*}{\ind{\zeta_n + \frac{(\cdot)}{m_n} < Z
\leq \zeta_n}}\right\|_K$ and show that it converges to zero in mean. Finally, from the expressions
\begin{eqnarray}
A_n^*(h_1) - \hat{A}_n (h_1,h_3) &=& 2h_1\sqrt{m_n}\function{\mathbb{P}_n^*}{\tilde{\epsilon}_n\ind{\zeta_n + \frac{h_3}{m_n} < Z \leq \zeta_n}} - h_1^2 \function{\mathbb{P}_n^*}{\ind{\zeta_n +  \frac{(h_3}{m_n} < Z \leq \zeta_n}},\nonumber \\
C_n^*(h_3) - \hat{C}_n (h_2,h_3) & = &  2 h_2\sqrt{m_n} \function{\mathbb{P}_n^*}{\tilde{\epsilon}_n \ind{\zeta_n + \frac{h_3}{m_n} < Z \leq \zeta_n}}\nonumber \\
&  & \;\; - \; \left(2 h_2\sqrt{m_n}(\alpha_n -\beta_n) - h_2^2\right) \function{\mathbb{P}_n^*}{\ind{\zeta_n + \frac{h_3}{m_n} < Z \leq \zeta_n}}\nonumber
 \end{eqnarray}
we get that $\left\|A_n^* - \hat{A}_n \right\|_K\stackrel{\mathbb{L}_1}{\longrightarrow} 0$ and $\left\|C_n^* - \hat{C}_n\right\|_K \stackrel{\mathbb{L}_1}{\longrightarrow} 0$. With completely analogous
arguments, it is seen that $\left\|B_n^* - \hat{B}_n \right\|_K\stackrel{\mathbb{L}_1}{\longrightarrow} 0$ and $\left\|D_n^* - \hat{D}_n\right\|_K \stackrel{\mathbb{L}_1}{\longrightarrow} 0$ as well.
Observing that $\hat{E}_n = \hat{A}_n + \hat{B}_n + \hat{C}_n + \hat{D}_n -
\mathbb{P}_n^*(\tilde{\epsilon}_n^2)$ completes the proof of the result. $\hfill \square$

\subsubsection{Proof of Lemma \ref{l15}}\label{prueba4}
It suffices to show that each of the components of $(\Xi_n)_{n=1}^\infty$ is tight. Write $\tilde{\epsilon}_{n,j} = \tilde{\epsilon}_n (Z_{n,j},Y_{n,j})$ and let
\begin{eqnarray*}
    r_n & = & m_n\mathbb{Q}_n\left(e^{i\frac{\xi}{\sqrt{m_n}}\tilde{\epsilon}_{n}\ind{Z \leq \zeta_n}} - 1 -i\frac{\xi}{m_n}{\sqrt{m_n}}\tilde{\epsilon}_{n}\ind{Z\leq\zeta_n} + \frac{\xi^2}{2 m_n}\tilde{\epsilon}_{n}^2\ind{Z\leq\zeta_n}\right) \\
    & \le & \frac{m_n^{-1/2} \xi^3 \mathbb{Q}_n |\tilde{\epsilon}_{n}|^3}{6}.
\end{eqnarray*}
Then, assumption (VIII) implies that $r_n\rightarrow 0$ as $n\rightarrow\infty$.
Since the characteristic function of $\sqrt{m_n}\mathbb{P}_n^*(\tilde{\epsilon}_n\ind{Z\leq\zeta_n})$ is given by
\[ \e{ e^{i\xi\sqrt{m_n}\mathbb{P}_n^*(\tilde{\epsilon}_n\ind{Z \leq \zeta_n})}} = \left(1 + i\frac{\xi}{\sqrt{m_n}} \mathbb{Q}_n \left(\tilde{\epsilon}_n \ind{Z\leq\zeta_n}\right) - \frac{\xi^2}{2 m_n}\mathbb{Q}_n\left(\tilde{\epsilon}_{n}^2\ind{Z\leq\zeta_n}\right) +
\frac{r_n}{m_n}\right)^{m_n}\] taking the limit as $n\rightarrow\infty$ we can conclude that
$\sqrt{m_n}\mathbb{P}_n^*(\tilde{\epsilon}_n\ind{Z\leq \zeta_n})\rightsquigarrow  N(0,\mathbb{P}(Z\leq\zeta_0)\sigma^2)$ by using (VII) and the fact that $(1 + \beta_n/n)^n \rightarrow e^\beta$ if $\beta_n \rightarrow \beta$. With similar arguments, it is seen that
$\sqrt{m_n}\mathbb{P}_n^*(\tilde{\epsilon}_n\ind{Z> \zeta_n})\rightsquigarrow  N(0,\mathbb{P}(Z>\zeta_0)\sigma^2)$, so the first two components of the random vector of interest are uniformly tight.

Consider now the processes $\Gamma_n(t) = m_n\mathbb{P}_n^* (\ind{\zeta_n < Z\leq\zeta_n +\frac{t}{m_n}})$ and $\\$ $\Psi_n(t) = m_n\mathbb{P}_n^* (\tilde{\epsilon}_n\ind{\zeta_n < Z\leq\zeta_n +\frac{t}{m_n}})$.
For any process $\Psi \in \tilde{\mathcal{D}_I}$, $I \subset\mathbb{R}$ compact interval, $\delta>0$, we write \[ \function{w_{\Psi}^{''}}{\delta} = \sup\left\{\left|\Psi(t_1) - \Psi(t)\right| \land \left|\Psi(t_2)-\Psi(t)\right|\right\}\] where the supremum is taken over all $t_1\leq t\leq t_2\in I$ with $0\leq t_2 - t_1 \leq \delta$. Also, for any $A\subset I$, define $\displaystyle \function{w_\Psi}{A} = \sup_{s,t\in A} \left\{\left|\Psi(t)-\Psi(s)\right|\right\}$. This agrees with the notation defined in Chapter 14 of \cite{bi}. Let $\eta> 0$ be an upper bound for the absolute values of the elements of $I$, consider any $\rho >0$, and define the numbers $a_{\Psi}^\rho$ and $a_{\Gamma}^\rho$ by,
\begin{eqnarray}
a_{\Psi}^\rho &=& \frac{1}{\rho}\sup_{n\in\mathbb{N}}\left\{m_n\mathbb{Q}_n \left(|\tilde{\epsilon}_n|\ind{\zeta_n < Z \leq \zeta_n + \frac{\eta}{m_n}}\right)\right\}\nonumber\\
a_{\Gamma}^\rho &=& \frac{1}{\rho}\sup_{n\in\mathbb{N}}\left\{m_n\mathbb{Q}_n \left(\ind{\zeta_n < Z \leq \zeta_n + \frac{\eta}{m_n}}\right)\right\}.\nonumber
\end{eqnarray}
Then, using Markov's inequality,
\begin{eqnarray}
\lsup_{n\rightarrow\infty} \p{\sup_{t\in I} \left\{\left|\Psi_n(t)\right|\right\} > a_{\Psi}^\rho}&\leq& \rho
\label{ec36}\\
\lsup_{n\rightarrow\infty} \p{\sup_{t\in I} \left\{\left|\Gamma_n(t)\right|\right\} > a_{\Gamma}^\rho}&\leq& \rho.
\label{ec37}
\end{eqnarray}
Now, let $\rho,\gamma>0$ be any pair of positive numbers and assume that $I=[a,b]$. Then, choose $\delta<\frac{\gamma}{8|b-a|f(\zeta_0)^2}\land\frac{|b-a|}{4}\land\frac{1}{f(\zeta_0)}$ so there is an integer $N\geq 2$ such that $\delta<\frac{|b-a|}{N}<2\delta$. Define $s_j= a + \frac{j}{N}(b-a)$ and consider the partition $\left\{a=s_0<s_1<\ldots<s_N=b\right\}$ of $I$. Notice that if $\Psi$ is a step function on $I$, for $\function{w_{\Psi}^{''}}{\delta}$ to be positive, we need at least two jumps in an interval of size at most $\delta$. Then, the probability that at least two jumps of the process $\Psi_n$ happens on any interval $(s_{j-2},s_j]$ is bounded from above by
\begin{eqnarray*}
a_{j,m_n} & := & \p{\bigcup_{1\leq k < l \leq m_n}\bigg[m_n(Z_{n,k}-\zeta_n),m_n(Z_{n,l}-\zeta_n)\in (s_{j-2},s_{j}]\bigg]} \\
& \leq & \frac{m_n^2}{2}\mathbb{Q}_n\left(\zeta_n + \frac{s_{j-2}}{m_n} < Z \leq \zeta_n + \frac{s_j}{m_n}\right)^2
\end{eqnarray*}
and hence the limit superior of the probability that either $\Psi_n$ or $\Gamma_n$ has two jumps in any interval of the form $(s_{j-2},s_j]$ is bounded from above by $2 |b - a|^2 f(\zeta_0)^2/N^2$ by (VI). Therefore, the probability that at least two jumps happen in any interval of size at most $\delta$ is asymptotically bounded from above by
\[ \sum_{i=2}^N a_{j,m_n} \le \sum_{i=2}^N 2 |b - a|^2 f(\zeta_0)^2/N^2 \le 4 (N-1) f(\zeta_0)^2 |b - a| \delta /N \le \gamma. \] Thus,
\begin{eqnarray}
\lsup_{n\rightarrow\infty} \p{\function{w_{\Psi_n}^{''}}{\delta}>\rho} & < & \gamma \label{ec38}
\end{eqnarray}
The exact same argument can be used to show that
\begin{eqnarray}
    \lsup_{n\rightarrow\infty} \p{\function{w_{\Gamma_n}^{''}}{\delta}>\rho} &<& \gamma. \label{ec39}
\end{eqnarray}
Now, note that
\begin{eqnarray*}
    \p{\function{w_{\Psi_n}}{[a, a + \delta)}>\rho} & \le & \p{ \bigcup_{j =1}^{m_n} m_n (Z_{n,j} - \zeta_n) \in [a, a + \delta) >\rho} \\
    & \le & m_n \mathbb{Q}_n \left(\zeta_n + \frac{a}{m_n} < Z \leq \zeta_n + \frac{a + \delta}{m_n}\right)
\end{eqnarray*}
which implies that
\begin{eqnarray}
\lsup_{n\rightarrow\infty} \p{\function{w_{\Psi_n}}{[a, a + \delta)}>\rho} \le \delta f(\zeta_0) < \gamma. \label{ec40}
\end{eqnarray}
A similar analysis leads to the following bounds
\begin{eqnarray}
\lsup_{n\rightarrow\infty} \p{\function{w_{\Psi_n}}{[b-\delta,b)}>\rho} &<& \gamma \label{ec41}\\
\lsup_{n\rightarrow\infty} \p{\function{w_{\Gamma_n}}{[a,a + \delta)}>\rho} &<& \gamma \label{ec42}\\
\lsup_{n\rightarrow\infty} \p{\function{w_{\Gamma_n}}{[b-\delta,b)}>\rho} &<& \gamma \label{ec43}.
\end{eqnarray}
Putting together (\ref{ec36}), (\ref{ec37}), (\ref{ec38}), (\ref{ec39}), (\ref{ec40}), (\ref{ec41}), (\ref{ec42}) and (\ref{ec43}) and using Theorem 15.3 of \cite{bi} we obtain that both sequences $(\Psi_n)_{n=1}^\infty$ and $(\Gamma_n)_{n=1}^\infty$ are uniformly tight in $\tilde{\mathcal{D}_I}$. Similar arguments show the tightness of the third and fourth components of the process. Therefore, $(\Xi_n)_{n=1}^\infty$ is uniformly tight. The uniform tightness of $(E_n^*)_{n=1}^\infty$ now follows from the fact that $(\Xi_n)_{n=1}^\infty$ is uniformly tight and $E_n^*$ is a continuous function of $\Xi_n$. $\hfill \square$

\subsubsection{Proof of Lemma \ref{l16}}\label{prueba5}
In view of Lemma \ref{l15}, to show $(i)$ it suffices to show convergence of the finite dimensional distributions. To this end, consider the real numbers $t_{-N_-}<\ldots<t_{-1}<0 = t_0< t_1 <\ldots <t_{N_+}$ and the linear combination
\begin{eqnarray}
W_n &=& \mu \sqrt{m_n}\mathbb{P}_n^* (\tilde{\epsilon}_n \ind{Z\leq \zeta_n}) + \lambda \sqrt{m_n}\mathbb{P}_n^* \left( \tilde{\epsilon}_n\ind{Z> \zeta_n} \right)\nonumber\\
& & + \sum_{j=1}^{N_{-}} \left\{ \xi_{-j} m_n \mathbb{P}_n^* \left( \tilde{\epsilon}_n\ind{\zeta_n + \frac{t_{-j}}{m_n}<Z\leq \zeta_n } \right) + \eta_{-j}m_n\mathbb{P}_n^* \left(\ind{\zeta_n + \frac{t_{-j}}{m_n}<Z\leq \zeta_n} \right) \right\}  \nonumber\\
& & + \sum_{j=1}^{N_{+}} \left\{ \xi_{j}m_n\mathbb{P}_n^* \left( \tilde{\epsilon}_n\ind{\zeta_n<Z\leq \zeta_n + \frac{t_{j}}{m_n}} \right) +\eta_{j}m_n\mathbb{P}_n^* \left( \ind{\zeta_n<Z\leq \zeta_n + \frac{t_{j}}{m_n}} \right) \right\} \label{ec50}
\end{eqnarray}
where $\mu$, $\lambda$ and the $\xi_j$'s and the $\eta_j$'s are arbitrary real numbers. Now, set $\xi_0 = \eta_0 = 0$ and define
\begin{eqnarray}
    \mu_{\pm j} = \sum_{k=j}^{N_{\pm}}  \eta_{\pm k } \; \mbox{  and } \; \lambda_{\pm j} = \sum_{k=j}^{N_{\pm}} \xi_{\pm k }. \label{ec51}
\end{eqnarray}
Then grouping terms appropriately we can rewrite $W_n$ as
\begin{eqnarray}
W_n & = & \mu \sqrt{m_n}\mathbb{P}_n^* \left(\tilde{\epsilon}_n\ind{Z\leq \zeta_n+\frac{t_{-N_{-}}}{m_n}} \right) + \lambda \sqrt{m_n}\mathbb{P}_n^* \left(\tilde{\epsilon}_n\ind{Z> \zeta_n+\frac{t_{N_{+}}}{m_n}} \right) \nonumber\\
& & + \sum_{j=1}^{N_{-}} (\lambda_{-j} m_n + \mu \sqrt{m_n})\mathbb{P}_n^* \left( \tilde{\epsilon}_n \ind{\zeta_n + \frac{t_{-j}}{m_n}<Z\leq \zeta_n + \frac{t_{-j+1}}{m_n}} \right)\nonumber \\
& & + \sum_{j=1}^{N_{-}} \mu_{-j}m_n \mathbb{P}_n^* \left( \ind{\zeta_n + \frac{t_{-j}}{m_n}<Z\leq \zeta_n + \frac{t_{-j+1}}{m_n}} \right)   \nonumber\\
& & + \sum_{j=1}^{N_{+}} (\lambda_{j}m_n + \lambda \sqrt{m_n})\mathbb{P}_n^* \left( \tilde{\epsilon}_n \ind{\zeta_n + \frac{t_{j-1}}{m_n} <Z \leq \zeta_n + \frac{t_{j}}{m_n}} \right) \nonumber \\
& & + \sum_{j=1}^{N_{+}} \mu_{j} m_n \mathbb{P}_n^* \left( \ind{\zeta_n + \frac{t_{j-1}}{m_n} <Z\leq \zeta_n + \frac{t_{j}}{m_n}} \right). \nonumber
\end{eqnarray}
Using the independence of $X_{n,1},\ldots,X_{n,m_n}$, the characteristic function of $W_n$ is
\begin{eqnarray}
\e{e^{i s W_n}} = \left[ 1 +  \sum_{j=1}^{N_{-}} \function{\mathbb{Q}_n}{(e^{is( \frac{\mu}{\sqrt{m_n}}+\lambda_{-j})\tilde{\epsilon}_n+is\mu_{-j}}-1)\ind{\zeta_n + \frac{t_{-j}}{m_n} < Z \leq \zeta_n + \frac{t_{-j+1}}{m_n}}} \right.\nonumber \\
+ \function{\mathbb{Q}_n}{(e^{i \frac{s\mu}{\sqrt{m_n}}\tilde{\epsilon}_n}-1)\ind{Z \leq \zeta_n + \frac{t_{-N_{-}}}{m_n}}} + \function{\mathbb{Q}_n}{(e^{i \frac{s\lambda}{\sqrt{m_n}} \tilde{\epsilon}_n}-1)\ind{Z > \zeta_n + \frac{t_{N_{+}}}{m_n}}} \nonumber \\
\left. + \sum_{j=1}^{N_{+}} \function{\mathbb{Q}_n}{(e^{is( \frac{\lambda}{\sqrt{m_n}} + \lambda_{j})\tilde{\epsilon}_n+is\mu_{j}}-1)\ind{\zeta_n + \frac{t_{j-1}}{m_n} < Z \leq \zeta_n + \frac{t_{j}}{m_n}}}\right]^{m_n}. \label{ec52}
\end{eqnarray}
Let $r_n$ be given by
\[ r_n = m_n\mathbb{Q}_n\left[\left( e^{i\frac{s\mu}{\sqrt{m_n}}\tilde{\epsilon}_{n}} - 1 -i\frac{s\mu}{\sqrt{m_n}}\tilde{\epsilon}_{n} + \frac{s^2 \mu^2}{2 m_n}\tilde{\epsilon}_{n}^2 \right) \ind{Z\leq\zeta_n + \frac{t_{-N_-}}{m_n}}\right] \le \frac{s^3 \mathbb{Q}_n |\tilde \epsilon_n^3| }{6 \sqrt{m_n}}.\]
Condition (VIII) now implies that $r_n = o(1)$. But note that
\begin{eqnarray*}
    \function{\mathbb{Q}_n}{(e^{i \frac{s\mu}{\sqrt{m_n}}\tilde{\epsilon}_n}-1)\ind{Z \leq \zeta_n + \frac{t_{-N_{-}}}{m_n}}} = i\frac{s\mu}{m_n}\sqrt{m_n}\mathbb{Q}_n \left( \tilde{\epsilon}\ind{Z\leq\zeta_n+ \frac{t_{-N_-}}{m_n}} \right) \\
    - \frac{s^2 \mu^2}{2 m_n} \mathbb{Q}_n \left( \tilde{\epsilon}_n^2 \ind{Z \leq \zeta_n + \frac{t_{-N_-}}{m_n}} \right) + \frac{r_n}{m_n}
\end{eqnarray*}
and so $(i)$ of Lemma \ref{l12} together with condition (VII) and (\ref{ec48}) imply that
\begin{equation}\label{ec53}
m_n \function{\mathbb{Q}_n}{(e^{i \frac{s\mu}{\sqrt{m_n}}\tilde{\epsilon}_n}-1)\ind{Z \leq \zeta_n + \frac{t_{-N_{-}}}{m_n}}} = -\frac{s^2 \mu^2 }{2}\sigma^2\mathbb{P}(Z\leq \zeta_0) + o(1).
\end{equation}
Following a completely analogous argument one can show that
\begin{equation}\label{ec54}
m_n \function{\mathbb{Q}_n}{ \left( e^{i \frac{s\lambda}{\sqrt{m_n}}\tilde{\epsilon}_n} - 1 \right) \ind{Z > \zeta_n + \frac{t_{N_{+}}}{m_n}}} = -\frac{s^2 \lambda^2 }{2}\sigma^2\mathbb{P}(Z> \zeta_0) + o(1).
\end{equation}
Now, take $1\leq j \leq N_+$, and observe that equation (\ref{ec48}) implies
\begin{eqnarray*}
    m_n\left|\function{\mathbb{Q}_n}{(e^{is( \frac{\lambda}{\sqrt{m_n}}+\lambda_{j})\tilde{\epsilon}_n+is\mu_{j}}-e^{is\lambda_{j}\tilde{\epsilon}_n + is\mu_{j}})\ind{\zeta_n + \frac{t_{j-1}}{m_n} < Z \leq \zeta_n + \frac{t_{j}}{m_n}}}\right| \\
    \le |s\lambda| \sqrt{m_n} \mathbb{Q}_n \left( |\tilde{\epsilon}_n|\ind{\zeta_n + \frac{t_{j-1}}{m_n} < Z \leq \zeta_n + \frac{t_{j}}{m_n}} \right) \rightarrow 0.
\end{eqnarray*}
Using (VI) we can write
\[ m_n \function{\mathbb{Q}_n}{(e^{is( \frac{\lambda}{\sqrt{m_n}}+\lambda_{j})\tilde{\epsilon}_n+is\mu_{j}}-1)\ind{\zeta_n + \frac{t_{j-1}}{m_n} < Z \leq \zeta_n + \frac{t_{j}}{m_n}}} \]
\[ = (\varphi(s\lambda_j)e^{is\mu_j}-1) f(\zeta_0)(t_j - t_{j-1}) + o(1) \]
where $\varphi$ is the characteristic function of $\epsilon$ (under $\mathbb{P}$). Thus,
\begin{eqnarray}
m_n \sum_{j=1}^{N_{+}} \function{\mathbb{Q}_n}{(e^{is( \frac{\lambda}{\sqrt{m_n}}+\lambda_{j})\tilde{\epsilon}_n+is\mu_{j}}-1)\ind{\zeta_n + \frac{t_{j-1}}{m_n} < Z \leq \zeta_n + \frac{t_{j}}{m_n}}} \nonumber \\
= \sum_{j=1}^{N_{+}}(t_j - t_{j-1})f(\zeta_0)(\varphi(s\lambda_j)e^{is\mu_j}-1)  + o(1).\label{ec55}
\end{eqnarray}
Similarly, one can prove that
\begin{eqnarray}
m_n \sum_{j=1}^{N_{-}} \function{\mathbb{Q}_n}{(e^{is( \frac{\mu}{\sqrt{m_n}}+\lambda_{-j})\tilde{\epsilon}_n+is\mu_{-j}}-1)\ind{\zeta_n + \frac{t_{-j}}{m_n} < Z \leq \zeta_n + \frac{t_{-j+1}}{m_n}}} \nonumber\\
= \sum_{j=1}^{N_{-}}(t_{-j+1} - t_{-j})f(\zeta_0)(\varphi(s\lambda_{-j})e^{is\mu_{-j}}-1)  + o(1).\label{ec56}
\end{eqnarray}
So putting (\ref{ec50}), (\ref{ec51}), (\ref{ec52}), (\ref{ec53}), (\ref{ec54}), (\ref{ec55}) and (\ref{ec56}) together we see that,
\begin{eqnarray}
    \e{e^{i s W_n}} & \rightarrow & \exp \left[ \sum_{j=1}^{N_{-}} f(\zeta_0)(t_{-j+1} - t_{-j}) \left\{ \varphi \left( s ( \sum_{k=j}^{N_{-}}  \xi_{-k } ) \right) e^{is \sum_{k=j}^{N_-}  \eta_{-k }}-1 \right\} \right. \nonumber \\
    & & -\frac{s^2\mu^2\sigma^2}{2}\mathbb{P}(Z\leq\zeta_0) -\frac{s^2 \lambda^2 \sigma^2}{2} \mathbb{P}(Z>\zeta_0) \nonumber \\
    & & \left. + \sum_{j=1}^{N_{+}} f(\zeta_0)(t_{j} - t_{j-1}) \left\{ \varphi \left( s(\sum_{k=j}^{N_{+}}  \xi_{k }) \right) e^{is \left( \sum_{k=j}^{N_{+}}  \eta_{ k } \right)} - 1 \right\} \right]. \label{ec57}
\end{eqnarray}
But the right-hand side of (\ref{ec57}) is precisely $\e{e^{isW}}$ where, with the notation of (\ref{ec45}), $W$ is given by
\begin{eqnarray*}
    W = \mu\mathbf{Z}_1 +\lambda\mathbf{Z}_2 + \sum_{k=1}^{N_{-}}\left(\xi_{-k}\sum_{0<j\leq \nu_1(-t_{-k})} v_k \ind{t_{-k} < 0} + \eta_{-k}\nu_1(-t_{-k}) \ind{t_{-k} < 0}\right) \\
    + \sum_{k=1}^{N_{+}}\left(\xi_{k}\sum_{0<j\leq \nu_2(t_{k})} u_k \ind{t_{k} \geq 0} + \eta_{k}\nu_2(t_{k}) \ind{t_{k} \geq 0}\right)
\end{eqnarray*}
and thus $W_n \rightsquigarrow W$. From the fact that $\mu$, $\lambda$, the $\xi_j$'s and the $\eta_j$'s were arbitrarily chosen, by the Cramer-Wold device
{\footnotesize
\[ \left(\Xi_n(t_{-N_{-}}),\ldots,\Xi_n(t_{-1}),
    \Xi_n(t_{1}),\ldots,\Xi_n(t_{N_{+}})
    \right)'\rightsquigarrow
    \left(\Xi(t_{-N_{-}}),\ldots,\Xi(t_{-1}),
    \Xi(t_{1}),\ldots,
    \Xi(t_{N_{+}})
    \right)'.
\]}

This gives the convergence of the finite dimensional distributions, proving $(i)$. An application of the continuous mapping theorem shows that $(i)$ implies $(ii)$. Further, Lemma \ref{l14} and $(ii)$ now imply $(iii)$. $\hfill \square$

\subsubsection{Proof of Lemma \ref{l17}}\label{prueba6}
Every sample path of $E^*=E^* (h_1,h_2,h_3)$ can be written as
\begin{eqnarray*}
2h_1 \mathbf{Z}_1 - h_1^2 \mathbb{P}(Z\leq \zeta_0) + 2h_2 \mathbf{Z}_2 - h_2^2 \mathbb{P}(Z>\zeta_0)
+   \ind{h_3<0} 2(\alpha_0 - \beta_0)\sum_{j=1}^{\nu_1 (-h_3)}v_j \\
- (\alpha_0 - \beta_0)^2\nu_1 (-h_3)\ind{h_3<0} + \ind{h_3\geq 0}2(\beta_0 - \alpha_0)\sum_{j=1}^{\nu_2 (h_3)}u_j - \ind{h_3\geq 0}(\alpha_0 - \beta_0)^2\nu_2 (h_3).
\end{eqnarray*}
From this last expression it is obvious that for any fixed $h_3$, the $E^* (\cdot,\cdot,h_3)$ gets maximized at $\phi_1^* = \mathbf{Z}_1/\mathbb{P}(Z\leq \zeta_0)$ and $\phi_2^* = \mathbf{Z}_2/\mathbb{P}(Z> \zeta_0) $. The independence of the three co-ordinates follows from the fact that $\phi_1^*$ depends only on $\mathbf{Z}_1$, $\phi_2^*$ depends only on $\mathbf{Z}_2$, and $\phi_3^*$ depends only on $\mathbf{u}$, $\mathbf{v}$, $\nu_1$ and $\nu_2$. Since $E^*$ is piecewise constant in the third argument $h_3$, to complete the proof it is enough to show that $E^*(\phi_1^*, \phi_2^*, h_3)\rightarrow -\infty$ as $|h_3|\rightarrow\infty$. But this follows from the law of the iterated logarithm (applied to the random walks defined by the $v_i$'s and $u_i$'s) together with the fact that $\nu_1(t)\land\nu_2(t)\cas \infty$ as $t\rightarrow\infty$. Note that $\sum_{j=1}^{\nu_1(-h_3)} v_j$ is of order $O(\sqrt{\nu_1 \log \log \nu_1})$ a.s. as $h_3 \rightarrow \infty$. $\hfill \square$

\subsubsection{Proof of Proposition \ref{pg3}}\label{prueba7}
Lemma \ref{l17} and the fact that the $u_i$'s and the $v_i$'s come from a continuous distribution, show that $(E^*,J^*)$ satisfy the hypotheses of Lemma \ref{l11}, and in particular that (\ref{ec24}) holds. Moreover, Proposition \ref{pg2} shows that the sequence $(\sqrt{m_n}(\alpha_n^*-\alpha_n),\sqrt{m_n}(\beta_n^*-\beta_n),m_n(\zeta_n^*-\zeta_n))$ is tight. Now, consider $C\in\mathbb{N}$ and let $\phi_n$, $\phi_{n,C}$ and $\phi_C$ be the smallest maximizers of $\hat{E}_n$, $\hat{E}_n|_{[-C,C]^3}$ and $E^*|_{[-C,C]^3}$. To prove the result, we will apply Lemma \ref{l11} and Lemma 3.3 of \cite{lmm}. Using the notation of the latter, set $\epsilon = \frac{1}{C}$, $W_{n\epsilon} = \phi_{n,C}$, $W_\epsilon = \phi_C$, $W_n = \phi_n$ and $W=\phi^*$. From Proposition \ref{pg2} we see that $\displaystyle \lim_{\epsilon\rightarrow 0} \lsup_{n\rightarrow\infty} \p{W_{n\epsilon} \neq W_n} = 0$. Lemma \ref{l17} implies that $\displaystyle \lim_{\epsilon\rightarrow 0} \p{ W_\epsilon \neq W} = 0$. Finally, Lemma \ref{l11} and an application of Skorohod's Representation Theorem (see Theorem 1.8, page 102 of \cite{ek}) show that $W_{n\epsilon} \rightsquigarrow W_{\epsilon}$ and hence, from Lemma 3.3 of \cite{lmm}, we conclude that $\phi_n \rightsquigarrow \phi^*$. $\hfill \square$

\subsubsection{Proof of Lemma \ref{l3}}\label{app3}
We expand $m_\theta(X)$ as in (\ref{eq:simp_m_theta}) but with $\epsilon = Y  - \alpha_0 \mathbf{1}_{Z \le \zeta_0}  - \beta_0 \mathbf{1}_{Z > \zeta_0}$ in place of $\tilde \epsilon_n$ to get
\begin{eqnarray}\label{eq:simp_m_theta2}
    m_\theta(X) & = & -(\epsilon + \alpha_0 - \alpha)^2 \ind{Z \le \zeta_0 \land \zeta} - (\epsilon + \beta_0 - \alpha)^2 \ind{\zeta_0 < Z \le \zeta} \nonumber \\
    & & - (\epsilon + \alpha_0 - \beta)^2 \ind{\zeta < Z \le \zeta_0} - (\epsilon + \beta_0 - \beta)^2 \ind{Z > \zeta_0 \lor \zeta}.
\end{eqnarray}
Letting $\hat \gamma_n = (\hat \alpha_n, \hat \beta_n)$, we can also bound $M_n (\theta_0)$ using a similar argument as in the proof of Proposition \ref{pg1} to obtain
\begin{eqnarray*}
    |\hat{\gamma}_n-\gamma_0|^2 \mathbb{P}_n (Z<a)\land\mathbb{P}_n (Z>b) \qquad \qquad \qquad  \\
    \leq \function{\mathbb{P}_n}{\epsilon^2 \ind{a\leq Z\leq b}} + 2|\hat{\gamma}_n - \gamma_0| \left(|\function{\mathbb{P}_n}{\epsilon\ind{Z<a}}| + |\function{\mathbb{P}_n}{\epsilon\ind{Z>b}}|\right).
\end{eqnarray*}
By the strong law of large numbers
\begin{eqnarray*}
    \mathbb{P}_n (Z<a)\land\mathbb{P}_n (Z>b) & \stackrel{a.s.}{\longrightarrow} & \mathbb{P} (Z<a) \land \mathbb{P} (Z>b) \\
    \function{\mathbb{P}_n}{\epsilon^2 \ind{a\leq Z \leq b}} & \stackrel{a.s.}{\longrightarrow} & \sigma^2\function{\mathbb{P}}{a\leq Z\leq b} \; \mbox{ and } \\ |\function{\mathbb{P}_n}{\epsilon\ind{Z<a}}| + |\function{\mathbb{P}_n}{\epsilon\ind{Z>b}}| & \stackrel{a.s.}{\longrightarrow} & 0.
\end{eqnarray*}
Therefore, w.p. 1 we can write $$|\hat{\gamma}_n-\gamma_0|^2 \leq O(1) +|\hat{\gamma}_n-\gamma_0|o(1)$$ and thus the sequence $\left(\hat{\gamma}_n-\gamma_0\right)_{n=1}^{\infty}$ is bounded w.p. 1.

Now, take any compact set $K\subset\Theta$ and consider the classes of functions
\begin{eqnarray}
\mathcal{K}_1 &=& \left\{ \left(\epsilon + \alpha_0 - \alpha\right)^2\ind{(-\infty,\zeta\land\zeta_0]}\right\}_{\theta\in K}\nonumber\\
\mathcal{K}_2 &=& \left\{ \left(\epsilon + \beta_0 - \alpha\right)^2\ind{(\zeta_0,\zeta]}\right\}_{\theta\in K}\nonumber\\
\mathcal{K}_3 &=& \left\{ \left(\epsilon + \alpha_0 - \beta\right)^2\ind{(\zeta,\zeta_0]}\right\}_{\theta\in K}\nonumber\\
\mathcal{K}_4 &=& \left\{ \left(\epsilon + \beta_0 - \beta\right)^2\ind{(\zeta\lor\zeta_0,\infty)}\right\}_{\theta\in K}. \nonumber
\end{eqnarray}
If $A^*$ is an upper bound for the norm of the elements in $K$, we can see that each of these classes is a VC-subgraph class with integrable envelope $(|\epsilon|+ A^* + |\gamma_0|)^2$. With the notation $\left\|Q\right\|_\mathcal{F}=\sup\left\{|Qf|:f\in\mathcal{F}\right\}$ for classes of functions $\mathcal{F}$ and probability measures $Q$, a combination of Theorems 2.6.7 and 2.4.3 of \cite{vw} shows that all four quantities $\left\|\mathbb{P}_n - \mathbb{P}\right\|_{\mathcal{K}_j}$, $j=1,2,3,4$, converge to zero almost surely. Therefore using (\ref{eq:simp_m_theta2}), we get the inequality $$\displaystyle \left\|M_n - M\right\|_K \leq \sum_{1\leq j \leq 4} \left\|\mathbb{P}_n - \mathbb{P}\right\|_{\mathcal{K}_j}$$ which now implies $(i)$ ( Since $M_n,M\in\mathcal{D}_K$, $\left\|M_n - M\right\|_K$ is measurable.). The second assertion follows immediately from $(ii)$.

Consider a family of compact rectangles $\Theta_n\subset\Theta_{n+1}$ such that $\displaystyle \Theta=\cup_{n=1}^\infty \Theta_n$. Then, since the sequence $\left( \hat{\gamma}_n - \gamma_0\right)_{n=1}^{\infty}$ is almost surely bounded, w.p. 1 we have that there is some $m\in\mathbb{N}$ such that $\Theta_m$ contains both $\theta_0$ and the entire sequence $(\hat{\theta}_n)_{n=1}^\infty$. Finally, from (\ref{ec3}) with $\theta_n$ replaced by $\theta_0$ it is seen that
\begin{eqnarray*}
    M(\theta) = -\sigma^2 - (\alpha_0-\alpha)^2\mathbb{P}(Z\leq \zeta\land\zeta_0) - (\alpha_0-\beta)^2\mathbb{P}(\zeta<Z\leq\zeta_0) \\
    - (\alpha-\beta_0)^2\mathbb{P}(\zeta_0<Z\leq\zeta) - (\beta_0-\beta)^2\mathbb{P}(Z>\zeta\lor\zeta_0).
\end{eqnarray*}
As $\alpha_0\neq\beta_0$ and $Z$ has a strictly positive density on $[a,b]$, the last equation shows that $M$ satisfies the conditions of Lemma \ref{l2}. Since the event that $M_n\rightarrow M$ in $\mathcal{D}_{\Theta_k}$ for all $k \in \mathbb{N}$ has probability one, Lemma \ref{l2} allows us to conclude that $\sargmax(M_n) = \hat{\theta}_n \cas \theta_0$. $\hfill \square$ \newline

\subsubsection{Proof of Lemma \ref{l5}}\label{pl5}
Let $\rho,\delta>0$. We know from Corollary \ref{corolario} that the sequences $\left(\sqrt{n}(\hat{\alpha}_n - \alpha_0)\right)_{n=1}^\infty$, $\left(n(\hat{\zeta}_n - \zeta_0)\right)_{n=1}^\infty$ and $\left(n\function{\mathbb{P}_n}{\zeta_0 - \frac{h}{n} < Z \leq \zeta_0 + \frac{h}{n}}\right)_{n=1}^\infty$, for any $h>0$, are all stochastically bounded. Thus, since $m_n = O(n)$ there is $L>0$ such that $\p{m_n|\hat{\zeta}_n - \zeta_0|>L}<\rho$ and $\p{\sqrt{m_n}|\hat{\alpha}_n - \alpha_0|>L}<\rho$ for any $n\in\mathbb{N}$. Therefore,
\begin{eqnarray*}
    & & \p{m_n^\gamma\left\|\mathbb{P}_n(\hat{\zeta}_n + \frac{(\cdot)}{m_n} < Z \leq
    \hat{\zeta}_n)\right\|_K > \delta} \\
    & \leq & \frac{m_n^\gamma}{\delta}\e{\mathbb{P}_n \left( \zeta_0 - \frac{L+\eta}{m_n} < Z \leq \zeta_0 + \frac{L}{m_n}\right)} + \p{m_n|\hat{\zeta}_n - \zeta_0|>L} \\
    & \leq & f(\zeta_0)\frac{\eta+2L}{\delta}m_n^{\gamma -1} + o\left(m_n^{\gamma - 1}\right) + \rho,
\end{eqnarray*}
so by letting $n\rightarrow\infty$ and then $\rho\rightarrow 0$ we get $(i)$.

We prove $(ii)$ for when $p=1$, the case $p=2$ follows from similar arguments. Note that if $m_n|\hat{\zeta}_n - \zeta_0| \leq L$, then $m_n^\gamma \| \mathbb{P}_n(|\tilde{\epsilon}_n| \ind{\hat{\zeta}_n + \frac{(\cdot)}{m_n} < Z \leq \hat{\zeta}_n})\|_K$ can be bounded by
{\footnotesize
\[m_n^\gamma\left\|\function{\mathbb{P}_n}{|\epsilon|\ind{\zeta_0 - \frac{L}{m_n} + \frac{(\cdot)}{m_n} < Z \leq \zeta_0 +\frac{L}{m_n}}}\right\|_K + m_n^\gamma |\hat{\alpha}_n-\alpha_0|\left\|\mathbb{P}_n(\zeta_0 - \frac{L}{m_n} + \frac{(\cdot)}{m_n} < Z \leq \zeta_0 +\frac{L}{m_n})\right\|_K . \]}
But just as in the proof of $(i)$, we have
{\footnotesize
\begin{eqnarray*}
    & & \p{m_n^\gamma\left\|\mathbb{P}_n(|\epsilon|\ind{\hat{\zeta}_n + \frac{(\cdot)}{m_n} < Z \leq
    \hat{\zeta}_n})\right\|_K > \delta} \\
    & \leq & \p{m_n^\gamma\left\|\function{\mathbb{P}_n}{|\epsilon|\ind{\zeta_0 - \frac{L}{m_n} + \frac{(\cdot)}{m_n} < Z \leq \zeta_0 +\frac{L}{m_n}}}\right\|_K > \frac{\delta}{2}} + \\
    & & \p{m_n^\gamma |\hat{\alpha}_n-\alpha_0|\mathbb{P}_n(\zeta_0 - \frac{L}{m_n} + \frac{\eta}{m_n} < Z \leq \zeta_0 +\frac{L}{m_n})>\frac{\delta}{2}} + \p{m_n|\hat{\zeta}_n - \zeta_0|>L} \\
    & \leq & \frac{2m_n^\gamma}{\delta}\e{\function{\mathbb{P}_n}{|\epsilon|\ind{\zeta_0 - \frac{L}{m_n} + \frac{\eta}{m_n} < Z \leq \zeta_0 +\frac{L}{m_n}}}} + \\
    & & \p{m_n^\gamma |\hat{\alpha}_n-\alpha_0|\mathbb{P}_n(\zeta_0 - \frac{L}{m_n} + \frac{\eta}{m_n} < Z \leq \zeta_0 +\frac{L}{m_n})>\frac{\delta}{2}} + \p{m_n|\hat{\zeta}_n - \zeta_0|>L} \\
    & \leq & f(\zeta_0)\e{|\epsilon|}\frac{2(\eta+2L)}{\delta}m_n^{\gamma -1} + o\left(m_n^{\gamma -1} \right)+\\
    & & \p{m_n^\gamma |\hat{\alpha}_n-\alpha_0|\mathbb{P}_n(\zeta_0 - \frac{L}{m_n} + \frac{\eta}{m_n} < Z \leq \zeta_0 +\frac{L}{m_n})>\frac{\delta}{2}} + \rho.
\end{eqnarray*}}
The result follows again by letting $n\rightarrow\infty$ and $\rho\rightarrow 0$.$\hfill \square$

The next results will be useful to support our conjecture of inconsistency of some of our bootstrap scenarios.

\begin{lemma}\label{l18}
Let $\lambda, B>0$, $\rho\in(0,\frac{1}{2})$ and $H_\lambda$ be the distribution function of a Poisson random variable with mean $\lambda$. For each value of $\lambda$ write $L_{\lambda+B}^\rho = \min\left\{n\in\mathbb{N}: H_{\lambda+B} (n)>\rho\right\}$ and $U_{\lambda}^\rho = \max\left\{n\in\mathbb{N}: 1-H_\lambda(n)>\rho\right\}$. Then, there is $\lambda_*>0$ such that $L_{\lambda+B}^\rho < U_{\lambda}^\rho$ for all $\lambda\geq \lambda_*$.
\end{lemma}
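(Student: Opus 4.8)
The plan is to compare the two quantities through their behaviour as $\lambda\to\infty$, with $B$ and $\rho$ held fixed; the key input is the central limit theorem for the Poisson family. First note that for $\lambda$ large both numbers are well defined: $H_{\lambda+B}(n)\to 1>\rho$ as $n\to\infty$, so the defining set of $L_{\lambda+B}^\rho$ is nonempty and bounded below, while $1-H_\lambda(0)=1-e^{-\lambda}>\rho$ for $\lambda$ large and $1-H_\lambda(n)\to 0$ as $n\to\infty$, so the defining set of $U_\lambda^\rho$ is nonempty and bounded above. Recall that if $X_\mu\sim\mathrm{Poisson}(\mu)$ then $(X_\mu-\mu)/\sqrt\mu\rightsquigarrow N(0,1)$ as $\mu\to\infty$; since the limit law has the continuous distribution function $\Phi$, the convergence of distribution functions is uniform, so $s_\mu\to s$ implies $\p{X_\mu\le \mu+s_\mu\sqrt\mu}\to\Phi(s)$ and $\p{X_\mu>\mu+s_\mu\sqrt\mu}\to 1-\Phi(s)$.

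Because $\rho<\tfrac12=\Phi(0)$ and $\Phi$ is continuous, one can fix $t_1<0$ with $\Phi(t_1)>\rho$ and $t_2>0$ with $\Phi(t_2)<1-\rho$; put $c:=\min\{-t_1,t_2\}>0$. Setting $m_1(\lambda):=\lceil(\lambda+B)+t_1\sqrt{\lambda+B}\rceil$ one has $(m_1(\lambda)-(\lambda+B))/\sqrt{\lambda+B}\to t_1$, hence $H_{\lambda+B}(m_1(\lambda))\to\Phi(t_1)>\rho$, so $L_{\lambda+B}^\rho\le m_1(\lambda)\le(\lambda+B)+t_1\sqrt{\lambda+B}+1$ for all large $\lambda$. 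Setting $m_2(\lambda):=\lfloor\lambda+t_2\sqrt\lambda\rfloor$ one has $(m_2(\lambda)-\lambda)/\sqrt\lambda\to t_2$, hence $1-H_\lambda(m_2(\lambda))\to 1-\Phi(t_2)>\rho$, so $U_\lambda^\rho\ge m_2(\lambda)\ge\lambda+t_2\sqrt\lambda-1$ for all large $\lambda$. The $O(1)$ rounding errors are $o(\sqrt\lambda)$ and therefore do not affect these limits.

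Combining the two estimates, for all sufficiently large $\lambda$ one obtains $U_\lambda^\rho-L_{\lambda+B}^\rho\ge t_2\sqrt\lambda+|t_1|\sqrt{\lambda+B}-B-2\ge 2c\sqrt\lambda-B-2$, and the right-hand side is strictly positive as soon as $\lambda>((B+2)/(2c))^2$. Taking $\lambda_*$ to be the largest of this threshold and the finitely many thresholds produced by the two central-limit estimates yields $L_{\lambda+B}^\rho<U_\lambda^\rho$ for every $\lambda\ge\lambda_*$, which is the claim.

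The step I expect to be the main obstacle is the lower bound $U_\lambda^\rho\ge\lambda+t_2\sqrt\lambda-1$: it rests on a \emph{lower} bound for the upper tail $\p{X_\lambda>m_2(\lambda)}$ with $m_2(\lambda)\approx\lambda+t_2\sqrt\lambda$, and Chebyshev's inequality only bounds tails from above, so the central limit theorem (equivalently, a Berry--Esseen estimate or a direct Poisson tail computation) is genuinely needed here rather than a crude second-moment argument. Everything else is routine bookkeeping, and the choice $t_1<0<t_2$ — available precisely because $\rho<\tfrac12$ — is exactly what produces the $\Theta(\sqrt\lambda)$ separation between the two quantiles that absorbs the fixed shift $B$.
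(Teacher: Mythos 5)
Your proof is correct, but it takes a genuinely different route from the paper's. The paper avoids the central limit theorem altogether: it cites a specific bound on the Poisson median, $|c_\lambda-\lambda|<\log 2$, observes that the Poisson mass at its mode is $O(\lambda^{-1/2})$, and from these two facts derives $\limsup_\lambda H_{\lambda+B}(L_{\lambda+B}^\rho+1)=\rho<\tfrac12\le\liminf_\lambda H_{\lambda+B}(U_\lambda^\rho)$, which forces $L_{\lambda+B}^\rho<U_\lambda^\rho$ eventually. Your argument instead pins the two quantiles to specific $z$-scores $t_1<0<t_2$ of the Gaussian limit and reads off a $\Theta(\sqrt\lambda)$ gap directly. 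Each version has something to recommend it: the paper's is more elementary (it needs only a known median estimate and the unimodality of the Poisson pmf, not Polya's uniform-convergence refinement of the CLT) but is purely qualitative; yours is quantitative and, more interestingly, is essentially the same normal-approximation technique the paper is forced to deploy anyway in the companion Lemma~\ref{l18bis} for compound Poisson laws, where the median trick is unavailable. In that sense your proof unifies the treatment of Lemmas~\ref{l18} and \ref{l18bis}. One small point worth tidying: you should make explicit why $t_1<0$ with $\Phi(t_1)>\rho$ exists, which is precisely because $\rho<\tfrac12=\Phi(0)$ and $\Phi$ is continuous — this is the only place the hypothesis $\rho<\tfrac12$ enters, and it is worth flagging since the lemma fails for $\rho\ge\tfrac12$.
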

\begin{proof} Let $c_\lambda$ be the median (i.e. $c_\lambda = \min \{n
\in \mathbb{N}: H_{\lambda}(n)>\frac{1}{2}\}.$) of $H_\lambda$. Observe that $c_\lambda \leq U_{\lambda}^\rho$. According to \cite{haz}, $|c_\lambda - \lambda|<\log (2)$ for any positive $\lambda$. Letting $\lfloor x \rfloor$ denote the greatest integer less than or equal to $x$, we have
\begin{eqnarray}
    & & \left|H_{\lambda+B}(c_{\lambda+B}) - H_{\lambda+B}(c_{\lambda})\right| \nonumber \\
    & \le & \left|H_{\lambda+B}( \lambda + B +\log (2))-H_{\lambda+B}(\lambda - \log (2))\right| \nonumber \\
    & \le & (B+2\log(2))e^{-(\lambda+B)}\frac{(\lambda + B)^{\lfloor \lambda + B \rfloor}}{\lfloor \lambda + B \rfloor !}  \rightarrow 0 \textrm{ as } \lambda \rightarrow \infty.\nonumber
\end{eqnarray}
as the Poisson mass function has a maximum at $\lfloor \lambda + B \rfloor$. Therefore, \\ $\linf_{\lambda \rightarrow \infty} H_{\lambda + B} (U_{\lambda}^\rho)\geq 1/2$. But we also note that $\sup_{n\in\mathbb{N}} \{H_{\lambda+B}(n+1) $ $- H_{\lambda+B}(n)\}\rightarrow 0$ as $\lambda \rightarrow \infty$. Thus,
\[ \lsup_{\lambda\rightarrow\infty}H_{\lambda+B}(L_{\lambda + B}^\rho+1) = \rho < \frac{1}{2} \leq \linf_{\lambda\rightarrow\infty} H_{\lambda + B}(U_{\lambda}^\rho).\]
It follows that $U_{\lambda}^\rho > L_{\lambda + B}^\rho$ for all $\lambda$ sufficiently large. \end{proof}

\begin{lemma}\label{l18bis}
$\\$
Let $\lambda,B>0$, $0<\rho<\frac{1}{2}$, $\mu$ and $\nu$ be two nondegenerate Borel probability measures on $\mathbb{R}$ and $H_{\mu,\lambda}$ denote the compound Poisson distribution with intensity $\lambda$ and compounding distribution $\mu$. For each value of $\lambda$ write $L_{\nu,\lambda+B}^\rho = \inf\left\{s\in\mathbb{R}: H_{\nu,\lambda+B} (s)\geq\rho\right\}$ and $U_{\mu,\lambda}^\rho = \sup\left\{s\in\mathbb{R}: 1-H_{\mu,\lambda}(s)\geq\rho\right\}$. In addition, assume that $\int x^2\nu(dx),\int x^2\mu(dx)<\infty$ and that $\int x \nu(dx) \leq \int x \mu (dx)$.  Then there is $\lambda_*>0$ such that $L_{\nu,\lambda+B}^\rho < U_{\mu,\lambda}^\rho$ for all $\lambda\geq \lambda_*$. Moreover, let $0<r<1$, suppose that there is another Borel probability measure $\gamma$ on $\mathbb{R}$ and define $\nu_\gamma:= \frac{rB}{\lambda+B}\gamma+\frac{\lambda+(1-r)B}{\lambda+B}$ and the corresponding constant $L_{\nu_\gamma,\lambda+B}^\rho = \inf\left\{s\in\mathbb{R}: H_{\nu_\gamma,\lambda+B} (s)\geq\rho\right\}$. Then there is $\lambda_*>0$ such that $L_{\nu_\gamma,\lambda+B}^\rho < U_{\mu,\lambda}^\rho$ for all $\lambda\geq \lambda_*$.
\end{lemma}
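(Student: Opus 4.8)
The plan is to exploit the near-Gaussianity of compound Poisson laws with large intensity: $U_{\mu,\lambda}^\rho$ will behave like the strictly positive $(1-\rho)$-quantile of a normal law with mean $\lambda\int x\,\mu(dx)$ and standard deviation of order $\sqrt{\lambda}$, while $L_{\nu,\lambda+B}^\rho$ will sit at or below the median of a normal law with mean $(\lambda+B)\int x\,\nu(dx)$; since $\int x\,\nu(dx)\le\int x\,\mu(dx)$ the two centers differ only by $O(1)$ as $\lambda\to\infty$, whereas the spread of the first law grows without bound, so eventually $U_{\mu,\lambda}^\rho>L_{\nu,\lambda+B}^\rho$. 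The one ingredient I would first isolate is a central limit theorem for compound Poisson laws with a real intensity tending to infinity: writing $m_\pi=\int x\,\pi(dx)$ and $v_\pi=\int x^2\,\pi(dx)$ for a probability measure $\pi$ with finite second moment (so that $v_\pi>0$ when $\pi$ is nondegenerate), and letting $S_{\pi,\Lambda}$ have law $H_{\pi,\Lambda}$, one has $\mathbf{E}(S_{\pi,\Lambda})=\Lambda m_\pi$ and $\mathrm{Var}(S_{\pi,\Lambda})=\Lambda v_\pi$, and from $\mathbf{E}(e^{isS_{\pi,\Lambda}})=\exp\{\Lambda(\hat\pi(s)-1)\}$ together with a second-order Taylor expansion of the characteristic function $\hat\pi$ at $0$ one obtains $(S_{\pi,\Lambda}-\Lambda m_\pi)/\sqrt{\Lambda v_\pi}\rightsquigarrow N(0,1)$ as $\Lambda\to\infty$ through $\mathbb{R}$.

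Granting this, put $z=\Phi^{-1}(1-\rho)>0$ (positive because $\rho<\tfrac12$, with $\Phi$ the standard normal distribution function) and fix $\epsilon\in(0,z)$. Evaluating $1-H_{\mu,\lambda}$ at the point $\lambda m_\mu+(z-\epsilon)\sqrt{\lambda v_\mu}$ and applying the CLT gives the limit $1-\Phi(z-\epsilon)>\rho$, so for all large $\lambda$ this point lies in the set defining $U_{\mu,\lambda}^\rho$, whence $U_{\mu,\lambda}^\rho\ge\lambda m_\mu+(z-\epsilon)\sqrt{\lambda v_\mu}$; and evaluating $H_{\nu,\lambda+B}$ at $(\lambda+B)m_\nu$ gives the limit $\Phi(0)=\tfrac12>\rho$, whence $L_{\nu,\lambda+B}^\rho\le(\lambda+B)m_\nu$ for all large $\lambda$. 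Subtracting and using $(\lambda+B)m_\nu-\lambda m_\mu=\lambda(m_\nu-m_\mu)+Bm_\nu\le Bm_\nu$,
\[ L_{\nu,\lambda+B}^\rho-U_{\mu,\lambda}^\rho\ \le\ Bm_\nu-(z-\epsilon)\sqrt{\lambda v_\mu}\ \longrightarrow\ -\infty \]
since $v_\mu>0$; choosing $\lambda_*$ large enough that the finitely many ``for all large $\lambda$'' requirements hold and the right-hand side is negative proves the first assertion.

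For the ``moreover'' statement (where $\nu_\gamma=\frac{rB}{\lambda+B}\gamma+\frac{\lambda+(1-r)B}{\lambda+B}\nu$) I would use the standard fact that a compound Poisson with a mixed compounding law splits into an independent sum: $S_{\nu_\gamma,\lambda+B}$ has the same law as $T+S_{\nu,\lambda+(1-r)B}$, where $T$ has the compound Poisson law of intensity $rB$ and compounding distribution $\gamma$ and is independent of $S_{\nu,\lambda+(1-r)B}$. Crucially $T$ does not depend on $\lambda$, so it is $O_{\mathbf{P}}(1)$ and we may fix $M$ with $\mathbf{P}(T>M)\le\tfrac14-\tfrac{\rho}{2}$; then, since $\{T\le M\}\cap\{S_{\nu,\lambda+(1-r)B}\le s\}\subseteq\{S_{\nu_\gamma,\lambda+B}\le M+s\}$ in distribution, we get $H_{\nu_\gamma,\lambda+B}(M+s)\ge H_{\nu,\lambda+(1-r)B}(s)-\mathbf{P}(T>M)$, and with $s=(\lambda+(1-r)B)m_\nu$ the right-hand side tends to $\tfrac12-\mathbf{P}(T>M)>\rho$, so $L_{\nu_\gamma,\lambda+B}^\rho\le M+(\lambda+(1-r)B)m_\nu$ for all large $\lambda$. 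Combining this with the lower bound on $U_{\mu,\lambda}^\rho$ from the previous paragraph and again using $m_\nu\le m_\mu$,
\[ L_{\nu_\gamma,\lambda+B}^\rho-U_{\mu,\lambda}^\rho\ \le\ M+(1-r)Bm_\nu+\lambda(m_\nu-m_\mu)-(z-\epsilon)\sqrt{\lambda v_\mu}\ \longrightarrow\ -\infty, \]
which gives the second assertion. Note that this route never uses any moments of $\gamma$.

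The computations above are routine; the genuinely delicate points are establishing the compound Poisson CLT with the \emph{continuous} index $\Lambda\to\infty$ (handled via characteristic functions), phrasing the conclusions carefully given the one-sided, $\inf/\sup$ form of the definitions of $L^\rho$ and $U^\rho$ (one checks membership in the defining set rather than invoking a quantile-convergence theorem), and --- this is the main obstacle --- coping with the dependence of $\nu_\gamma$ on $\lambda$, which is exactly what the independent-sum decomposition dissolves by peeling off the fixed, tight summand $T$.
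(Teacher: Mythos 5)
Your proof is correct, and while it rests on the same core idea as the paper's argument (asymptotic normality of compound Poisson laws with large intensity), the execution differs in two substantive ways that are both improvements in rigor. First, for the quantile comparison you test membership of a fixed standardized point $z-\epsilon$ in the set defining $U^\rho_{\mu,\lambda}$ (resp.\ the analogous set for $L^\rho_{\nu,\lambda+B}$), which requires only pointwise convergence of the standardized distribution function at a single continuity point; the paper instead converts the CLT to uniform convergence via P\'olya's theorem and then translates approximate quantile relations into inequalities, which is slightly more delicate and stated somewhat imprecisely. Second, and more importantly, for the ``moreover'' assertion the paper simply asserts that $\bigl(T_{\nu_\gamma,\lambda+B}-(\lambda+B)c_{\nu_\gamma}\bigr)/\sqrt{(\lambda+B)d_{\nu_\gamma}}\rightsquigarrow\Phi$ and reruns the same argument; this implicitly requires $\int x^2\,\gamma(dx)<\infty$, a hypothesis which the lemma as stated does not impose on $\gamma$. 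Your decomposition of $S_{\nu_\gamma,\lambda+B}$ as an independent sum $T+S_{\nu,\lambda+(1-r)B}$ --- using the standard splitting of a compound Poisson with a mixed compounding law, and exploiting that the intensity $rB$ allotted to $\gamma$ is constant in $\lambda$ --- lets you peel off $T$ as a tight, $\lambda$-free summand and bound $L^\rho_{\nu_\gamma,\lambda+B}$ without any moment assumption on $\gamma$; this genuinely repairs a small gap and proves the lemma in its stated generality. (In the paper's application, Lemma~\ref{l19}~(iv), the relevant $\gamma$ does have a finite second moment, so the paper's proof suffices there, but your route is both cleaner and more general.)
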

\begin{proof}
Denote by $\Phi$ the standard normal distribution and $\mathbf{z}_\alpha$ the lower $\alpha$-quantile of $\Phi$ (i.e. $\Phi(\mathbf{z}_\alpha)=\alpha$). Also, write $c_\mu := \int x\mu(dx)$, $d_\mu:= \int x^2\mu(dx)$ and define the corresponding quantities $c_\nu$ and $d_\nu$ for $\nu$. For any possible value of $\lambda$ and $\mu$ denote by $T_{\mu,\lambda}$ a random variable with distribution $H_{\mu,\lambda}$. It is easily seen (as, for instance, in Theorem 2.1 of \cite{mo2005}) that $\displaystyle S_{\mu,\lambda}:=\frac{T_{\mu,\lambda} - \lambda c_\mu}{\sqrt{\lambda d_\mu}}\rightsquigarrow \Phi$ as $\lambda\rightarrow\infty$. Since the standard normal distribution is continuous, the distributions of $S_{\mu,\lambda}$ converge uniformly on $\mathbb{R}$ to $\Phi$ as $\lambda\rightarrow\infty$.

Let $1<\kappa<1/(2\rho)$. Then, since the distributions of $S_{\mu,\lambda}$ converge uniformly to $\Phi$, there is $\lambda_1$ such that {$1- \function{\Phi}{\frac{ U_{\mu,\lambda}^\rho - \lambda c_\mu}{\sqrt{\lambda d_\mu}}}<\kappa\rho$} for $\lambda>\lambda_1$  and $\lambda_2>0$ such that {$\function{\Phi}{\frac{L_{\nu,\lambda+B}^\rho - (\lambda + B) c_\nu}{\sqrt{(\lambda+B) d_\nu}}}<\kappa\rho$} for all $\lambda>\lambda_2$. These two inequalities in turn imply that
\begin{eqnarray*}
U_{\mu,\lambda}^\rho &>& \lambda c_\mu - \sqrt{\lambda d_\mu}\mathbf{z}_{\kappa\rho},\\
L_{\nu,\lambda+B}^\rho &<& (\lambda+B) c_\nu + \sqrt{(\lambda +B) d_\nu}\mathbf{z}_{\kappa\rho}.
\end{eqnarray*}
Since $c_\mu\geq c_\nu$ we can find $\lambda_3$ such that
\[ (\lambda+B) c_\nu + \sqrt{(\lambda +B) d_\nu}\mathbf{z}_{\kappa\rho} <  \lambda c_\mu - \sqrt{\lambda d_\mu}\mathbf{z}_{\kappa\rho}\ \ \ \textrm{for all}\ \lambda \geq \lambda_3.\]
The first part of the result now follows by taking $\lambda_*:=\lambda_1\lor\lambda_2\lor\lambda_3$.
To prove the result for the measure $\nu_\gamma$ it suffices to see that we also have $\displaystyle \frac{T_{\nu_\gamma,\lambda+B} - (\lambda+B)c_{\nu_\gamma}}{\sqrt{(\lambda+B)d_{\nu_\gamma}}}\rightsquigarrow\Phi$, as $\lambda\rightarrow\infty$ (this is easily seen by analyzing the characteristic functions). The rest follows from the same argument used to prove the first part of the lemma.
\end{proof}


\subsubsection{Proof of Lemma \ref{l19}}\label{prueba21}
\noindent {\bf Proof of $(i)$:} Let $s < t$. Note that $(Z_n)_{n=1}^\infty$ is a collection of i.i.d. random variables and  $n \mathbb{P}_n(\zeta_0+\frac{s}{n}<Z\leq\zeta_0 + \frac{t}{n})$ is permutation invariant, so the Hewitt-Savage 0-1 law (see page 304 of \cite{bi2}) implies that any convergent subsequence must converge to a constant. On the other hand, Lemma \ref{l16} implies that $n \mathbb{P}_n(\zeta_0 + \frac{s}{n}< Z \leq \zeta_0 + \frac{t}{n}) \rightsquigarrow \textrm{Poisson}((t-s)f(\zeta_0))$. Therefore, $\left(n \mathbb{P}_n(\zeta_0+\frac{s}{n}<Z\leq\zeta_0 + \frac{t}{n})\right)_{n=1}^\infty$ has no almost surely convergent subsequence.

\noindent {\bf Proof of $(ii)$:} Now, let $\delta\in(0,\frac{1}{4})$. From Proposition \ref{pg2} we know that there is $B_\delta > 0$ such that $\p{n|\hat{\zeta}_n - \zeta_0|\leq B_\delta}>1-\delta$ for any $n\in\mathbb{N}$.
Choose $h>2B_{\delta}$ and take any increasing sequence of natural numbers $n_k$. Write $\hat{T}_k = n_k \mathbb{P}_{n_{k}}(\hat{\zeta}_{n_{k}}< Z \leq \hat{\zeta}_{n_{k}} + \frac{h}{n_k})$, $S_k = n_k \mathbb{P}_{n_{k}}(\zeta_0 - \frac{B_\delta}{n_{k}}< Z \leq \zeta_0 + \frac{h+B_\delta}{n_{k}})$ and $T_k = n_k \mathbb{P}_{n_{k}}(\zeta_0+\frac{B_\delta}{n_{k}}<Z\leq\zeta_0 + \frac{h-B_\delta}{n_{k}})$. Then, $\left\{n_{k}|\hat{\zeta}_{n_k} - \zeta_0|\leq B_\delta\right\} \subset \left\{S_k \geq \hat{T}_k \geq T_k\right\}$ and therefore we have $\p{\hat{T}_k\geq T_k}\land\p{S_k\geq \hat{T}_k}>1-\delta$ for all $k$.

We know that $T_k\rightsquigarrow\textrm{Poisson}((h-2B_\delta)f(\zeta_0))$ and $S_k\rightsquigarrow\textrm{Poisson}((h+2B_\delta)f(\zeta_0))$, so in view of Lemma \ref{l18} with $B = 4 B_\delta f(\zeta_0) $ and $\lambda = (h - 2 B_\delta) f(\zeta_0)$, there is a number $h_* > 2B_\delta$ large enough so that whenever $h \geq h_*$ we can find two numbers $N_{1,h} < N_{2,h}\in\mathbb{N}$ with the property that, $\linf_{k\rightarrow\infty}\p{T_k > N_{2,h}}>2\delta$ and $\linf_{k\rightarrow\infty}\p{S_k \leq N_{1,h}}>2\delta$. Thus, for $h\geq h_*$, $\p{T_k > N_{2,h}} > 2\delta$ and $\p{S_k \leq N_{1,h}} > 2\delta$ for all but a finite number of $k$'s. Therefore, for any $k$ large enough, $\p{T_k > N_{2,h}}\land\p{S_k \leq N_{1,h}} > 2\delta$. Using the fact that $\p{S_k \geq \hat{T}_k \geq T_k} > 1 -\delta$ we get that $\p{\hat{T}_k \geq T_k > N_{2,h}}\land \p{N_{1,h}\geq S_k\geq \hat{T}_k}> \delta$ for all but finitely many $k$'s. Thus, whenever $h\geq h_*$, $$\p{\hat{T}_k \geq T_k > N_{2,h}, i.o.}>\delta \mbox{ and } \p{N_{1,h}\geq S_k\geq \hat{T}_k, i.o.}> \delta.$$ But for every $k\in\mathbb{N}$, the events $\left\{\hat{T}_k \geq T_k > N_{2,h}\right\}$ and $\left\{N_{1,h}\geq S_k\geq \hat{T}_k\right\}$ are permutation-invariant on the i.i.d. random vectors $X_1,\ldots,X_{n_k}$. Hence, the Hewitt-Savage 0-1 law implies that $\p{\hat{T}_k \geq T_k > N_{2,h}, i.o.}=1$ and $\p{N_{1,h}\geq S_k\geq \hat{T}_k, i.o.}=1$. Since $N_{1,h} < N_{2,h}$ it follows that $\hat{T}_k = n_k \mathbb{P}_{n_{k}}(\hat{\zeta}_{n_{k}} < Z \leq \hat{\zeta}_{n_{k}} + h/n_k)$ does not have an almost sure limit. But the choice of the subsequence $n_k$ was arbitrary and independent of $h_*$ so we can conclude that for any $h\geq h_*$, the sequence
$\left\{n \mathbb{P}_n( \hat{\zeta}_n < Z \leq \hat{\zeta}_n + \frac{h}{n})\right\}_{n=1}^\infty$ does not converge in probability. Proceeding analogously, we can prove the same for $\left\{n \mathbb{P}_n(\hat{\zeta}_n - \frac{h}{n}<Z\leq\hat{\zeta}_n)\right\}_{n=1}^\infty$.

\noindent {\bf Proof of $(iii)$:} We introduce some notation, for any two Borel probability measures $\mu$ and $\nu$ on $\mathbb{R}$ we write $\mu\bigstar\nu$ for their convolution and for $\lambda>0$ we write $\textrm{CPoisson}(\mu,\lambda)$ for the compound Poisson distribution with intensity $\lambda$ and compounding distribution $\mu$. Let $\mu_\alpha$ and $\mu_\beta$ be, respectively, the distributions under $\mathbb{P}$ of $\phi(\epsilon+\alpha_0)$ and $\phi(\epsilon+\beta_0)$.

Observe that depending on whether $t<0$, $s<0<t$ or $s>0$ we have that $n \mathbb{P}_n(\phi(Y)\ind{\zeta_0+\frac{s}{n}<Z\leq\zeta_0 + \frac{t}{n}})$ converges weakly to $\textrm{CPoisson}(\mu_\alpha,(t-s)f(\zeta_0))$, \newline $\textrm{CPoisson}(\mu_\alpha,sf(\zeta_0))\bigstar\textrm{CPoisson}(\mu_\beta,tf(\zeta_0))$ or $\textrm{CPoisson}(\mu_\beta,(t-s)f(\zeta_0))$, respectively. This follows easily from convergence of the corresponding characteristic functions. Considering that $\{(Y_n,Z_n)\}_{n=1}^\infty$ is a collection of i.i.d. random vectors and that $n \mathbb{P}_n(\phi(Y)\ind{\zeta_0+\frac{s}{n}<Z\leq\zeta_0 + \frac{t}{n}})$ is permutation invariant for $(Y_1,Z_1),\ldots,(Y_n,Z_n)$ the same argument as in (i) applies here as well.

\noindent {\bf Proof of (iv):} We keep the notation used in the proof of $(iii)$. The argument here is quite similar to the one used to show $(ii)$. Assume without loss of generality that $\phi\leq 0$.

Now, let $\delta\in\left(0,\frac{1}{4}\right)$ and $N\in\mathbb{N}$. From Proposition \ref{pg2} we know that there is $B_\delta > 0$ such that $\p{n|\hat{\zeta}_n - \zeta_0|\leq B_\delta}>1-\delta$ for any $n\in\mathbb{N}$.
Choose $h>2B_{\delta}$ and take any increasing sequence of natural numbers $n_k$. Write $\hat{T}_{k,h}^\phi = n_k \mathbb{P}_{n_{k}}(\phi(Y)\ind{\hat{\zeta}_{n_{k}}< Z \leq \hat{\zeta}_{n_{k}} + \frac{h}{n_k}})$, $S_{k,h}^\phi = n_k \mathbb{P}_{n_{k}}(\phi(Y)\ind{\zeta_0 - \frac{B_\delta}{n_{k}}< Z \leq \zeta_0 + \frac{h+B_\delta}{n_{k}}})$ and \newline {\footnotesize $T_{k,h}^\phi = n_k \mathbb{P}_{n_{k}}(\phi(Y)\ind{\zeta_0+\frac{B_\delta}{n_{k}}<Z\leq\zeta_0 + \frac{h-B_\delta}{n_{k}}})$}. Then, $\left\{n_{k}|\hat{\zeta}_{n_k} - \zeta_0|\leq B_\delta\right\} \subset \left\{S_{k,h}^\phi \leq \hat{T}_{k,h}^\phi \leq T_{k,h}^\phi\right\}$ and therefore we have $\p{\hat{T}_{k,h}^\phi\leq T_{k,h}^\phi}\land\p{S_{k,h}^\phi\leq \hat{T}_{k,h}^\phi}>1-\delta$ for all $k$.

We know that $T_{k,h}^\phi\rightsquigarrow\textrm{CPoisson}(\mu_\beta,(h-2B_\delta)f(\zeta_0))$ and
$$ S_k^\phi\rightsquigarrow\textrm{CPoisson}(\mu_\alpha,2B_\delta f(\zeta_0))\bigstar\textrm{CPoisson}(\mu_\beta,(h+B_\delta)f(\zeta_0))$$
$$\equiv
\textrm{CPoisson}\left(\frac{B_\delta }{h+2B_\delta}\mu_\alpha+\frac{h+B_\delta }{h+2B_\delta}\mu_\beta,(h+2B_\delta)f(\zeta_0)\right),$$
as $k\rightarrow\infty$.

An application of Lemma \ref{l18bis} with $\mu = \nu = \mu_\beta$, $\gamma=\mu_\alpha$, $B = 4 B_\delta f(\zeta_0)$, $r=\frac{1}{4}$ and $\lambda = (h - 2 B_\delta) f(\zeta_0)$, shows the existence of an $h_* > 2B_\delta$ large enough so that whenever $h \geq h_*$ we can find two numbers $R_{1,h} > R_{2,h}\in\mathbb{N}$ with the property that $\linf_{k\rightarrow\infty}\p{T_{k,h}^\phi < R_{2,h}}>2\delta$ and $\linf_{k\rightarrow\infty}\p{S_{k,h}^\phi \geq R_{1,h}}>2\delta$. Thus, for $h\geq h_*$, $\p{T_{k,h}^\phi < R_{2,h}} > 2\delta$ and $\p{S_{k,h}^\phi \geq R_{1,h}} > 2\delta$ for all but a finite number of $k$'s. Therefore, for any $k$ large enough, $\p{T_{k,h}^\phi < R_{2,h}}\land\p{S_{k,h}^\phi \geq R_{1,h}} > 2\delta$. Using the fact that $\p{S_{k,h}^\phi \leq \hat{T}_{k,h}^\phi \leq T_{k,h}^\phi} > 1 -\delta$ we get that $\p{\hat{T}_{k,h}^\phi \leq T_{k,h}^\phi < R_{2,h}}\land \p{R_{1,h}\leq S_{k,h}^\phi \leq \hat{T}_{k,h}^\phi}> \delta$ for all but finitely many $k$'s. Thus, whenever $h\geq h_*$, $$\p{\hat{T}_{k,h}^\phi \leq T_{k,h}^\phi < R_{2,h}, i.o.}>\delta \mbox{ and } \p{R_{1,h}\leq S_{k,h}^\phi\leq \hat{T}_{k,h}^\phi, i.o.}> \delta.$$
The argument relying on the Hewitt-Savage 0-1 law applied in the proof of $(ii)$ can be used to finish this proof. \newline A completely analogous proof applies for $\left\{n \mathbb{P}_n(\phi(Y)\ind{\hat{\zeta}_n - \frac{h}{n}<Z\leq\hat{\zeta}_n})\right\}_{n=1}^\infty$. $\hfill \square$

\subsubsection{Proof of Lemma \ref{lsho}}\label{plsho}
We start by computing the characteristic functions of the weak limits of the last two components of the process $\tilde \Xi_n$ as defined in (\ref{ecsins}). Let $g_n(\xi)$ and $\psi_n(\xi)$ be the (unconditional) characteristic functions of $n\mathbb{P}_n^*(\ind{\zeta_0<Z\leq\zeta_0 + \frac{t}{n}})$ and $n\mathbb{P}_n^*(\epsilon\ind{\zeta_0<Z\leq\zeta_0 + \frac{t}{n}})$, respectively. Fix $\xi\in\mathbb{R}$ and write
\begin{eqnarray}
\Lambda_n &:=& \ce{\textrm{e}^{i\xi n\mathbb{P}_n^*(\epsilon\ind{\zeta_0<Z\leq\zeta_0 + \frac{t}{n}})}},\nonumber\\
\Psi_n &:=& n\function{\mathbb{P}_n}{\left(\textrm{e}^{i\xi\epsilon}-1\right)\ind{\zeta_0<Z\leq \zeta_0 + \frac{t}{n}}},\nonumber\\
\Psi_\xi^* &:=& \sum_{1\leq k\leq\nu(t)} \left(e^{i\xi\epsilon_k}-1\right)\nonumber,
\end{eqnarray}
where $(\nu(s))_{s\geq 0}$ is a Poisson process with rate $f(\zeta_0)$ independent of $(\epsilon_n)_{n=1}^\infty$. Then, $\psi_n(\xi) = \e{\Lambda_n}$ and $|\Lambda_n|\leq 1$. By the conditional independence of the bootstrap samples, we have
\begin{equation}\nonumber
\Lambda_n =  \left( 1 + \frac{1}{n}\Psi_n\right)^n .
\end{equation}
We now consider the characteristic functions of the complex-valued random variables $\Psi_n$.
Taking into account the independence of the $X$'s, we obtain that for any $\eta\in\mathbb{R}^2$,
{\small \begin{eqnarray}
\e{\textrm{e}^{i\eta_1 \textrm{Re}(\Psi_n)+i\eta_2 \textrm{Im}(\Psi_n)}} &=& \left(1 + \frac{1}{n}\function{\mathbb{P}}{\textrm{e}^{i\eta_1(\function{\cos}{\xi\epsilon}-1) + i\eta_2\function{\sin}{\xi\epsilon}}-1}\left(n\mathbb{P}(\ind{\zeta_0<Z<\zeta_0+\frac{t}{n}})\right)\right)^n\nonumber\\
\e{\textrm{e}^{i\eta_1 \textrm{Re}(\Psi_n)+i\eta_2 \textrm{Im}(\Psi_n)}} &\rightarrow& \textrm{e}^{tf(\zeta_0)\e{\textrm{e}^{i\eta_1(\function{\cos}{\xi\epsilon}-1) + i\eta_2\function{\sin}{\xi\epsilon}}-1}} = \e{\textrm{e}^{i\eta_1\textrm{Re}{\Psi_\xi^*}+i\eta_2\textrm{Im}{\Psi_\xi^*}}}. \nonumber
\end{eqnarray}}

\noindent Therefore, $\Psi_n\rightsquigarrow\Psi_\xi^*$ and, from the continuous mapping theorem, $\Lambda_n\rightsquigarrow e^{\Psi_\xi^*}$. Thus, Lebesgue's Dominated Convergence Theorem implies
\begin{equation}\label{ecsins2}
\psi_n(\xi) = \e{\Lambda_n} \rightarrow \e{\textrm{e}^{\Psi_\xi^*}} = \textrm{e}^{tf(\zeta_0)\left(\e{\textrm{e}^{\textrm{e}^{i\xi\epsilon}-1}}-1\right)}\ \ \forall\ \xi\in\mathbb{R}.
\end{equation}
With simpler arguments, we can also show that
\begin{equation}\label{ecsins3}
g_n(\xi) \rightarrow \textrm{e}^{tf(\zeta_0)\left(\textrm{e}^{(\textrm{e}^{i\xi\epsilon}-1)}-1\right)}\ \ \forall\ \xi\in\mathbb{R}.
\end{equation}
While (\ref{ecsins3}) is immediately recognized as the characteristic function of a compound Poisson process with rate $f(\zeta_0)$ and compounding distribution $\textrm{Poisson}(1)$, the characteristic function in (\ref{ecsins2}) can be shown to correspond to another compound Poisson process which can be written as
\begin{equation}\label{ecdreff1}
\sum_{1\leq j \leq \nu(t)} \epsilon_j\tau_j ,
\end{equation}
where $(\tau_n)_{n=1}^\infty\stackrel{i.i.d.}{\sim}\textrm{Poisson}(1)$, $(\nu(s))_{s\geq 0}$ is a Poisson process with rate $f(\zeta_0)$, and $(\tau_n)_{n=1}^\infty$, $(\epsilon_n)_{n=1}^\infty$ and $(\nu(s))_{s\geq 0}$ are mutually independent.

Therefore, the fifth and sixth components of $\tilde\Xi_n$ as defined in (\ref{ecsins}) converge, respectively, to a compound Poisson process with rate $f(\zeta_0)$ and Poisson(1) as compounding distribution and to the process described in (\ref{ecdreff1}). A similar analysis shows the analogous results for the third and fourth components of $\tilde\Xi_n$. The first and second components of $\tilde\Xi_n$ can easily be seen (by using the Lindeberg-Feller Central Limit Theorem) to be asymptotically normal with mean $0$ and variances $\sigma^2 \mathbb{P}(Z\leq \zeta_0)$ and $\sigma^2 \mathbb{P}(Z > \zeta_0)$, respectively.

All these facts indicate that the finite dimensional distributions of the limiting process of $\tilde\Xi_n$ match those of the process $\tilde \Xi$. In fact, we can proceed as in the proof of Proposition \ref{pg3} (i.e., proving tightness and convergence of the finite dimensional distributions using the Cramer-Wold device) to show $(i)$ and $(ii)$. For the sake of brevity, we omit the full technical details.

Then, arguing as in Proposition \ref{pg2} one can show that the sequence $(\sqrt{n}(\alpha_n^* - \alpha_0),\sqrt{n}(\beta_n^* - \beta_0),n(\zeta_n^* - \zeta_0))$ is stochastically bounded and then conclude that the (unconditional) asymptotic distribution of $(\sqrt{n}(\alpha_n^* - \alpha_0),\sqrt{n}(\beta_n^* - \beta_0),n(\zeta_n^* - \zeta_0)$ is that of $\sargmax_{h\in\mathbb{R}^3}\{ \tilde E^*(h)\}$, with $\tilde E^*(h)$ as defined in (\ref{ecdreff2}) and (\ref{ecdreff3}). For the sake of brevity we omit the full technical details of these arguments.

As $n(\zeta_n^* - \zeta_0) = n(\zeta_n^* - \hat \zeta_n) + n(\hat \zeta_n - \zeta_0)$, and if the ECDF bootstrap were consistent, the conditional distribution of $n(\zeta_n^* - \hat \zeta_n)$ (given the data) and the unconditional distribution of $n(\hat \zeta_n - \zeta_0)$ would have had the same weak limit. Then, as a consequence of Lemma 3.1 in \cite{sebawo} (also see Theorem 2.2 in \cite{kost}) the unconditional asymptotic distribution of $n(\zeta_n^* - \zeta_0)$ must be that of the sum of two independent copies of the asymptotic distribution of the $n(\hat{zeta}_n - \zeta_0)$. The result now follows. $\hfill \square$

\subsubsection{Proof of Lemma \ref{l7}}\label{prueba11}
Let $\mathbb{G}_n$ be the ECDF of $\epsilon_1,\ldots,\epsilon_n$. We first observe that
\[ \int e^{i\xi x}d\mathbb{P}_n^\epsilon (x) = e^{-i\xi\bar{\epsilon}_n}\function{\mathbb{P}_n}{e^{i\xi\widetilde{\epsilon}_n}}\]
and hence, for any $\xi\in\mathbb{R}$ with $|\xi|\leq\eta$ we have,
\begin{eqnarray}
 \left|\int e^{i\xi x}d\mathbb{P}_n^\epsilon (x) -  e^{-i\xi\bar{\epsilon}_n}\int e^{i\xi x}d\mathbb{G}_n (x)\right| &=& \left|\function{\mathbb{P}_n}{e^{i\xi\widetilde{\epsilon}_n}} -\function{\mathbb{P}_n}{e^{i\xi\epsilon}}\right| \nonumber\\
 &\leq& |\eta|\function{\mathbb{P}_n}{|\tilde{\epsilon}_n - \epsilon|}\nonumber
\end{eqnarray}
but $\function{\mathbb{P}_n}{|\tilde{\epsilon}_n - \epsilon|}$ is bounded from above by
\[ |\hat{\alpha}_n - \alpha_0| + \left(|\alpha_0|+|\beta_0|\right)|\mathbb{P}_n (\ind{Z\leq\hat{\zeta}_n}-\ind{Z\leq\zeta_0})| + |\hat{\beta}_n - \beta_0|\]
which goes to zero almost surely as consequence of Lemmas \ref{l3} and \ref{l12} $(iv)$, with $\mathbb{Q}_n = \mathbb{P}_n$. Thus,
\[ \sup_{|\xi|\leq \eta}\left\{\left|\int e^{i\xi x}d\mathbb{P}_n^\epsilon (x) -  e^{-i\xi\bar{\epsilon}_n}\int e^{i\xi x}d\mathbb{G}_n (x)\right|\right\}\cas 0\]
and $(i)$ follows immediately because $\bar{\epsilon}_n=\mathbb{P}_n (\tilde{\epsilon}_n)\cas 0 $ and $\mathbb{G}_n$ converges to $G$ in total variation distance with probability one. The second assertion is seen to be true at once because $G$ is assumed to be continuous and condition $(i)$ implies that the characteristic functions of $\mathbb{P}_n^\epsilon$ converge to the characteristic function of $G$ on the entire real line with probability one.
Statements $(ii)$ and $(iii)$ are straightforward: On the one hand, we have shown that conditions (I)-(IV) hold for the ECDF, so Lemma \ref{l12} implies that $\displaystyle \int x^2 d\mathbb{P}_n^\epsilon(x)  = \mathbb{P}_n(\tilde{\epsilon}_n^2) - \mathbb{P}_n(\tilde{\epsilon}_n)^2 \cas \sigma^2$. On the other hand,
\begin{eqnarray}
\left| \int |x|d\mathbb{P}_n^\epsilon - \int |\epsilon|d\mathbb{P}_n\right| &=& |\mathbb{P}_n (|\tilde{\epsilon}_n - \bar{\epsilon}_n| - |\epsilon|)|\nonumber\\
 &\leq& \mathbb{P}_n (|\tilde{\epsilon}_n-\epsilon|) + |\bar{\epsilon}_n| \cas 0. \nonumber
\end{eqnarray}

To prove $(iv)$, we first notice that
\[ \int |x|^3 d \mathbb{P}_n^\epsilon (x) \leq |\bar{\epsilon}_n|^3 + 3 |\bar{\epsilon}_n|^2 \emp{|\tilde{\epsilon}_n|} + 3|\bar{\epsilon}_n|\emp{\tilde{\epsilon}_n^2} + \emp{|\tilde{\epsilon}_n|^3}.\]
Then, from Lemma \ref{l13} all but the last summand on the right-hand side converge almost surely. Hence, it suffices to show that  $\lsup\emp{|\tilde{\epsilon}_n|^3}<\infty$ w. p. 1. With this in mind, let $L_n = |\alpha_0| + |\hat{\alpha}_n|+|\beta_0|+|\hat{\beta}_n|$ and observe that
\[\emp{|\tilde{\epsilon}_n|^3} \leq \emp{|\epsilon|^3} + 3\emp{|\epsilon|^2}L_n + 3\emp{|\epsilon|}L_n^2 + L_n^3.
\]
The result then is an immediate consequence of the third moment assumption on $\epsilon$, the strong law of large numbers and the almost sure convergence of the least squares estimators. $\hfill \square$

\subsubsection{Proof of Proposition \ref{pfd1}}\label{prueba17}
Just as in the proof of Proposition \ref{pg1} we have
\begin{eqnarray}
-\frac{1}{n}\sum_{k=1}^n (\tilde{\epsilon}_{n,j}^*)^2 &=& R_n(\hat{\theta}_n)\nonumber\\
 &\leq& R_n(\theta_n^*) \leq -\frac{1}{n}\sum_{j=1}^n (\tilde{\epsilon}_{n,j}^* + \hat{\alpha}_n - \alpha_n^*)^2\ind{Z_j<a} + (\tilde{\epsilon}_{n,j}^* + \hat{\beta}_n - \beta_n^*)^2\ind{Z_j>b}\nonumber
\end{eqnarray}
from which we can see that
\[ |\gamma_n^*-\gamma_n|^2 \mathbb{P}_n (Z<a)\land\mathbb{P}_n (Z>b) \leq \]
\[\frac{1}{n}\sum_{j=1}^n (\tilde{\epsilon}_{n,j}^*)^2 \ind{a\leq Z_j\leq b} + \frac{2}{n}|\gamma_n^*-\gamma_n|\left(\left|\sum_{j=1}^n \tilde{\epsilon}_{n,j}^*\ind{Z_j<a}\right|+ \left|\sum_{j=1}^n \tilde{\epsilon}_{n,j}^*\ind{Z_j>b}\right|\right).\]
But the first of the terms on the right-hand side of the previous inequality is conditionally bounded in $\mathbb{L}_1$ (an upper bound for the conditional expectations is $\displaystyle \sup_{n\in\mathbb{N}} \left\{\int x^2 d\mathbb{P}_n^\epsilon(x) \right\}<\infty$). The terms $\displaystyle \frac{1}{n} \sum_{j=1}^n \tilde{\epsilon}_{n,j}^*\ind{Z_j<a}$ and $\displaystyle  \frac{1}{n} \sum_{j=1}^n \tilde{\epsilon}_{n,j}^*\ind{Z_j>b}$ both have zero conditional expectation and conditional variances equal to $\displaystyle \frac{1}{n} \mathbb{P}_n(Z<a)\int x^2 d\mathbb{P}_n^\epsilon(x)$ and $\displaystyle \frac{1}{n}\mathbb{P}_n(Z>b)\int x^2 d\mathbb{P}_n^\epsilon(x)$ respectively. So we have that
\[ \left|\frac{1}{n}\sum_{j=1}^n \tilde{\epsilon}_{n,j}^*\ind{Z_j<a}\right|+ \left|\frac{1}{n}\sum_{j=1}^n \tilde{\epsilon}_{n,j}^*\ind{Z_j>b}\right|\ccipas 0. \]
Thus,
\begin{equation}\label{ec59}
|\gamma_n^* - \hat{\gamma}_n| = O_{\mathbf{P}_\mathfrak{X}}(1) \textrm{ almost surely.}
\end{equation}
Now, let $Z_{(k)}$ be the $k$-th order statistic from the sample $(Z_1,\ldots,Z_n)$ and $r_k$ a number such that $Z_{(k)} = Z_{r_k}$. For any $\zeta\in[a,b]$ define $m_\zeta = \max\{1\leq j\leq n: Z_{(j)} \leq \zeta\land\hat{\zeta}_n\}$ and observe that we have
\begin{equation}\label{ec60}
\frac{1}{n}\sum_{j=1}^n \tilde{\epsilon}_{n,j}^*\ind{Z_j\leq \zeta\land\hat{\zeta}_n} = \frac{1}{n}\sum_{1\leq j \leq m_\zeta}\tilde{\epsilon}_{n,r_j}^*,
\end{equation}
and thus
\begin{equation}\label{ec60bis}
\sup_{\zeta\in[a,b]}\left\{\left|\frac{1}{n}\sum_{j=1}^n \tilde{\epsilon}_{n,j}^*\ind{Z_j\leq \zeta\land\hat{\zeta}_n}\right|\right\} \leq \max_{1\leq k\leq n}\left\{\frac{1}{n}\left|\sum_{1\leq j \leq k}\tilde{\epsilon}_{n,r_j}^*\right|\right\}.
\end{equation}
But the indexes $r_k$ and the order statistics are functions of $Z_1,\ldots,Z_n$ and therefore $\mathfrak{X}$-measurable. Hence, conditionally, $\displaystyle \sum_{1\leq j \leq k}\tilde{\epsilon}_{n,r_j}^*\ind{Z_{r_j}\leq \zeta\land\hat{\zeta}_n}$ is a square integrable martingale with zero expectation. Hence, from Doob's submartingale inequality (see \cite{pwm}, Theorem 14.6, page 137) we get
\[ \cp{\max_{1\leq k\leq n}\left\{\frac{1}{n}\left|\sum_{1\leq j \leq k}\tilde{\epsilon}_{n,r_j}^*\right|\right\} > \rho} \leq \frac{1}{n\rho^2}\mathbb{P}_n(\tilde{\epsilon}_n^2)\]
and consequently, equations (\ref{ec60}) and (\ref{ec60bis}) show that
\begin{equation}\label{ec61}
\cp{\left\|\frac{1}{n}\sum_{j=1}^n \tilde{\epsilon}_{n,j}^*\ind{Z_j\leq (\cdot)\land\hat{\zeta}_n}\right\|_{[a,b]} >\rho}\leq \frac{1}{\rho^2 n}\mathbb{P}_n(\tilde{\epsilon}_n^2) \cas 0.
\end{equation}
Similar arguments give that (\ref{ec61}) is also true if we replace $\ind{Z_j \leq (\cdot)\land\hat{\zeta}_n}$ by any of $\ind{ (\cdot)<Z_j \leq\hat{\zeta}_n}$, $\ind{\hat{\zeta}_n<Z_j \leq (\cdot)}$ or $\ind{Z_j > (\cdot)\lor\hat{\zeta}_n}$. Now, if we write $R_n$ like
\begin{eqnarray}
R_n(\theta) = -\mathbb{P}_n^*(\tilde{\epsilon}_n^2) -\frac{2}{n}(\hat{\alpha}_n-\alpha)\sum_{j=1}^n\tilde{\epsilon}_{n,j}^*\mathbf{1}_{Z_j\leq \zeta\land\hat{\zeta}_n} - (\hat{\alpha}_n-\alpha)^2\mathbb{P}_n(\mathbf{1}_{Z\leq \zeta\land\hat{\zeta}_n}) \nonumber\\
- \frac{2}{n}(\hat{\beta}_n-\alpha)\sum_{j=1}^n\tilde{\epsilon}_{n,j}^*\mathbf{1}_{\hat{\zeta}_n < Z\leq \zeta} - (\hat{\beta}_n-\alpha)^2\mathbb{P}_n(\mathbf{1}_{\hat{\zeta}_n < Z\leq \zeta}) \nonumber\\
-\frac{2}{n}(\hat{\alpha}_n-\beta)\sum_{j=1}^n\tilde{\epsilon}_{n,j}^*\mathbf{1}_{\zeta < Z\leq \hat{\zeta}_n} - (\hat{\alpha}_n-\beta)^2\mathbb{P}_n(\mathbf{1}_{\zeta < Z\leq \hat{\zeta}_n}) \nonumber\\
- \frac{2}{n}(\hat{\beta}_n-\beta)\sum_{j=1}^n\tilde{\epsilon}_{n,j}^*\mathbf{1}_{Z> \zeta\lor\hat{\zeta}_n} -
(\hat{\beta}_n-\beta)^2\mathbb{P}_n(\mathbf{1}_{Z> \zeta\lor\hat{\zeta}_n}), \label{ec62}
\end{eqnarray}
$(ii)$ follows immediately from (\ref{ec61}), applied for all the four possible types of indicator functions. Note that the four terms on the far right of all the rows in the previous display vanish when we subtract $M_n$ from $R_n$. Lemma \ref{l3} shows that $(ii)$ implies $(i)$, while Corollary 3.2.3 $(ii)$, page 287, of \cite{vw} together with (\ref{ec59}) allows one to derive $(iii)$ from $(i)$ and $(ii)$. $\hfill \square$

\subsubsection{Proof of Lemma \ref{pfd3}}\label{prueba19}
The proof is analogous to the proof of Lemma \ref{l21}. We again consider the number $h_*>0$ defined in the statement of Lemma \ref{l19} and take $K\subset\mathbb{R}^3$ to be any compact rectangle containing the point $(0,0,h_*)$. To prove the theorem it suffices to show that the sequence $(\hat{E}_n(0,0,h_3))_{n=1}^\infty$ does not have a weak limit in probability whenever $h_3\geq h_*$ and $(0,0,h_3)\in K$. But in view of Lemma \ref{l19} this is straightforward because the (conditional) characteristic function of $\hat{E}_n(0,0,h_3)$ is given by
\[ \left(\int e^{i2(\hat{\alpha}_n - \hat{\beta}_n)\xi x - i\xi(\hat{\alpha}_n - \hat{\beta}_n)^2} d\mathbb{P}_n^\epsilon(x)\right)^{n\mathbb{P}_n(\hat{\zeta}_n<Z\leq \hat{\zeta}_n + \frac{h_3}{n})}.\]
and Lemma \ref{l7} and the strong consistency of the least squares estimator imply that
\[ \int e^{i2(\hat{\alpha}_n - \hat{\beta}_n)\xi x - i\xi(\hat{\alpha}_n - \hat{\beta}_n)^2} d\mathbb{P}_n^\epsilon(x)\cas e^{ - i\xi(\alpha_0 - \beta_0)^2}\function{\varphi}{2(\alpha_0 - \beta_0)\xi}.\]
Thus, for all $\xi$ in a neighborhood of the origin, this characteristic function will converge if and only if $n\mathbb{P}_n(\hat{\zeta}_n<Z\leq \hat{\zeta}_n + \frac{h_3}{n})$ converges. We know that this is not the case from Lemma \ref{l19}. $\hfill \square$

\subsubsection{Proof of Proposition \ref{cs3}}\label{prueba15}
We will show that conditions (I)-(V) in Section \ref{pgs} hold w.p. 1 for the bootstrap measures arising in this scheme. Note that (IV) is a consequence of Lemma \ref{l3}. That $\left\|\mathbb{Q}_n - \mathbb{P}\right\|_\mathcal{F}\cas 0$ follows immediately from the fact that $\|\hat{F}_n - F\|_\infty \cas 0$. Now, for any $g = y\psi\in\mathcal{G}$ with $\psi\in\mathcal{F}$, we have
\begin{eqnarray}
 \mathbb{Q}_n (g) &=& \hat{\alpha}_n \mathbb{Q}_n(\ind{Z\leq\hat{\zeta}_n}\psi) + \hat{\beta}_n \mathbb{Q}_n(\ind{Z>\hat{\zeta}_n}\psi), \nonumber\\
 \mathbb{P} (g) &=& \alpha_0 \mathbb{P}(\ind{Z\leq\zeta_0}\psi) + \beta_0 \mathbb{P}(\ind{Z>\zeta_0}\psi), \nonumber
\end{eqnarray}
from which we see that
\begin{eqnarray}
\left\|\mathbb{Q}_n - \mathbb{P}\right\|_\mathcal{G} & \leq & \left(\left|\hat{\alpha}_n - \alpha_0\right| + \left|\hat{\beta}_n - \beta_0\right|\right) + (|\alpha_0| + |\beta_0|)\left\|\mathbb{Q}_n - \mathbb{P}\right\|_\mathcal{F} \nonumber \\
& & \;\;\; + \; (|\alpha_0| + |\beta_0|) \int_{\mathbb{R}}|\ind{z\leq \hat{\zeta}_n} - \ind{z\leq \zeta_0}|\hat{f}_n(z)dz. \nonumber
\end{eqnarray}
Lebesgue's dominated convergence theorem shows that the last integral goes almost surely to zero and the strong consistency of the least squares estimators and property (I) now yields  $\left\|\mathbb{Q}_n - \mathbb{P}\right\|_\mathcal{G}\cas 0$. Finally, we can write any $h\in\mathcal{H}$ in the form $h=y^2\psi$ for some $\psi\in\mathcal{F}$. Using this representation we obtain,
\begin{eqnarray}
 \mathbb{Q}_n (h) &=& \hat{\alpha}_n^2 \mathbb{Q}_n(\ind{Z\leq\hat{\zeta}_n}\psi) + \hat{\beta}_n^2 \mathbb{Q}_n(\ind{Z>\hat{\zeta}_n}\psi) +  \mathbb{P}_n^\epsilon (\tilde{\epsilon}_n^2)\mathbb{Q}_n(\psi), \nonumber\\
 \mathbb{P} (h) &=& \alpha_0^2 \mathbb{P}(\ind{Z\leq\zeta_0}\psi) + \beta_0^2 \mathbb{P}(\ind{Z>\zeta_0}\psi) + \sigma^2\mathbb{P}(\psi) ,\nonumber
\end{eqnarray}
and the triangle inequality then implies that
\begin{eqnarray*}
&& \left\| \mathbb{Q}_n - \mathbb{P}\right\|_\mathcal{H} \leq  (|\hat{\alpha}_n^2 - \alpha_0^2| + |\hat{\beta}_n^2 - \beta_0^2|) + (\alpha_0^2 + \beta_0^2 + \sigma^2)\left\|\mathbb{Q}_n - \mathbb{P}\right\|_\mathcal{F}\\
& & + \; |\mathbb{P}_n^\epsilon (\tilde{\epsilon}_n^2) - \mathbb{P}(\epsilon^2)| + (\alpha_0^2 + \beta_0^2) \int_{\mathbb{R}}|\ind{z\leq \hat{\zeta}_n} - \ind{z\leq \zeta_0}|\hat{f}_n(z)dz \cas 0.
\end{eqnarray*}
It remains to show (V). Observe that (\ref{rccs2}) and (\ref{rccs3}) hold automatically because under $\mathbb{Q}_n$, $\tilde{\epsilon}_n$ and $Z$ are independent. Hence, we only require to show that (\ref{rccs1}) holds w.p. 1. As (\ref{ascci}) holds, we have
\[ \inf_{\zeta\in [c,d]}\left\{\hat{f}_n(\zeta)\right\}\cas \inf_{\zeta\in [c,d]}\left\{f(\zeta)\right\} > 0.\]
The mean value theorem implies that for any $\zeta,\xi\in [c,d]$, there is $\vartheta\in [0,1]$ such that
$|\hat{F}_n(\zeta) - \hat{F}_n(\xi)| = |\xi-\zeta| \hat{f}_n(\zeta + \vartheta (\xi-\zeta)) $. It follows that for $\eta > 0$ small enough, \[ \inf_{0< |\zeta-\hat{\zeta}_n| < \delta^2} \left\{ \frac{1}{|\zeta-\hat{\zeta}_n|} |\hat{F}_n(\zeta) - \hat{F}_n(\hat{\zeta}_n)|\right\}\geq \inf_{\zeta\in [c,d]}\left\{\hat{f}_n(\zeta)\right\}\ \ \forall\ n\in\mathbb{N} \] and consequently (V) holds w.p.1 for all $\delta < \eta$ for all large $n$. $\hfill \square$

\subsubsection{Proof of Proposition \ref{cos3}}\label{prueba16}
We already know that conditions (I)-(V) hold w.p. 1. Condition (VII) holds automatically because $Z$ and $\tilde{\epsilon}_n$ are independent under $\mathbb{Q}_n$ and $\mathbb{Q}_n(\tilde{\epsilon}_n)=0$. Lemma \ref{l7} $(v)$ implies that condition (VIII) holds a.s. It remains to prove (VI).

Write $I = [c,d]$ and consider the sequence of events $\left\{A_N\right\}_{N\in\mathbb{N}}$ given by
\[ A_N = \left[ \hat{\zeta}_n -\frac{\delta}{n},\hat{\zeta}_n +\frac{\eta}{n}\in I, \mbox{ almost always},\ \forall\ \delta,\eta\in(0,N) \right]\cap\left[\|\hat{f}_n-f\|_I\rightarrow 0\right].\]
Fix $N \in \mathbb{N}$, let $\psi$ be the function $\psi(x) = e^{i\xi x}$ for some $\xi\in\mathbb{R}$ or the function $\psi(x) = |x|^p$, $p=1,2$, and $\eta,\delta>0$ be any positive real numbers smaller than $N$. Then,
\[m_n\mathbb{Q}_n(\psi(\tilde{\epsilon}_n)\ind{\zeta_n - \frac{\delta}{n} < Z \leq \zeta_n + \frac{\eta}{n}}) = n \mathbb{P}_n^\epsilon\left(\psi\right) \int_{\hat{\zeta}_n- \frac{\delta}{n}}^{\hat{\zeta}_n + \frac{\eta}{n}} \hat{f}_n(x)dx. \]
Lemma \ref{l7} implies that $\mathbb{P}_n^\epsilon\left(\psi\right)\cas \mathbb{P}\left(\psi(\epsilon)\right)$. And, when $A_N$ holds, we also have
\[ n\left|\int_{\hat{\zeta}_n- \frac{\delta}{n}}^{\hat{\zeta}_n + \frac{\eta}{n}} \hat{f}_n(x)dx - \int_{\hat{\zeta}_n- \frac{\delta}{n}}^{\hat{\zeta}_n + \frac{\eta}{n}} f(x)dx\right| \leq 2N\left\|\hat{f}_n - f\right\|_{[c,d]}\rightarrow 0.\]
Hence, condition (VI) holds for all $0<\delta,\eta<N$ on $A_N$. But the strong consistency of the least squares estimators and the conditions on $\hat{f}_n$ imply that each of these events have probability one. Therefore, $\p{\cap_{N\in\mathbb{N}} A_N} = 1$. Hence, condition (VI) holds w.p.1 and the result follows from an application of Proposition \ref{pg3}. $\hfill \square$

\subsubsection{Proof of Proposition \ref{cs5}}\label{prueba20}
Since $\mathbb{Q}_n$ is just the ECDF, the validity of conditions (I)-(IV) follows from the result established for the regular ECDF bootstrap and Lemma \ref{l3}. (VIII) is a consequence of the strong law of large numbers. It remains to show (V)-(VII).

We start with (VI). First observe that $m_n\mathbb{P}(\psi(\epsilon)\ind{\zeta_0  - \frac{\delta}{m_n} < Z \leq \zeta_0 + \frac{\eta}{m_n}})\rightarrow (\delta+\eta) f(\zeta_0)\mathbb{P}(\psi(\epsilon))$. We will proceed as follows: we will first use this simple observation just made to show that the following equations are true,

\begin{eqnarray}
m_n\left\| \mathbb{P}_n(\psi(\epsilon)\ind{\zeta_0 - \frac{(\cdot)}{m_n}< Z \leq\zeta_0}) - (\cdot)\mathbb{P}(\psi(\epsilon))f(\zeta_0) \right\|_K &\cip& 0 \label{ecuacionfinal5}\\
m_n\left\| \mathbb{P}_n(\psi(\epsilon)\ind{\zeta_0< Z \leq\zeta_0 + \frac{(\cdot)}{m_n}}) - (\cdot)\mathbb{P}(\psi(\epsilon))f(\zeta_0)\right\|_K &\cip& 0 \label{ecuacionfinal6}\\
m_n\left\| \mathbb{P}_n(\psi(\tilde{\epsilon}_n)\ind{\hat{\zeta}_n - \frac{(\cdot)}{m_n}< Z \leq\hat{\zeta}_n}) - \mathbb{P}_n(\psi(\epsilon)\ind{\zeta_0 - \frac{(\cdot)}{m_n}< Z \leq\zeta_0})\right\|_K &\cip& 0 \label{ecuacionfinal1}\\
m_n\left\| \mathbb{P}_n(\psi(\tilde{\epsilon}_n)\ind{\hat{\zeta}_n< Z\leq\hat{\zeta}_n + \frac{(\cdot)}{m_n}}) - \mathbb{P}_n(\psi(\epsilon)\ind{\zeta_0< Z \leq\zeta_0 + \frac{(\cdot)}{m_n}})\right\|_K &\cip& 0 \label{ecuacionfinal2}
\end{eqnarray}
for any compact interval $K\subset\mathbb{R}$. All these facts put together will give
\begin{eqnarray}
m_n\left\| \mathbb{P}_n(\psi(\tilde{\epsilon}_n)\ind{\hat{\zeta}_n - \frac{(\cdot)}{m_n}< Z \leq\hat{\zeta}_n}) - (\cdot)\mathbb{P}(\psi(\epsilon))f(\zeta_0) \right\|_K &\cip& 0 \label{ecuacionfinal3}\\
m_n\left\| \mathbb{P}_n(\psi(\tilde{\epsilon}_n)\ind{\hat{\zeta}_n< Z \leq\hat{\zeta}_n + \frac{(\cdot)}{m_n}}) - (\cdot)\mathbb{P}(\psi(\epsilon))f(\zeta_0)\right\|_K &\cip& 0 \label{ecuacionfinal4}
\end{eqnarray}
for any compact interval $K\subset\mathbb{R}$. Having achieved this, we will be able to conclude that (VI) holds in probability. For if (\ref{ecuacionfinal3}) and (\ref{ecuacionfinal4}) are both true, we can take an increasing sequence of compacts $(K_n)_{n=1}^\infty$ whose union is $\mathbb{R}$ and then for any subsequence $(n_k)_{k=1}^\infty$ find a further subsequence $(n_{k_s})_{s=1}^\infty$ such that
{\footnotesize
\begin{eqnarray}
\p{m_{n_{k_s}}\left\| \mathbb{P}_{n_{k_s}}(\psi(\tilde{\epsilon}_{n_{k_s}})\ind{\hat{\zeta}_{n_{k_s}} - \frac{(\cdot)}{m_{n_{k_s}}}< Z \leq\hat{\zeta}_{n_{k_s}}}) - (\cdot)\mathbb{P}(\psi(\epsilon))f(\zeta_0) \right\|_{K_{s}} > \frac{1}{s}}&<&\frac{1}{s^2} \nonumber\\
\p{m_{n_{k_s}}\left\| \mathbb{P}_{n_{k_s}}(\psi(\tilde{\epsilon}_{n_{k_s}})\ind{\hat{\zeta}_{n_{k_s}}< Z \leq \hat{\zeta}_{n_{k_s}} + \frac{(\cdot)}{m_{n_{k_s}}} }) - (\cdot)\mathbb{P}(\psi(\epsilon))f(\zeta_0) \right\|_{K_{s}} > \frac{1}{s}}&<&\frac{1}{s^2}. \nonumber
\end{eqnarray}
}
The Borel-Cantelli Lemma will then imply that (VI) holds almost surely for the subsequence $(n_{k_s})_{s=1}^\infty$. Therefore, it suffices to show (\ref{ecuacionfinal5}), (\ref{ecuacionfinal6}), (\ref{ecuacionfinal1}) and (\ref{ecuacionfinal2}).

First consider the case where $\psi(\cdot) = |\cdot|$ and a positive number $\eta>0$. Let $t\in\mathbb{R}$ and write
\[ r_n = n\function{\mathbb{P}}{e^{i\frac{m_n}{n}t|\epsilon|} - 1 -\frac{m_n}{n}t|\epsilon|}\mathbb{P}(\ind{\zeta_0<Z\leq \zeta_0 + \frac{\eta}{m_n}}).\]
Then, $|r_n|\leq t^2 \sigma^2 \frac{m_n}{n}m_n\mathbb{P}(\ind{\zeta_0<Z\leq \zeta_0 + \frac{\eta}{m_n}}) \rightarrow 0$. The characteristic function of $\\$ $m_n\mathbb{P}_n (|\epsilon| \ind{\zeta_0<Z\leq + \frac{\eta}{m_n}})$ can be written as
\[ \varphi_n (t) = \left(1+i\frac{m_n}{n}t\mathbb{P}(|\epsilon|)\mathbb{P}( \ind{\zeta_0<Z\leq \zeta_0 + \frac{\eta}{m_n}}) + \frac{r_n}{n}\right)^n\rightarrow e^{it\eta\mathbb{P}(|\epsilon|)f(\zeta_0)}\]
and therefore
\begin{equation}\nonumber
m_n\mathbb{P}_n (|\epsilon| \ind{\zeta_0<Z\leq \zeta_0 + \frac{\eta}{m_n}})\cip \eta f(\zeta_0)\mathbb{P}(|\epsilon|).
\end{equation}
But
\[\sup_{n\in\mathbb{N}}\left\{\e{m_n\left\|\mathbb{P}_n (|\epsilon| \ind{\zeta_0<Z\leq \zeta_0 + \frac{(\cdot)}{m_n}})\right\|_{[0,\eta]}}\right\} <\infty\]
and hence the sequence of processes $\left(m_n\mathbb{P}_n (|\epsilon| \ind{\zeta_0<Z\leq \zeta_0 + \frac{(\cdot)}{m_n}})\right)_{n=1}^\infty$ is tight in $\mathcal{D}_{[0,\eta]}$.
It follows that
\[ m_n\mathbb{P}_n (|\epsilon| \ind{\zeta_0<Z\leq \zeta_0 + \frac{(\cdot)}{m_n}})\rightsquigarrow
(\cdot) f(\zeta_0)\mathbb{P}(|\epsilon|)\ \ \textrm{in }\ \mathcal{D}_{[0,\eta]}\]
but since the limiting process is continuous and deterministic we actually obtain
\begin{equation}\label{ec65}
\left\|m_n\mathbb{P}_n (|\epsilon| \ind{\zeta_0<Z\leq \zeta_0 + \frac{\cdot}{m_n}})-
(\cdot) f(\zeta_0)\mathbb{P}(|\epsilon|)\right\|_{[0,\eta]}\cip 0.
\end{equation}
And with similar arguments one can also prove that
\begin{equation}\label{ec66}
\left\|m_n\mathbb{P}_n (|\epsilon| \ind{\zeta_0 - \frac{(\cdot)}{m_n}<Z\leq \zeta_0}) -(\cdot)f(\zeta_0)\mathbb{P}(|\epsilon|)\right\|_{[0,\eta]}\cip 0.
\end{equation}
Pick a positive number $\eta>0$. Taking into account that $\epsilon\ind{\zeta_0 < Z \leq \zeta_0 + \frac{\eta}{m_n}} = (y-\beta_0)\ind{\zeta_0 < Z \leq \zeta_0 + \frac{\eta}{m_n}}$ and the analogous result for $\tilde{\epsilon}_n$ with $\hat{\zeta}_n$ and $\hat{\beta}_n$ instead of $\zeta_0$ and $\beta_0$ we see that
\[ m_n \left\| \mathbb{P}_n (|\tilde{\epsilon}_n|\ind{\hat{\zeta}_n < Z \leq \hat{\zeta}_n + \frac{(\cdot)}{m_n}})  - \mathbb{P}_n (|\epsilon|\ind{\zeta_0 < Z \leq \zeta_0 + \frac{(\cdot)}{m_n}})\right\|_{[0,\eta]} \leq\]
\[ m_n \left\| \mathbb{P}_n \left(|Y-\hat{\beta}_n|\left(\ind{\hat{\zeta}_n < Z \leq \hat{\zeta}_n + \frac{(\cdot)}{m_n}} - \ind{\zeta_0 < Z \leq \zeta_0 + \frac{(\cdot)}{m_n}}\right)\right)\right\|_{[0,\eta]} +\]
 \[ m_n \left\|\mathbb{P}_n \left(\left(|
Y-\hat{\beta}_n| - |Y-\beta_0|\right)\ind{\zeta_0 < Z \leq \zeta_0 + \frac{(\cdot)}{m_n}}\right)\right\|_{[0,\eta]}\]
and consequently
\[ m_n \left\| \mathbb{P}_n (|\tilde{\epsilon}_n|\ind{\hat{\zeta}_n < Z \leq \hat{\zeta}_n + \frac{(\cdot)}{m_n}})  - \mathbb{P}_n (|\epsilon|\ind{\zeta_0 < Z \leq \zeta_0 + \frac{(\cdot)}{m_n}})\right\|_{[0,\eta]} \leq\]
\[ m_n \left\| \mathbb{P}_n \left(|Y-\beta_0|\left(\ind{\hat{\zeta}_n < Z \leq \hat{\zeta}_n + \frac{(\cdot)}{m_n}} - \ind{\zeta_0 < Z \leq \zeta_0 + \frac{(\cdot)}{m_n}}\right)\right)\right\|_{[0,\eta]} +\]
\[ |\hat{\beta}_n - \beta_0|m_n \left\| \mathbb{P}_n \left(\ind{\hat{\zeta}_n < Z \leq \hat{\zeta}_n + \frac{(\cdot)}{m_n}} - \ind{\zeta_0 < Z \leq \zeta_0 + \frac{(\cdot)}{m_n}}\right)\right\|_{[0,\eta]} +\]
\begin{equation}\label{ec67}
|\hat{\beta}_n - \beta_0| m_n \left\|\mathbb{P}_n \left(\ind{\zeta_0 < Z \leq \zeta_0 + \frac{(\cdot)}{m_n}}\right)\right\|_{[0,\eta]}.
\end{equation}
We will show that each of the terms on the right-hand side of (\ref{ec67}) goes to zero in probability. Since $n(\hat{\zeta}_n - \zeta_0) = O_\mathbf{P} (1)$, we know that for any $\delta>0$ there is $R_\delta>0$ such that
$\p{n|\hat{\zeta}_n - \zeta_0|>R_\delta}<\delta$. Then,
\[ \p{m_n \left\| \mathbb{P}_n \left(|Y-\beta_0|\left(\ind{\hat{\zeta}_n < Z \leq \hat{\zeta}_n + \frac{(\cdot)}{m_n}} - \ind{\zeta_0 < Z \leq \zeta_0 + \frac{(\cdot)}{m_n}}\right)\right)\right\|_{[0,\eta]} > \delta} \leq \delta + \]
\[ \p{m_n  \mathbb{P}_n \left(|\epsilon|\ind{\zeta_0 - \frac{R_\delta}{n}<Z\leq\zeta_0}\right) > \frac{\delta}{3}} +\]
\[ \p{m_n \left\| \mathbb{P}_n \left(|\epsilon|\ind{\zeta_0 < Z \leq \zeta_0 + \frac{(\cdot)}{m_n} + \frac{R_\delta}{n}}\right)\right\|_{[0,\eta]} > \frac{\delta}{3}} + \]
\begin{equation}\nonumber
\p{m_n  |\alpha_0 - \beta_0|\mathbb{P}_n \left(\ind{\zeta_0 - \frac{R_\delta}{n}<Z\leq\zeta_0}\right) > \frac{\delta}{3}}
\end{equation}
but from equations (\ref{ec65}) and (\ref{ec66}), and the fact that $\frac{m_n}{n}\rightarrow 0$, we actually get that all the terms of the right-hand side are asymptotically smaller than $\frac{\delta}{3}$. Thus,
\begin{equation}\label{ec68}
\lsup_{n\rightarrow\infty}\p{m_n \left\| \mathbb{P}_n \left(|Y-\beta_0|\left(\ind{\hat{\zeta}_n < Z \leq \hat{\zeta}_n + \frac{(\cdot)}{m_n}} - \ind{\zeta_0 < Z \leq \zeta_0 + \frac{(\cdot)}{m_n}}\right)\right)\right\|_{[0,\eta]} > \delta}<2\delta.
\end{equation}
An argument similar in spirit to the one just employed gives
\begin{equation}\label{ec69}
\lsup_{n\rightarrow\infty} \p{m_n \left\| \mathbb{P}_n \left(\left(\ind{\hat{\zeta}_n < Z \leq \hat{\zeta}_n + \frac{(\cdot)}{m_n}} - \ind{\zeta_0 < Z \leq \zeta_0 + \frac{(\cdot)}{m_n}}\right)\right)\right\|_{[0,\eta]}>\delta} < \delta
\end{equation}
while equation (\ref{ec70}), for $\xi = 0$, and the strong consistency of the least squares estimator give
\begin{equation}\nonumber
|\hat{\beta}_n - \beta_0| m_n \left\|\mathbb{P}_n \left(\ind{\zeta_0 < Z \leq \zeta_0 + \frac{(\cdot)}{m_n}}\right)\right\|_{[0,\eta]}\cip 0.
\end{equation}
Then, combining  the last identity with (\ref{ec67}), (\ref{ec68}) and (\ref{ec69}) we get
\begin{equation}\nonumber
 \lim_{\delta\rightarrow 0} \lsup_{n\rightarrow\infty}m_n \p{\left\| \mathbb{P}_n (|\tilde{\epsilon}_n|\ind{\hat{\zeta}_n < Z \leq \hat{\zeta}_n + \frac{(\cdot)}{m_n}})  - \mathbb{P}_n (|\epsilon|\ind{\zeta_0 < Z \leq \zeta_0 + \frac{(\cdot)}{m_n}})\right\|_{[0,\eta]}>\delta} = 0.
\end{equation}
Completely analogous arguments prove that
\[m_n \left\| \mathbb{P}_n (|\tilde{\epsilon}_n|\ind{\hat{\zeta}_n - \frac{(\cdot)}{m_n} < Z \leq \hat{\zeta}_n})  - \mathbb{P}_n (|\epsilon|\ind{\zeta_0 - \frac{(\cdot)}{m_n} < Z \leq \zeta_0})\right\|_{[0,\eta]}\cip 0.\]
Since $\eta>0$ was arbitrarily chosen, we have shown (IV) for $\psi(\cdot) = |\cdot|$. The case $\psi = |\cdot|^2$ is proven in a very similar manner. For the sake of brevity, we omit the proof.

Now, we consider the case where $\psi(x) = e^{i\xi x}$ for some $\xi\in\mathbb{R}$. Again, fix $\eta>0$. We will proceed in the same way as before. Let $t\in\mathbb{R}$ and write
\[ \rho_n = n\function{\mathbb{P}}{e^{i\frac{m_n}{n}t\function{\cos}{\xi\epsilon}} - 1 -\frac{m_n}{n}t\function{\cos}{\xi\epsilon}}\mathbb{P}(\ind{\zeta_0<Z\leq \zeta_0 + \frac{\eta}{m_n}}).\]
Then, $|\rho_n|\leq t^2 \frac{m_n}{n}m_n\mathbb{P}(\ind{\zeta_0<Z\leq \zeta_0 + \frac{\eta}{m_n}}) \rightarrow 0$. The characteristic function of $\\$ $m_n\mathbb{P}_n (\function{\cos}{\xi\epsilon} \ind{\zeta_0<Z\leq + \frac{\eta}{m_n}})$ can be written as
\[ \varphi_n (t) = \left(1+i\frac{m_n}{n}t\mathbb{P}(\function{\cos}{\xi\epsilon})\mathbb{P}( \ind{\zeta_0<Z\leq \zeta_0 + \frac{\eta}{m_n}}) + \frac{r_n}{n}\right)^n\rightarrow e^{it\eta\mathbb{P}(\function{\cos}{\xi\epsilon})f(\zeta_0)}\]
and therefore
\begin{equation}\nonumber
m_n\mathbb{P}_n (\function{\cos}{\xi\epsilon} \ind{\zeta_0<Z\leq \zeta_0 + \frac{\eta}{m_n}})\cip \eta f(\zeta_0)\mathbb{P}(\function{\cos}{\xi\epsilon}).
\end{equation}
Applying the same arguments to the function $\function{\sin}{\xi\epsilon}$ we obtain that
\begin{equation}\nonumber
m_n\mathbb{P}_n (\function{\sin}{\xi\epsilon} \ind{\zeta_0<Z\leq \zeta_0 + \frac{\eta}{m_n}})\cip \eta f(\zeta_0)\mathbb{P}(\function{\sin}{\xi\epsilon}).
\end{equation}
and hence
\begin{equation}\nonumber
m_n\mathbb{P}_n (e^{i\xi\epsilon} \ind{\zeta_0<Z\leq \zeta_0 + \frac{\eta}{m_n}})\cip \eta f(\zeta_0)\varphi{\xi}= \eta f(\zeta_0)\mathbb{P}(e^{i\xi\epsilon}).
\end{equation}
The same tightness argument that was applied to prove (\ref{ec65}) can be used here to conclude that
\begin{equation}\label{ec70}
\left\|m_n\mathbb{P}_n (e^{i\xi\epsilon} \ind{\zeta_0<Z\leq \zeta_0 + \frac{\cdot}{m_n}})-
(\cdot) f(\zeta_0)\mathbb{P}(e^{i\xi\epsilon})\right\|_{[0,\eta]}\cip 0
\end{equation}
and similarly
\begin{equation}\label{ec71}
\left\|m_n\mathbb{P}_n (e^{i\xi\epsilon} \ind{\zeta_0 - \frac{(\cdot)}{m_n}<Z\leq \zeta_0}) -(\cdot)f(\zeta_0)\mathbb{P}(e^{i\xi\epsilon})\right\|_{[0,\eta]}\cip 0.
\end{equation}

Using the triangular inequality together with the definition of $\tilde{\epsilon}_n$ we get
\[ m_n \left\| \mathbb{P}_n (e^{i\xi \tilde{\epsilon}_n}\ind{\hat{\zeta}_n < Z \leq \hat{\zeta}_n + \frac{(\cdot)}{m_n}})  - \mathbb{P}_n (e^{i\xi \epsilon}\ind{\zeta_0 < Z \leq \zeta_0 + \frac{(\cdot)}{m_n}})\right\|_{[0,\eta]} \leq\]
\[m_n \left\| \mathbb{P}_n\left(\ind{\hat{\zeta}_n < Z \leq \hat{\zeta}_n + \frac{(\cdot)}{m_n}} - \ind{\zeta_0 < Z \leq \zeta_0 + \frac{(\cdot)}{m_n}}\right)\right\|_{[0,\eta]} + \]
\[ m_n\left\|\mathbb{P}_n\left((e^{i\xi(Y-\hat{\beta}_n)} - e^{i\xi(Y-\beta_0)})\ind{\zeta_0 < Z \leq \zeta_0 + \frac{(\cdot)}{m_n}}\right)\right\|_{[0,\eta]}.\]
But (\ref{ec66})  implies that
\[m_n \left\| \mathbb{P}_n\left(\ind{\hat{\zeta}_n < Z \leq \hat{\zeta}_n + \frac{(\cdot)}{m_n}} - \ind{\zeta_0 < Z \leq \zeta_0 + \frac{(\cdot)}{m_n}}\right)\right\|_{[0,\eta]} \cip 0\]
while (\ref{ec70}) applied when $\xi = 0$ and the strong consistency of $\hat{\beta}_n$ yield
\[m_n\left\|\mathbb{P}_n\left((e^{i\xi(Y-\hat{\beta}_n)} - e^{i\xi(Y-\beta_0)})\ind{\zeta_0 < Z \leq \zeta_0 + \frac{(\cdot)}{m_n}}\right)\right\|_{[0,\eta]} \leq \]
\[|\hat{\beta}_n - \beta_0| m_n\left\|\mathbb{P}_n\left(\ind{\zeta_0 < Z \leq \zeta_0 + \frac{(\cdot)}{m_n}}\right)\right\|_{[0,\eta]}\cip 0.\]
Therefore,
\[m_n \left\| \mathbb{P}_n (e^{i\xi \tilde{\epsilon}_n}\ind{\hat{\zeta}_n < Z \leq \hat{\zeta}_n + \frac{(\cdot)}{m_n}})  - \mathbb{P}_n (e^{i\xi \epsilon}\ind{\zeta_0 < Z \leq \zeta_0 + \frac{(\cdot)}{m_n}})\right\|_{[0,\eta]}\cip 0\]
which together with (\ref{ec70}) proves that
\begin{equation}\nonumber
\left\|m_n\mathbb{P}_n (e^{i\xi\tilde{\epsilon}_n} \ind{\hat{\zeta}_n<Z\leq \hat{\zeta}_n + \frac{(\cdot)}{m_n}})-
(\cdot) f(\zeta_0)\mathbb{P}(e^{i\xi\epsilon})\right\|_{[0,\eta]}\cip 0.
\end{equation}
With completely analogous arguments one shows
\[m_n \left\| \mathbb{P}_n (e^{i\xi\tilde{\epsilon}_n}\ind{\hat{\zeta}_n - \frac{(\cdot)}{m_n} < Z \leq \hat{\zeta}_n})  - (\cdot) f(\zeta_0)\mathbb{P}(e^{i\xi\epsilon})\right\|_{[0,\eta]}\cip 0.\]
This proves that (VI) holds in probability.

We now proceed to prove that (V) and (VII) hold in probability. Before embarking in this task, we want to make the following remark. Consider that class of functions $\mathcal{C}:=\left\{\epsilon\ind{I}(z):I\subset\mathbb{R} \textrm{ is an interval}\right\}$. Then, this class has a square integrable envelope $|\epsilon|$ and $\mathbb{P}(\psi)=0$ for any $\psi\in\mathcal{C}$. Therefore, the maximal inequality 3.1 from \cite{kipo} implies that $\left\|\mathbb{P}_n\right\|_\mathcal{C} = O_\mathbf{P} \left(n^{-\frac{1}{2}}\right)$. Similar observations also show that $\left\|\mathbb{P}_n-\mathbb{P}\right\|_\mathcal{F} = O_\mathbf{P} \left(n^{-\frac{1}{2}}\right)$. All these considerations, in addition with Corollary \ref{corolario}, (\ref{ecuacionfinal1}), (\ref{ecuacionfinal2}), (\ref{ecuacionfinal5}) and (\ref{ecuacionfinal6}) show that
\begin{eqnarray}
\sqrt{m_n}(\hat{\alpha}_n - \alpha_0) &\cip& 0 \label{ecuacionfinal7}\\
\sqrt{m_n}(\hat{\beta}_n - \beta_0) &\cip& 0 \label{ecuacionfinal8}\\
m_n(\hat{\zeta}_n - \zeta_0) &\cip& 0 \label{ecuacionfinal9}\\
\sqrt{m_n}\left\|\mathbb{P}_n\right\|_\mathcal{C} &\cip& 0\label{ecuacionfinal10}\\
\sqrt{m_n}\left\| \mathbb{P}_n(|\epsilon|\ind{\zeta_0 - \frac{(\cdot)}{m_n}< Z \leq\zeta_0 + \frac{(\cdot)}{m_n}})\right\|_K &\cip& 0 \label{ecuacionfinal11}\\
\sqrt{m_n}\left\| \mathbb{P}_n(|\tilde{\epsilon}_n|\ind{\hat{\zeta}_n - \frac{(\cdot)}{m_n}< Z\leq\hat{\zeta}_n + \frac{(\cdot)}{m_n}})\right\|_K &\cip& 0 \label{ecuacionfinal12}\\
\sqrt{m_n}\left\|\mathbb{P}_n-\mathbb{P}\right\|_\mathcal{F} &\cip& 0\label{ecuacionfinal15}
\end{eqnarray}
for any compact set $K\subset\mathbb{R}$.

Let $\eta>0$ be fixed. Take any subsequence $(n_k)_{k=1}^\infty$ and find a further subsequence $(n_{k_s})_{s=1}^\infty$ such that all the statements in the previous display happen almost surely with the compact set $K$ taken to be $K=[\zeta_0 -2\eta,\zeta_0 +2\eta]$. Now, for such a subsequence, there is $N\in\mathbb{N}$ such that $m_{n_{k_{s}}}|\zeta_0 - \hat{\zeta}_{n_{k_{s}}}|<\eta$ $\forall\ s\geq N$. Then, for any $\delta>0$ and $s\geq N$, the following inequalities are true

\begin{eqnarray}
\sup_{|\hat{\zeta}_{n_{k_{s}}}-\zeta|<\delta^2}\left\{|\mathbb{P}_{n_{k_{s}}}(\tilde{\epsilon}_{n_{k_{s}}}\ind{\zeta\land\hat{\zeta}_{n_{k_{s}}} < Z \leq \zeta\lor\hat{\zeta}_{n_{k_{s}}}})|\right\}  \leq  |\hat{\alpha}_{n_{k_{s}}}-\alpha_0| + |\hat{\beta}_{n_{k_{s}}}-\beta_0| +\nonumber\\
\mathbb{P}_{n_{k_{s}}} (|\tilde{\epsilon}_{n_{k_{s}}}|\ind{\hat{\zeta}_{n_{k_{s}}} - \frac{\eta}{m_{n_{k_{s}}}} < Z \leq \hat{\zeta}_{n_{k_{s}}} + \frac{\eta}{m_{n_{k_{s}}}}}) +  \mathbb{P}_{n_{k_{s}}} (|\epsilon|\ind{\zeta_0- \frac{2\eta}{m_{n_{k_{s}}}} < Z \leq \zeta_0 + \frac{2\eta}{m_{n_{k_{s}}}}}) +  \left\|\mathbb{P}_{n_{k_{s}}}\right\|_\mathcal{C}\nonumber
\end{eqnarray}
\begin{eqnarray}
\sup_{|\hat{\zeta}_{n_{k_{s}}}-\zeta|<\delta^2}\left\{|\mathbb{P}_{n_{k_{s}}}(\tilde{\epsilon}_{n_{k_{s}}}\ind{Z \leq \zeta\land\hat{\zeta}_{n_{k_{s}}}})|+|\mathbb{P}_{n_{k_{s}}}(\tilde{\epsilon}_{n_{k_{s}}}\ind{Z > \zeta\lor\hat{\zeta}_{n_{k_{s}}}})|\right\}  \leq  |\hat{\alpha}_{n_{k_{s}}}-\alpha_0| + |\hat{\beta}_{n_{k_{s}}}-\beta_0| +\nonumber\\
\mathbb{P}_{n_{k_{s}}} (|\tilde{\epsilon}_{n_{k_{s}}}|\ind{\hat{\zeta}_{n_{k_{s}}} - \frac{\eta}{m_{n_{k_{s}}}} < Z \leq \hat{\zeta}_{n_{k_{s}}} + \frac{\eta}{m_{n_{k_{s}}}}})
+\mathbb{P}_{n_{k_{s}}} (|\epsilon|\ind{\zeta_0- \frac{2\eta}{m_{n_{k_{s}}}} < Z \leq \zeta_0 + \frac{2\eta}{m_{n_{k_{s}}}}})
+  \left\|\mathbb{P}_{n_{k_{s}}}\right\|_\mathcal{C}.\nonumber
\end{eqnarray}
These last inequalities together with (\ref{ecuacionfinal7})-(\ref{ecuacionfinal15}) imply that
\begin{eqnarray}
\lsup_{s\rightarrow \infty}\sqrt{m_{n_{k_s}}}\sup_{|\hat{\zeta}_{n_{k_{s}}}-\zeta|<\delta^2}\left\{|\mathbb{P}_{n_{k_{s}}}(\tilde{\epsilon}_{n_{k_{s}}}\ind{\zeta\land\hat{\zeta}_{n_{k_{s}}} < Z \leq \zeta\lor\hat{\zeta}_{n_{k_{s}}}})|\right\} & = & 0\ \textit{a.s.}\nonumber\\
\lsup_{s\rightarrow\infty}\sqrt{m_{n_{k_s}}}
\sup_{|\hat{\zeta}_{n_{k_{s}}}-\zeta|<\delta^2}\left\{|\mathbb{P}_{n_{k_{s}}}(\tilde{\epsilon}_{n_{k_{s}}}\ind{Z \leq \zeta\land\hat{\zeta}_{n_{k_{s}}}})|+|\mathbb{P}_{n_{k_{s}}}(\tilde{\epsilon}_{n_{k_{s}}}\ind{Z > \zeta\lor\hat{\zeta}_{n_{k_{s}}}})|\right\} & = & 0\ \textit{a.s.}\nonumber
\end{eqnarray}
The previous equations show that (\ref{rccs2}) and (\ref{rccs3}) in (V) as well as (VII) hold with probability one for the subsequence $(n_{k_s})_{s=1}^\infty$. We conclude by noting that if $\displaystyle \kappa = \inf_{z\in[a,b]}\left\{f(z)\right\}$, then the mean value theorem implies
\[ \inf_{\frac{1}{\sqrt{m_{n_{k_s}}}}\leq|\zeta-\hat{\zeta}_{n_{k_s}}|<\delta^2}\left\{ \frac{1}{|\zeta-\zeta_{n_{k_s}}|}\mathbb{P}_{n_{k_s}} (\ind{\zeta\land\hat{\zeta}_{n_{k_s}} < Z\leq\zeta\lor\hat{\zeta}_{n_{k_s}}})\right\}\geq \kappa - \sqrt{m_{n_{k_s}}}\left\|\mathbb{P}_{n_{k_s}} - \mathbb{P}\right\|_\mathcal{F}\]
which in consequence shows
\[ \linf_{s\rightarrow\infty} \inf_{\frac{1}{\sqrt{m_{n_{k_s}}}}\leq|\zeta-\hat{\zeta}_{n_{k_s}}|<\delta^2}\left\{ \frac{1}{|\zeta-\zeta_{n_{k_s}}|}\mathbb{P}_{n_{k_s}} (\ind{\zeta\land\hat{\zeta}_{n_{k_s}} < Z\leq\zeta\lor\hat{\zeta}_{n_{k_s}}})\right\}\geq \kappa > 0\ \ \ \textrm{a. s.}\]
This finishes the proof. $\hfill \square$

\bibliography{Referencias}
\bibliographystyle{apalike}
\end{document}